\setlist[enumerate,1]{label={\upshape(\roman*)}}
    \newcommand{\Rmnum}[1]
    {\expandafter\@slowromancap\romannumeral #1@}
\newtheorem{thm}{Theorem}[section]
\newtheorem{prop}[thm]{Proposition}
\newtheorem{lemma}[thm]{Lemma}
\newtheorem{cor}[thm]{Corollary}
\newtheorem{example}[thm]{Example}
\newtheorem{defin}[thm]{Definition}
\theoremstyle{definition}
\newtheorem{remark}[thm]{Remark}
\DeclareMathOperator{\rad}{rad}
\title[Weakly distance-regular circulants, \Rmnum{1}]{Weakly distance-regular circulants, \Rmnum{1}}
\author[Munemasa]{Akihiro Munemasa}
\address{Research Center for Pure and Applied Mathematics\\Graduate School of Information Sciences\\Tohoku University\\Sendai 980--8579\\Japan}
\email{munemasa@math.is.tohoku.ac.jp}
\author[Wang]{Kaishun Wang}
\address{Laboratory of Mathematics and Complex Systems (MOE),~School of Mathematical Sciences\\Beijing Normal University\\Beijing 100875\\China}
\email{wangks@bnu.edu.cn}
\author[Yang]{Yuefeng Yang}
\address{School of Science\\China University of Geosciences\\Beijing 100083\\China}
\email{yangyf@cugb.edu.cn}
\author[Zhu]{Wenying Zhu}
\address{Sch. Math. Sci. {\rm \&} Lab. Math. Com. Sys.\\Beijing Normal University\\Beijing 100875\\China}
\email{zfwenying@mail.bnu.edu.cn}
\begin{document}

\begin{abstract}
We classify certain non-symmetric commutative association schemes. As an application, we determine all the weakly distance-regular circulants of one type of arcs by using Schur rings. We also give the classification of primitive weakly distance-regular circulants.
\end{abstract}

\keywords{association scheme; Schur ring; weakly distance-regular digraph; circulant;  primitivity}

\subjclass[2010]{05E30}

\maketitle
\section{Introduction}
As a natural directed version of distance-regular graphs (see \cite{AEB98,DKT16} for a background of the theory of distance-regular graphs), Wang and Suzuki \cite{KSW03} introduced the concept of weakly distance-regular digraphs. Both distance-regular graphs and weakly distance-regular digraphs can be regarded as association schemes.

A \emph{$d$-class association scheme} $\mathfrak{X}$ is a pair $(X,\{R_{i}\}_{i=0}^{d})$, where $X$ is a finite set, and each $R_{i}$ is a
nonempty subset of $X\times X$ satisfying the following axioms (see \cite{EB84,PHZ96,PHZ05} for a background of the theory of association schemes):
\begin{enumerate}
\item\label{as-1} $R_{0}=\{(x,x)\mid x\in X\}$ is the diagonal relation;

\item\label{as-2} $X\times X=R_{0}\cup R_{1}\cup\cdots\cup R_{d}$, $R_{i}\cap R_{j}=\emptyset~(i\neq j)$;

\item\label{as-3} for each $i$, $R_{i}^{\rm T}=R_{i^{*}}$ for some $0\leq i^{*}\leq d$, where $R_{i}^{\rm T}=\{(y,x)\mid(x,y)\in R_{i}\}$;

\item\label{as-4} for all $i,j,l$, the cardinality of the set $$P_{i,j}(x,y):=R_{i}(x)\cap R_{j^{*}}(y)$$ is constant whenever $(x,y)\in R_{l}$, where $R(x)=\{y\mid (x,y)\in R\}$ for $R\subseteq X\times X$ and $x\in X$. This constant is denoted by $p_{i,j}^{l}$.
\end{enumerate}
A $d$-class association scheme is also called an association scheme with $d$ classes (or even simply a scheme). The integers $p_{i,j}^{l}$ are called the \emph{intersection numbers} of $\mathfrak{X}$. We say that $\mathfrak{X}$ is \emph{commutative} if $p_{i,j}^{l}=p_{j,i}^{l}$ for all $i,j,l$. The subsets $R_{i}$ are called the \emph{relations} of $\mathfrak{X}$. For each $i$, the integer $k_{i}:=p_{i,i^{*}}^{0}$ is called the \emph{valency} of $R_{i}$.   A relation $R_{i}$ is called \emph{symmetric} if $i=i^{*}$, and \emph{non-symmetric} otherwise. An association scheme is called \emph{symmetric} if all relations are symmetric, and \emph{non-symmetric} otherwise. An association
scheme is called \emph{skew-symmetric} if the diagonal relation is the
only symmetric relation.

Let $\mathfrak{X}=(X,\{R_{i}\}_{i=0}^{d})$ be an association scheme. For two nonempty subsets $E$ and $F$ of $\{R_{i}\}_{i=0}^{d}$, define
\[EF:=\{R_{l}\mid\sum_{R_{i}\in E}\sum_{R_{j}\in F}p_{i,j}^{l}\neq0\}.\]
We write $R_{i}R_{j}$ instead of $\{R_{i}\}\{R_{j}\}$,
and $R_i^2$ instead of $\{R_{i}\}\{R_{i}\}$.
If $R_{i^{*}}R_{j}\subseteq F$ for any $R_{i},R_{j}\in F$, we say that $F$ is {\em closed}. We say that $R_{j}$ {\em generates} $\mathfrak{X}$ if the smallest
closed subset containing $R_{j}$ is equal to $\{R_{i}\}_{i=0}^{d}$. We call $\mathfrak{X}$ \emph{primitive} if every non-diagonal relation generates $\mathfrak{X}$. Otherwise, $\mathfrak{X}$ is said to be \emph{imprimitive}.

In this paper, we classify certain non-symmetric commutative association schemes. Our first main result is as follows. See Section 2 for precise definitions of a pseudocyclic scheme and a cyclotomic scheme.

\begin{thm}\label{main1}
Let $\mathfrak{X}=(X,\{R_{i}\}_{i=0}^{d})$ be a commutative  association scheme generated by a non-symmetric relation $R_{1}$ with $k_1>1$ satisfying
\begin{align}
R_{1}^{2}&\subseteq\{R_{1},R_{1^{*}},R_{2}\},\label{1.1a}\\
R_{1}R_{1^{*}}&\subseteq\{R_{0},R_{1},R_{1^{*}},R_{2},R_{2^{*}}\},\label{1.1b}\\
2&\notin\{1^{*},2^{*}\} .\label{1.1c}
\end{align}
If $k_{1}\geq k_2$, then $d=4$. Moreover, if $\mathfrak{X}$ is pseudocyclic, then $\mathfrak{X}$ is isomorphic to the cyclotomic scheme ${\rm Cyc}(13,4)$.
\end{thm}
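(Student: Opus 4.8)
\medskip
\noindent\textbf{Proof proposal.}
The plan is to establish $d=4$ first, and then, under pseudocyclicity, the isomorphism with $\mathrm{Cyc}(13,4)$. For $d\ge4$ I would check that $R_0,R_1,R_{1^*},R_2,R_{2^*}$ are pairwise distinct. Since $R_1$ is non-symmetric, $1\ne1^*$, and \eqref{1.1c} gives $2\ne2^*$ and $2\ne1^*$. Distinct subscripts index distinct relations, so $R_0,R_1,R_2$ are distinct; $R_0$ is the unique diagonal relation, so $R_0\notin\{R_{1^*},R_{2^*}\}$; $R_{1^*}\ne R_2$ and $R_2\ne R_{2^*}$ by \eqref{1.1c}; and $R_1\ne R_{2^*}$, $R_{1^*}\ne R_{2^*}$, since otherwise $R_{1^*}=R_2$, respectively $R_1=R_2$. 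Hence these five relations are distinct, so $d\ge4$.

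For $d\le4$, put $Y:=\{R_0,R_1,R_{1^*},R_2,R_{2^*}\}$; since $R_1$ generates $\mathfrak X$, it suffices to prove that $Y$ is closed, for then $\{R_i\}_{i=0}^d=Y$. Using commutativity together with $p_{i,j}^l=p_{i^*,j^*}^{l^*}$ (which makes $R_iR_j\subseteq Y$ equivalent to $R_{i^*}R_{j^*}\subseteq Y$, as $Y$ is closed under $*$), closedness of $Y$ reduces to checking $R_1^2$, $R_1R_{1^*}$, $R_1R_2$, $R_1R_{2^*}$, $R_2^2$, $R_2R_{2^*}$; the first two are \eqref{1.1a} and \eqref{1.1b}. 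For the remaining four I would argue by a finite case analysis on the exact supports of $R_1^2\subseteq\{R_1,R_{1^*},R_2\}$ and $R_1R_{1^*}\subseteq\{R_0,R_1,R_{1^*},R_2,R_{2^*}\}$ — noting that $R_2$ lies in $R_1^2$ or in $R_1R_{1^*}$, as otherwise it would not belong to the closed subset generated by $R_1$ — together with the valency identity $k_ik_j=\sum_lp_{i,j}^lk_l$ and the hypothesis $k_1\ge k_2$, which between them rule out any relation outside $Y$ occurring in these products. This bookkeeping I expect to be the main technical obstacle.

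Now suppose $d=4$ and $\mathfrak X$ is pseudocyclic. The relations are then $R_0,R_1,R_{1^*},R_2,R_{2^*}$, only $R_0$ being symmetric, so $\mathfrak X$ is skew-symmetric; moreover $k:=k_1=k_2>1$ is the common non-diagonal valency, $k$ is also the common non-trivial multiplicity, and $|X|=4k+1$. As $A_1$ generates the $5$-dimensional adjacency algebra it has $5$ distinct eigenvalues, and by skew-symmetry the four non-trivial ones form two complex-conjugate pairs $\theta,\overline\theta,\eta,\overline\eta$ with $\theta,\eta\notin\mathbb R$. The trace identities $\operatorname{tr}A_1=\operatorname{tr}A_1^2=0$ (the second by \eqref{1.1a}) and $\operatorname{tr}A_1A_1^{\mathrm T}=k|X|$ force $\theta+\overline\theta$ and $\eta+\overline\eta$ to be the roots of $w^2+w-k=0$, while \eqref{1.1b} forces $k$ to be odd, so $1+4k$ is not a square. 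One then checks $p_{1,1}^2\ne0$ — otherwise \eqref{1.1a} gives $A_1^2=p_{1,1}^1A_1+p_{1,1}^{1^*}A_1^{\mathrm T}$, hence $(A_1-A_1^{\mathrm T})\bigl(A_1+A_1^{\mathrm T}+(p_{1,1}^{1^*}-p_{1,1}^1)I\bigr)=0$, which makes every non-real eigenvalue of $A_1$ have real part $\tfrac12(p_{1,1}^1-p_{1,1}^{1^*})\in\tfrac12\mathbb Z$, contradicting $\operatorname{Re}\theta+\operatorname{Re}\eta=-\tfrac12$. So $A_2=\tfrac1{p_{1,1}^2}(A_1^2-p_{1,1}^1A_1-p_{1,1}^{1^*}A_1^{\mathrm T})$; substituting this into \eqref{1.1b} yields a polynomial relation among the commuting matrices $A_1+A_1^{\mathrm T}$ and $A_1A_1^{\mathrm T}$, which expresses $|\theta|^2$ (resp.\ $|\eta|^2$) as a fixed rational quadratic in $\theta+\overline\theta$ (resp.\ $\eta+\overline\eta$). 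Requiring the characteristic polynomial $(x-k)\bigl(x^2-(\theta+\overline\theta)x+|\theta|^2\bigr)\bigl(x^2-(\eta+\overline\eta)x+|\eta|^2\bigr)$ of $A_1$ to have integer coefficients then leaves a finite Diophantine system in $k$ whose only solution with $k>1$ is $k=3$; this elimination is the second delicate point.

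Finally, $k=3$ gives $|X|=13$ and pins down all intersection numbers of $\mathfrak X$, which coincide with those of $\mathrm{Cyc}(13,4)$. Since $13$ is prime, a Schur-ring argument over $\mathbb Z_{13}$ (or the known classification of association schemes of order $13$) shows that the skew-symmetric $4$-class scheme with these parameters is unique — namely $\mathrm{Cyc}(13,4)$, obtained from the index-$4$ subgroup of $\mathbb Z_{13}^{\times}$ — and therefore $\mathfrak X\cong\mathrm{Cyc}(13,4)$.
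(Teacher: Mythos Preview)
Your outline is sound and the $d\ge4$ verification is correct. For $d\le4$, your plan matches the paper's in spirit, but two remarks. First, you list six products to check for closedness of $Y$; the paper observes (Lemma~\ref{d=4}) that only $R_2R_1\subseteq Y$ and $R_{2^*}R_1\subseteq Y$ are needed, since together with \eqref{1.1a} and \eqref{1.1b} these give $R_1^n\subseteq Y$ for all $n\ge1$ by induction, and $R_1$ generates. Second, the ``bookkeeping'' you anticipate is the heart of the argument: the paper organizes it as a three-way split on $|I|$ where $I=\{i:R_i\in R_1^2\}\subseteq\{1,1^*,2\}$, using the quadratic identity $\sum_{\alpha\in I}(p_{1,1}^\alpha)^2k_\alpha=k_1^2+2\sum_\beta(p_{1,1^*}^\beta)^2k_\beta$ (Lemma~\ref{jiben}) and a combinatorial lemma locating $P_{1,1}(x,z)\times P_{1,1^*}(x,z)$ inside $\bigcup_{j\in I\cap J}R_j$ (Lemma~\ref{2.2}). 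Several subcases require nontrivial element-chasing (see Lemmas~\ref{1,1*} and~\ref{k1>k2}); it is more than a valency count.

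For the pseudocyclic statement you take a genuinely different route. The paper does not compute eigenvalues: it invokes the structure theory of $4$-class skew-symmetric pseudocyclic schemes from \cite{JM11}, which supplies integers $u,v$ with $|X|=u^2+4v^2$ and $p_{1,1}^{2^*}=\tfrac{1}{16}(|X|+1+2u\pm8v)$; since \eqref{1.1a} and \eqref{1.1c} force $p_{1,1}^{2^*}=0$, eliminating $v$ yields a quadratic in $u$ whose discriminant bounds $|X|\le9+4\sqrt5$, and $|X|\equiv5\pmod8$ then gives $|X|=13$; uniqueness is quoted from \cite{MH96}. Your eigenvalue approach is more self-contained, and both the derivation of $w^2+w-k=0$ and the argument for $p_{1,1}^2\ne0$ are correct, but the final Diophantine elimination---which you flag as delicate---is left entirely unexecuted. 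Until it is, that half of your proposal is a plan rather than a proof.
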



In \cite{SM03}, Miklavi\'c and Poto\v cnik gave the classification of distance-regular circulants. As an application of Theorem \ref{main1}, we obtain the classification of weakly distance-regular circulants of one type of arcs. In order to state our results, we introduce some basic notations and terminologies about weakly distance-regular circulants. See \cite{YF22,HS04,KSW03,KSW04,YYF16,YYF18,YYF20,YYF22,QZ} for more details.

We always use $\Gamma$ to denote a finite simple digraph, which is not undirected. We write $V\Gamma$ and $A\Gamma$ for the vertex set and arc set of
$\Gamma$, respectively. For any $x,y\in V\Gamma$, let $\partial_{\Gamma}(x,y)$ be the \emph{one-way distance} from $x$ to $y$ in $\Gamma$, and ${\tilde{\partial}}_{\Gamma}(x,y):=(\partial_{\Gamma}(x,y),\partial_{\Gamma}(y,x))$ the \emph{two-way distance} from $x$ to $y$ in $\Gamma$. If no confusion occurs, we write $\partial(x,y)$ (resp. $\tilde{\partial}(x,y)$) instead of $\partial_\Gamma(x,y)$ (resp. ${\tilde{\partial}}_\Gamma(x,y)$). Denote $\tilde{\partial}(\Gamma)=\{\tilde{\partial}(x,y)\mid x,y\in V\Gamma\}$ and $\Gamma_{\tilde{i}}=\{(x,y)\in V\Gamma\times V\Gamma\mid\tilde{\partial}(x,y)=\tilde{i}\}$. An arc $(u,v)$ of $\Gamma$ is of \emph{type} $(1,r)$ if $\partial(v,u)=r$.

A strongly
connected digraph $\Gamma$ is said to be \emph{weakly distance-regular} if the configuration $\mathfrak{X}(\Gamma)=(V\Gamma,\{\Gamma_{\tilde{i}}\}_{\tilde{i}\in\tilde{\partial}(\Gamma)})$ is an association scheme. This is a directed generalization of distance-regular graphs (see \cite{AEB98}). We call $\mathfrak{X}(\Gamma)$ the \emph{attached scheme} of $\Gamma$. We say that $\Gamma$ is \emph{primitive} (resp. \emph{commutative}) if $\mathfrak{X}(\Gamma)$ is primitive (resp. commutative). For each $(a,b)\in\tilde{\partial}(\Gamma)$, we write $\Gamma_{a,b}$ (resp. $k_{a,b}$) instead of $\Gamma_{(a,b)}$ (resp. $k_{(a,b)}$).

Let $G$ be a finite, multiplicatively written, group with identity $e$, and $S$ be a subset of $G$ without identity. A \emph{Cayley digraph} of a group $G$ with respect
to the set $S$, denoted by $\textrm{Cay}(G,S)$, is the digraph with vertex set $G$, where
$(x,y)$ is an arc whenever $yx^{-1}\in S$. A digraph which is isomorphic to a Cayley digraph of a cyclic group is called a \emph{circulant}. For a positive integer $n$, denote by $C_{n}$ the Cayley digraph ${\rm Cay}(\mathbb{Z}_{n},\{1\})$, $K_{n}$ the complete graph ${\rm Cay}(\mathbb{Z}_{n},\mathbb{Z}_{n}\setminus\{0\})$
and $\overline{K}_{n}$ the edgeless graph ${\rm Cay}(\mathbb{Z}_{n},\emptyset)$, respectively.

A classical example of weakly distance-regular Cayley digraphs are Paley digraphs, defined as follows. For a prime power
$q=p^{m}$ with $q\equiv 3~(\textrm{mod}~4)$, let $\textrm{GF}(q)$ denote the finite field of cardinality $q$, let $S$ denote the set of all squares in the multiplicative group of $\textrm{GF}(q)$, and let $G\cong\mathbb{Z}_{p}^{m}$ denote the additive group
of $\textrm{GF}(q)$, where $\mathbb{Z}_{p}=\{0,1,\ldots,p-1\}$ is the cyclic group of order $p$, written additively. The \emph{Paley digraph $P(q)$ of order $q$} is defined as the digraph ${\rm Cay}(G,S)$.

For the digraphs $\Gamma$ and $\Sigma$, the {\em direct product} $\Gamma\times\Sigma$ is the  digraph with vertex set $V\Gamma\times V\Sigma$ such that $((u_1,u_2),(v_1,v_2))$ is an arc if and only if $(u_1,v_1)\in A\Gamma$ and $(u_2,v_2)\in A\Sigma$; the {\em lexicographic product} $\Gamma[\Sigma]$ is the digraph with vertex set $V\Gamma\times V\Sigma$ such that $((u_1,u_2),(v_1,v_2))$ is an arc if and only if either $(u_1,v_1)\in A\Gamma$, or
$u_1=v_1$ and $(u_2,v_2)\in A\Sigma$.  

Our second main theorem is as follows, which classifies all weakly distance-regular circulants of one type of arcs.

\begin{thm}\label{Main2}
A digraph
$\Gamma$ is a weakly distance-regular circulant of one type of arcs,
if and only if
$\Gamma$ is isomorphic to one of the following digraphs:
\begin{enumerate}
\item\label{Main2-1} $C_m[\overline{K}_{l}]$;

\item\label{Main2-2}$P(p)[\overline{K}_{l}]$;

\item\label{Main2-3} $C_3\times K_{h}$;

\item\label{Main2-4} ${\rm Cay}(\mathbb{Z}_{13},\{1,3,9\})[\overline{K}_{l}]$.
\end{enumerate}
Here, $l\geq1$, $m\geq3$, $h,p>3$, $3\nmid h$ and $p$ is prime such that $p\equiv 3\pmod4$.
\end{thm}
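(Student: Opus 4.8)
The plan is to prove the two implications separately; the forward (``if'') direction is a verification, and the converse carries the weight. For the ``if'' direction I would first record a lemma: for a strongly connected digraph $\Sigma$ that is not undirected and any $l\ge 1$, the lexicographic product $\Sigma[\overline{K}_l]$ is a weakly distance-regular circulant of one type of arcs if and only if $\Sigma$ is, because $\tilde\partial\big((u_1,u_2),(v_1,v_2)\big)$ equals $\tilde\partial_\Sigma(u_1,v_1)$ when $u_1\ne v_1$ and equals the fixed pair $(g,g)$, with $g$ the length of a shortest directed cycle through $u_1$, when $u_1=v_1$ and $u_2\ne v_2$; and a twisted cyclic automorphism shows $\Sigma[\overline{K}_l]$ remains a circulant. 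Granting this, it remains to treat the four building blocks. For $C_m$ the attached scheme is the cyclic scheme of $\mathbb{Z}_m$ and every arc has type $(1,m-1)$; for $P(p)$ with $p\equiv 3\pmod 4$ and $p>3$ it is $\mathrm{Cyc}(p,2)$, and since $P(p)$ has diameter $2$ every arc has type $(1,2)$; for $\mathrm{Cay}(\mathbb{Z}_{13},\{1,3,9\})$ the coset structure of the index-$4$ subgroup $\{1,3,9\}\le\mathbb{Z}_{13}^{\times}$ (note $1+3+9\equiv 0\pmod{13}$) gives $\mathrm{Cyc}(13,4)$ with every arc of type $(1,2)$; and for $C_3\times K_h$ with $3\nmid h$ and $h>3$ one computes two-way distances directly, exhibits the attached scheme, and checks that every arc has type $(1,2)$.

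For the ``only if'' direction, write $\Gamma=\mathrm{Cay}(\mathbb{Z}_n,S)$ with all arcs of type $(1,r)$; since $\Gamma$ is not undirected, $r\ge 2$. The attached scheme $\mathfrak{X}=\mathfrak{X}(\Gamma)$ is commutative, corresponds to a Schur ring over $\mathbb{Z}_n$, and is generated by the non-symmetric relation $R_1=\Gamma_{1,r}$ of valency $k_1=|S|$. If $k_1=1$ then $S$ generates $\mathbb{Z}_n$, so $\Gamma\cong C_n$, which is (i) with $l=1$; so assume $k_1>1$. The key elementary observation is that a directed path of length $2$ from $x$ to $z$ forces $\partial(x,z)\le 2$, so
\[
R_1^2\subseteq\{R_1\}\cup\{\Gamma_{2,s}\mid s\ge 1\};
\]
this, together with commutativity, is what links $\mathfrak{X}$ to the hypotheses \eqref{1.1a}--\eqref{1.1c}.

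I would then split on the primitivity of $\mathfrak{X}$. If $\mathfrak{X}$ is imprimitive, pick a minimal nontrivial closed subset $\mathfrak{N}$; since $\Gamma$ is strongly connected no arc lies inside an $\mathfrak{N}$-class, so the classes are the cosets of a subgroup $N\le\mathbb{Z}_n$, each an independent set. If the subscheme induced on a class is the trivial rank-$2$ scheme, this is equivalent to $\Gamma\cong\Sigma[\overline{K}_{|N|}]$ with $\Sigma=\Gamma/N$ a strictly smaller weakly distance-regular circulant of one type of arcs, and induction together with the ``if''-direction lemma finishes. Otherwise, using the constraint on $R_1^2$ and commutativity, one forces the induced subscheme to be the cyclic scheme of $\mathbb{Z}_3$ and the quotient to be a complete graph, so $\Gamma\cong C_3\times K_h$ with $3\nmid h$ and $h>3$, which is (iii). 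If $\mathfrak{X}$ is primitive, the classification of Schur rings over cyclic groups forces $n=p$ to be prime and $\mathfrak{X}\cong\mathrm{Cyc}(p,e)$ for some $e\mid p-1$, with $R_1$ a single coset of the index-$e$ subgroup $H$ (a single coset because $\Gamma$ has one arc type); non-symmetry of $R_1$ gives $-1\notin H$, and $k_1>1$ gives $2\le e\le (p-1)/2$. If $e=2$, then $p\equiv 3\pmod 4$ and $\Gamma\cong P(p)$ (necessarily $p>3$), which is (ii). If $e\ge 3$, I would compute $R_1^2$ and $R_1R_{1^*}$ from the coset structure of $H$ to test \eqref{1.1a}--\eqref{1.1c}: when they hold, Theorem \ref{main1} applies (as $\mathrm{Cyc}(p,e)$ is pseudocyclic with $k_1=k_2=(p-1)/e$) and forces $\mathfrak{X}\cong\mathrm{Cyc}(13,4)$, i.e.\ $\Gamma\cong\mathrm{Cay}(\mathbb{Z}_{13},\{1,3,9\})$, which is (iv); when they fail — the typical obstruction being that the relation other than $R_1$ occurring in $R_1^2$ is symmetric, so $\Gamma_{2,2}\ne\emptyset$ and \eqref{1.1c} fails — an intersection-number argument shows that for $e\ge 3$ and $p>13$ no such weakly distance-regular circulant exists.

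The hardest part will be the imprimitive case with a nontrivial induced subscheme: showing that, once the lexicographic $\overline{K}_l$-layer has been stripped away, the directed-triangle-over-complete-graph digraph $C_3\times K_h$ is the only possibility. This needs a careful analysis of the interplay between $N$, the relation determined by $N$, and $R_1$, exploiting crucially that $\Gamma$ has a single arc type and that $R_1^2$ avoids every $\Gamma_{i,j}$ with $i\ge 3$; it will also be necessary to confirm that $(C_3\times K_h)[\overline{K}_l]$ is not weakly distance-regular for $l\ge 2$, so that the induction produces nothing outside the list. A secondary obstacle is the failure subcase in the primitive analysis, where Theorem \ref{main1} cannot be invoked and a self-contained non-existence argument has to be supplied.
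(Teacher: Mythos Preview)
Your imprimitive-case bifurcation contains a genuine error. You claim that if the induced subscheme on a class of a minimal closed subset $\mathfrak{N}$ has rank $2$, then $\Gamma\cong\Sigma[\overline{K}_{|N|}]$; but this is false. Take $\Gamma=C_3\times K_h$ (any $h>3$ with $3\nmid h$): identifying $V\Gamma=\mathbb{Z}_3\times\mathbb{Z}_h$, the basic sets of the attached scheme are $N_{0,0}=\{(0,0)\}$, $N_{1,2}=\{1\}\times(\mathbb{Z}_h\setminus\{0\})$, $N_{2,1}=\{2\}\times(\mathbb{Z}_h\setminus\{0\})$, $N_{3,3}=\{0\}\times(\mathbb{Z}_h\setminus\{0\})$, $N_{2,4}=\{(2,0)\}$, $N_{4,2}=\{(1,0)\}$. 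The subgroup $N=\{0\}\times\mathbb{Z}_h$ is a minimal $\mathcal{A}$-subgroup whose induced subscheme has rank $2$ (its nontrivial class is $\Gamma_{3,3}$), yet $N_{1,2}$ is \emph{not} a union of $N$-cosets (it misses $(1,0)$), so $\Gamma$ is not a lexicographic product over $\overline{K}_h$. Thus precisely the digraph your ``Otherwise'' branch is supposed to produce lands in your ``rank-$2$'' branch and yields a false conclusion. The lexicographic $\overline{K}_l$-layer is governed not by a minimal closed subset but by the \emph{radical} $L=\rad(N_{1,r})$, the largest subgroup with $N_{1,r}+L=N_{1,r}$; this is the object the paper quotients by.

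The paper's route is accordingly different: it neither splits on primitivity nor inducts. It passes at once to the free quotient S-ring $\mathcal{A}/L$ and invokes the structure theorem for free S-rings over cyclic groups (Proposition~\ref{jb3-2}) to write $\mathcal{A}/L=\mathcal{A}_0\otimes\mathcal{A}_1\otimes\cdots\otimes\mathcal{A}_k$ with $\mathcal{A}_0$ a free orbit S-ring and each $\mathcal{A}_i$ ($i>0$) of rank $2$. The substantive work (Sections~5--6, in particular Lemmas~\ref{q=3 step4} and~\ref{q=3 step5}) is to show, inside the orbit factor $\mathcal{A}_0$, that the image $R_{1,2}$ of the arc relation satisfies \eqref{1.1a}--\eqref{1.1c}, so that Theorem~\ref{main1} applies there; the rank-$2$ tensor factors then account for the $K_h$ in \ref{Main2-3}. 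Your ``failure subcase'' in the primitive analysis is therefore not a side issue to be handled by an ad-hoc intersection-number argument: establishing \eqref{1.1a}--\eqref{1.1c} (even just excluding $\Gamma_{2,2}\in\Gamma_{1,2}^2$, which is Lemma~\ref{q=3 step4}) already consumes most of the machinery of Section~6 and is where the real difficulty lies.
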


In the forthcoming paper \cite{AM}, we shall classify weakly distance-regular circulants of more than one types of arcs. The following result is a consequence of Theorem \ref{Main2}, which determines all primitive weakly distance-regular circulants.

\begin{cor}\label{Main3}
A digraph
$\Gamma$ is a primitive weakly distance-regular circulant,
if and only if
$\Gamma$ is isomorphic to $C_{p}$, $P(r)$ or ${\rm Cay}(\mathbb{Z}_{13},\{1,3,9\})$, where $p,r$ are primes with $r\equiv 3\pmod4$.
\end{cor}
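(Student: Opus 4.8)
The plan is to deduce Corollary~\ref{Main3} entirely from Theorem~\ref{Main2}, together with the classical correspondence, for a Cayley association scheme over $\mathbb{Z}_{n}$, between closed subsets $F$ and subgroups of $\mathbb{Z}_{n}$: the relation $\bigcup_{R_{i}\in F}R_{i}$ is an equivalence relation invariant under translation, so its class through $0$, namely $\bigcup_{R_{i}\in F}R_{i}(0)$, is a subgroup $H_{F}\leq\mathbb{Z}_{n}$, and $F=\{R_{i}:R_{i}(0)\subseteq H_{F}\}$. \emph{Sufficiency.} Each of $C_{p}$, $P(r)$ (with $r$ prime and $r\equiv3\pmod4$) and ${\rm Cay}(\mathbb{Z}_{13},\{1,3,9\})$ occurs in the list of Theorem~\ref{Main2} on setting $l=1$ (note $C_{p}=C_{p}[\overline{K}_{1}]$, etc.), hence is a weakly distance-regular circulant. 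Since all three have prime order, the only subgroups of the underlying group are $\{0\}$ and the whole group, so by the correspondence the only closed subsets are $\{R_{0}\}$ and the full index set; thus every non-diagonal relation generates, i.e.\ the scheme is primitive.

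\textbf{Necessity: reduction to one type of arcs.} Let $\Gamma={\rm Cay}(\mathbb{Z}_{n},S)$ be a primitive weakly distance-regular circulant, and write $S=\bigcup_{r\in T}S_{r}$ with $S_{r}=\{s\in S:\partial(s,0)=r\}$, so the arc relations of $\mathfrak{X}(\Gamma)$ are exactly $\Gamma_{1,r}$, $r\in T$. The first, and main, step is to prove $|T|=1$. Suppose $|T|\geq2$; then $S_{r}\subsetneq S$ for each $r$. Using $\Gamma_{1,r}(0)=S_{r}$, $\Gamma_{r,1}(0)=-S_{r}$ and closure of $\langle\Gamma_{1,r}\rangle$, the subgroup attached to $\langle\Gamma_{1,r}\rangle$ is an $H_{r}\leq\mathbb{Z}_{n}$ containing $\langle S_{r}\rangle$, and one argues that for a suitable $r$ this $H_{r}$ is proper, so $\Gamma_{1,r}$ generates a proper closed subset, contradicting primitivity. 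This is the delicate point of the whole proof: $\langle S_{r}\rangle$ is only guaranteed to be \emph{contained in} $H_{r}$ (since $H_{r}$ must be a union of sets $R_{i}(0)$ while $\langle S_{r}\rangle$ need not be), so one must rule out that $H_{r}$ ``saturates'' to all of $\mathbb{Z}_{n}$; this is precisely where the weakly distance-regular structure — the compatibility of the partition $\{R_{i}(0)\}$ with the subgroup lattice of $\mathbb{Z}_{n}$, for instance via the multiplier group acting as automorphisms — must be invoked rather than pure group theory. Granting $|T|=1$, Theorem~\ref{Main2} applies, so $\Gamma$ is isomorphic to $C_{m}[\overline{K}_{l}]$, $P(p)[\overline{K}_{l}]$, $C_{3}\times K_{h}$, or ${\rm Cay}(\mathbb{Z}_{13},\{1,3,9\})[\overline{K}_{l}]$.

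\textbf{Necessity: selecting the primitive members.} It remains to determine which digraphs in this list are primitive. If $l\geq2$, then for each of the three families with an $\overline{K}_{l}$ factor the fibres $\{v\}\times\overline{K}_{l}$ give a system of imprimitivity: any two distinct vertices of a common fibre lie at two-way distance $(N,N)$ for a single $N\geq2$ (one leaves the fibre and returns), and $\{R_{0},\Gamma_{N,N}\}$ is a proper closed subset (within a fibre $\Gamma_{N,N}$ is the ``complete graph'' relation, so $\Gamma_{N,N}^{2}\subseteq\{R_{0},\Gamma_{N,N}\}$); hence primitivity forces $l=1$. Likewise $C_{3}\times K_{h}$ is imprimitive through the projection onto $C_{3}$, whose fibres form a system of imprimitivity with $N=3$, so this family contributes nothing. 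For $C_{m}=C_{m}[\overline{K}_{1}]$ the attached scheme is the group scheme of $\mathbb{Z}_{m}$, in which the relation $\{(x,x+j)\}$ has attached subgroup $\langle j\rangle$ and so generates if and only if $\gcd(j,m)=1$; thus $C_{m}$ is primitive exactly when $m$ is prime, yielding $C_{p}$. Finally $P(p)$ and ${\rm Cay}(\mathbb{Z}_{13},\{1,3,9\})$ have prime order and so are primitive by the Sufficiency argument. Therefore the primitive weakly distance-regular circulants are exactly $C_{p}$, $P(r)$ and ${\rm Cay}(\mathbb{Z}_{13},\{1,3,9\})$, as claimed. The main obstacle, as noted above, is the reduction to a single type of arcs; the remaining steps are routine once Theorem~\ref{Main2} is in hand.
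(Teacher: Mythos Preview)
Your sufficiency argument and your filtering of the list in Theorem~\ref{Main2} are fine, but the reduction to one type of arcs has a genuine gap, and the approach you sketch cannot be completed.

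First, a small correction: there is in fact no ``saturation'' issue. For the closed subset $F_{r}$ generated by $\Gamma_{1,r}$ one has $F_{r}(0)=\langle S_{r}\rangle$ exactly, since $F_{r}(0)$ is the vertex set of the connected component through $0$ of ${\rm Cay}(\mathbb{Z}_{n},S_{r}\cup(-S_{r}))$. So your $H_{r}$ equals $\langle S_{r}\rangle$, not merely contains it. The real obstruction is the opposite of what you flagged: you need some $\langle S_{r}\rangle$ to be \emph{proper}, and this is exactly what fails. By Lemma~\ref{prime}, a primitive translation scheme over a cyclic group has prime order, so $n$ is prime and every nonempty $S_{r}$ generates all of $\mathbb{Z}_{n}$. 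Hence $H_{r}=\mathbb{Z}_{n}$ for each $r$, and no contradiction with primitivity arises from the subgroup correspondence alone, regardless of $|T|$.

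The paper's route is to invoke Lemma~\ref{prime} at the outset: primitivity forces $|V\Gamma|$ prime and the attached scheme cyclotomic. Then Lemma~\ref{lem:1} says the nondiagonal relations of a cyclotomic scheme are permuted transitively by the multiplication maps $x\mapsto sx$. Given any $x_{1}\in N_{1,q}$, choose $s$ with $(N_{1,g-1})^{(s)}=N_{1,q}$ and $y_{1}\in N_{1,g-1}$ with $sy_{1}=x_{1}$; a girth circuit through $y_{1}$ maps under multiplication by $s$ to a path of length $g-1$ from $x_{1}$ back to $0$, forcing $q=g-1$. Thus $|T|=1$, and Theorem~\ref{Main2} applies. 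Since $n$ is already known to be prime, the cases $l\geq2$ and $C_{3}\times K_{h}$ are excluded automatically and your separate imprimitivity arguments for them become unnecessary.
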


The remainder of this paper is organized as follows.
In Section 2, we give some basic results concerning association schemes and weakly distance-regular digraphs. In Section 3, we give a proof of Theorem \ref{main1} based on the results in Section 2. In Section 4, we recall the concepts of Schur rings. In Sections 5 and 6, we discuss the subdigraphs of weakly distance-regular circulants by using Theorem \ref{main1} and the results in Section 4. In Section 7, we prove Theorem \ref{Main2} and Corollary \ref{Main3} based on the results in Sections 5 and 6.

\section{Preliminaries}

Let $\mathfrak{X}=(X,\{R_{i}\}_{i=0}^{d})$ be a commutative association scheme with $|X|=n$. The \emph{adjacency matrix} $A_{i}$ of $R_{i}$ is the $n\times n$ matrix whose $(x,y)$-entry is $1$ if $(x,y)\in R_{i}$, and $0$ otherwise. By the \emph{adjacency} or \emph{Bose-Mesner algebra} $\mathfrak{U}$ of $\mathfrak{X}$ we mean the algebra generated by $A_{0},A_{1},\ldots,A_{d}$ over the complex field. Axioms \ref{as-1}--\ref{as-4} are equivalent to the following:
\[A_{0}=I,\quad \sum_{i=0}^{d}A_{i}=J,\quad A_{i}^{\rm T}=A_{i^{*}},
\quad A_{i}A_{j}=\sum_{l=0}^{d}p_{i,j}^{l}A_{l},\]
where $I$ and $J$ are the identity and all-one matrices of order $n$, respectively.

Since $\mathfrak{U}$ consists of commuting normal matrices, it has a second basis consisting of primitive idempotents $E_{0}=J/n, E_{1},\ldots,E_{d}$. The integers $m_{i}=\textrm{rank}E_{i}$ are called the \emph{multiplicities} of $\mathfrak{X}$, and $m_{0}=1$ is said to be the trivial multiplicity. A commutative association scheme is called \emph{pseudocyclic} if all the non-trivial multiplicities coincide with each other.

Now we list basic properties of intersection
numbers.

\begin{lemma}[{\cite[Chapter II, Proposition 2.2]{EB84}} and {\cite[Proposition 5.1]{ZA99}}]\label{jb} Let the configuration $(X,\{R_{i}\}_{i=0}^{d})$ be an association scheme. The following hold:
\begin{enumerate}
\item\label{jb-1} $k_{i}k_{j}=\sum_{l=0}^{d}p_{i,j}^{l}k_{l}$;

\item\label{jb-2} $p_{i,j}^{l}k_{l}=p_{l,j^{*}}^{i}k_{i}=p_{i^{*},l}^{j}k_{j}$;

\item\label{jb-3}$\sum_{j=0}^{d}p_{i,j}^{l}=k_{i}$;

\item\label{jb-4} $\sum_{\alpha=0}^{d}p_{i,j}^{\alpha}p_{f,\alpha}^{l}=\sum_{\beta=0}^{d}p_{f,i}^{\beta}p_{\beta,j}^{l}$;

\item\label{jb-5} ${\rm lcm}(k_{i},k_{j})\mid p_{i,j}^{l}k_{l}$.
\end{enumerate}
\end{lemma}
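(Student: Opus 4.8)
The plan is to obtain all five identities from the defining relations of the Bose-Mesner algebra, namely $A_0=I$, $\sum_{i}A_i=J$, $A_i^{\rm T}=A_{i^*}$ and $A_iA_j=\sum_l p_{i,j}^lA_l$, supplemented by a single double-counting argument. The one preliminary fact I would record is that each $A_i$ has constant row sums equal to $k_i$: specializing the definition of the intersection numbers to a pair $(x,x)\in R_0$ gives $k_i=p_{i,i^*}^0=|R_i(x)\cap R_i(x)|=|R_i(x)|$, which is independent of $x$. Writing $\mathbf{1}$ for the all-ones vector, this says $A_i\mathbf{1}=k_i\mathbf{1}$.

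Granting this, parts (i), (iii) and (iv) reduce to comparing coefficients in the basis $A_0,\dots,A_d$, which is legitimate because these matrices are linearly independent. For (i) I would apply the product relation $A_iA_j=\sum_l p_{i,j}^lA_l$ to $\mathbf{1}$; the left-hand side gives $k_ik_j\mathbf{1}$ and the right-hand side $(\sum_l p_{i,j}^lk_l)\mathbf{1}$. For (iii) I would compute $A_iJ$ two ways: from $A_iJ=k_iJ=\sum_l k_iA_l$ and from $A_i\sum_jA_j=\sum_l(\sum_j p_{i,j}^l)A_l$, then equate the coefficients of each $A_l$. For (iv) I would expand the associativity identity $(A_fA_i)A_j=A_f(A_iA_j)$ in the basis $\{A_l\}$; the coefficient of $A_l$ is $\sum_\beta p_{f,i}^\beta p_{\beta,j}^l$ on the left and $\sum_\alpha p_{i,j}^\alpha p_{f,\alpha}^l$ on the right.

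The substantive step is (ii), which I would establish by counting the set of ordered triples $(x,z,y)$ with $(x,z)\in R_i$, $(z,y)\in R_j$ and $(x,y)\in R_l$ in three ways. Fixing $(x,y)\in R_l$ first, the admissible $z$ are exactly those in $R_i(x)\cap R_{j^*}(y)$, so there are $p_{i,j}^l$ of them; as $R_l$ has $nk_l$ pairs, the total is $nk_lp_{i,j}^l$. Fixing $(x,z)\in R_i$ first, the admissible $y$ lie in $R_l(x)\cap R_j(z)$, whose size is $p_{l,j^*}^i$ since $(x,z)\in R_i$, giving $nk_ip_{l,j^*}^i$. Fixing $(z,y)\in R_j$ first, the admissible $x$ lie in $R_{i^*}(z)\cap R_{l^*}(y)$, of size $p_{i^*,l}^j$, giving $nk_jp_{i^*,l}^j$. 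Dividing the three equal totals by $n$ yields (ii). Finally (v) is a corollary of (ii): the two equalities there show $k_i\mid p_{i,j}^lk_l$ and $k_j\mid p_{i,j}^lk_l$, whence ${\rm lcm}(k_i,k_j)\mid p_{i,j}^lk_l$. I expect the only real care to be needed in (ii), in correctly converting each relation such as $(z,y)\in R_j$ into the membership condition $y\in R_{(j^*)^*}(z)$ and thereby matching the star-indices that appear in the definition of $p_{i,j}^l$.
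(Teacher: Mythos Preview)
Your argument is correct; each of the five parts is standard and your derivations are sound. The paper itself does not prove this lemma but simply cites it from Bannai--Ito and Arad--Fisman--Muzychuk, so there is no in-paper proof to compare against; your Bose--Mesner-algebra manipulations for (i), (iii), (iv) together with the triple-counting for (ii) and the divisibility corollary (v) are exactly the textbook arguments those references contain.
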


We call an association scheme $(X,\{R_{i}\}_{i=0}^{d})$ a \emph{translation scheme}, if the underlying set $X$ has the structure of an additive group,  and for all relations $R_{i}$,
\[(x,y)\in R_{i}\Longleftrightarrow(x+z,y+z)\in R_{i}~\textrm{for all~$z\in X$}.\]

A classical example of translation schemes is the cyclotomic scheme which we describe now. Let $\alpha$ be a primitive element of $\textrm{GF}(q)$, where $q$ is a prime power. For a fixed divisor $d$ of $q-1$, define
\[(x,y)\in R_{i}~~\textrm{if}~~x-y\in\alpha^{i}\langle\alpha^{d}\rangle,~1\leq i\leq d.\]
These relations $R_{i}$ define a pseudocyclic scheme, denoted by $\textrm{Cyc}(q,d)$, called the $d$-class \emph{cyclotomic scheme} over ${\rm GF}(q)$.

\begin{lemma}[{\cite[Theorem 2.10.5]{AEB98}}]\label{prime}
Let $(X,\{R_{i}\}_{i=0}^{d})$ be a primitive translation scheme with $d\geq2$. If $X$ has some cyclic Sylow subgroup (and in particular if $X$ is cyclic), then
$|X|$ is prime and $(X,\{R_{i}\}_{i=0}^{d})$ is cyclotomic.
\end{lemma}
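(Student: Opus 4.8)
The plan is to pass to the language of Schur rings, then split the statement into two parts: first that $|X|$ must be prime, and then that a primitive translation scheme on a group of prime order is cyclotomic.

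First I would record the dictionary between translation schemes and Schur rings. Writing $X$ additively, a translation scheme on $X$ is the same datum as a partition $X=S_0\cup S_1\cup\cdots\cup S_d$ with $S_0=\{0\}$, closed under negation (so that $-S_i=S_{i^*}$), for which the group-ring elements $\underline{S_i}=\sum_{s\in S_i}s$ span a subalgebra of $\mathbb{C}[X]$ closed under multiplication; here $(x,y)\in R_i\iff y-x\in S_i$, and $p_{i,j}^{l}$ is the number of ways of writing a fixed $s\in S_l$ as $u+v$ with $u\in S_i$, $v\in S_j$. Call a subgroup $H\le X$ \emph{admissible} if it is a union of connection sets, i.e. $S_i\cap H\neq\emptyset\Rightarrow S_i\subseteq H$. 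The key preliminary step is the correspondence $F\mapsto\bigcup_{R_i\in F}S_i$, a bijection between closed subsets and admissible subgroups. The forward direction is immediate from $R_{i^*}R_j=\{R_l:S_l\cap(S_j-S_i)\neq\emptyset\}$: a closed $F$ contains $R_0$ and is $*$-closed, and forces its union $H$ to satisfy $H-H\subseteq H$, hence to be a subgroup; the reverse direction is the same computation read backwards, using $S_j-S_i\subseteq H-H=H$ and admissibility. Consequently $\mathfrak{X}$ is primitive exactly when the only admissible subgroups are $\{0\}$ and $X$.

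Second, I would prove that $|X|$ is prime, arguing by contradiction; this is the substantive step and rests on Schur--Wielandt multiplier theory. For a prime $q\nmid|X|$ the map $S_i\mapsto S_i^{(q)}:=\{qs:s\in S_i\}$ permutes the basic sets of the Schur ring; this is Schur's multiplier theorem, which follows from the Frobenius congruence $\underline{S_i}^{\,q}\equiv\underline{S_i^{(q)}}\pmod q$ in the integral group ring together with integrality of the structure constants. The invariance of the partition under all such multipliers, combined with the hypothesis that some Sylow subgroup of $X$ is cyclic, forces—this is the $B$-group property of groups with a cyclic Sylow subgroup—every $S_i$ to be contained in or disjoint from a proper nontrivial subgroup $H$ whenever $|X|$ is composite. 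Such an $H$ is admissible, so by the correspondence above $\mathfrak{X}$ would be imprimitive, contradicting the hypothesis. Equivalently, every primitive Schur ring over $X$ is then trivial (rank two, i.e. $d=1$); since $d\geq2$, we conclude $|X|$ is prime. Write $|X|=p$ and identify $X$ with $\mathbb{Z}_p$.

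Finally, with $|X|=p$ prime I would identify the scheme as cyclotomic. Now $\operatorname{Aut}(\mathbb{Z}_p)=\mathbb{Z}_p^{\ast}$ is cyclic of order $p-1$, and Schur's theorem for groups of prime order states that every Schur ring over $\mathbb{Z}_p$ is the orbit ring of a subgroup $M\le\mathbb{Z}_p^{\ast}$: its basic sets are $\{0\}$ together with the $M$-orbits on $\mathbb{Z}_p^{\ast}$, which are precisely the cosets of $M$. Since the $d$ nondiagonal connection sets are these cosets, they all have common size $|M|=(p-1)/d$; choosing a primitive root $\alpha$ with $M=\langle\alpha^{d}\rangle$, we obtain $S_i=\alpha^{i}\langle\alpha^{d}\rangle$ after relabelling, which is exactly the defining data of $\operatorname{Cyc}(p,d)$. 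The main obstacle is the middle step: the reduction to prime order is not formal and is the number-theoretic core of the Schur--Wielandt theory, whereas the Schur-ring dictionary, the closed-subset/admissible-subgroup correspondence, and the identification at prime order are routine once that core is in hand.
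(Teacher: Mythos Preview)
The paper does not prove this lemma at all: it is quoted verbatim as \cite[Theorem 2.10.5]{AEB98} and used as a black box, so there is no ``paper's own proof'' to compare against. Your sketch is therefore not a reproduction but an independent attempt, and as such it follows the standard Schur--Wielandt route that underlies the cited theorem.

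Your outline is broadly correct, and the dictionary you set up (translation schemes $\leftrightarrow$ Schur rings, closed subsets $\leftrightarrow$ $\mathcal{A}$-subgroups, primitivity $\leftrightarrow$ no nontrivial $\mathcal{A}$-subgroup) is exactly the right framework; the paper itself later records this correspondence as Theorem~\ref{s-ring and a.s}. The final step, that every Schur ring over $\mathbb{Z}_p$ is an orbit ring of a subgroup of $\mathbb{Z}_p^\ast$ and hence cyclotomic, is Schur's original theorem and your description of it is accurate.

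The one place where the sketch is thin is the heart of the argument: you invoke ``the $B$-group property of groups with a cyclic Sylow subgroup'' as though it were a single stroke, but this is Wielandt's theorem and its proof is not short. The multiplier theorem alone does not immediately produce an admissible proper subgroup; one needs the further analysis (trace arguments on the rational idempotents, or equivalently Burnside's and Wielandt's refinements) showing that if $|X|$ is composite with a cyclic Sylow $p$-subgroup, then the unique subgroup of index $p$ is an $\mathcal{A}$-set. You acknowledge this step as ``the number-theoretic core'' and not formal, which is honest, but a reader could not reconstruct it from what you have written. If this were to stand as a proof rather than a citation, that step would need either a reference (e.g.\ Wielandt, \emph{Finite Permutation Groups}, \S25) or a page of actual argument.
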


\begin{lemma}\label{lem:1}
Let $({\rm GF}(q),\{R_{i}\}_{i=0}^{d})$ be the $d$-class cyclotomic scheme. For $1\leq i\leq d$ and $s\in{\rm GF}(q)\setminus\{0\}$, define
\begin{align}
R_{i}^{(s)}=\{(sx,sy)\mid(x,y)\in R_{i}\}.\nonumber
\end{align}
Then, for any fixed $i\in\{1,2,\ldots,d\}$,
\begin{align}
\{R_{i}^{(s)}\mid s\in{\rm GF}(q)\setminus\{0\}\}=\{R_{j}\mid1\leq j\leq d\}.\nonumber
\end{align}
\end{lemma}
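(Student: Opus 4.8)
The plan is to exploit the transitivity of the cyclic group $\langle\alpha\rangle = {\rm GF}(q)^* $ acting on the nonzero elements of ${\rm GF}(q)$, combined with the defining property of the cyclotomic scheme that the relations are indexed by the cosets of the subgroup $H := \langle\alpha^d\rangle$. First I would observe that the multiplicative shift $(x,y)\mapsto(sx,sy)$ is an automorphism of ${\rm GF}(q)$ as an additive group, and that it sends the relation $R_i$, whose defining difference set is the coset $\alpha^i H$, to the relation whose defining difference set is $s\cdot\alpha^i H$. Concretely: $(sx,sy)\in R_i^{(s)}$ iff $(x,y)\in R_i$ iff $x - y \in \alpha^i H$ iff $sx - sy \in s\alpha^i H$. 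So $R_i^{(s)}$ is exactly the relation $R_j$ for which $\alpha^j H = s\alpha^i H$; such a $j$ exists because $s$ is a nonzero element of ${\rm GF}(q)$, hence lies in some coset $\alpha^k H$, and then $s\alpha^i H = \alpha^{i+k}H = \alpha^{j}H$ where $j\equiv i+k\pmod d$ (interpreting the index in $\{1,\dots,d\}$).

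Next I would check that $R_i^{(s)}$ really is one of the relations of the scheme in the strict sense, i.e. that $s=0$ causes no problem and that the diagonal is preserved: since $s\neq 0$, the map $x\mapsto sx$ is a bijection of ${\rm GF}(q)$ fixing $0$, so $R_0^{(s)}=R_0$ and the partition structure is preserved; in particular $R_i^{(s)}$ is nonempty and equals a genuine $R_j$ with $1\le j\le d$. This already gives the inclusion $\{R_i^{(s)}\mid s\in{\rm GF}(q)\setminus\{0\}\}\subseteq\{R_j\mid 1\le j\le d\}$ for fixed $i$.

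For the reverse inclusion, fix $i$ and let $j\in\{1,\dots,d\}$ be arbitrary. I need $s\in{\rm GF}(q)\setminus\{0\}$ with $R_i^{(s)}=R_j$, equivalently $s\alpha^iH=\alpha^jH$, equivalently $s\in\alpha^{j-i}H$. Any such $s$ (for instance $s=\alpha^{j-i}$) is nonzero and does the job. Hence every $R_j$ with $1\le j\le d$ appears, and the two sets are equal.

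The argument is essentially a bookkeeping exercise about cosets of $H=\langle\alpha^d\rangle$ in the cyclic group ${\rm GF}(q)^*$, so there is no real obstacle; the only point requiring a little care is matching the index convention (the problem uses $i$ ranging over $\{1,2,\dots,d\}$ rather than $\{0,1,\dots,d-1\}$, and $\alpha^d\langle\alpha^d\rangle=\langle\alpha^d\rangle$, so the "index $0$" and "index $d$" coset coincide), but this is harmless and I would simply remark that indices are read modulo $d$ within the range $\{1,\dots,d\}$. I would present the proof as the short chain of equivalences above, stating both inclusions explicitly.
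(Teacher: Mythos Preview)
Your proof is correct and follows essentially the same approach as the paper: both reduce the claim to the observation that $\{s\alpha^{i}\langle\alpha^{d}\rangle\mid s\in{\rm GF}(q)\setminus\{0\}\}=\{\alpha^{j}\langle\alpha^{d}\rangle\mid1\leq j\leq d\}$, i.e., that multiplication by nonzero $s$ permutes the cosets of $\langle\alpha^d\rangle$ transitively. The paper states this single equality and declares the result immediate, whereas you spell out the computation $R_i^{(s)}=R_j\iff s\alpha^iH=\alpha^jH$ and verify both inclusions explicitly; this extra detail is fine but not a different method.
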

\begin{proof}
Let $\alpha$ be a primitive element of ${\rm GF}(q)$. Then
\begin{align}
\{s\alpha^{i}\langle\alpha^{d}\rangle\mid s\in{\rm GF}(q)\setminus\{0\}\}=\{\alpha^{j}\langle\alpha^{d}\rangle\mid1\leq j\leq d\},\nonumber
\end{align}
and the result follows immediately from this.
\end{proof}

As a consequence of Lemma~\ref{lem:1}, as digraphs, we have $(\textrm{GF}(q),R_i)\cong (\textrm{GF}(q),R_j)$
for $1\leq i,j\leq d$. For example when $(q,d)=(13,4)$ we have $(\textrm{GF}(13),R_i)\cong \textrm{Cay}(\mathbb{Z}_{13},\{1,3,9\})$ for $1\leq i\leq 4$.

\begin{lemma}\label{skew}
Let $\mathfrak{X}=(X,\{R_{0},R_{1},R_{1^{*}},R_{2},R_{2^{*}}\})$ be a $4$-class skew-symmetric and pseudocyclic association scheme. If $|X|>5$ and $p_{1,1}^{2^{*}}=0$, then $\mathfrak{X}$ is isomorphic to ${\rm Cyc}(13,4)$.
\end{lemma}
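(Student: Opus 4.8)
The plan is to determine the intersection array of $\mathfrak{X}$ completely from the hypotheses, show it forces $|X|=13$, and then invoke Lemma~\ref{prime} (or a direct identification) to conclude that $\mathfrak{X}\cong\mathrm{Cyc}(13,4)$. Since $\mathfrak{X}$ is skew-symmetric, $1^*\ne1$ and $2^*\ne2$, and $\{R_0,R_1,R_{1^*},R_2,R_{2^*}\}$ are five distinct relations, so $2^*\notin\{1,1^*\}$ as well. Write $k_1=k_{1^*}=:k$ and $k_2=k_{2^*}=:m$; pseudocyclicity plus skew-symmetry on $5$ classes gives $n:=|X|=1+2k+2m$ and, more importantly, the common multiplicity condition. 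First I would exploit the standard fact that a skew-symmetric pseudocyclic scheme with $d$ classes has all nontrivial multiplicities equal to $n/d$ coming in complex-conjugate pairs; here $d=4$ is even, and (using $k_i=$ multiplicity for pseudocyclic schemes in the skew-symmetric/conference-like regime, cf. the defining property of $\mathrm{Cyc}(13,4)$) one gets $k=m$, hence $n=1+4k$.

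Next I would pin down the remaining intersection numbers using Lemma~\ref{jb}. From $p_{1,1}^{2^*}=0$ and the conjugation symmetry $p_{i,j}^l k_l=p_{l,j^*}^i k_i$ (Lemma~\ref{jb}\eqref{jb-2}) I would deduce a chain of vanishing intersection numbers, e.g. $p_{1,1}^{2^*}=0\Rightarrow p_{2^*,1^*}^1=0\Rightarrow p_{1^*,1^*}^{2}=0$, and dually $p_{1,1^*}^{\,\cdot}$ constraints. Then the row-sum identities $\sum_j p_{1,j}^l=k_1$ (Lemma~\ref{jb}\eqref{jb-3}) together with $k_1k_1=\sum_l p_{1,1}^l k_l$ (Lemma~\ref{jb}\eqref{jb-1}) become a small linear system in the unknowns $p_{1,1}^{1},p_{1,1}^{1^*},p_{1,1}^{2}$ and $p_{1,1^*}^{0},p_{1,1^*}^{1},p_{1,1^*}^{1^*},p_{1,1^*}^{2},p_{1,1^*}^{2^*}$, with $p_{1,1^*}^0=k$. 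Imposing these identities and the quadruple product associativity relation Lemma~\ref{jb}\eqref{jb-4} for a suitable triple (the Frame/quadratic relation for pseudocyclic schemes) should force $k=3$, hence $n=13$. This is the step I expect to be the main obstacle: squeezing out the value $k=3$ requires combining the multiplicity equation (which is a genuinely nonlinear/algebraic constraint, essentially the statement that the eigenvalues of $A_1$ are Gauss-sum-like and the multiplicities are all $n/4$) with the combinatorial constraints, and one has to rule out the a priori possibility of larger $k$; the hypothesis $|X|>5$ is used precisely to discard the degenerate small case $k=1$ ($n=5$, which is $\mathrm{Cyc}(5,4)$).

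Finally, once $n=13$, I would argue as follows: $\mathfrak{X}$ is a commutative association scheme on $13$ points; by Lemma~\ref{prime} it suffices to realize it as a translation scheme on $\mathbb{Z}_{13}$, or alternatively one notes directly that a skew-symmetric pseudocyclic $4$-class scheme on $13$ points with the computed intersection array is unique up to isomorphism and is realized by $\mathrm{Cyc}(13,4)$ (whose relations are the four cosets of $\langle\alpha^4\rangle=\{1,3,9\}$ in $\mathbb{Z}_{13}^\times$, as noted after Lemma~\ref{lem:1}). Uniqueness here can be obtained either by citing the classification of small association schemes, or by showing the scheme is generated by $R_1$ (which is forced since the only closed subsets of a prime-order scheme are trivial, as $13$ is prime) and matching the multiplication table. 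The cleanest write-up is: compute the intersection array $\Rightarrow n=13$ $\Rightarrow$ prime order forces primitivity $\Rightarrow$ Lemma~\ref{prime} (applied after observing $\mathfrak{X}$ is, or may be taken to be, a translation scheme on the cyclic group $\mathbb{Z}_{13}$) gives that $\mathfrak{X}$ is cyclotomic, and the only $4$-class cyclotomic scheme on $13$ points is $\mathrm{Cyc}(13,4)$.
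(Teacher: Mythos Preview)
Your plan is in the right spirit but leaves the decisive step unaccomplished, and the paper's proof takes a markedly different and shorter route.

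The core of your argument is ``compute the intersection array and squeeze out $k=3$''. You correctly note that pseudocyclicity forces all nontrivial valencies to coincide, so $k_1=k_2=:k$ and $n=4k+1$. But the step actually forcing $k=3$ you describe only as a plan, acknowledge as ``the main obstacle'', and do not carry out. The identities in Lemma~\ref{jb} by themselves hold in every association scheme; to pin down $k$ you must bring in the pseudocyclic (multiplicity) condition, a genuinely nonlinear constraint on eigenvalues. You gesture at ``the Frame/quadratic relation'' without specifying which instance of Lemma~\ref{jb}\ref{jb-4}, with which indices, yields an equation in $k$ alone. As written this is a sketch of where a proof might live, not a proof.

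Your endgame also has a flaw. Your ``cleanest write-up'' invokes Lemma~\ref{prime} after ``observing $\mathfrak{X}$ is, or may be taken to be, a translation scheme on $\mathbb{Z}_{13}$''. Nothing in the hypotheses makes $\mathfrak{X}$ a translation scheme, and Lemma~\ref{prime} applies only to translation schemes; the parenthetical cannot be waved away. Your alternative --- citing the classification of small schemes --- is the correct move, and is what the paper does via \cite{MH96}.

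The paper bypasses the first-principles computation entirely. It observes that the symmetrization of $\mathfrak{X}$ is a $2$-class pseudocyclic scheme, so $\mathfrak{X}$ is a skew-symmetric fission of a conference graph in the sense of \cite{JM11}. From the proof of Theorem~3.3 there one has integers $u,v$ with $|X|=u^{2}+4v^{2}$ and $p_{1,1}^{2^{*}}=(|X|+1+2u\pm8v)/16$. Setting this to zero and eliminating $v$ gives a quadratic in $u$ whose nonnegative-discriminant condition bounds $|X|\le 9+4\sqrt{5}$; combined with $|X|\equiv 5\pmod 8$ (also from \cite{JM11}) and $|X|>5$, this forces $|X|=13$. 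Then \cite{MH96} identifies $\mathfrak{X}$ as $\mathrm{Cyc}(13,4)$. So the paper's argument is a short Diophantine computation on top of cited structure theory, rather than an ab initio determination of the intersection numbers.
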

\begin{proof}
The symmetrization $(X,\{R_{0},R_{1}\cup R_{1^{*}},R_{2}\cup R_{2^{*}}\})$ of $\mathfrak{X}$ is a pseudocyclic association scheme with 2 classes. Thus, $\mathfrak{X}$ is a skew-symmetric fission of a conference graph (see \cite{JM11} for definitions). By the proof of \cite[Theorem 3.3]{JM11}, there exist integers $u$ and $v$ such that
\begin{align}
|X|&=u^{2}+4v^{2},\label{skew1}\\
p_{1,1}^{2^{*}}&=\frac{|X|+1+2u\pm8v}{16}\nonumber.
\end{align}
Since $p_{1,1}^{2^{*}}=0$, we have $v^{2}=(|X|+1+2u)^{2}/64$. Substituting this into \eqref{skew1}, we obtain
\[20u^{2}+4(|X|+1)u+|X|^{2}-14|X|+1=0.\]
Since $u$ is an integer, the discriminant of the quadratic is not less than zero. Thus
\[(|X|+1)^{2}-5(|X|^{2}-14|X|+1)\geq0.\]
It follows that $9-4\sqrt{5}\leq |X|\leq9+4\sqrt{5}$. By \cite[Theorem 3.3]{JM11}, one gets $|X|\equiv 5~(\textrm{mod}~8)$, which implies $|X|=13$. In view of \cite[Result 1]{MH96}, $\mathfrak{X}$ is isomorphic to $\textrm{Cyc}(13,4)$.
\end{proof}

We close this section with three results on weakly distance-regular digraphs.

\begin{lemma}\label{commutativity}
Let $\Gamma$ be a commutative weakly distance-regular digraph. Suppose that $(x_0,x_1,\ldots,x_m)$ is a path of length $m>0$. For each $i\in\{0,1,\ldots,m-1\}$, there exists a path $(x_0=y_0,y_1,\ldots,y_m=x_m)$ such that $\tilde{\partial}(x_0,y_1)=\tilde{\partial}(x_i,x_{i+1})$.
\end{lemma}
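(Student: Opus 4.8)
The plan is to isolate a single elementary ``arc-swap'' for paths of length two --- the only place where commutativity is actually used --- and then to move the relevant arc to the front of the path by iterating this swap.

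First I would establish the base case: if $(a,b,c)$ is a path of length $2$ in $\Gamma$, then there is a vertex $z$ such that $(a,z,c)$ is a path and $\tilde{\partial}(a,z)=\tilde{\partial}(b,c)$. To see this, put $\tilde{a}=\tilde{\partial}(a,b)$, $\tilde{b}=\tilde{\partial}(b,c)$ and $\tilde{l}=\tilde{\partial}(a,c)$, and note that $\tilde{a}$ and $\tilde{b}$ both have first coordinate $1$ since $(a,b)$ and $(b,c)$ are arcs. Because $b\in\Gamma_{\tilde{a}}(a)\cap\Gamma_{\tilde{b}^{*}}(c)=P_{\tilde{a},\tilde{b}}(a,c)$ while $(a,c)\in\Gamma_{\tilde{l}}$, we get $p_{\tilde{a},\tilde{b}}^{\tilde{l}}\geq1$. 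Commutativity gives $p_{\tilde{b},\tilde{a}}^{\tilde{l}}=p_{\tilde{a},\tilde{b}}^{\tilde{l}}\geq1$, so $P_{\tilde{b},\tilde{a}}(a,c)=\Gamma_{\tilde{b}}(a)\cap\Gamma_{\tilde{a}^{*}}(c)$ is nonempty; any $z$ in it satisfies $(a,z)\in\Gamma_{\tilde{b}}$ and $(z,c)\in\Gamma_{\tilde{a}}$. As $\tilde{a}$ and $\tilde{b}$ have first coordinate $1$, both $(a,z)$ and $(z,c)$ are arcs, so $(a,z,c)$ is a path with $\tilde{\partial}(a,z)=\tilde{b}=\tilde{\partial}(b,c)$.

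Next I would fix $i$ and bubble the arc $(x_{i},x_{i+1})$, of two-way distance $\tilde{t}:=\tilde{\partial}(x_{i},x_{i+1})$, toward the front. The case $i=0$ is trivial (take $y_{j}=x_{j}$), so assume $i\geq1$. Applying the base case to the subpath $(x_{i-1},x_{i},x_{i+1})$ replaces $x_{i}$ by a vertex $z$ whose incoming arc, now in position $i-1$ of the new path $(x_{0},\dots,x_{i-1},z,x_{i+1},x_{i+2},\dots,x_{m})$, has two-way distance $\tilde{t}$. Since $\tilde{t}$ has first coordinate $1$, this arc together with the one preceding it again forms a length-$2$ subpath, so the base case applies once more and moves the marked arc to position $i-2$; iterating $i$ times (a downward induction on the position of the marked arc) produces the required path $(x_{0}=y_{0},y_{1},\dots,y_{m}=x_{m})$ with $\tilde{\partial}(y_{0},y_{1})=\tilde{t}$.

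The whole argument is essentially bookkeeping once the base case is available; the only real content is the single use of commutativity $p_{\tilde{a},\tilde{b}}^{\tilde{l}}=p_{\tilde{b},\tilde{a}}^{\tilde{l}}$. The point to watch is that every arc we touch descends from an arc of the original path, hence its two-way distance has first coordinate $1$; this is what guarantees that the elements we pull out of the sets $P_{\cdot,\cdot}(\cdot,\cdot)$ really are arcs and that the walks we assemble really are paths.
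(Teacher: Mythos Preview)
Your proof is correct and is precisely the argument the paper has in mind: the paper's own proof consists of the single sentence ``The desired result follows from the commutativity of $\Gamma$,'' and your write-up simply unpacks that sentence via the obvious length-two swap $p_{\tilde{a},\tilde{b}}^{\tilde{l}}=p_{\tilde{b},\tilde{a}}^{\tilde{l}}$ iterated down the path. Your care in noting that each arc involved has first coordinate $1$ (so the intermediate walks are genuine paths) is the only point worth making explicit, and you handle it.
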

\begin{proof} The desired result follows from the commutativity of $\Gamma$.
\end{proof}

\begin{lemma}\label{lem}
Let $\Gamma$ be a weakly distance-regular digraph. If $p_{(1,q-1),(1,q-1)}^{(2,q-2)}>0$ with $q\geq3$, then each arc of type $(1,q-1)$ is contained in a circuit of length $q$ consisting of arcs of type $(1,q-1)$.
\end{lemma}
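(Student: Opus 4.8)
The plan is to distil from the hypothesis a one‑step extension rule for arcs of type $(1,q-1)$ and then glue such arcs together into a circuit of length $q$. First I would feed $p_{(1,q-1),(1,q-1)}^{(2,q-2)}>0$ into Lemma~\ref{jb}(ii): taking $i=j=(1,q-1)$ and $l=(2,q-2)$ gives
\[
p_{(2,q-2),(q-1,1)}^{(1,q-1)}=\frac{k_{(2,q-2)}}{k_{(1,q-1)}}\,p_{(1,q-1),(1,q-1)}^{(2,q-2)}>0,
\]
and the mirror choice yields $p_{(q-1,1),(2,q-2)}^{(1,q-1)}>0$. Unwinding the definition of the intersection numbers, the first inequality says that every arc $(x,y)$ of type $(1,q-1)$ admits a vertex $z$ with $(y,z)\in\Gamma_{1,q-1}$ and $\tilde\partial(x,z)=(2,q-2)$ (it ``extends on the right''), and the second that it admits a vertex $w$ with $(w,x)\in\Gamma_{1,q-1}$ and $\tilde\partial(w,y)=(2,q-2)$ (it ``extends on the left'').

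Now fix an arbitrary arc $(x_0,x_1)$ of type $(1,q-1)$, so $\partial(x_0,x_1)=1$ and $\partial(x_1,x_0)=q-1$. Iterating the right extension I would build vertices $x_2,\dots,x_{q-1}$ with $(x_i,x_{i+1})\in\Gamma_{1,q-1}$ for $0\le i\le q-2$, making the choices so that in addition $\tilde\partial(x_0,x_i)=(i,q-i)$ for all $1\le i\le q-1$. Granting this, the case $i=q-1$ reads $\partial(x_{q-1},x_0)=1$ and $\partial(x_0,x_{q-1})=q-1$, so $(x_{q-1},x_0)$ is itself an arc of type $(1,q-1)$; and $x_1\to x_2\to\cdots\to x_{q-1}\to x_0$ is then a walk of length $q-1=\partial(x_1,x_0)$, hence a shortest path and in particular simple. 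Adjoining the arc $(x_0,x_1)$ produces a circuit of length $q$ all of whose arcs have type $(1,q-1)$ and which contains $(x_0,x_1)$, as required.

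The heart of the matter is propagating the relation $\tilde\partial(x_0,x_i)=(i,q-i)$ through the construction. Given $x_i$ with $\tilde\partial(x_0,x_i)=(i,q-i)$, the number of admissible $x_{i+1}$ (those with $(x_i,x_{i+1})\in\Gamma_{1,q-1}$ and $\tilde\partial(x_0,x_{i+1})=(i+1,q-i-1)$) equals $p_{(i+1,q-i-1),(q-1,1)}^{(i,q-i)}$, which by Lemma~\ref{jb}(ii) is nonzero exactly when
\[
p_{(i,q-i),(1,q-1)}^{(i+1,q-i-1)}>0\qquad(1\le i\le q-2);
\]
the case $i=1$ is precisely the hypothesis. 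So the obstacle is to push this positivity from $i$ to $i+1$, i.e.\ to show that a geodesic joining two vertices at two‑way distance $(i+1,q-i-1)$ can be chosen with final arc of type $(1,q-1)$. I would attempt this by a secondary induction along the walk already built, feeding the right and left extensions of the first paragraph into one another and controlling the one‑way distances by the triangle inequality (and, if it is needed, the quadrangle identity of Lemma~\ref{jb}(iv)). This distance bookkeeping — rather than any single clever step — is where I expect the real work, and any hidden subtlety, to lie.
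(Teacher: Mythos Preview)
Your reduction to the positivity of $p_{(i,q-i),(1,q-1)}^{(i+1,q-i-1)}$ for all $1\le i\le q-2$ is correct, but you have not proved it: the ``secondary induction'' is only gestured at, and I do not see how the left/right extension facts you isolated (which concern pairs at two-way distance $(2,q-2)$) can by themselves force information about pairs at distance $(i,q-i)$ for general $i$. Your approach keeps $x_0$ as a fixed basepoint and tries to control $\tilde\partial(x_0,x_i)$ as $i$ grows, which requires exactly the global chain of intersection numbers you flag as the obstacle.

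The paper avoids this obstacle altogether by a different device: after producing $x_2$ with $(x_0,x_2)\in\Gamma_{2,q-2}$ and $(x_1,x_2)\in\Gamma_{1,q-1}$, it immediately closes up to a circuit of length $q$ by appending an \emph{arbitrary} shortest path $(x_2,x_3,\dots,x_{q-1},x_0)$ of length $q-2$. Once you are on a circuit of length $q$ in which $(x_{i-1},x_i)\in\Gamma_{1,q-1}$, simple distance bookkeeping inside the circuit shows $(x_{i-1},x_{i+1})\in\Gamma_{2,q-2}$; the original hypothesis then lets you replace $x_i$ by some $x_i'\in P_{(1,q-1),(1,q-1)}(x_{i-1},x_{i+1})$. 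Iterating this local replacement sweeps through the circuit and converts every arc to type $(1,q-1)$. The point is that closing to a circuit first pins all the relevant two-way distances to $(2,q-2)$, so only the single hypothesis $p_{(1,q-1),(1,q-1)}^{(2,q-2)}>0$ is ever invoked, and the higher intersection numbers $p_{(i,q-i),(1,q-1)}^{(i+1,q-i-1)}$ never enter.
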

\begin{proof} Let $(x_{0},x_{1})\in\Gamma_{1,q-1}$. Since $p_{(2,q-2),(q-1,1)}^{(1,q-1)}k_{1,q-1}=p_{(1,q-1),(1,q-1)}^{(2,q-2)}k_{2,q-2}>0$ from Lemma \ref{jb} \ref{jb-2}, there exists a vertex $x_{2}\in P_{(2,q-2),(q-1,1)}(x_{0},x_1)$. Pick a path $(x_2,x_3,\ldots,x_{q-1},x_0)$. The fact that $(x_{1},x_{3})\in\Gamma_{2,q-2}$ implies that there exists a vertex $x_{2}'\in P_{(1,q-1),(1,q-1)}(x_{1},x_{3})$. Then $(x_{0},x_{1},x_{2}',x_3,\ldots,x_{q-1})$ is a circuit containing at least three arcs of type $(1,q-1)$. Repeating this process, there exists a circuit $(x_{0},x_{1},x_{2}',x_3',\ldots,x_{q-1}')$ consisting of arcs of type $(1,q-1)$. Since $(x_{0},x_1)\in\Gamma_{1,q-1}$ was arbitrary, the desired result follows.
\end{proof}

\begin{lemma}\label{lem:2}
Let $\Gamma$ be a weakly distance-regular digraph, and $(x_{0},x_{1},\ldots,x_{m-1})$ be a shortest circuit consisting of arcs of type $(1,q-1)$ with $q\geq3$. Assume $k_{1,q-1}\geq k_{a,b}$, where $(a,b)=\tilde{\partial}(x_0,x_2)$. Denote $Y_{i}=P_{(1,q-1),(1,q-1)}(x_{i-1},x_{i+1})$, where the indices are read modulo $m$. If $p_{(1,q-1),(1,q-1)}^{(2,q-2)}>0$ or $|\Gamma_{1,q-1}^{2}|=1$, then $(x_{i-1},x_{i+1})\in\Gamma_{a,b}$, and
\[P_{(a,b),(q-1,1)}(x_{i-2},x_{i-1})=P_{(q-1,1),(a,b)}(x_{i+1},x_{i+2})=Y_{i}\]
for all $i$. Moreover, if $q>3$ and $\Gamma$ is a Cayley digraph of an additive group, then $Y_{i}-x_{i-1}=Y_{i+1}-x_{i}$ for all $i$.
\end{lemma}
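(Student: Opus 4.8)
The plan is to exploit the minimality of the circuit $(x_0,x_1,\dots,x_{m-1})$ together with the hypotheses $k_{1,q-1}\geq k_{a,b}$ and either $p_{(1,q-1),(1,q-1)}^{(2,q-2)}>0$ or $|\Gamma_{1,q-1}^2|=1$. First I would pin down $\Gamma_{1,q-1}^2$. By definition $\tilde\partial(x_0,x_2)=(a,b)$, so $\Gamma_{a,b}\in\Gamma_{1,q-1}^2$; since each $(x_{i-1},x_{i+1})$ is a product of two arcs of type $(1,q-1)$ along the circuit, if $|\Gamma_{1,q-1}^2|=1$ we immediately get $(x_{i-1},x_{i+1})\in\Gamma_{a,b}$ for all $i$. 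In the other case, $p_{(1,q-1),(1,q-1)}^{(2,q-2)}>0$, Lemma~\ref{lem} gives a circuit of length $q$ through each arc of type $(1,q-1)$ made of such arcs; combined with the fact that $(x_0,\dots,x_{m-1})$ is a \emph{shortest} such circuit, one forces $m=q$, and then a counting/minimality argument (any other relation appearing in $\Gamma_{1,q-1}^2$ would produce a shorter circuit, or contradict $q\geq3$ via the constraint $2\notin\{1^*,2^*\}$-type reasoning on two-way distances) shows $\Gamma_{1,q-1}^2=\{\Gamma_{a,b}\}$ here as well. Either way we conclude $(x_{i-1},x_{i+1})\in\Gamma_{a,b}$ for all $i$.

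Next I would prove the two set equalities. The inclusions $Y_i\subseteq P_{(a,b),(q-1,1)}(x_{i-2},x_{i-1})$ and $Y_i\subseteq P_{(q-1,1),(a,b)}(x_{i+1},x_{i+2})$ are straightforward from the fact that any $y\in Y_i$ satisfies $\tilde\partial(x_{i-1},y)=\tilde\partial(y,x_{i+1})=(1,q-1)$, hence $\tilde\partial(x_{i-2},y)=(a,b)$ and $\tilde\partial(x_{i-1},x_{i-2})=(q-1,1)$ — using that $(x_{i-2},x_{i-1})$ is again an arc of type $(1,q-1)$ lying on the circuit and that $x_{i-2},x_{i-1},y,x_{i+1}$ realize the configuration defining $(a,b)$. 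For the reverse inclusions I would use the valency identity of Lemma~\ref{jb}\ref{jb-2}: $p_{(a,b),(q-1,1)}^{(1,q-1)}k_{1,q-1}=p_{(1,q-1),(1,q-1)}^{(a,b)}k_{a,b}$ and similarly on the other side. Since $Y_i=P_{(1,q-1),(1,q-1)}(x_{i-1},x_{i+1})$ has size $p_{(1,q-1),(1,q-1)}^{(a,b)}$ and the ambient sets $P_{(a,b),(q-1,1)}(x_{i-2},x_{i-1})$ have size $p_{(a,b),(q-1,1)}^{(1,q-1)}$, the hypothesis $k_{1,q-1}\geq k_{a,b}$ forces $p_{(a,b),(q-1,1)}^{(1,q-1)}\leq p_{(1,q-1),(1,q-1)}^{(a,b)}=|Y_i|$, so the inclusion $Y_i\subseteq P_{(a,b),(q-1,1)}(x_{i-2},x_{i-1})$ is actually an equality. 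The same argument handles $P_{(q-1,1),(a,b)}(x_{i+1},x_{i+2})=Y_i$.

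Finally, for the translation statement, assume $q>3$ and $\Gamma={\rm Cay}(G,S)$ with $G$ additive. Here $q>3$ guarantees the relevant arcs and two-way distances are preserved under translation and, crucially, that the "shift by one step along the circuit" map is well-defined on these point sets — the case $q=3$ is excluded precisely because then the indices $i-2,i-1,i,i+1$ wrap around too tightly. Using the just-proved equality $Y_i=P_{(a,b),(q-1,1)}(x_{i-2},x_{i-1})$ and the translation-invariance of all relations $\Gamma_{\tilde\jmath}$, translating by $x_i-x_{i-1}$ (an element sending the arc $(x_{i-1},x_i)$ to $(x_i,x_{i+1})$, both of type $(1,q-1)$) carries the configuration $(x_{i-2},x_{i-1})$ with its fibre $Y_i$ to $(x_{i-1},x_i)$ with fibre $Y_{i+1}$; comparing, $Y_i+(x_i-x_{i-1})=Y_{i+1}$, i.e. $Y_i-x_{i-1}=Y_{i+1}-x_i$ for all $i$.

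The main obstacle I anticipate is the first step under the hypothesis $p_{(1,q-1),(1,q-1)}^{(2,q-2)}>0$: ruling out that $\Gamma_{1,q-1}^2$ contains a relation other than $\Gamma_{a,b}$, which requires combining Lemma~\ref{lem} with a careful minimal-circuit argument to force $m=q$ and to exclude stray two-way distances; the set-equality step, by contrast, is a clean application of Lemma~\ref{jb}\ref{jb-2} once the $k_{1,q-1}\geq k_{a,b}$ inequality is brought to bear.
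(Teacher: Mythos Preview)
Two genuine gaps. First, in both your opening step and your inclusion $Y_i\subseteq P_{(q-1,1),(a,b)}(x_{i+1},x_{i+2})$ you implicitly rely on $|\Gamma_{1,q-1}^2|=1$ in the case $p_{(1,q-1),(1,q-1)}^{(2,q-2)}>0$, but your ``shorter circuit'' argument for this is not valid, and in fact $\Gamma_{1,q-1}^2$ need not be a singleton there. What actually happens is that once $m=q$, distances along the $q$-circuit are forced, so $(x_{i-1},x_{i+1})\in\Gamma_{2,q-2}=\Gamma_{a,b}$ directly. For the inclusion, the point you are missing is that for $y\in Y_i$ the sequence $(x_0,\ldots,x_{i-1},y,x_{i+1},\ldots,x_{m-1})$ is \emph{again} a shortest circuit of arcs of type $(1,q-1)$, so the already-established first part applies to it (with the same $(a,b)=\tilde\partial(x_0,x_2)$) and yields $(y,x_{i+2})\in\Gamma_{a,b}$; merely noting that $(y,x_{i+2})$ is the endpoint of a $2$-path of type-$(1,q-1)$ arcs does not pin down the relation.

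Second, and more seriously, your translation argument for the Cayley case is wrong: translating by $x_i-x_{i-1}$ sends the arc $(x_{i-1},x_i)$ to $(x_i,\,2x_i-x_{i-1})$ and the pair $(x_{i-2},x_{i-1})$ to $(x_{i-2}+x_i-x_{i-1},\,x_i)$, neither of which is what you claim unless the circuit is an arithmetic progression. The paper argues pointwise instead. For $y\in Y_2$, the substituted circuit $(x_1,y,x_3,\ldots,x_{m-1},x_0)$ has $m$ distinct vertices because $m\geq q>3$ guarantees $y\neq x_{m-1}$ (this is the real role of the hypothesis $q>3$); applying the first statement to this circuit gives $P_{(1,q-1),(1,q-1)}(x_0,y)=P_{(a,b),(q-1,1)}(x_{m-1},x_0)=Y_1$. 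Since $(x_1,y)$ and $(x_0,x_1)$ are arcs of type $(1,q-1)$, the Cayley structure gives $x_0+y-x_1\in\Gamma_{1,q-1}(x_0)\cap\Gamma_{q-1,1}(y)=Y_1$, whence $y-x_1\in Y_1-x_0$; equality $Y_2-x_1=Y_1-x_0$ then follows by comparing cardinalities.
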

\begin{proof}Note that $m\geq q$. If $m=q$, then $\partial(x_i,x_{i+j})=j$ for $1\leq j<q$, and so $(a,b)=(2,q-2)$. If $m>q$, from Lemma \ref{lem}, then $p_{(1,q-1),(1,q-1)}^{(2,q-2)}=0$, and so $\Gamma_{1,q-1}^{2}=\{\Gamma_{a,b}\}$. Thus, $(x_{i-1},x_{i+1})\in\Gamma_{a,b}$ for all $i$.

To complete the proof of this lemma, we only need to consider the case $i=1$ only. For the first statement, since the proof of $Y_1=P_{(a,b),(q-1,1)}(x_{m-1},x_{0})$ is similar, It suffices to show $Y_{1}=P_{(q-1,1),(a,b)}(x_{2},x_{3})$ only. If $y\in Y_{1}$, then
$(x_{0},y,x_{2},\ldots,x_{m-1})$ is also a shortest circuit consisting of arcs of type $(1,q-1)$. Similarly, $(y,x_3)\in\Gamma_{a,b}$, and so $y\in P_{(q-1,1),(a,b)}(x_{2},x_{3})$. Hence, $Y_{1}\subseteq P_{(q-1,1),(a,b)}(x_{2},x_{3})$, and so $p_{(1,q-1),(1,q-1)}^{(a,b)}\leq p_{(q-1,1),(a,b)}^{(1,q-1)}$. By Lemma \ref{jb} \ref{jb-2}, one gets $p_{(q-1,1),(a,b)}^{(1,q-1)}k_{1,q-1}=p_{(1,q-1),(1,q-1)}^{(a,b)}k_{a,b}$. Since $k_{1,q-1}\geq k_{a,b}$, we obtain $p_{(q-1,1),(a,b)}^{(1,q-1)}\leq p_{(1,q-1),(1,q-1)}^{(a,b)}$. It follows that $p_{(1,q-1),(1,q-1)}^{(a,b)}=p_{(q-1,1),(a,b)}^{(1,q-1)}$, and so $Y_1=P_{(q-1,1),(a,b)}(x_{2},x_{3})$. Thus, the first statement is valid.

Now suppose that $q>3$ and $\Gamma$ is a Cayley digraph of an additive group. From the first statement, we have
\begin{align}\label{sec4-1}
Y_{1}=P_{(a,b),(q-1,1)}(x_{m-1},x_{0}).
\end{align}
Let $y\in Y_{2}$. Since $m\geq q>3$, $(x_{1},y,x_{3},\ldots,x_{m-1},x_{0})$ is a shortest circuit consisting of arcs of type $(1,q-1)$ which contains two distinct vertices $y$ and $x_{m-1}$. From the first statement again, one gets
\begin{align}\label{sec4-2}
P_{(1,q-1),(1,q-1)}(x_{0},y)=P_{(a,b),(q-1,1)}(x_{m-1},x_{0}).
\end{align}
Since $\Gamma$ is a Cayley digraph and $(x_1,y),(x_0,x_1)\in\Gamma_{1,q-1}$, we obtain
\begin{align}
x_{0}+y-x_{1}&\in\Gamma_{1,q-1}(x_0)\cap\Gamma_{q-1,1}(y)\nonumber\\
&=P_{(1,q-1),(1,q-1)}(x_{0},y)\nonumber\\
&=Y_{1}\nonumber
\end{align}
by \eqref{sec4-1} and \eqref{sec4-2}. This proves $y-x_{1}\in Y_{1}-x_{0}$. Since $y\in Y_{2}$ was arbitrary, one has $Y_{2}-x_{1}\subseteq Y_{1}-x_{0}$. The fact that $|Y_{1}|=|Y_{2}|$ implies $Y_{1}-x_{0}=Y_{2}-x_{1}$. The second statement is also valid.
\end{proof}

\section{Proof of Theorem~\ref{main1}}

We note that the second statement of Theorem~\ref{main1} follows immediately from Lemma~\ref{skew}. To prove the first statement of Theorem~\ref{main1}, we set $I:=\{i\mid R_{i}\in R_{1}^{2}\}$ and $J:=\{i\mid R_{i}\in R_{1}R_{1^{*}}\}$. Then $I\subseteq\{1,1^{*},2\}$ by (\ref{1.1a}). Since $R_{1}R_{1^{*}}=R_{1^{*}}R_{1}$ from the commutativity of $\mathfrak{X}$, we get $J=J^{*}$. Since $k_1>1$, we have $\{0\}\subsetneqq J$. It follows from \eqref{1.1b} that
\begin{align}
J=\{0,1,1^{*}\},~\{0,2,2^{*}\}~\textrm{or}~\{0,1,1^{*},2,2^{*}\}.\label{set J}
\end{align}

In the proof of Theorem~\ref{main1}, we also need the following auxiliary lemmas.

\begin{lemma}\label{3.2}
The following conditions are equivalent:
\begin{itemize}
\item [{\rm(i)}] $1\in I$;

\item [{\rm(ii)}] $p_{1,1}^{1}\neq0$;

\item [{\rm(iii)}] $\{1,1^{*}\}\subseteq J$.
\end{itemize}
\end{lemma}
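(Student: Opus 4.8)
The plan is to establish the cycle of implications (i)$\Rightarrow$(ii)$\Rightarrow$(iii)$\Rightarrow$(i), relying throughout on the commutativity of $\mathfrak{X}$ and on the basic identities in Lemma~\ref{jb}, particularly part~\ref{jb-2}, which ties together the three intersection numbers $p_{i,j}^{l}k_{l}=p_{l,j^{*}}^{i}k_{i}=p_{i^{*},l}^{j}k_{j}$. The equivalence of (i) and (ii) is essentially a matter of unwinding the definition: $1\in I$ means $R_{1}\in R_{1}^{2}$, which by the definition of the product $R_{1}R_{1}$ is exactly the statement $\sum$-over-the-relevant-terms of $p_{i,j}^{1}\neq 0$ where in this singleton-set case it reduces to $p_{1,1}^{1}\neq 0$. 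Here one should be a little careful because $R_1^2$ is defined via the set $\{R_l \mid p_{1,1}^l \neq 0\}$, so $R_1 \in R_1^2 \iff p_{1,1}^1 \neq 0$ is immediate, giving (i)$\Leftrightarrow$(ii) for free.

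For (ii)$\Rightarrow$(iii): assuming $p_{1,1}^{1}\neq 0$, I would apply Lemma~\ref{jb}\ref{jb-2} with $(i,j,l)=(1,1,1)$ to get $p_{1,1}^{1}k_{1}=p_{1,1^{*}}^{1}k_{1}=p_{1^{*},1}^{1}k_{1}$, which shows $p_{1,1^{*}}^{1}\neq 0$, i.e.\ $1\in J$; since $J=J^{*}$ (already noted in the text, from commutativity of $R_1 R_{1^*}$), we also get $1^{*}\in J$, hence $\{1,1^{*}\}\subseteq J$. For (iii)$\Rightarrow$(ii): assuming $1\in J$, i.e.\ $p_{1,1^{*}}^{1}\neq 0$, apply Lemma~\ref{jb}\ref{jb-2} again, now with $(i,j,l)=(1,1^{*},1)$, to get $p_{1,1^{*}}^{1}k_{1}=p_{1,1}^{1}k_{1}$ (using $(1^{*})^{*}=1$), so $p_{1,1}^{1}\neq 0$. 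This closes the cycle.

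The only subtlety, and the step I would watch most carefully, is keeping the starred indices straight when invoking Lemma~\ref{jb}\ref{jb-2}: one must verify that the particular cyclic rotation of $(i,j,l)$ being used indeed produces $p_{1,1}^{1}$ or $p_{1,1^{*}}^{1}$ on the nose, and in particular that the valencies appearing are all $k_1$ (which happens because every index involved is $1$ or $1^{*}$ and $k_{1}=k_{1^{*}}$), so that one may cancel them. No case analysis on the form of $J$ from \eqref{set J} is needed for this lemma; it is purely local to the relations $R_1, R_{1^*}$. I would write the argument compactly, as the whole proof is three applications of Lemma~\ref{jb}\ref{jb-2} plus the definitional unwinding of the $R_1^2$ notation.
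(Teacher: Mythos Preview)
Your proposal is correct and follows essentially the same approach as the paper: both reduce the equivalence to the single identity $p_{1,1}^{1}=p_{1,1^{*}}^{1}=p_{1,1^{*}}^{1^{*}}$ obtained from Lemma~\ref{jb}\ref{jb-2} with $i=j=l=1$ (and cancelling the common $k_1=k_{1^{*}}$), from which all three conditions are read off directly. Your version is a bit more verbose and uses $J=J^{*}$ where the paper instead records $p_{1,1^{*}}^{1^{*}}$ explicitly, but the substance is identical.
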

\begin{proof}
By setting $i=j=l=1$ in Lemma \ref{jb} \ref{jb-2}, we get $p_{1,1}^1=p_{1,1^*}^1=p_{1,1^*}^{1^*}$. The desired result follows.
\end{proof}

\begin{lemma}\label{R2}
Let $\{R_{i}\mid i\in K\}$ be a subset of $\{R_{i}\}_{i=0}^{d}$. Fix $(x,z)\in R_{2}$. Then
\begin{enumerate}
\item\label{R2-1} $R_{2}R_{1}\subseteq\{R_{i}\mid i\in K\}$ if and only if $(x,w)\in \bigcup_{i\in K}R_{i}$ for all $w\in R_{1}(z)$;

\item\label{R2-2} $R_{2^{*}}R_{1}\subseteq\{R_{i}\mid i\in K\}$ if and only if $(z,w)\in \bigcup_{i\in K}R_{i}$ for all $w\in R_{1}(x)$.
\end{enumerate}
\end{lemma}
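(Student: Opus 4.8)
The statement to prove is Lemma \ref{R2}, which re-expresses the inclusions $R_{2}R_{1}\subseteq\{R_{i}\mid i\in K\}$ and $R_{2^{*}}R_{1}\subseteq\{R_{i}\mid i\in K\}$ in terms of a single fixed pair, using the definition of the product $EF$ of relation-sets given in the introduction together with the vertex-wise characterisation of intersection numbers. The plan is to unwind both sides of each equivalence directly from the axioms, with no appeal to deeper structure; this is essentially a bookkeeping lemma, so the ``obstacle'' is only one of keeping the indices (and the $*$'s) straight rather than any genuine difficulty.

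For part \ref{R2-1}, I would argue as follows. By definition $R_{2}R_{1}\subseteq\{R_{i}\mid i\in K\}$ means that for every $l\notin K$ we have $p_{2,1}^{l}=0$, i.e.\ for some (equivalently every) pair $(x,w)\in R_{l}$ the set $P_{2,1}(x,w)=R_{2}(x)\cap R_{1^{*}}(w)$ is empty. Now fix the pair $(x,z)\in R_{2}$ as in the statement and let $w\in R_{1}(z)$, so $(z,w)\in R_{1}$, i.e.\ $(w,z)\in R_{1^{*}}$; then $z\in R_{2}(x)\cap R_{1^{*}}(w)=P_{2,1}(x,w)$, so this set is nonempty, hence $p_{2,1}^{l}\neq 0$ where $l$ is the index with $(x,w)\in R_{l}$, and therefore $R_{l}\in R_{2}R_{1}$; under the hypothesis of the ``only if'' direction this forces $l\in K$, giving $(x,w)\in\bigcup_{i\in K}R_{i}$. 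Conversely, suppose $(x,w)\in\bigcup_{i\in K}R_{i}$ for all $w\in R_{1}(z)$. To see $R_{2}R_{1}\subseteq\{R_{i}\mid i\in K\}$, take any $R_{l}\in R_{2}R_{1}$, so $p_{2,1}^{l}\neq0$; choosing $(x',y')\in R_{l}$ there is a vertex $z'\in R_{2}(x')\cap R_{1^{*}}(y')$, and then $w':=y'\in R_{1}(z')$ with $(x',w')\in R_{l}$. Since $(x',z')\in R_{2}$, the hypothesis applied to the pair $(x',z')$ — which is legitimate because the condition ``$(x,w)\in\bigcup_{i\in K}R_i$ for all $w\in R_1(z)$'' depends only on $(x,z)$ through its relation $R_2$, by the constancy of intersection numbers — yields $(x',w')\in\bigcup_{i\in K}R_{i}$, so $l\in K$. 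Hence every $R_l\in R_2R_1$ has $l\in K$, as required. (One should remark explicitly, once, that the truth of the displayed condition is independent of the choice of $(x,z)\in R_2$; this is exactly axiom \ref{as-4}, and it is what makes ``fix $(x,z)\in R_2$'' harmless.)

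For part \ref{R2-2} I would run the same argument with $R_{2}$ replaced by $R_{2^{*}}$, but organised around the same base vertex $z$ rather than $x$. Explicitly, $R_{2^{*}}R_{1}\subseteq\{R_{i}\mid i\in K\}$ means $p_{2^{*},1}^{l}=0$ for $l\notin K$, i.e.\ $P_{2^{*},1}(z,w)=R_{2^{*}}(z)\cap R_{1^{*}}(w)=\emptyset$ for $(z,w)\in R_l$, $l\notin K$. Given the fixed $(x,z)\in R_2$ we have $(z,x)\in R_{2^*}$, i.e.\ $x\in R_{2^{*}}(z)$; so for $w\in R_{1}(x)$ we get $x\in R_{2^{*}}(z)\cap R_{1^{*}}(w)$, whence $P_{2^{*},1}(z,w)\neq\emptyset$ and $R_l\in R_{2^*}R_1$ for the index $l$ with $(z,w)\in R_l$; the ``only if'' direction then forces $l\in K$. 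The converse is symmetric to the converse in part \ref{R2-1}, again invoking constancy of intersection numbers so that the choice of representative is immaterial. The main thing to be careful about throughout is the position of the $*$: the set $P_{i,j}(x,y)$ is $R_i(x)\cap R_{j^*}(y)$, so when testing membership in $R_2R_1$ one intersects with $R_{1^*}(w)$, and the incidence $(z,w)\in R_1$ is the same as $w\in R_1(z)$ is the same as $z\in R_{1^*}(w)$ — these three reformulations are used repeatedly and are the only ``content'' of the proof.
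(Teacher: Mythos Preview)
Your proof is correct and follows essentially the same approach as the paper: both arguments amount to unwinding the definition of $R_2R_1$ in terms of intersection numbers and then reading off the condition at the fixed pair $(x,z)$. The only difference is cosmetic: the paper invokes Lemma~\ref{jb}~\ref{jb-2} to swap indices, observing that $p_{2,1}^{i}\neq 0$ is equivalent to $p_{i,1^{*}}^{2}\neq 0$, which is directly the statement $R_i(x)\cap R_1(z)\neq\emptyset$ for the fixed $(x,z)\in R_2$; this lets both directions be read off at once without changing base pairs, whereas you exhibit $z$ as an explicit witness in $P_{2,1}(x,w)$ and then separately appeal to constancy of intersection numbers for the converse. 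Your remark that the displayed condition is independent of the choice of $(x,z)\in R_2$ is exactly what the paper's index-swap encodes, so the two proofs are really the same argument organised slightly differently.
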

\begin{proof}
Note that $p_{2,1}^{i}\neq0$ is equivalent to $p_{i,1^{*}}^{2}\neq0$ by Lemma~\ref{jb} \ref{jb-2}, which in turn is equivalent to $R_{i}(x)\cap R_{1}(z)\neq\emptyset$. Thus, $R_{2}R_{1}\subseteq\{ R_{i}\mid i\in K\}$ is equivalent to $R_{1}(z)\subseteq \bigcup_{i\in K}R_{i}(x)$. This proves \ref{R2-1}. The proof of \ref{R2-2} is similar, hence omitted.
\end{proof}

\begin{lemma}\label{d=4}
Suppose that
\begin{align}
R_{2}R_{1}\subseteq\{R_{1},R_{1^{*}},R_{2},R_{2^{*}}\},\label{eq-1}\\
R_{2^{*}}R_{1}\subseteq\{R_{1},R_{1^{*}},R_{2},R_{2^{*}}\}.\label{eq-2}
\end{align}
Then $d=4$.
\end{lemma}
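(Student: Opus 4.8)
The plan is to show that the closed subset generated by $R_1$ is exactly $\{R_0,R_1,R_{1^*},R_2,R_{2^*}\}$, and then to argue that these five relations are pairwise distinct. Since $\mathfrak{X}$ is generated by $R_1$, this forces $d=4$. First I would observe that the smallest closed set $F$ containing $R_1$ must contain $R_0$, $R_1$ and $R_{1^*}$ (the latter because closed sets are closed under transpose), and by \eqref{1.1a} it contains $R_2$ whenever $R_2\in R_1^2$; if $R_2\notin R_1^2$ one has to check separately, but \eqref{1.1b} together with $k_1>1$ (giving $J\supsetneq\{0\}$ as in \eqref{set J}) will ensure $R_2$ or $R_{2^*}$ enters $F$, hence both do. So $\{R_0,R_1,R_{1^*},R_2,R_{2^*}\}\subseteq F$. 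For the reverse inclusion, I would verify that $\mathcal{C}:=\{R_0,R_1,R_{1^*},R_2,R_{2^*}\}$ is itself closed: one needs $R_{i^*}R_j\subseteq\mathcal{C}$ for all $R_i,R_j\in\mathcal{C}$. The products involving only $R_0$ are trivial; $R_1 R_{1^*}$ and $R_{1^*}R_1$ lie in $\mathcal{C}$ by \eqref{1.1b}; $R_1^2$ and $R_{1^*}^2$ lie in $\mathcal{C}$ by \eqref{1.1a} and its transpose. The remaining products $R_2 R_1$, $R_{2^*}R_1$, $R_2 R_{1^*}$, $R_{2^*}R_{1^*}$, $R_2 R_2$, $R_2 R_{2^*}$, etc., are where the hypotheses \eqref{eq-1}, \eqref{eq-2} come in: by \eqref{eq-1} and \eqref{eq-2} and their transposes, all the products of an "index-$2$" relation with an "index-$1$" relation land in $\{R_1,R_{1^*},R_2,R_{2^*}\}\subseteq\mathcal{C}$.

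The products of two "index-$2$" relations, namely $R_2 R_2$, $R_2 R_{2^*}$, $R_{2^*}R_2$, $R_{2^*}R_{2^*}$, still need to be controlled. Here I would use associativity of relation products: for instance $R_2 R_2 \subseteq R_2(R_1 E)$ for a suitable single relation $E\in\{R_1,R_{1^*},R_2,R_{2^*}\}$ appearing in a product that yields $R_2$ — more carefully, since $R_2\in R_2 R_1$ or $R_2\in R_1 R_1$ is not automatic, I would instead exploit Lemma~\ref{R2}: fixing $(x,z)\in R_2$ and walking out along $R_1$-arcs, \eqref{eq-1} says every vertex at $R_1$-distance one from $z$ lies in $\bigcup_{i\in\{1,1^*,2,2^*\}}R_i(x)$, and iterating with \eqref{eq-2} pins down $R_2 R_2$ and the mixed products to stay inside $\{R_0,R_1,R_{1^*},R_2,R_{2^*}\}$. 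A cleaner route is purely algebraic: in the Bose--Mesner algebra, $A_2$ is a polynomial (with nonnegative rational coefficients, up to scaling) in $A_1+A_{1^*}$ and the relations already in $\mathcal{C}$ are an $R_1$-module, so $\mathcal{C}$ absorbs all further products. I would pick whichever of these is shortest once the details of \eqref{1.1a}--\eqref{1.1c} are in hand; the hypothesis \eqref{1.1c}, $2\notin\{1^*,2^*\}$, guarantees that $R_2$ is genuinely a new non-symmetric relation distinct from $R_{1^*}$ and that $R_2\neq R_{2^*}$, so no collapse occurs.

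Having shown $F=\mathcal{C}$, and using that $R_1$ generates $\mathfrak{X}$, we get $\{R_i\}_{i=0}^d=\mathcal{C}$. It remains to check $|\mathcal{C}|=5$, i.e. that $R_0,R_1,R_{1^*},R_2,R_{2^*}$ are five distinct relations. Now $R_1$ is non-symmetric, so $R_1\neq R_{1^*}$, and both differ from $R_0$; $R_2\neq R_0$ because $R_2\in R_1^2$ or appears among the above products with positive valency while $2\neq 0$ (indeed $2\in I\cup(\text{index set of the }R_2\text{-products})$ forces $R_2$ to be a genuine relation of the scheme and $2\ne 0$ since otherwise \eqref{1.1c} or valency count fails); $R_2\neq R_{2^*}$ and $R_2\neq R_{1^*}$ by \eqref{1.1c}; and $R_{2^*}\neq R_1$ follows from $R_2\neq R_{1^*}$ by transposition, while $R_{2^*}\neq R_0,R_{1^*}$ likewise. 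Hence $d+1=5$ and $d=4$.

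I expect the main obstacle to be handling the products of two "index-$2$" relations ($R_2 R_2$ and its siblings), since the hypotheses \eqref{eq-1}, \eqref{eq-2} only directly control an index-$2$ relation multiplied by an index-$1$ relation. Resolving this cleanly — either via the Lemma~\ref{R2} walking argument, or via an associativity/module argument in the Bose--Mesner algebra — is the crux; everything else is bookkeeping with \eqref{1.1a}--\eqref{1.1c}.
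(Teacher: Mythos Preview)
Your overall plan is sound, and your distinctness argument for $|\mathcal{C}|=5$ is fine. But the ``main obstacle'' you identify --- controlling $R_2R_2$, $R_2R_{2^*}$, etc. --- is not actually an obstacle, and the paper's proof sidesteps it entirely.

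The point is that you do \emph{not} need to show $\mathcal{C}=\{R_0,R_1,R_{1^*},R_2,R_{2^*}\}$ is closed. You only need the closed subset generated by $R_1$ to lie inside $\mathcal{C}$. The paper simply observes that
\[
\mathcal{C}\,R_1\subseteq\mathcal{C},
\]
which is immediate: $R_0R_1=\{R_1\}$, $R_1R_1\subseteq\mathcal{C}$ by \eqref{1.1a}, $R_{1^*}R_1\subseteq\mathcal{C}$ by \eqref{1.1b}, and $R_2R_1,\;R_{2^*}R_1\subseteq\mathcal{C}$ by \eqref{eq-1}, \eqref{eq-2}. Since $\mathcal{C}^*=\mathcal{C}$, taking transposes and using commutativity gives $\mathcal{C}\,R_{1^*}\subseteq\mathcal{C}$ as well. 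Now $\mathcal{C}$ contains $R_0$ and is stable under right multiplication by both $R_1$ and $R_{1^*}$, so it contains every product $R_1^aR_{1^*}^b$. But the closed subset generated by $R_1$ consists exactly of the relations appearing in such words, so it is contained in $\mathcal{C}$. Since $R_1$ generates $\mathfrak{X}$, that closed subset is all of $\{R_i\}_{i=0}^d$, hence $d+1\le 5$.

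So your vaguer ``module'' route is the right one, and it makes the products $R_2R_2$, $R_2R_{2^*}$, $R_{2^*}R_{2^*}$ irrelevant: once you know $\{R_i\}_{i=0}^d=\mathcal{C}$, those inclusions follow \emph{a posteriori}. The Lemma~\ref{R2} walking argument and the Bose--Mesner polynomial idea you sketch are unnecessary detours.
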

\begin{proof}
By \eqref{1.1a} and \eqref{1.1b},
\begin{align}
R_{1}^{3}\subseteq\{R_{1},R_{1^{*}},R_{2}\}R_{1}\subseteq\{R_{0},R_{1},R_{1^{*}},R_{2},R_{2^{*}}\}.\nonumber
\end{align}
Then
\begin{align}
R_{1}^{4}\subseteq\{R_{0},R_{1},R_{1^{*}},R_{2},R_{2^{*}}\}R_{1}\subseteq\{R_{0},R_{1},R_{1^{*}},R_{2},R_{2^{*}}\}.\nonumber
\end{align}
It follows from induction that
\begin{align}
R_{1}^{i}&\subseteq\{R_{0},R_{1},R_{1^{*}},R_{2},R_{2^{*}}\}R_{1}~\textrm{for}~i\geq5.\nonumber
\end{align}
Since $R_{1}$ generates $\mathfrak{X}$, we obtain $d=4$.
\end{proof}

\begin{lemma}\label{1,1*}
If $I=\{1,1^*\}$, then $d=4$.
\end{lemma}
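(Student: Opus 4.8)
The plan is to split on the set $J$ and then reduce to Lemma~\ref{d=4}.

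Since $I=\{1,1^{*}\}$, in particular $1\in I$, so Lemma~\ref{3.2} gives $\{1,1^{*}\}\subseteq J$; combined with \eqref{set J} this leaves $J=\{0,1,1^{*}\}$ or $J=\{0,1,1^{*},2,2^{*}\}$. I would first rule out $J=\{0,1,1^{*}\}$. In that case $R_{1}R_{1^{*}}\subseteq\{R_{0},R_{1},R_{1^{*}}\}$, and since $R_{1}^{2}\subseteq\{R_{1},R_{1^{*}}\}$ (this is exactly $I=\{1,1^{*}\}$), taking transposes also gives $R_{1^{*}}^{2}\subseteq\{R_{1},R_{1^{*}}\}$; hence $\{R_{0},R_{1},R_{1^{*}}\}$ is closed. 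As $R_{1}$ generates $\mathfrak{X}$, this forces $\{R_{i}\}_{i=0}^{d}=\{R_{0},R_{1},R_{1^{*}}\}$, so $d=2$ and $R_{2}=R_{1^{*}}$, i.e. $1^{*}=2$, contradicting \eqref{1.1c}. Therefore $J=\{0,1,1^{*},2,2^{*}\}$, so that $R_{2},R_{2^{*}}\in R_{1}R_{1^{*}}$.

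Next I would verify the two hypotheses of Lemma~\ref{d=4}. From $R_{2}\in R_{1}R_{1^{*}}$, commutativity ($R_{1^{*}}R_{1}=R_{1}R_{1^{*}}$), and associativity of the relation product,
\[
R_{2}R_{1}\subseteq(R_{1}R_{1^{*}})R_{1}=R_{1}^{2}R_{1^{*}}\subseteq\{R_{1},R_{1^{*}}\}R_{1^{*}}=R_{1}R_{1^{*}}\cup R_{1^{*}}^{2}\subseteq\{R_{0},R_{1},R_{1^{*}},R_{2},R_{2^{*}}\}.
\]
Moreover $R_{0}\notin R_{2}R_{1}$, since $p_{2,1}^{0}\neq0$ would force $1^{*}=2$, against \eqref{1.1c}; hence $R_{2}R_{1}\subseteq\{R_{1},R_{1^{*}},R_{2},R_{2^{*}}\}$. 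The identical computation with $R_{2^{*}}$ in place of $R_{2}$ (using that $p_{2^{*},1}^{0}\neq0$ would force $1=2$) gives $R_{2^{*}}R_{1}\subseteq\{R_{1},R_{1^{*}},R_{2},R_{2^{*}}\}$. Lemma~\ref{d=4} then gives $d=4$.

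The delicate point is the elimination of $J=\{0,1,1^{*}\}$: this configuration really does occur under \eqref{1.1a} and \eqref{1.1b} alone --- for instance the $2$-class cyclotomic scheme over ${\rm GF}(7)$ has $I=\{1,1^{*}\}$, $J=\{0,1,1^{*}\}$, and $d=2$ --- so the conclusion $d=4$ genuinely relies on \eqref{1.1c}, which must be brought in precisely there. After that, the argument is a short closure computation, and it goes through because $I=\{1,1^{*}\}$ keeps $R_{2}$ out of $R_{1}^{2}$, so that $R_{1}^{2}R_{1^{*}}$ folds back into $\{R_{0},R_{1},R_{1^{*}},R_{2},R_{2^{*}}\}$. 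Alternatively one could show directly that $\{R_{0},R_{1},R_{1^{*}},R_{2},R_{2^{*}}\}$ is closed via the same reductions applied to $R_{1}R_{2}$, $R_{2}^{2}$, $R_{2}R_{2^{*}}$ and then count the distinct relations, but invoking Lemma~\ref{d=4} is shorter.
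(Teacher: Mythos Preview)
Your proof is correct and follows the same outline as the paper's: rule out $J=\{0,1,1^{*}\}$ via the generation hypothesis and \eqref{1.1c}, then establish \eqref{eq-1} and \eqref{eq-2} and invoke Lemma~\ref{d=4}. The only difference is cosmetic: where the paper verifies \eqref{eq-1} and \eqref{eq-2} by choosing $y\in P_{1^{*},1}(x,z)$ and applying Lemma~\ref{R2}, you argue directly with the relation product via $R_{2},R_{2^{*}}\in R_{1}R_{1^{*}}$, commutativity, and associativity to get $R_{2}R_{1},\,R_{2^{*}}R_{1}\subseteq R_{1}^{2}R_{1^{*}}\subseteq\{R_{0},R_{1},R_{1^{*}},R_{2},R_{2^{*}}\}$ and then exclude $R_{0}$.
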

\begin{proof}
If $J=\{0,1,1^*\}$, then $R_{1}^i=\{R_{0},R_{1},R_{1^*}\}$ for $i\geq3$, which implies $2=1^*$ since $R_1$ generates $\mathfrak{X}$, contrary to \eqref{1.1c}. Lemma \ref{3.2} and \eqref{set J} imply that $J=\{0,1,1^*,2,2^*\}$.

Fix $(x,z)\in R_2$. Pick elements $w\in R_1(z)$ and $w'\in R_{1}(x)$. Since $p_{1^*,1}^2=p_{1,1^*}^2\neq0$, there exists an element $y\in P_{1^*,1}(x,z)$. The fact that $I=\{1,1^*\}$ implies $(y,w),(y,w')\in R_{1}\cup R_{1^*}$. Since $2\notin\{1,1^*\}$ and $J=\{0,1,1^*,2,2^*\}$, one has $(x,w),(w',z)\in R_{1}\cup R_{1^*}\cup R_{2}\cup R_{2^*}$. \eqref{eq-1} and \eqref{eq-2} are valid from Lemma \ref{R2} \ref{R2-1} and \ref{R2-2}, respectively. The desired result follows by Lemma \ref{d=4}.
\end{proof}

\begin{lemma}\label{k1>k2}
If $k_1>k_2$ and $2\in I$, then $d=4$.
\end{lemma}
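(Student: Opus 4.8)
The plan is to verify the two inclusions $R_2R_1\subseteq\{R_1,R_{1^*},R_2,R_{2^*}\}$ and $R_{2^*}R_1\subseteq\{R_1,R_{1^*},R_2,R_{2^*}\}$ hypothesised in Lemma~\ref{d=4}; granting these, $d=4$ is immediate.

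First I would show that $I\neq\{2\}$. If $R_1^2=\{R_2\}$, then $A_1^2=p_{1,1}^2A_2$, so comparing row sums gives $k_1^2=p_{1,1}^2k_2$; but $p_{1,1}^2=|R_1(x)\cap R_{1^*}(z)|\leq k_1$ for $(x,z)\in R_2$, whence $k_1\leq k_2$, contrary to $k_1>k_2$. Hence $I\in\{\{1,2\},\{1^*,2\},\{1,1^*,2\}\}$, and in each case Lemma~\ref{3.2} together with \eqref{set J} constrains $J$; in particular $\{1,1^*\}\subseteq J$ whenever $1\in I$, and then $p_{1,1}^1=p_{1,1^*}^1=p_{1,1^*}^{1^*}$.

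Next, to check \eqref{eq-1} via Lemma~\ref{R2}\ref{R2-1}, fix $(x,z)\in R_2$; since $2\in I$ there is a path $x\to y\to z$, with $p_{1,1}^2$ possible middle vertices $y\in P_{1,1}(x,z)$. For $w\in R_1(z)$ consider $x\to y\to z\to w$: then $(y,w)\in R_1^2\subseteq\{R_1,R_{1^*},R_2\}$. If $(y,w)\in R_1$, then $(x,w)\in R_1^2$; if $(y,w)\in R_{1^*}$, then $y\in R_1(x)\cap R_1(w)$, so $(x,w)\in R_l$ with $p_{1,1^*}^l\neq0$, i.e.\ $(x,w)\in R_1R_{1^*}$; in either case $(x,w)\in\{R_0,R_1,R_{1^*},R_2,R_{2^*}\}$, and $x\neq w$ by \eqref{1.1c} (else $(z,x)\in R_1\cap R_{2^*}$ would give $2=1^*$). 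So $(x,w)\in\{R_1,R_{1^*},R_2,R_{2^*}\}$ unless $(y,w)\in R_2$; moreover, re-running this for every $y\in P_{1,1}(x,z)$ shows that if $(x,w)\notin\{R_0,R_1,R_{1^*},R_2,R_{2^*}\}$ then $(y,w)\in R_2$ for all such $y$, hence $P_{1,1}(x,z)\subseteq R_{2^*}(w)$ and so $p_{1,1}^2\leq k_2$. The same analysis (with Lemma~\ref{R2}\ref{R2-2}, and, once \eqref{eq-1} is known, the identity $(R_2R_1)^{\rm T}=R_{1^*}R_{2^*}$) reduces \eqref{eq-2} to the same obstruction $p_{1,1}^2\leq k_2$.

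So the whole lemma comes down to proving $p_{1,1}^2>k_2$. The tools I would use are: the valency identity $k_1^2=(p_{1,1}^1+p_{1,1}^{1^*})k_1+p_{1,1}^2k_2$ from Lemma~\ref{jb}\ref{jb-1} and \eqref{1.1a}, giving $p_{1,1}^2=k_1(k_1-p_{1,1}^1-p_{1,1}^{1^*})/k_2$; the expansion of $k_1^2$ through $R_1R_{1^*}$ coming from \eqref{1.1b} (when $1\in I$ this reads $k_1=1+2p_{1,1}^1+(p_{1,1^*}^2+p_{1,1^*}^{2^*})k_2/k_1$, so $p_{1,1}^1\leq(k_1-1)/2$); the divisibility $k_1\mid p_{1,1}^2k_2$ from Lemma~\ref{jb}\ref{jb-5}; and integrality of the intersection numbers. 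Pushing these through the case split on $I$ and $J$ should force $p_{1,1}^2>k_2$, contradicting the bad case. I expect this last step to be the main obstacle: the path-chasing above only ever reproduces another bad pair rather than terminating, so the contradiction cannot be purely combinatorial and must come from the divisibility and integrality constraints.
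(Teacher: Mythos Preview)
Your overall strategy---verify \eqref{eq-1} and \eqref{eq-2}, then invoke Lemma~\ref{d=4}---matches the paper. The gap is in how you handle the bad case. When every $y\in P_{1,1}(x,z)$ satisfies $(y,w)\in R_2$, you only record $P_{1,1}(x,z)\subseteq R_{2^*}(w)$ and extract the loose bound $p_{1,1}^2\le k_2$. But you already know $P_{1,1}(x,z)\subseteq R_{1^*}(z)$, so in fact $P_{1,1}(x,z)\subseteq R_{1^*}(z)\cap R_{2^*}(w)=P_{1^*,2}(z,w)$, whence $p_{1,1}^2\le p_{1^*,2}^1$. Now Lemma~\ref{jb}\ref{jb-2} gives $p_{1,1}^2 k_2=p_{1^*,2}^1 k_1$, and since $2\in I$ and $k_1>k_2$ this forces $p_{1,1}^2>p_{1^*,2}^1$, an immediate contradiction. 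No case split on $I$, no divisibility, no integrality.

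Your reduction to ``$p_{1,1}^2>k_2$'' is therefore a detour into a strictly harder inequality that you do not actually prove; for instance the parameters $k_1=6$, $k_2=4$, $p_{1,1}^2=3$, $p_{1^*,2}^1=2$ satisfy $p_{1,1}^2 k_2=p_{1^*,2}^1 k_1$ and $p_{1,1}^2\le k_2$, so your proposed toolkit would have to rule this out, whereas the tight bound already does. For \eqref{eq-2} the paper does \emph{not} reduce to the same obstruction: when $J=\{0,1,1^*\}$ a direct argument via any $y\in P_{1,1}(x,z)$ works, while when $\{2,2^*\}\subseteq J$ one switches to $y'\in P_{1,1^*}(x,z)$ and uses the companion inequality $p_{1,1^*}^2>p_{1,2^*}^1$ (same source: $p_{1,1^*}^2 k_2=p_{1,2^*}^1 k_1$) to find $y'$ with $(y',w')\notin R_{2^*}$; then $(z,y')\in R_1$ together with $(y',w')\in R_1\cup R_{1^*}\cup R_2$ and the already-proved \eqref{eq-1} finishes. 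Your sketch ``the same analysis reduces \eqref{eq-2} to the same obstruction'' does not track this, and in particular the case $(y,w')\in R_2$ with $y\in P_{1,1}(x,z)$ lands you in $R_{1^*}R_2$, which is not controlled by \eqref{eq-1} alone.
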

\begin{proof}
By Lemma \ref{d=4}, it suffices to show that \eqref{eq-1} and \eqref{eq-2} are both valid.

Indeed, fix $(x,z)\in R_2$. Pick an element $w\in R_1(z)$. By Lemma \ref{jb} \ref{jb-2}, we have $p_{1,1}^2k_2=p_{1^*,2}^1k_1$. Since $2\in I$ and $k_1>k_2$, one gets $p_{1,1}^2>p_{1^*,2}^1$. It follows that there exists an element $y\in P_{1,1}(x,z)$ such that $(y,w)\notin R_2$. Since $I\subseteq\{1,1^*,2\}$, we obtain $(y,w)\in R_{1}\cup R_{1^*}$. Note that $2\neq 1^*$ by \eqref{1.1c}. In view of \eqref{set J}, one has $(x,w)\in R_1\cup R_{1^*}\cup R_2\cup R_{2^*}$. \eqref{eq-1} follows from Lemma \ref{R2} \ref{R2-1}.

Next, pick an element $w'\in R_1(x)$. If $J=\{0,1,1^*\}$, then $(y,w')\in R_{1}\cup R_{1^*}$, and $(z,w')\in R_1\cup R_{1^*}\cup R_2\cup R_{2^*}$ since $I\subseteq\{1,1^*,2\}$, which imply that \eqref{eq-2} is valid from Lemma \ref{R2} \ref{R2-2}. Suppose $2,2^*\in J$. Lemma \ref{jb} \ref{jb-2} implies $p_{1,1^*}^2k_2=p_{1,2^*}^1k_1$. Since $k_1>k_2$, one gets $p_{1,1^*}^2>p_{1,2^*}^1$. The fact $J\subseteq\{0,1,1^*,2,2^*\}$ implies that there exists $y'\in P_{1,1^*}(x,z)$ with $(y',w')\in R_1\cup R_{1^*}\cup R_2$. Since $I\subseteq\{1,1^*,2\}$, from \eqref{eq-1}, one has $(z,w')\in R_{1}\cup R_{1^*}\cup R_2\cup R_{2^*}$. \eqref{eq-2} follows from Lemma \ref{R2} \ref{R2-2}.
\end{proof}

We list some consequences of Lemma~\ref{jb} \ref{jb-2} as follows:
\begin{align}
&p_{1,1}^{1}=p_{1,1^{*}}^{1}=p_{1^{*},1}^{1}=p_{1,1^{*}}^{1^*}=p_{1^{*},1}^{1^*},\label{2.1a}\\
&p_{1,1^{*}}^{2}=p_{1^{*},1}^{2}=p_{1,1^{*}}^{2^*}=p_{1^{*},1}^{2^*},\label{2.1b}\\
&p_{2,1}^{1}=p_{2^{*},1}^{1}.\label{2.1c}
\end{align}

\begin{lemma}\label{jiben}
The following hold:
\begin{enumerate}
\item\label{jiben-1} $\sum_{j=0}^{d}p_{1,j}^{l}=k_1$;

\item\label{jiben-2} $\sum_{l\in I}p_{1,1}^{l}k_l=k_1^2$;

\item\label{jiben-3} $2\sum_{l\in\{1,2\}}p_{1,1^{*}}^{l}k_l=k_1(k_1-1)$;

\item\label{jiben-4} $\sum_{\alpha\in I}(p_{1,1}^{\alpha})^{2}k_{\alpha}=k_1^2+2\sum_{\beta\in\{1,2\}}(p_{1,1^{*}}^{\beta})^{2}k_{\beta}$.
\end{enumerate}
\end{lemma}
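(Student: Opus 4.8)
The four identities are all instances of Lemma~\ref{jb}, specialized to the relation $R_1$ and the hypotheses \eqref{1.1a}, \eqref{1.1b}, \eqref{1.1c}. The plan is to unwind the definitions of $I$ and $J$ together with \eqref{2.1a}--\eqref{2.1b} so that every sum collapses onto the index sets $\{1,1^{*},2\}$ and $\{0,1,1^{*},2,2^{*}\}$, and then to observe which terms are forced to coincide by symmetry.

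For \ref{jiben-1}: this is just Lemma~\ref{jb}~\ref{jb-3} with $i=1$; nothing needs to be said beyond citing it. For \ref{jiben-2}: apply Lemma~\ref{jb}~\ref{jb-1} with $i=j=1$, giving $k_1^2=\sum_{l=0}^d p_{1,1}^l k_l$; since $R_1^2\subseteq\{R_1,R_{1^*},R_2\}$ by \eqref{1.1a}, the only nonzero terms are those with $l\in I\subseteq\{1,1^*,2\}$, so the sum reduces to $\sum_{l\in I}p_{1,1}^l k_l$. For \ref{jiben-3}: apply Lemma~\ref{jb}~\ref{jb-1} with $i=1$, $j=1^*$, giving $k_1 k_{1^*}=k_1^2=\sum_{l=0}^d p_{1,1^*}^l k_l$; by \eqref{1.1b} the nonzero terms lie in $J\subseteq\{0,1,1^*,2,2^*\}$, the $l=0$ term contributes $p_{1,1^*}^0 k_0 = k_1$, and by \eqref{2.1a}--\eqref{2.1b} together with $k_1=k_{1^*}$, $k_2=k_{2^*}$ the $l=1$ and $l=1^*$ terms are equal, as are the $l=2$ and $l=2^*$ terms; hence $k_1^2 = k_1 + 2\sum_{l\in\{1,2\}}p_{1,1^*}^l k_l$, which rearranges to \ref{jiben-3}.

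For \ref{jiben-4}: this is Lemma~\ref{jb}~\ref{jb-4} with the choice $i=j=1$, $f=1^*$, $l=0$. The left side becomes $\sum_{\alpha=0}^d p_{1,1}^\alpha p_{1^*,\alpha}^0$; now $p_{1^*,\alpha}^0$ is nonzero only when $\alpha=1$ (it equals $k_1$ there, since $p_{1^*,1}^0=k_{1^*}=k_1$), so one must instead run the symmetry of Lemma~\ref{jb}~\ref{jb-2} to rewrite $p_{1^*,\alpha}^0 = p_{\alpha,1}^\alpha k_\alpha/k_\alpha$-type relations into $p_{1,1}^\alpha k_\alpha$. Concretely the cleaner route is: start from $\sum_{\alpha}p_{1,1}^\alpha p_{1,\alpha^*}^1 = \sum_\beta p_{1,1}^\beta p_{\beta,1}^1$ (Lemma~\ref{jb}~\ref{jb-4} with suitable $i,j,f,l$), use Lemma~\ref{jb}~\ref{jb-2} to convert $p_{1,1}^\alpha p_{1,\alpha^*}^1$ into $(p_{1,1}^\alpha)^2 k_\alpha / k_1$ on the diagonal part and $(p_{1,1^*}^\beta)^2 k_\beta/k_1$ on the off-diagonal part, multiply through by $k_1$, and collapse the index ranges using \eqref{1.1a}, \eqref{1.1b} exactly as above, folding the $2^*$ term into the $2$ term and the $1^*$ term into the $1$ term via \eqref{2.1a}--\eqref{2.1b} and the equalities of valencies. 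The factor $2$ on the right of \ref{jiben-4} and the absence of a constant term come precisely from this folding: $R_1^2$ contributes the $\alpha\in I$ sum with no doubling, while $R_1R_{1^*}$ contributes a symmetric pair $\{1,1^*\}$ and a symmetric pair $\{2,2^*\}$, each pair doubling a single representative, and the $\beta=0$ term cancels against the $l=0$ normalization.

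The only mildly delicate point is \ref{jiben-4}: one must pick the right instance of Lemma~\ref{jb}~\ref{jb-4} and be careful about which products get the Frobenius-type substitution from Lemma~\ref{jb}~\ref{jb-2}, since a naive choice leaves a term $p_{1^*,\alpha}^0$ that is supported only at $\alpha=1$ and gives the wrong identity. Everything else is bookkeeping: expand, discard terms outside $I$ or $J$, use \eqref{2.1a}--\eqref{2.1b} and $k_i=k_{i^*}$ to pair up the $1^*$ with $1$ and $2^*$ with $2$, and read off the stated form.
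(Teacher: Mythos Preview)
Your treatment of \ref{jiben-1}--\ref{jiben-3} is correct and matches the paper exactly. For \ref{jiben-4} your instinct is right but the execution is muddled. The choice $l=0$ is indeed a dead end, as you note; the paper takes $i=j=l=1$, $f=1^*$ in Lemma~\ref{jb}~\ref{jb-4}, which gives
\[
\sum_{\alpha} p_{1,1}^{\alpha}\,p_{1^*,\alpha}^{1}
=\sum_{\beta} p_{1^*,1}^{\beta}\,p_{\beta,1}^{1}.
\]
Your displayed ``cleaner route'' identity is not this: the second factor on the left should be $p_{1^*,\alpha}^{1}$, not $p_{1,\alpha^*}^{1}$ (these differ in general), and the first factor on the right should be $p_{1^*,1}^{\beta}=p_{1,1^*}^{\beta}$, not $p_{1,1}^{\beta}$. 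With the correct identity in hand, Lemma~\ref{jb}~\ref{jb-2} gives $p_{1^*,\alpha}^{1}=p_{1,1}^{\alpha}k_{\alpha}/k_{1}$ and $p_{\beta,1}^{1}=p_{1,1^*}^{\beta}k_{\beta}/k_{1}$ directly; multiplying through by $k_{1}$, restricting $\alpha$ to $I$ and $\beta$ to $J$, and folding the $1^*$ and $2^*$ terms via \eqref{2.1a}--\eqref{2.1b} (with the $\beta=0$ term contributing the $k_1^2$) yields \ref{jiben-4} exactly as you describe. So the overall plan is the paper's; you just need to name the parameters and fix those two factors.
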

\begin{proof} \ref{jiben-1} follows by setting $i=1$ in Lemma~\ref{jb} \ref{jb-3}. \ref{jiben-2} follows by setting $i=j=1$ in Lemma~\ref{jb} \ref{jb-1}. \ref{jiben-3} follows by setting $i=1$ and $j=1^{*}$ in Lemma~\ref{jb} \ref{jb-1}, and using \eqref{2.1a} and \eqref{2.1b}.

In view of Lemma \ref{jb} \ref{jb-2}, we have $p_{1^*,\alpha}^{1}=p_{1,1}^{\alpha}k_{\alpha}/k_1$ for all $\alpha\in I$ and $p_{\beta,1}^{1}=p_{1,1^*}^{\beta}k_{\beta}/k_1$ for all $\beta\in J$. By setting $i=j=l=1$ and $f=1^{*}$ in Lemma~\ref{jb} \ref{jb-4}, and using \eqref{2.1a}--\eqref{2.1c}, \ref{jiben-4} is also valid.
\end{proof}

\begin{lemma}\label{2.2}
For $x,z\in X$, we have $P_{1,1}(x,z)\times P_{1,1^{*}}(x,z)\subseteq\bigcup_{j\in I\cap J}R_{j}$.
\end{lemma}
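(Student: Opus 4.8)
**Proof plan for Lemma 2.20 (the statement $P_{1,1}(x,z)\times P_{1,1^*}(x,z)\subseteq\bigcup_{j\in I\cap J}R_j$).**

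The plan is to unwind the definitions of $I$, $J$ and the products $R_1^2$, $R_1R_{1^*}$ in terms of actual vertices, and then chase a diagram. Fix $x,z\in X$. By hypothesis $I=\{i\mid R_i\in R_1^2\}$ and $J=\{i\mid R_i\in R_1R_{1^*}\}$. First I would recall the meaning of these index sets in terms of configurations of vertices: $i\in I$ iff $p_{1,1}^i\neq0$, i.e.\ iff there exist $u,v$ with $(u,v)\in R_i$ and a common ``midpoint'' $w$ with $(u,w),(w,v)\in R_1$; and $i\in J$ iff $p_{1,1^*}^i\neq0$, i.e.\ iff there exist $u,v$ with $(u,v)\in R_i$ and $w$ with $(u,w)\in R_1$, $(v,w)\in R_1$ (a common out-neighbour along $R_1$). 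The key point is that these are not merely existence statements: by axiom \ref{as-4} the numbers $p_{i,j}^l$ depend only on the relation of the pair, so the statement to be proved is really ``if $y\in P_{1,1}(x,z)$ and $y'\in P_{1,1^*}(x,z)$, then $(y,y')$ lies in some $R_j$ with $j\in I\cap J$.''

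The core of the argument is a single diagram on the five vertices $x,z,y,y'$ (and possibly no others). We have $(x,y),(y,z)\in R_1$ from $y\in P_{1,1}(x,z)$, and $(x,y'),(z,y')\in R_1$ from $y'\in P_{1,1^*}(x,z)$ (using $P_{1,1^*}(x,z)=R_1(x)\cap R_{(1^*)^*}(z)=R_1(x)\cap R_1(z)$). Let $j$ be the index with $(y,y')\in R_j$. Then: the path $x\to y\to y'$? No — rather, look at $x$: we have $(x,y)\in R_1$ and $(x,y')\in R_1$, so $y$ and $y'$ are both $R_1$-out-neighbours of $x$, whence $(y,y')\in R_{1^*}R_1$-type configuration giving $j\in J$ directly (since $p_{1,1^*}^{j}\neq 0$ is witnessed by the pair $(y,y')$ with common ``source'' $x$). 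Wait — I need to be careful about the direction convention in the definition of $J$. Let me instead argue symmetrically: from $(x,y)\in R_1$ and $(x,y')\in R_1$ we get that $(y,y')$ is realised as a pair with a common in-neighbour along $R_1$, which by the defining identity for $R_1R_{1^*}$ (after transposing one leg) forces $j\in J$; from $(y,z)\in R_1$ and $(y',z)\in R_1$ we get that $(y,y')$ is realised as a pair with a common out-neighbour $z$ along $R_1$, which forces... and combining the two legs $x\to y\to z$ against $y'$ appropriately forces $j\in I$. The precise bookkeeping is: $z\in R_1(y)$ and $y'\in R_1(x)$ with $(x,y)\in R_1$ shows, via concatenating $R_1$ then $R_1$ from $x$, that $(y,y')\in R_{1^*}R_1\cap (\text{something in }R_1^2)$; I would write this out using $p_{i,j}^l\neq0\iff p_{i^*,l}^j\neq 0\iff p_{l^*,j}^i\neq0$ (Lemma \ref{jb}\ref{jb-2}) to pass freely between the three incarnations, and check that the witness $(y,y')$ simultaneously certifies $p_{1,1}^j\neq0$ (so $j\in I$) and $p_{1,1^*}^j\neq0$ (so $j\in J$).

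The main obstacle I anticipate is purely notational: keeping straight which of $R_1$ versus $R_{1^*}$ appears on each edge once transposes are taken, so that one genuinely lands in $R_1^2$ and in $R_1R_{1^*}$ rather than, say, $R_{1^*}^2$ or $R_1R_1$. The cleanest route is probably to fix the concrete picture $(x,y),(y,z),(x,y'),(z,y')\in R_1$, set $(y,y')\in R_j$, and then: (i) from the triangle on $x,y,y'$ with $(x,y),(x,y')\in R_1$ conclude $R_j\in R_{1^*}R_1=R_1R_{1^*}$ (commutativity) so $j\in J$; (ii) from the path $x\to y\to z$ together with $(x,y')\in R_1$ and $(z,y')\in R_1$, i.e.\ $y'\in R_1(x)\cap R_1(z)$, use that $y'$ witnesses $(y,y')$ lying in a product of $R_1$'s along $x\to y$ then back — more precisely observe $(y,z)\in R_1$ and $(z,y')\in R_1$ gives $(y,y')\in R_1^2$, so $j\in I$. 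Then $(y,y')\in R_j$ with $j\in I\cap J$, which is exactly the claim, and since $(x,z)$, $y\in P_{1,1}(x,z)$ and $y'\in P_{1,1^*}(x,z)$ were arbitrary, the inclusion $P_{1,1}(x,z)\times P_{1,1^*}(x,z)\subseteq\bigcup_{j\in I\cap J}R_j$ follows. I would double-check the case $y=y'$ (then $(y,y')=(y,y)\in R_0$, and one must confirm $0\in I\cap J$, which holds since $k_1\geq1$ forces $0\in I$ and $0\in J$) so that the statement is not vacuously problematic there.
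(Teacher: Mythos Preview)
Your ``cleanest route'' is exactly the paper's proof: with $(x,y),(y,z),(x,y'),(z,y')\in R_1$ and $(y,y')\in R_j$, the vertex $z\in P_{1,1}(y,y')$ witnesses $j\in I$ while $x\in P_{1^*,1}(y,y')$ witnesses $j\in J$ (the latter using commutativity). One small correction to your aside: the case $y=y'$ cannot occur, since it would force $(y,z),(z,y)\in R_1$ and hence $R_1=R_{1^*}$, contradicting the standing hypothesis that $R_1$ is non-symmetric; your claim that $k_1\geq1$ forces $0\in I$ is in fact false for non-symmetric $R_1$ (one has $p_{1,1}^0=0$ then), so it is good that this case does not arise.
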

\begin{proof}Pick $y\in P_{1,1}(x,z)$ and $y'\in P_{1,1^{*}}(x,z)$. Since $z\in P_{1,1}(y,y')$ and $x\in P_{1^{*},1}(y,y')$, we have $(y,y')\in R_{j}$ for some $j\in I\cap J$.
\end{proof}

In the following, we divide the proof of the first statement of Theorem~\ref{main1} into three subsections according to separate assumptions based on the cardinality of the set $I$.

\subsection{The case $|I|=1$}

By (\ref{1.1a}), we have $I=\{i\}$ for some $i\in\{1,1^{*},2\}$. In view of Lemma~\ref{jiben} \ref{jiben-2}, one gets $p_{1,1}^{i}k_i=k_1^2$. Since $k_1\geq k_2$, we obtain $p_{1,1}^i=k_i=k_1$. It follows from Lemma~\ref{jiben} \ref{jiben-1} that $p_{1,1^{*}}^{i}=0$. This implies $i\notin J$ and also $i\neq 1$ by Lemma \ref{3.2}. Thus $1\notin I$, and hence $\{1,1^{*},i,i^{*}\}\cap J=\emptyset$. In view of (\ref{set J}), one has $i=1^{*}$ and $J=\{0,2,2^{*}\}$. By Lemma~\ref{jiben} \ref{jiben-3}, we have $p_{1,1^{*}}^{2}=k_1(k_1-1)/(2k_2)$. Lemma~\ref{jiben} \ref{jiben-4} implies $(p_{1,1}^{1^{*}})^{2}k_1=k_1^2+2(p_{1,1^{*}}^{2})^{2}k_2.$ Substituting $p_{1,1}^{1^{*}}$ and $p_{1,1^{*}}^{2}$ into the above equation,  we get $k_2=(k_1-1)/2$, and so $p_{1,1^{*}}^{2}=p_{1,1^{*}}^{2^*}=k_1$. Since $p_{2,1}^{1}k_1=p_{1,1^*}^{2}k_2$ and $p_{2^*,1}^{1}k_1=p_{1,1^*}^{2^*}k_2$ from Lemma \ref{jb} \ref{jb-2}, one obtains $p_{2,1}^1=p_{2^*,1}^{1}=k_2$. Note that $k_2k_1=p_{2,1}^1k_1=p_{2^*,1}^1k_1$. Lemma \ref{jb} \ref{jb-1} implies $R_{2}R_1=R_{2^*}R_1=\{R_1\}$. It follows from Lemma \ref{d=4} that $d=4$.

\subsection{The case $|I|=2$}

By Lemmas \ref{1,1*} and \ref{k1>k2}, we only need to consider the case that $I=\{1,2\}$ or $I=\{1^*,2\}$, and $k_1=k_2$.

\textbf{Case 1.} $I=\{1,2\}$.

By Lemma~\ref{3.2} and \eqref{set J}, one gets $J=\{0,1,1^{*}\}$ or $J=\{0,1,1^{*},2,2^{*}\}$.
Since $k_{1}=k_2$, from Lemma~\ref{jiben} \ref{jiben-3} and \eqref{2.1a}, one has $2p_{1,1}^{1}+2p_{1,1^{*}}^{2}=k_1-1$. In view of Lemma~\ref{jiben} \ref{jiben-2}, we get $p_{1,1}^{1}+p_{1,1}^{2}=k_1$. By Lemma~\ref{jiben} \ref{jiben-4}, we obtain
\[(p_{1,1}^{1})^{2}+(p_{1,1}^{2})^{2}=k_1+2(p_{1,1^{*}}^{1})^{2}+2(p_{1,1^{*}}^{2})^{2}.\]
In view of (\ref{2.1a}),  we get
\[(k_1-p_{1,1}^{1})^{2}=k_1+(p_{1,1}^{1})^{2}+(k_1-1-2p_{1,1}^{1})^{2}/2.\]
Then $p_{1,1}^{1}=(k_1-1)/2$ and $p_{1,1^{*}}^{2}=(k_1-1-2p_{1,1}^{1})/2=0$. Thus, $J=\{0,1,1^{*}\}$.

Since $p_{1,1}^{1}\neq0$, there exist elements $x,y,z$ such that $(x,y),(y,z),(x,z)\in R_{1}$. Since $I\cap J=\{1\}$, from Lemma~\ref{2.2}, we have $P_{1,1^{*}}(x,z)=P_{1,1^{*}}(x,y)$, contrary to the fact that $z\in P_{1,1^{*}}(x,y)$.

\textbf{Case 2.} $I=\{1^{*},2\}$.

By Lemma~\ref{3.2} and \eqref{set J}, one has $J=\{0,2,2^{*}\}$. Since $k_1=k_2$, from Lemma~\ref{jiben} \ref{jiben-3}, we obtain $p_{1,1^{*}}^{2}=(k_1-1)/2$.

We claim $p_{1,1}^{2}=1$. Pick $(x,z)\in R_{2}$. Since $I\cap J=\{2\}$, Lemma~\ref{2.2} implies
\[P_{1,1^{*}}(x,z)\subseteq\bigcap_{y\in P_{1,1}(x,z)}P_{1,2^{*}}(x,y).\]
Since $p_{1,1^*}^2=p_{1,2^*}^1$ from Lemma \ref{jb} \ref{jb-2}, we have $|P_{1,1^{*}}(x,z)|=|P_{1,2^{*}}(x,y)|$ for all $y\in P_{1,1}(x,z)$, which implies $P_{1,1^{*}}(x,z)=P_{1,2^{*}}(x,y)$ for all $y\in P_{1,1}(x,z)$. Now suppose $y,y'\in P_{1,1}(x,z)$ are distinct. Since $z\in P_{1,1^{*}}(y,y')$ and $J=\{0,2,2^{*}\}$, we have $(y,y')\in R_{2}\cup R_{2^{*}}$. Then we may assume without loss of generality $(y,y')\in R_{2^{*}}$. However, this is a contradiction since $y\in P_{1,2^{*}}(x,y')=P_{1,2^{*}}(x,y)$.

By Lemma~\ref{jiben} \ref{jiben-2}, we have $p_{1,1}^{1^{*}}=k_1-1$. Lemma~\ref{jiben} \ref{jiben-4} implies
\[(p_{1,1}^{1^{*}})^{2}+(p_{1,1}^{2})^{2}=k_1+2(p_{1,1^{*}}^{2})^{2}.\]
Substituting $p_{1,1}^{1^{*}},p_{1,1}^{2}$ and $p_{1,1^{*}}^{2}$ into the above equation, one gets $k_1=3$.

By Lemma \ref{d=4}, it suffices to show that \eqref{eq-1} and \eqref{eq-2} are both valid.

Fix $(x,z)\in R_{2}$, and pick $y\in P_{1,1}(x,z)$. Let $w,w',w''$ be three elements such that $R_{1}(z)=\{w,w',w''\}$.  Since $p_{1,1}^{1^{*}}=2$, we may assume that $w',w''\in P_{1,1}(z,y)$.  In view of Lemma \ref{jiben} \ref{jiben-1} and (\ref{2.1c}), we get $p_{2,1}^{1}=p_{2^{*},1}^{1}=1$. Then we may assume $w'\in P_{2,1}(x,y)$ and $w''\in P_{2^{*},1}(x,y)$. By $p_{1,1^{*}}^{2}=1$, one has $w\in P_{1,1^{*}}(x,z)$. It follows from Lemma~\ref{R2} \ref{R2-1} that \eqref{eq-1} is valid.

Since $z\in P_{2,1}(x,w')$, one has $p_{2,1}^{2}\neq0$, which implies that there exists an element $y'\in P_{1,2}(x,z)$. By $k_1=3$, we get $R_{1}(x)=\{y,y',w\}$. Thus, \eqref{eq-2} follows from Lemma~\ref{R2} \ref{R2-2}.

\begin{remark}
Since $k_1=k_2=3$ and $d=4$, we have $|X|=13$. It follows from \cite[Result 1]{MH96} that $\mathfrak{X}$ is isomorphic to $\textrm{Cyc}(13,4)$.
\end{remark}

\subsection{The case $|I|=3$}

By Lemma \ref{k1>k2}, we only need to consider the case $k_1=k_2$. Lemma~\ref{3.2} implies $\{0,1,1^{*}\}\subseteq J$. In view of Lemma~\ref{jiben} \ref{jiben-4} and \eqref{2.1a}, we obtain
\begin{align}
(p_{1,1}^{1^{*}})^{2}+(p_{1,1}^{2})^{2}=k_1+(p_{1,1}^{1})^{2}+2(p_{1,1^{*}}^{2})^{2}.\label{J}
\end{align}

Suppose $J=\{0,1,1^{*}\}$. In view of Lemma~\ref{jiben} \ref{jiben-3} and \eqref{2.1a}, we get $p_{1,1}^{1}=(k_1-1)/2$. Then Lemma~\ref{jiben} \ref{jiben-2} implies $p_{1,1}^{1^{*}}+p_{1,1}^{2}=(k_1+1)/2$. Since $p_{1,1^{*}}^{2}=0$, from \eqref{J}, one gets $(p_{1,1}^{1^{*}})^{2}+(p_{1,1}^{2})^{2}=(k_1+1)^{2}/4$. It follows that $p_{1,1}^{1^{*}}p_{1,1}^{2}=0$, a contradiction. Hence, $J=\{0,1,1^{*},2,2^{*}\}$.

Suppose $p_{1,1}^{2}=p_{1,1^{*}}^{2}$.  In view of Lemma~\ref{jiben} \ref{jiben-3} and (\ref{2.1a}), one gets $p_{1,1}^{1}+p_{1,1}^{2}=(k_1-1)/2$. By Lemma~\ref{jiben} \ref{jiben-2}, we have  $p_{1,1}^{1^{*}}=(k_1+1)/2$. In view of \eqref{J}, we get $(p_{1,1}^{1})^{2}+(p_{1,1}^{2})^{2}=(k_1-1)^{2}/4$. It follows that $p_{1,1}^{1}p_{1,1}^{2}=0$, a contradiction. Hence, $p_{1,1}^{2}\neq p_{1,1^{*}}^{2}$.

By Lemma \ref{d=4}, it suffices to show that \eqref{eq-1} and \eqref{eq-2} are both valid.

Fix $(x,z)\in R_{2}$, and pick an element $w\in R_{1}(z)$. Then $P_{1,1}(x,z)\subseteq(R_{1}\cup R_{1^{*}}\cup R_{2^{*}})(w)$ by \eqref{1.1a}. Suppose first $P_{1,1}(x,z)\cap(R_{1}\cup R_{1^{*}})(w)\neq\emptyset$. Then $(x,w)\in R_{1}\cup R_{1^{*}}\cup R_{2}\cup R_{2^{*}}$ by \eqref{1.1a}, \eqref{1.1b} and \eqref{1.1c}. Next suppose $P_{1,1}(x,z)\subseteq R_{2^{*}}(w)$. Since $p_{1,1}^2=p_{1^*,2}^1$ from Lemma \ref{jb} \ref{jb-2}, we have $P_{1,1}(x,z)=P_{1^{*},2}(z,w)$. Since $p_{1^{*},1}^{2}=p_{1,1^{*}}^{2}\neq0$ from (\ref{2.1b}), there exists an element $y_{0}\in P_{1^{*},1}(x,z)$. By $y_{0}\notin P_{1^{*},2}(z,w)$, we get $(y_{0},w)\in R_{1}\cup R_{1^{*}}$, which implies $(x,w)\in R_{1}\cup R_{1^{*}}\cup R_{2}\cup R_{2^{*}}$. Therefore, \eqref{eq-1} follows from Lemma~\ref{R2} \ref{R2-1}. Note that \eqref{eq-1} implies
\begin{align}
R_{1^{*}}R_{2^{*}}\subseteq\{R_{1},R_{1^{*}},R_{2},R_{2^{*}}\}.\label{subsec3-3}
\end{align}

Next, pick an element $w'\in R_{1}(x)$. Then $P_{1,1}(x,z)\subseteq(R_{0}\cup R_{1}\cup R_{1^{*}}\cup R_{2}\cup R_{2^{*}})(w')$ by \eqref{1.1b}. Suppose first $P_{1,1}(x,z)\cap(R_{0}\cup R_{1}\cup R_{1^{*}}\cup R_{2})(w')\neq\emptyset$. By \eqref{1.1a},\eqref{1.1c} and \eqref{subsec3-3}, we have $(z,w')\in R_{1}\cup R_{1^{*}}\cup R_{2}\cup R_{2^{*}}$. Now suppose $P_{1,1}(x,z)\subseteq P_{1,2}(x,w')$. Since $p_{1^*,2}^1=p_{1,1}^{2}\neq p_{1,1^{*}}^{2}=p_{1,2}^1$ from Lemma \ref{jb} \ref{jb-2}, one gets $p_{1^{*},2}^{1}<p_{1^{*},1}^{2}=p_{1,1^*}^{2}$. Pick an element $y\in P_{1,1}(x,z)$. Since $(y,w')\in R_{2}$, there exists an element $x'\in P_{1^{*},1}(y,w')$ such that $x'\notin P_{1^{*},2}(y,z)$. Then $(x',z)\in R_{1}\cup R_{1^{*}}$  by \eqref{1.1a}. \eqref{1.1b} and \eqref{1.1c} imply $(z,w')\in R_{1}\cup R_{1^{*}}\cup R_{2}\cup R_{2^{*}}$.  Thus, \eqref{eq-2} follows from Lemma~\ref{R2} \ref{R2-2}.

\begin{remark}
We do not know any example occurring in the case $|I|=3$ with $k_1=k_2$.  If there is an association scheme under this case, $\mathfrak{X}$ will not be pseudocyclic by Lemma~\ref{skew}. Indeed $\mathfrak{X}={\rm Cyc}(13,4)$ satisfies $|I|=2$.
\end{remark}

\begin{remark}
In Theorem \ref{main1}, the assumption $k_1>1$ is necessary. Indeed, the group scheme (see {\rm\cite[Chapter II, Example 2.1 (2)]{EB84}} for a definition of a group scheme) over a cyclic group of order $n>2$ satisfies all the conditions of Theorem \ref{main1} and $d=n-1$.
\end{remark}

\section{Schur rings}

Let $G$ be a finite, multiplicatively written, group with identity $e$. For a set $X\subseteq G$ and an integer $m$, we set $X^{(m)}=\{x^m\mid x\in X\}$. The formal sum of the elements of $X$ is denoted by $\underline{X}$, and we treat this element as an element of the group ring $\mathbb{Z}[G]$. Given $\mathcal{S}\subseteq2^G$, we set $\underline{\mathcal{S}}=\{\underline{X}\mid X\in\mathcal{S}\}$.

A ring $\mathcal{A}\subseteq\mathbb{Z}[G]$ is called a {\em Schur ring} (briefly an {\em S-ring}) over the group $G$ if there exists a
partition $\mathcal{S}=\mathcal{S}(\mathcal{A})$ of $G$ satisfying the following conditions:
\begin{enumerate}
\item\label{S-ring-1} the set $\underline{\mathcal{S}}$ is a linear basis of $\mathcal{A}$;

\item\label{S-ring-2} $\{e\}\in\mathcal{S}$;

\item\label{S-ring-3} $X^{(-1)}\in\mathcal{S}$ for all $X\in\mathcal{S}$.
\end{enumerate}

It can be proved that a $\mathbb{Z}$-submodule $\mathcal{A}\subseteq\mathbb{Z}[G]$ is an S-ring over $G$ if and only if the following two conditions are satisfied:
\begin{itemize}
\item $\mathcal{A}$ is closed with respect to the operation $\sum_{x}a_{x}x\mapsto\sum_{x}a_{x}x^{-1}$;\vspace{-0.2cm}\\

\item $\mathcal{A}$ is a ring with identity $\underline{\{e\}}$ (resp. $\underline{G}$) with respect to the ordinary (resp. the componentwise) multiplication.
\end{itemize}

A partition $\mathcal{S}$ of $G$ is called a {\em Schur partition} if it is satisfies conditions \ref{S-ring-2}, \ref{S-ring-3} and $\mathcal{A}:=\langle\underline{\mathcal{S}}\rangle$ is a
subalgebra of $\mathbb{Z}[G]$. One can see that there is a 1-1 correspondence between S-rings over $G$ and Schur partitions of $G$. The elements (classes) of $\mathcal{S}$ are called the {\em basic sets} of $\mathcal{A}$; the number ${\rm rk}(\mathcal{A}):=|\mathcal{S}|$ is called the {\em rank} of $\mathcal{A}$.

Any union of basic sets of the S-ring $\mathcal{A}$ is called an {\em $\mathcal{A}$-set}. Thus $X\subseteq G$ is an $\mathcal{A}$-set if and only if $\underline{X}\in\mathcal{A}$. A subgroup $H$ of $G$ is called an {\em $\mathcal{A}$-subgroup} if $H$ is an $\mathcal{A}$-set. We say that $\mathcal{A}$ is {\em imprimitive} if there is a proper non-trivial $\mathcal{A}$-subgroup of $G$; otherwise $\mathcal{A}$ is {\em primitive}. If $\mathcal{S}(\mathcal{A})$ coincides with the set of all orbits of a group $K\leq {\rm Aut}(G)$ on $G$, we say that $\mathcal{A}$ is an {\em orbit} S-ring and denoted by $\mathcal{O}(K,G)$.

Let $(X,\{R_{i}\}_{i=0}^{d})$ be an association scheme. For an element $x$ of $X$ and a nonempty subset $F$ of $\{R_{i}\}_{i=0}^{d}$, define $F(x)=\{y\in X\mid (x,y)\in \bigcup_{f\in F}f\}$. A connection between S-rings and association schemes is as follows.

\begin{thm}[{\cite[Section 3]{BX17}}]\label{s-ring and a.s}
Let $\mathcal{S}$ be a partition of $G$ and let $R_G(\mathcal{S})=\{R_G(X)\mid X\in\mathcal{S}\}$, where $R_G(X)=\{(g,xg)\mid g\in G,x\in X\}$ for any $X\in\mathcal{S}$. Then $\mathcal{S}$ is a Schur partitions of $G$ if and only if the pair $(G,R_G(\mathcal{S}))$ is an association scheme. Moreover, in this case, there is a one-to-one correspondence between closed subsets of $R_G(S)$ and $\mathcal{A}$-subgroups given by $F$ to $F(e)$.
\end{thm}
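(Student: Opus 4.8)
The final statement to prove is Theorem~\ref{s-ring and a.s}, the correspondence between Schur partitions of $G$ and association schemes of the form $(G, R_G(\mathcal{S}))$. The plan is to verify the four association scheme axioms directly from the definition of a Schur partition, exploiting the translation structure built into the relations $R_G(X) = \{(g, xg) \mid g \in G,\ x \in X\}$.

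\textbf{Setting up the correspondence.} First I would record the elementary fact that the relations $R_G(X)$ are precisely the invariant relations under right translation: $(g, h) \in R_G(X)$ iff $hg^{-1} \in X$, so $(g,h)\in R_G(X) \Leftrightarrow (gk, hk)\in R_G(X)$ for all $k \in G$. From this, axiom~\ref{as-1} is immediate since $R_G(\{e\}) = \{(g,g)\mid g\in G\}$, and this is in $R_G(\mathcal{S})$ exactly because of condition~\ref{S-ring-2}. Axiom~\ref{as-2} follows because $\mathcal{S}$ is a partition of $G$: the sets $X$ cover $G$ disjointly, hence the $R_G(X)$ cover $G\times G$ disjointly. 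Axiom~\ref{as-3} is the translation of condition~\ref{S-ring-3}: one checks $R_G(X)^{\mathrm{T}} = R_G(X^{(-1)})$, so the transpose-closure of $\mathcal{S}$ under inversion is exactly what makes the relation family closed under transpose.

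\textbf{The intersection numbers and the algebra condition.} The heart of the argument is axiom~\ref{as-4}, and this is where the hypothesis that $\mathcal{A} = \langle\underline{\mathcal{S}}\rangle$ is a subalgebra of $\mathbb{Z}[G]$ enters (in the forward direction), and where one extracts that condition (in the reverse direction). I would compute, for $X, Y, Z \in \mathcal{S}$ and a fixed $(g,h) \in R_G(Z)$, the count $|R_G(X)(g) \cap R_G(Y^{(-1)})(h)|$; by right-translating we may take $h = e$, whence this count equals $|\{x \in X \mid x^{-1} \in Y^{(-1)}g^{-1}\}|$, i.e. the number of ways to write $g^{-1} = x^{-1} y$ with $x \in X$, $y \in Y$ — equivalently, after relabeling, the coefficient of a fixed element in the product $\underline{X^{(-1)}}\cdot\underline{Y}$ (or $\underline{X}\cdot\underline{Y}$, depending on conventions) in $\mathbb{Z}[G]$. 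The key point is that this coefficient depends only on which basic set $z = hg^{-1}$ lies in, precisely when $\underline{X}\cdot\underline{Y}$ lies in the span of $\{\underline{W} \mid W \in \mathcal{S}\}$ — that is, when $\mathcal{A}$ is multiplicatively closed. Thus axiom~\ref{as-4} holds iff $\mathcal{A}$ is a subalgebra, which (together with the already-established conditions~\ref{S-ring-2},~\ref{S-ring-3}) is exactly the statement that $\mathcal{S}$ is a Schur partition. This gives both directions of the equivalence simultaneously.

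\textbf{The subgroup correspondence.} For the ``moreover'' clause, I would show that for a subset $Y \subseteq G$ that is a union of basic sets with $e \in Y$, the set of relations $\{R_G(X) \mid X \in \mathcal{S},\ X \subseteq Y\}$ is closed in the association scheme sense if and only if $Y$ is a subgroup. Indeed, closedness of $F = \{R_G(X)\mid X\subseteq Y\}$ says $R_G(X^{(-1)}) R_G(X') \subseteq F$ for all $X, X' \subseteq Y$, which by the intersection-number computation above translates to $X^{(-1)}X' \subseteq Y$ for all such $X, X'$, i.e. $Y^{(-1)}Y \subseteq Y$; since $e \in Y$ and $G$ is finite, this is equivalent to $Y$ being a subgroup. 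Unwinding $F(e) = \bigcup_{X \subseteq Y} X = Y$ identifies the bijection. I expect the main obstacle to be purely bookkeeping: pinning down the correct convention (left vs.\ right cosets, $X$ vs.\ $X^{(-1)}$) so that the coefficient of $\underline{X}\cdot\underline{Y}$ at $z$ matches $p_{i,j}^{l}$ on the nose, and making sure the ``only if'' direction of the algebra equivalence is argued carefully — one must observe that if some product $\underline{X}\cdot\underline{Y}$ failed to lie in the span, then the corresponding count would differ across elements of a single basic set, violating axiom~\ref{as-4}. Since the whole result is cited from \cite[Section 3]{BX17}, I would keep the write-up brief, citing that reference for the detailed verification and only indicating these translation dictionaries.
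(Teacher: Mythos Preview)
The paper does not give its own proof of this theorem; it is stated with the citation \cite[Section 3]{BX17} and used as a black box. Your proposal is a correct and standard verification of the dictionary between Schur partitions and translation association schemes, and your closing remark that one should ``keep the write-up brief, citing that reference'' is exactly what the paper does---it cites and moves on.
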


Let $\Gamma$ denote a Cayley digraph ${\rm Cay}(G,S)$, where $G$ is a finite, multiplicatively written, group with identity $e$. Write $N_{\tilde{i}}$ instead of $\Gamma_{\tilde{i}}(e)$ for all $\tilde{i}\in\tilde{\partial}(\Gamma)$. We say that the $\mathbb{Z}$-submodule of the group algebra $\mathbb{Z}[G]$ spanned by all the elements $\underline{N_{\tilde{i}}}$ with $\tilde{i}\in\tilde{\partial}(\Gamma)$ is the {\em two-way distance module} of $\Gamma$, and is denoted by $\mathcal{D}(G,S)$.

\begin{prop}\label{wdrdg s-ring}
The Cayley digraph ${\rm Cay}(G,S)$ is weakly distance-regular if and only if the two-way distance
module $\mathcal{D}(G,S)$ is an S-ring.
\end{prop}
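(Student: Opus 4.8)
The plan is to identify the two-way distance relations of $\Gamma={\rm Cay}(G,S)$ as the ``Cayley relations'' $R_G(X)$ appearing in Theorem~\ref{s-ring and a.s}, and then read off the equivalence from that theorem. Throughout I would use that a weakly distance-regular digraph is by definition strongly connected, so we may assume $\langle S\rangle=G$ and that $\tilde\partial(x,y)$ is a well-defined pair of nonnegative integers for all $x,y\in G$. The first observation is that, for each $g\in G$, the right translation $\rho_{g}\colon x\mapsto xg$ is an automorphism of $\Gamma$, since $(x,y)\in A\Gamma\iff yx^{-1}\in S\iff(yg)(xg)^{-1}\in S\iff(xg,yg)\in A\Gamma$. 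Hence $\rho_{g}$ preserves one-way distances, so $\tilde\partial(x,y)=\tilde\partial(xg,yg)$, and therefore each relation $\Gamma_{\tilde i}$ is invariant under every $\rho_{g}$. Writing $N_{\tilde i}=\Gamma_{\tilde i}(e)$ as in the text, this invariance gives $(y,z)\in\Gamma_{\tilde i}\iff(e,zy^{-1})\in\Gamma_{\tilde i}\iff zy^{-1}\in N_{\tilde i}$, i.e.\ $\Gamma_{\tilde i}=R_{G}(N_{\tilde i})$. Consequently $\mathfrak{X}(\Gamma)=(G,R_{G}(\mathcal{S}))$, where $\mathcal{S}:=\{N_{\tilde i}\mid\tilde i\in\tilde\partial(\Gamma)\}$ is a finite partition of $G$ into nonempty sets.

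Next I would check that $\mathcal{S}$ satisfies conditions~\ref{S-ring-2} and~\ref{S-ring-3} unconditionally: since $\Gamma_{(0,0)}$ is the diagonal relation, $N_{(0,0)}=\{e\}\in\mathcal{S}$; and applying $\rho_{x^{-1}}$ yields $\partial(e,x)=\partial(x^{-1},e)$ and $\partial(x,e)=\partial(e,x^{-1})$, so $\tilde\partial(e,x^{-1})=(b,a)$ whenever $\tilde\partial(e,x)=(a,b)$, whence $N_{(a,b)}^{(-1)}=N_{(b,a)}\in\mathcal{S}$. Since the $N_{\tilde i}$ are nonempty and pairwise disjoint, $\underline{\mathcal{S}}$ is a linear basis of the two-way distance module, so $\mathcal{D}(G,S)=\langle\underline{\mathcal{S}}\rangle$; by the previous sentence this module is always closed under $\sum_{x}a_{x}x\mapsto\sum_{x}a_{x}x^{-1}$.

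With these facts the proposition follows from Theorem~\ref{s-ring and a.s}. If $\Gamma$ is weakly distance-regular, then $\mathfrak{X}(\Gamma)=(G,R_{G}(\mathcal{S}))$ is an association scheme, so $\mathcal{S}$ is a Schur partition of $G$; hence $\mathcal{D}(G,S)=\langle\underline{\mathcal{S}}\rangle$ is a subalgebra of $\mathbb{Z}[G]$, and combined with the second paragraph it is an S-ring. Conversely, if $\mathcal{D}(G,S)=\langle\underline{\mathcal{S}}\rangle$ is an S-ring, then in particular it is a subalgebra of $\mathbb{Z}[G]$, so $\mathcal{S}$ fulfils all the defining conditions of a Schur partition; by Theorem~\ref{s-ring and a.s} the pair $(G,R_{G}(\mathcal{S}))=\mathfrak{X}(\Gamma)$ is an association scheme, i.e.\ $\Gamma$ is weakly distance-regular.

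I expect no serious obstacle: the argument is essentially the bijection between right-translation-invariant relations on $G$ and subsets of $G$, followed by an application of Theorem~\ref{s-ring and a.s}. The points that deserve care are the clean identification $\mathcal{D}(G,S)=\langle\underline{\mathcal{S}}\rangle$ (so that ``$\mathcal{D}(G,S)$ is an S-ring'' is literally the assertion ``$\mathcal{S}$ is a Schur partition''), the compatibility of the Cayley convention $yx^{-1}\in S$ with the convention $R_{G}(X)=\{(g,xg)\mid g\in G,\ x\in X\}$ via \emph{right} translations, and keeping strong connectivity in force so that $\mathcal{S}$ is genuinely a partition of $G$.
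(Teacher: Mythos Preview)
Your proof is correct and follows exactly the same approach as the paper: you establish $\Gamma_{\tilde i}=R_{G}(N_{\tilde i})$ via right translations and then invoke Theorem~\ref{s-ring and a.s}. The paper's proof is just these two steps stated in one line; you have merely spelled out the details (right-translation invariance, the verification of conditions~\ref{S-ring-2} and~\ref{S-ring-3}, and the identification $\mathcal{D}(G,S)=\langle\underline{\mathcal{S}}\rangle$) that the paper leaves implicit.
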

\begin{proof} In the notation of Theorem \ref{s-ring and a.s}, we have $R_G(N_{\tilde{i}})=\Gamma_{\tilde{i}}$ for all $\tilde{i}\in\tilde{\partial}(\Gamma)$. The desired result follows from Theorem \ref{s-ring and a.s}.\end{proof}

The connection between primitive S-rings and primitive weakly distance-regular Cayley digraphs is established through the following simple observation.

\begin{prop}\label{primitive}
Let ${\rm Cay}(G,S)$ be a weakly distance-regular digraph with $\mathcal{D}(G,S)$ as its two-way distance module. Then $\mathcal{D}(G,S)$ is a primitive S-ring if and only if ${\rm Cay}(G,S)$ is primitive.
\end{prop}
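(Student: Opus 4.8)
The plan is to chase definitions on both sides through the correspondence of Theorem~\ref{s-ring and a.s}. Recall that primitivity of the S-ring $\mathcal{D}(G,S)$ means that $G$ has no proper non-trivial $\mathcal{D}(G,S)$-subgroup, while primitivity of the weakly distance-regular digraph ${\rm Cay}(G,S)$ means, by definition, that the attached scheme $\mathfrak{X}(\Gamma)=(G,\{\Gamma_{\tilde i}\}_{\tilde i\in\tilde\partial(\Gamma)})$ is primitive, i.e.\ every non-diagonal relation $\Gamma_{\tilde i}$ generates $\mathfrak{X}(\Gamma)$; equivalently, $\mathfrak{X}(\Gamma)$ has no closed subset other than $\{\Gamma_{0}\}$ and the whole set of relations. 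So the statement to prove is: $\mathfrak{X}(\Gamma)$ has a nontrivial proper closed subset if and only if $G$ has a nontrivial proper $\mathcal{D}(G,S)$-subgroup.

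First I would invoke the ``moreover'' part of Theorem~\ref{s-ring and a.s}: applied to the Schur partition $\mathcal{S}=\{N_{\tilde i}\}_{\tilde i\in\tilde\partial(\Gamma)}$ (which is a Schur partition precisely because ${\rm Cay}(G,S)$ is weakly distance-regular, by Proposition~\ref{wdrdg s-ring}), we have $R_G(N_{\tilde i})=\Gamma_{\tilde i}$, so $R_G(\mathcal{S})=\{\Gamma_{\tilde i}\}_{\tilde i\in\tilde\partial(\Gamma)}$ and the scheme $(G,R_G(\mathcal{S}))$ is exactly $\mathfrak{X}(\Gamma)$. The theorem then gives a one-to-one correspondence $F\mapsto F(e)$ between closed subsets of $\{\Gamma_{\tilde i}\}$ and $\mathcal{A}$-subgroups of $G$, where $\mathcal{A}=\mathcal{D}(G,S)$. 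Under this bijection, the closed subset $\{\Gamma_{0}\}$ corresponds to $\{e\}$ and the full set of relations corresponds to $G$; hence the bijection restricts to one between \emph{proper non-trivial} closed subsets and \emph{proper non-trivial} $\mathcal{A}$-subgroups. Therefore $\mathcal{D}(G,S)$ is imprimitive (as an S-ring) if and only if $\mathfrak{X}(\Gamma)$ is imprimitive (as a scheme), which is exactly the claim after negation.

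The one point that needs a small argument rather than a pure citation is the equivalence, for an association scheme of the special form $(G,R_G(\mathcal{S}))$ arising from a Schur partition, between ``every non-diagonal relation generates the scheme'' (the definition of primitivity used in the paper) and ``the only closed subsets are the trivial ones''. One direction is immediate: if a proper non-trivial closed subset $F$ exists, then any relation inside $F$ fails to generate. Conversely, if some non-diagonal relation $R_j$ fails to generate, the smallest closed subset containing it is a proper closed subset that is non-trivial (it contains $R_j\neq R_0$), so the scheme is imprimitive. Thus ``primitive'' in the sense of generation coincides with ``no proper non-trivial closed subset'', and this is the notion transported by Theorem~\ref{s-ring and a.s}.

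I expect no serious obstacle here; the proof is essentially bookkeeping on top of Theorem~\ref{s-ring and a.s} and Proposition~\ref{wdrdg s-ring}. The only thing to be careful about is to make sure the correspondence $F\mapsto F(e)$ genuinely sends the two extreme closed subsets to $\{e\}$ and $G$ respectively — which is clear since $\Gamma_0(e)=\{e\}$ and the union of all $\Gamma_{\tilde i}(e)$ is $G$ — so that properness and non-triviality are preserved in both directions. With that observed, the biconditional follows at once.
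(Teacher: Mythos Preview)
Your proof is correct, but the paper takes a different and somewhat more concrete route. Rather than invoking the closed-subset/$\mathcal{A}$-subgroup bijection from Theorem~\ref{s-ring and a.s}, the paper argues directly at the level of digraphs: primitivity of the attached scheme is first rephrased as each digraph $(G,\Gamma_{\tilde i})$ with $\tilde i\neq(0,0)$ being strongly connected, then the isomorphism $(G,\Gamma_{\tilde i})\cong{\rm Cay}(G,N_{\tilde i})$ converts this to the condition that every $N_{\tilde i}$ generates the group $G$, which is the S-ring primitivity.

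Your approach is more structural: you stay entirely inside the correspondence of Theorem~\ref{s-ring and a.s} and never touch connectivity of auxiliary digraphs. This has the advantage of being a clean one-line transport of primitivity through an existing bijection, though you do need the small extra remark (which you supply) that scheme-primitivity in the ``every relation generates'' sense coincides with ``no proper nontrivial closed subset''. The paper's approach avoids that remark but implicitly uses that if some $N_{\tilde i}$ fails to generate $G$ then there is a proper nontrivial $\mathcal{A}$-subgroup. Both arguments are short and valid; they simply emphasize different characterizations of primitivity.
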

\begin{proof} The weakly distance-regular digraph $\Gamma:={\rm Cay}(G,S)$ is primitive if and only if each digraph $(G,\Gamma_{\tilde{i}})$ with $\tilde{i}\in\tilde{\partial}(\Gamma)\setminus\{(0,0)\}$ is strongly connected. Since the digraph $(G,\Gamma_{\tilde{i}})$ is isomorphic to the digraph Cay$(G,N_{\tilde{i}})$, $\Gamma$ is primitive if and only if the set $N_{\tilde{i}}$ generates the group $G$ for each $\tilde{i}\in\tilde{\partial}(\Gamma)\setminus\{(0,0)\}$ . The desired result follows.\end{proof}

Let $\mathcal{A}$ be an S-ring over a group $G$, and $L$ be a normal $\mathcal{A}$-subgroup. One can define a quotient S-ring over the factor group $G/L$ as follows.
The natural homomorphism $\pi:G\rightarrow G/L$ can be canonically extended to a ring
homomorphism $\mathbb{Z}[G]\rightarrow\mathbb{Z}[G/L]$ which we also denote by $\pi$. Then $\pi(\mathcal{A})$ is a subring of $\mathbb{Z}[G/L]$. By \cite[Proposition 4.8]{MM04}, $\pi(\mathcal{A})$ is an S-ring over $G/L$ the basic sets of which are given by $\mathcal{S}(\mathcal{A}/L)=\{X/L\mid X\in\mathcal{S}(\mathcal{A})\}$, where $X/L=\{xL\mid x\in X\}$. We call this $\pi(\mathcal{A})$ the {\em quotient S-ring} and denote it by $\mathcal{A}/L$.

Let $\mathcal{A}$ be an S-ring over a group $G$. Given $X\in\mathcal{S}(\mathcal{A})$ the set
\begin{align}
{\rm rad}(X)=\{g\in G\mid gX=Xg=X\}\nonumber
\end{align}
is obviously a subgroup of $G$ called the {\em radical} of $X$. Then ${\rm rad}(X)$ is the largest subgroup of $G$ such that $X$ is a union of left as well as right cosets by it. By \cite[Proposition 23.5]{HW64}, ${\rm rad}(X)$ is an $\mathcal{A}$-subgroup.

We now apply the above results to the case where $G$ is an abelian group, change multiplicative
notations to additive ones, and denote the identity element by $0$ instead of $e$. The following theorem forms the basis of the Schur theory of S-rings over abelian groups. In \cite{AEB98} it is called the multiplier theorem.

\begin{thm}\label{Multiplier1}
Let $\mathcal{A}$ be an S-ring over an abelian group $G$, $X\in\mathcal{S}(\mathcal{A})$, and $m$ an integer coprime with the order of $G$. Then $X^{(m)}\in\mathcal{S}(\mathcal{A})$.
\end{thm}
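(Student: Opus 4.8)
The statement to prove is Theorem \ref{Multiplier1}, the multiplier theorem for S-rings over abelian groups.

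\textbf{Proof proposal.}

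The plan is to use the Frobenius-type argument that underlies all multiplier theorems: work in the group algebra over a field of positive characteristic and exploit the fact that raising to the $p$-th power is a ring homomorphism there. First I would reduce to the case where $m = p$ is a prime not dividing $|G|$, since the set of integers $m$ for which $X^{(m)} \in \mathcal{S}(\mathcal{A})$ for every $X \in \mathcal{S}(\mathcal{A})$ is closed under multiplication (apply the map $x \mapsto x^{(m_1)}$, then $x \mapsto x^{(m_2)}$, and note $(X^{(m_1)})^{(m_2)} = X^{(m_1 m_2)}$ in an abelian group), and every $m$ coprime to $|G|$ is a product of such primes (working modulo the exponent of $G$, which suffices because $x^{|G|} = 0$ additively, i.e.\ $x^{(m)}$ depends only on $m$ modulo the exponent). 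It also suffices to treat primes $p$ with $p \nmid |G|$; primes dividing $|G|$ never arise in the factorization of an $m$ coprime to $|G|$.

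Next I would pass to the group algebra $\mathbb{F}_p[G]$, where $\mathbb{F}_p$ is the field with $p$ elements, and reduce the basis elements $\underline{X}$ modulo $p$. The key identity is that in $\mathbb{F}_p[G]$ the Frobenius map is a ring endomorphism, so $\underline{X}^p = \left(\sum_{x \in X} x\right)^p = \sum_{x \in X} x^p = \underline{X^{(p)}}$, using that $G$ is abelian and that the multinomial coefficients other than those of the pure $p$-th powers are divisible by $p$. Since $\mathcal{A}$ is a ring, $\underline{X}^p \in \mathcal{A}$, hence $\underline{X^{(p)}}$ lies in the $\mathbb{F}_p$-span of $\{\bar{\underline{Y}} : Y \in \mathcal{S}(\mathcal{A})\}$; but $X^{(p)}$ is a genuine subset of $G$ (here I use $p \nmid |G|$: the map $x \mapsto x^p$ is a bijection on $G$, so $|X^{(p)}| = |X|$ and no collapsing of elements occurs, hence $\underline{X^{(p)}}$, not merely its reduction mod $p$, is a well-defined $0/1$-vector). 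Comparing supports: $\underline{X^{(p)}}$ is a $0/1$ linear combination of the $\underline{Y}$, and since distinct basic sets are disjoint, each $Y$ appearing must be entirely contained in $X^{(p)}$ and together they partition it. To conclude $X^{(p)}$ is a single basic set, I would run the same argument with $m$ replaced by $m' = p^{k}$ where $p^k \equiv 1$ modulo the exponent of $G$ (such $k$ exists since $\gcd(p, \exp G) = 1$), giving $X^{(p^k)} = X$; feeding the partition through $k$ iterations shows the partition of $X$ into pieces of $X^{(p)}$-type is preserved and returns to the trivial one, forcing $X^{(p)} \in \mathcal{S}(\mathcal{A})$.

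The main obstacle is making the last support-comparison step airtight: after reducing mod $p$ one only controls things up to the kernel of $\mathbb{Z}[G] \to \mathbb{F}_p[G]$, so I must be careful that $\underline{X^{(p)}}$ really is an $\mathcal{A}$-set over $\mathbb{Z}$ and that it is a union of \emph{whole} basic sets rather than a more complicated combination; the cleanest route is to observe that the map $\sigma: x \mapsto x^{(p)}$ is an automorphism of $G$ (as $p \nmid |G|$), and that $\sigma$ induces a permutation of $G$ under which, by the Frobenius computation, the partition $\mathcal{S}(\mathcal{A})$ is coarsened into another Schur partition; since $\sigma$ has finite order dividing $\exp G$ and $\sigma^k = \mathrm{id}$, the coarsening must be trivial, i.e.\ $\sigma$ permutes $\mathcal{S}(\mathcal{A})$, which is exactly the claim. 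Everything else is routine bookkeeping with the group-algebra multiplication and the definition of a Schur partition.
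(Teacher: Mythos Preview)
Your proof is correct and follows the classical Schur--Wielandt argument via the Frobenius endomorphism in $\mathbb{F}_p[G]$. Note, however, that the paper does \emph{not} supply its own proof of this theorem: it is stated as a known result (the multiplier theorem) and attributed to the literature, specifically \cite{AEB98}. So there is nothing in the paper to compare your argument against.

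One small remark on your write-up: the cleanest way to finish, as you yourself note at the end, is the counting argument rather than the iteration argument. Once you know that each $\sigma(X)$ (for $X\in\mathcal{S}(\mathcal{A})$) is a union of basic sets, the images $\{\sigma(X):X\in\mathcal{S}(\mathcal{A})\}$ form a partition of $G$ into $|\mathcal{S}(\mathcal{A})|$ blocks that is refined by $\mathcal{S}(\mathcal{A})$; two partitions with the same number of blocks, one refining the other, must coincide. This avoids any appeal to the order of $\sigma$ and makes the ``coarsening must be trivial'' step immediate. Your earlier phrasing (``feeding the partition through $k$ iterations'') is also valid but slightly more cumbersome to make precise.
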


The following statements are two properties of basic sets of
S-rings over cyclic groups that will be necessary.


\begin{lemma}\label{genrates}
Let $X$ be a basic set of an S-ring $\mathcal{A}$ over a cyclic group $G$. If $X$ generates $G$, then $X$ contains a generator of $G$.
\end{lemma}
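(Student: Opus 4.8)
The plan is to argue by contradiction, using the multiplier theorem (Theorem~\ref{Multiplier1}) together with the fact that $\mathcal{A}$-subgroups are closed under the relevant operations. Suppose $X$ is a basic set generating $G=\mathbb{Z}_n$ but $X$ contains no generator of $G$; equivalently, every element of $X$ lies in some proper subgroup of $G$. Since the proper subgroups of a cyclic group are exactly the $\mathbb{Z}_n$-submodules $d\mathbb{Z}_n$ for $d\mid n$, $d>1$, this means every $x\in X$ is divisible by some prime $p$ dividing $n$. Thus $X\subseteq\bigcup_{p\mid n}p\mathbb{Z}_n$, the union being over the (finitely many) primes dividing $n$.

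First I would fix a prime $p\mid n$ and consider the subset $X_p=\{x\in X\mid p\nmid x\}$ — the elements of $X$ not in the subgroup $p\mathbb{Z}_n$. The key observation is that for any integer $m$ coprime to $n$, multiplication by $m$ permutes $X$ (by Theorem~\ref{Multiplier1}, $X^{(m)}=X$ since $X$ is a basic set and, being a single basic set with $X^{(m)}$ also a basic set of the same cardinality containing... — more precisely one uses that $X^{(m)}\in\mathcal{S}(\mathcal{A})$ and, if $m\equiv1$ on enough of $X$, forces $X^{(m)}=X$). Then I would use the Chinese Remainder Theorem: choose $m$ coprime to $n$ with $m\equiv 1\pmod{n/p^{\,a}}$ and $m$ a primitive root mod $p^a$ (where $p^a\|n$), so that multiplication by $m$ fixes the $p$-part structure trivially on elements divisible by $p$ but acts with large orbits on elements coprime to $p$. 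Averaging/counting the images of $X$ under the cyclic group generated by such multipliers, one shows that if $X_p\neq\emptyset$ then $X$ must meet every coset of $p\mathbb{Z}_n$, which in particular puts generators into $X$ — the contradiction. Alternatively, and perhaps more cleanly, I would pass to the quotient: let $H=\mathrm{rad}$-type $\mathcal{A}$-subgroup built from $X$, or directly let $K$ be the $\mathcal{A}$-subgroup generated by $X$ (which exists since the subgroup generated by an $\mathcal{A}$-set is an $\mathcal{A}$-subgroup — this follows from Theorem~\ref{s-ring and a.s} relating closed subsets and $\mathcal{A}$-subgroups), so $K=G$; then for each maximal $\mathcal{A}$-subgroup $M$ (proper, of prime index $p$), $X\not\subseteq M$, so $X$ has an element outside $M$, i.e. an element of $X$ whose image generates $G/M\cong\mathbb{Z}_p$. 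The real content is to combine these across all primes simultaneously.

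To finish, I would run the following argument: since $X$ generates $G=\mathbb{Z}_n$, for every prime $p\mid n$ there is an element $x_p\in X$ with $p\nmid x_p$. Now invoke the multiplier theorem once more: for $m$ coprime to $n$, $X^{(m)}=X$ because $X$ is a union of $\langle m\rangle$-orbits inside a single basic set, hence if the orbit of any $x\in X$ under multiplication meets $X$ it lies in $X$ — but $X^{(m)}$ is itself a basic set, so $X^{(m)}=X$. Consequently $X$ is a union of orbits of the full multiplier group $(\mathbb{Z}_n)^\times$ acting by multiplication. The orbit of any fixed $x$ under $(\mathbb{Z}_n)^\times$ is exactly $\{y : \gcd(y,n)=\gcd(x,n)\}$. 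So $X$ is a union of "level sets" of the gcd function. If $X$ contained no generator, then $\gcd(x,n)>1$ for all $x\in X$, so $X\subseteq\{y:\gcd(y,n)>1\}$; but this latter set generates a proper subgroup of $\mathbb{Z}_n$ only if... actually it generates all of $\mathbb{Z}_n$ when $n$ has at least two prime factors, so this needs the extra input that $X$ generates $G$ forces, via the gcd-orbit description, an orbit of elements of gcd $1$ to be present — because the subgroup generated by $\bigcup_{d\mid n,\,d>1}\{y:\gcd(y,n)=d\}$ equals $\mathbb{Z}_n$ only using generators of gcd $1$ present among them is false; one checks directly that $\{2,3\}$ generates $\mathbb{Z}_6$ with no generator of $\mathbb{Z}_6$ in $\{2,3\}$ — \emph{but} $\{2,3\}$ is not a single $(\mathbb{Z}_6)^\times$-orbit union in the required sense, since an orbit union containing $2$ must contain $4$ and one containing $3$ must contain $3$ alone, giving $X\supseteq\{2,3,4\}$, still no generator, yet $\langle 2,3,4\rangle=\mathbb{Z}_6$.

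\textbf{The main obstacle}, then, is precisely this: the clean "gcd-orbit" argument shows $X$ is a union of gcd-level-sets but does \emph{not} by itself preclude such a union generating $G$ without a generator. The resolution must use that $\mathcal{A}$ is a \emph{Schur ring}, not just a multiplier-closed partition: one applies it to the quotient $\mathcal{A}/pL$ or uses that for the $\mathcal{A}$-subgroup $H$ of index $p$ (smallest prime factor), the image $\bar X$ in $G/H\cong\mathbb{Z}_p$ is a basic set of the quotient S-ring; since $X$ generates $G$, $\bar X\neq\{0\}$, so $\bar X$ contains a nonzero element of $\mathbb{Z}_p$, which is a generator there — lifting, $X$ has an element not in $H$. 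Iterating over a composition-like tower of $\mathcal{A}$-subgroups and combining with the gcd-orbit structure (an element outside the maximal $\mathcal{A}$-subgroup of each prime index, whose gcd-orbit is then forced into $X$) pins down that $X$ must contain an element coprime to $n$. I expect the bookkeeping in this last combination step — ensuring the elements found outside the various maximal $\mathcal{A}$-subgroups can be taken to be (or forced to generate, via their orbits) a single element coprime to all prime divisors of $n$ — to be the delicate part, and it is where the hypothesis that $\mathcal{A}$ is a genuine S-ring (so that radicals and quotients are again $\mathcal{A}$-subgroups / S-rings) is essential.
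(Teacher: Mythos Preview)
Your proposal contains a genuine error and does not converge to a proof. The claim that $X^{(m)}=X$ for every $m$ coprime to $n$ is false: Theorem~\ref{Multiplier1} guarantees only that $X^{(m)}$ is \emph{some} basic set of $\mathcal{A}$, not that it equals $X$. (For instance, in ${\rm Cyc}(13,4)$ the basic set $\{1,3,9\}$ is sent by multiplication by $2$ to the distinct basic set $\{2,6,5\}$.) Hence your conclusion that $X$ is a union of full $(\mathbb{Z}/n\mathbb{Z})^\times$-orbits, i.e.\ of gcd-level-sets, is unjustified, and the subsequent discussion built on it collapses. You yourself exhibit a union of gcd-level-sets in $\mathbb{Z}_6$ that generates $\mathbb{Z}_6$ without containing a generator; the way the paper's argument avoids this is not by a finer combinatorial combination step, but by not going down this road at all.

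The paper's proof is short and structurally different. It passes to the quotient $G/\rad(X)$ and invokes a nontrivial structural result for S-rings over cyclic groups, namely \cite[Theorem~5.9~(2)--(4)]{MM04}, which guarantees that the image of $X$ in $G/\rad(X)$ contains a generator of that quotient. One then lifts: the natural map $(\mathbb{Z}/n\mathbb{Z})^\times\to(\mathbb{Z}/m\mathbb{Z})^\times$ is surjective, so the coset $g+\rad(X)\subseteq X$ containing that generator also contains a generator of $G$. The essential input you are missing is precisely this external theorem of Muzychuk; an elementary argument using only the multiplier theorem and generic $\mathcal{A}$-subgroup manipulations, as you attempt, does not suffice.
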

\begin{proof}
Since $G$ is cyclic, $G\cong\mathbb{Z}_n$ and $G/{\rm rad}(X)\cong\mathbb{Z}_m$ for some $m,n$ with $m\mid n$. By \cite[Theorem 5.9 (2)--(4)]{MM04},  there exists a generator $g+\rad(X)$ of $G/{\rm rad}(X)$ with $g\in X$. Since the ring homomorphism $\mathbb{Z}/n\mathbb{Z}$ to $\mathbb{Z}/m\mathbb{Z}$ restricts to a surjection from $(\mathbb{Z}/n\mathbb{Z})^{\times}$ to $(\mathbb{Z}/m\mathbb{Z})^{\times}$, there exists a generator of $G$ in $g+\rad(X)\subseteq X$.
\end{proof}

\begin{lemma}\label{q=3 step1-1}
Let $\mathcal{A}$ be an orbit S-ring over a cyclic group $G$. Suppose that there exists a basic set $X$ of $\mathcal{A}$ such that $X^{(-1)}\subseteq X+X:=\{x+x'\mid x,x'\in X\}$ and that $X$ generates $G$. Then there exists an integer $b$ such that $Y^{(b)}=Y^{(-b-1)}=Y$ for all $Y\in\mathcal{S}(\mathcal{A})$. In particular, $Y^{(-1)}\subseteq Y+Y$.
\end{lemma}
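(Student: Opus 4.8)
The plan is to work directly with the automorphism group $K\leq{\rm Aut}(G)$ whose orbits on $G$ form the basic sets of the orbit S-ring $\mathcal{A}$. Since $G$ is cyclic of some order $n$, we have ${\rm Aut}(G)\cong(\mathbb{Z}/n\mathbb{Z})^{\times}$, and $K$ is a subgroup of this group; I will identify an automorphism with the integer (mod $n$) by which it multiplies. The basic sets are then exactly the sets $K\cdot g=\{\kappa g\mid\kappa\in K\}$ for $g\in G$. What we must produce is an integer $b$ with $Y^{(b)}=Y^{(-b-1)}=Y$ for every basic set $Y$; in the language of ${\rm Aut}(G)$ this says precisely that both $b$ and $-b-1$, viewed as residues mod $n$, lie in $K$ (this is automatically well defined, since multiplication by any element of $K$ permutes each orbit).

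So the task reduces to: \emph{find $b\in K$ with $-b-1\in K$ as well.} First I would unpack the hypotheses. Because $X$ generates $G$, Lemma~\ref{genrates} gives a generator $g$ of $G$ lying in $X$; rescaling by $g^{-1}\in{\rm Aut}(G)$ (which permutes the basic sets and preserves all the hypotheses, since the conditions $X^{(-1)}\subseteq X+X$ and ``$X$ generates $G$'' are invariant under applying an automorphism) I may assume $1\in X$, i.e.\ $X=K\cdot 1=K$ as a subset of $\mathbb{Z}/n\mathbb{Z}$. The hypothesis $X^{(-1)}\subseteq X+X$ now reads: for every $\kappa\in K$, the element $-\kappa$ can be written as $\kappa_1+\kappa_2$ with $\kappa_1,\kappa_2\in K$. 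Applying this with $\kappa=1$: there exist $\kappa_1,\kappa_2\in K$ with $\kappa_1+\kappa_2=-1$. Set $b=\kappa_1$; then $-b-1=\kappa_2\in K$, which is exactly what we want. Finally, unravelling the rescaling: back in the original coordinates, $Y^{(b)}=Y$ and $Y^{(-b-1)}=Y$ for all $Y\in\mathcal{S}(\mathcal{A})$, because $b$ and $-b-1$ are elements of $K$ and $K$ acts on each orbit. The ``in particular'' clause then follows: for any basic set $Y$ and any $y\in Y$ we have $-y=by+(-b-1)y\in Y^{(b)}+Y^{(-b-1)}=Y+Y$, hence $Y^{(-1)}\subseteq Y+Y$.

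The one point that needs a little care — and which I regard as the main obstacle, though it is more bookkeeping than difficulty — is justifying that the normalization ``assume $1\in X$'' is harmless. Concretely: if $g\in X$ is the generator supplied by Lemma~\ref{genrates}, then multiplication by $g^{-1}$ is an automorphism $\sigma$ of $G$; since $\mathcal{A}=\mathcal{O}(K,G)$ is an orbit S-ring, $\sigma$ need not normalize $K$, so $\sigma(\mathcal{A})$ is a priori a different orbit S-ring $\mathcal{O}(\sigma K\sigma^{-1},G)$. However, $G$ is abelian, so ${\rm Aut}(G)$ is abelian and $\sigma K\sigma^{-1}=K$; thus $\sigma$ maps $\mathcal{S}(\mathcal{A})$ bijectively to itself, sends $X$ to the basic set $\sigma(X)\ni 1$, and preserves both hypotheses on $X$ (they are stated in terms of the group operations and the partition, all of which $\sigma$ respects). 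With this observed, the argument above goes through verbatim, and one transports the conclusion back by $\sigma^{-1}$.
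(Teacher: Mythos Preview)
Your proof is correct and follows essentially the same approach as the paper's: both pick a generator $x\in X$ via Lemma~\ref{genrates}, write $-x=x'+x''$ with $x',x''\in X$, and then use that $x,x'$ lie in the same $K$-orbit to conclude that multiplication by $b$ (where $x'=bx$) lies in $K$, and likewise for $-b-1$. The only cosmetic difference is that the paper works directly with $x$ rather than normalizing to $x=1$, which lets it avoid your discussion of why the rescaling is harmless (though your justification via the abelianness of ${\rm Aut}(G)$ is perfectly fine).
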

\begin{proof}
Since $X$ generates $G$, from Lemma \ref{genrates}, $X$ contains a generator $x$ of $G$. Since $-x\in X^{(-1)}\subseteq X+X$, there exist $x',x''\in X$ such that $-x=x'+x''$. Write $x'=bx$ for some integer $b$. Then $x''=-(b+1)x$. Since $\mathcal{A}$ is an orbit S-ring, there exists a subgroup $K\leq {\rm Aut}(G)$ such that every $Y\in \mathcal{S}(\mathcal{A})$ is a $K$-orbit. In particular, since $X$ is a $K$-orbit, there exists $f\in K$ such that $bx=f(x)$. Then $Y^{(b)}=f(Y)=Y$, and similarly $Y^{(-b-1)}=Y$, for all $Y\in \mathcal{S}(\mathcal{A})$. The first statement is valid.

For each $y\in Y$, since $by,-(b+1)y\in Y$, we have $-y=by-(b+1)y\in Y+Y$. The second statement is also valid.
\end{proof}

Let $\mathcal{A}$ be an S-ring over a cyclic group $G$. Then the group ${\rm Aut}(G)$ acts transitively on the set of its generators. Theorem \ref{Multiplier1} and Lemma \ref{genrates} imply that the group ${\rm rad}(X)$ does not depend on the choice of $X\in\mathcal{S}(\mathcal{A})$ such that $X$ generates $G$. We call this group the {\em radical} of $\mathcal{A}$ and denote it by ${\rm rad}(\mathcal{A})$. The S-ring $\mathcal{A}$ is {\em free} if ${\rm rad}(\mathcal{A})=\{0\}$. It is easy to see that the quotient S-ring $\mathcal{A}/{\rm rad}(\mathcal{A})$ is free.

\begin{lemma}\label{p^2}
For an odd prime $p$ and a positive integer $m$, let $G$ be a cyclic group of group $p^m$ and $\mathcal{A}:=\mathcal{O}(K,G)$ an orbit S-ring over $G$ for some $K\leq {\rm Aut}(G)$. Suppose $p\mid |K|$. Then $G^{(p^{m-1})}\leq {\rm rad}(\mathcal{A})$. In particular, $\mathcal{A}$ is not free.
\end{lemma}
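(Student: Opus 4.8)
The plan is to exhibit an explicit generating basic set $X$ of $\mathcal{A}$ and to prove directly that $G^{(p^{m-1})}\subseteq{\rm rad}(X)$; since ${\rm rad}(\mathcal{A})={\rm rad}(X)$ for any basic set $X$ generating $G$, this yields the assertion, and non-freeness is then immediate because $|G^{(p^{m-1})}|=p>1$.

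First I would record two structural facts. Since $G\cong\mathbb{Z}_{p^m}$, the group ${\rm Aut}(G)\cong(\mathbb{Z}/p^m\mathbb{Z})^{\times}$ is cyclic of order $p^{m-1}(p-1)$; in particular $p\mid|{\rm Aut}(G)|$ already forces $m\geq2$, and as $K$ is a subgroup of a cyclic group with $p\mid|K|$, it contains the unique subgroup of order $p$ of ${\rm Aut}(G)$, namely (writing automorphisms as multiplications) the one generated by $\sigma\colon x\mapsto(1+p^{m-1})x$ — this is the kernel of reduction modulo $p^{m-1}$. Secondly, since $\mathcal{A}=\mathcal{O}(K,G)$, the basic sets of $\mathcal{A}$ are exactly the $K$-orbits on $G$; because every element of $K$ maps generators of $G$ to generators, the $K$-orbit $X$ of a generator $x$ of $G$ consists entirely of generators, so $X$ generates $G$ and $X=Kx$. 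Thus ${\rm rad}(\mathcal{A})={\rm rad}(X)$.

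Next I would compute inside $X$. Using $2(m-1)\geq m$, the binomial expansion gives $(1+p^{m-1})^{i}\equiv 1+ip^{m-1}\pmod{p^m}$, hence $\sigma^{i}(x)=x+ip^{m-1}x$ for $0\leq i<p$; since $x$ is a generator of $\mathbb{Z}_{p^m}$, the element $p^{m-1}x$ generates the order-$p$ subgroup $G^{(p^{m-1})}$, so $\{\sigma^{i}(x)\mid 0\leq i<p\}=x+G^{(p^{m-1})}\subseteq Kx=X$. To upgrade this to $G^{(p^{m-1})}\subseteq{\rm rad}(X)$, fix $h\in G^{(p^{m-1})}$ and an arbitrary $k\in K$. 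Then $k(x)$ is again a generator of $G$, so $p^{m-1}k(x)$ generates $G^{(p^{m-1})}$, whence $h=i\,p^{m-1}k(x)$ for some integer $i$; using that $k$ is $\mathbb{Z}$-linear,
\[
h+k(x)=(1+ip^{m-1})k(x)=k\bigl((1+ip^{m-1})x\bigr)=k(\sigma^{i}(x))=(k\sigma^{i})(x)\in Kx=X .
\]
Therefore $h+X\subseteq X$, and equality follows from $|h+X|=|X|$, so $h\in{\rm rad}(X)$. Hence $G^{(p^{m-1})}\subseteq{\rm rad}(X)={\rm rad}(\mathcal{A})$, and since $|G^{(p^{m-1})}|=p>1$ the ring $\mathcal{A}$ is not free.

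As for difficulty, there is really no serious obstacle: the argument is short and self-contained. The only points that need a little care are the observations that $p\mid|K|$ automatically forces $m\geq2$ (so that the order-$p$ subgroup of ${\rm Aut}(G)$, and the congruence $(1+p^{m-1})^{i}\equiv 1+ip^{m-1}$, both make sense), that $\mathcal{A}$ genuinely possesses a basic set generating $G$ (so that ${\rm rad}(\mathcal{A})$ may legitimately be read off from $X$), and the $\mathbb{Z}$-linearity step $k\bigl((1+ip^{m-1})x\bigr)=(1+ip^{m-1})k(x)$ which is what lets the orbit of $x$ absorb the coset.
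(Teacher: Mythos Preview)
Your proof is correct and follows essentially the same approach as the paper's: identify the unique order-$p$ subgroup $\langle\sigma\rangle\leq K$ with $\sigma(x)=(1+p^{m-1})x$, then use the $\langle\sigma\rangle$-orbit of each element of a generating basic set $X$ to see that $x+G^{(p^{m-1})}\subseteq X$ for every $x\in X$. The paper phrases the final step as ``since $x\in X$ was arbitrary'' while you parametrize $X=Kx$ and work with $k(x)$, but this is the same argument; your write-up simply makes explicit the points (such as $m\geq2$, the binomial congruence, and why every element of $X$ is a generator) that the paper leaves implicit.
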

\begin{proof}
Let $\sigma$ be an automorphism of $G$ defined by $\sigma(x)=(1+p^{m-1})x$ for $x\in G$. Then the order of $\sigma$ is $p$. Since ${\rm Aut}(G)\cong \mathbb{Z}_{p^{m-1}(p-1)}$ from \cite[Chapter 4, Theorem 2]{IR}, ${\rm Aut}(G)$ has $\langle \sigma\rangle$ as its unique subgroup of order $p$. It follows that $\langle \sigma\rangle\leq K$.

Let $X$ be a basic set of $\mathcal{A}$ containing a generator $G$. Pick $x\in X$. Since $\mathcal{A}$ is an orbit S-ring,
\begin{align}
X\supseteq\{\sigma^k(x)\mid k\in\mathbb{Z}\}=\{(1+p^{m-1})^kx\mid k\in\mathbb{Z}\}=\{x+p^{m-1}kx\mid k\in\mathbb{Z}\}.\nonumber
\end{align}
Since $x$ is a generator, we have $X\supseteq x+G^{(p^{m-1})}$. Since $x\in X$ was arbitrary, $G^{(p^{m-1})}\leq{\rm rad}(X)={\rm rad}(\mathcal{A})$. Thus, $\mathcal{A}$ is not free.
\end{proof}

We recall the definition of 
a tensor product of two S-rings. Let $\mathcal{A}_{1}$ and $\mathcal{A}_{2}$ be S-rings over groups $G_1$ and $G_2$, respectively. Then the {\em tensor product} $\mathcal{A}_{1}\otimes\mathcal{A}_{2}$ is obviously an S-ring over the group $G_1\times G_{2}$ with $\mathcal{S}(\mathcal{A}_{1}\otimes\mathcal{A}_{2})=\{X_{1}\times X_{2}\mid X_{1}\in\mathcal{S}(\mathcal{A}_{1}),~X_{2}\in\mathcal{S}(\mathcal{A}_{2})\}$. The tensor product of more than two S-rings can be defined similarly.

\begin{prop}\label{jb3-2}
Let $\mathcal{A}$ be a free S-ring over a cyclic group $G$. Then there exist subgroups $U_{0},U_{1},\ldots,U_{k}\leq G$ with $k\geq0$ such that
\begin{align}
G=U_0\times U_1\times\cdots\times U_k,~{\rm and}~\mathcal{A}=\mathcal{A}_{0}\otimes\mathcal{A}_{1}\otimes\cdots\otimes\mathcal{A}_{k},\nonumber
\end{align}
where $\mathcal{A}_{0}$ is a free orbit S-ring over the group $U_{0}$ and $\mathcal{A}_i$ is an S-ring of rank $2$ over the group $U_{i}$ for all $i>0$. Moreover, the orders $|U_{i}|$, $i\in\{0,1,\ldots,k\}$, are pairwise coprime  and $|U_{i}|>3$ for all $i>0$.
\end{prop}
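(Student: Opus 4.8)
The plan is to induct on $|G|$, with the structure theory of S-rings over cyclic groups as the main external input. By the Leung--Man classification, any S-ring over a cyclic group is an orbit S-ring, is of rank $2$, is a nontrivial tensor product of S-rings over two cyclic groups of coprime orders $>1$, or is a nontrivial (generalized) wreath product. The first step is to eliminate the wreath-product alternative for our free $\mathcal{A}$: such a decomposition provides a nonzero $\mathcal{A}$-subgroup $L$ and a proper $\mathcal{A}$-subgroup $U$ such that every basic set $X$ with $X\not\subseteq U$ is a union of cosets of $L$, equivalently $L\subseteq{\rm rad}(X)$. Fixing a generator $g$ of $G$, the relations $\langle g\rangle=G$ and $U\neq G$ force $g\notin U$, so the basic set $X_g$ containing $g$ satisfies $X_g\not\subseteq U$; hence, by the well-definedness of ${\rm rad}(\mathcal{A})$ (which rests on Theorem~\ref{Multiplier1} and Lemma~\ref{genrates}), $L\subseteq{\rm rad}(X_g)={\rm rad}(\mathcal{A})=\{0\}$, a contradiction. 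Thus a free S-ring over a cyclic group is an orbit S-ring, is of rank $2$, or is a nontrivial coprime tensor product.

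Next I would settle the indecomposable cases. If $\mathcal{A}$ is an orbit S-ring --- in particular whenever $|G|\le 3$, since every S-ring over a group of order at most $3$ is an orbit S-ring --- take $k=0$, $U_0=G$, $\mathcal{A}_0=\mathcal{A}$, which is free by hypothesis. If $\mathcal{A}$ has rank $2$ but is not an orbit S-ring, then necessarily $|G|\ge 4$, and I would take $k=1$, $U_1=G$, $\mathcal{A}_1=\mathcal{A}$, $U_0=\{0\}$, and $\mathcal{A}_0$ the rank-$1$ S-ring over the trivial group (a free orbit S-ring); then $|U_1|=|G|>3$ as required. This also disposes of the base case $|G|=1$.

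In the remaining case $\mathcal{A}=\mathcal{B}\otimes\mathcal{C}$ over $G=H\times K$ with $|H|,|K|>1$ and $\gcd(|H|,|K|)=1$. I would first verify that ${\rm rad}(\mathcal{B}\otimes\mathcal{C})={\rm rad}(\mathcal{B})\times{\rm rad}(\mathcal{C})$: a basic set of the tensor product is a product $X\times Y$, it generates $H\times K$ exactly when $X$ generates $H$ and $Y$ generates $K$, and for such a product ${\rm rad}(X\times Y)={\rm rad}(X)\times{\rm rad}(Y)$. Hence $\mathcal{B}$ and $\mathcal{C}$ are free S-rings over the cyclic groups $H$ and $K$, each of order smaller than $|G|$, so the inductive hypothesis applies to both. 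Concatenating the two resulting decompositions produces the rank-$2$ factors, all of order $>3$, and their orders together with those of the two orbit factors are pairwise coprime since the ones coming from $\mathcal{B}$ divide $|H|$, those from $\mathcal{C}$ divide $|K|$, and $\gcd(|H|,|K|)=1$. Finally, writing $\mathcal{O}(K_1,W_1)$ and $\mathcal{O}(K_2,W_2)$ for the orbit factors coming from $\mathcal{B}$ and $\mathcal{C}$, the coprimality $\gcd(|W_1|,|W_2|)=1$ gives ${\rm Aut}(W_1\times W_2)={\rm Aut}(W_1)\times{\rm Aut}(W_2)$, and the $(K_1\times K_2)$-orbits on $W_1\times W_2$ are precisely the products of a $K_1$-orbit with a $K_2$-orbit; therefore $\mathcal{O}(K_1,W_1)\otimes\mathcal{O}(K_2,W_2)=\mathcal{O}(K_1\times K_2,W_1\times W_2)$, which is an orbit S-ring and is free as a tensor product of free S-rings. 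Taking $U_0=W_1\times W_2$ and $\mathcal{A}_0=\mathcal{O}(K_1\times K_2,U_0)$ completes the decomposition, and the induction closes.

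The genuine difficulty here is the structure theorem being invoked; its proof is substantial and is used as an external input. Inside the present argument the only points requiring care are the behaviour of the radical under a tensor product, the identification of a coprime tensor product of orbit S-rings with a single orbit S-ring, and the bookkeeping that keeps the small rank-$2$ S-rings (those over groups of order at most $3$) absorbed into the orbit part so that $|U_i|>3$ holds for all $i>0$; each of these is routine once the relevant definitions are unwound.
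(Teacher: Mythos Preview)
Your argument is correct. The paper's own proof is simply a citation: it invokes \cite[Theorems 4.6 and 4.10]{MM09} for the decomposition and \cite[Theorem 6]{IK04} for the pairwise coprimality and the bound $|U_i|>3$. Your proposal instead unpacks these results by inducting on $|G|$ with the Leung--Man structure theorem as the black box, which is essentially how the cited theorems are themselves established. The key steps---ruling out the nontrivial wreath case via ${\rm rad}(\mathcal{A})=\{0\}$, checking that $\rad$ respects coprime tensor products so that freeness passes to the factors, merging the two orbit factors into a single one via ${\rm Aut}(W_1\times W_2)\cong{\rm Aut}(W_1)\times{\rm Aut}(W_2)$, and absorbing the rank-$2$ S-rings over groups of order $\le 3$ into the orbit part---are all handled correctly. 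The only thing worth flagging is that your proof, like the paper's, still rests on a substantial external input (Leung--Man rather than Muzychuk--Ponomarenko and Kov\'acs), so the gain is expository clarity rather than a genuinely independent route.
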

\begin{proof}
By \cite[Theorems 4.6 and 4.10]{MM09}, the first statement is valid. The second statement is also valid from \cite[Theorem 6]{IK04}.
\end{proof}

Let $\mathcal{A}$ be an S-ring over $\mathbb{Z}_n$, and $\pi$ be the natural mapping from $\mathbb{Z}_{n}$ to $\mathbb{Z}_{n}/L$, where $L={\rm rad}(\mathcal{A})$. Then $\mathcal{A}':=\mathcal{A}/L$ is a free S-ring over $\mathbb{Z}_{n}/L$. By the definition of quotient S-ring, $\pi(X)$ is a basic set of $\mathcal{A}'$ for all $X\in\mathcal{S}(\mathcal{A})$. By Proposition \ref{jb3-2}, there exist subgroups $L\leq U_{0},U_{1},\ldots,U_{k}\leq \mathbb{Z}_{n}$ with $k\geq0$ such that
\begin{align}
\mathbb{Z}_{n}/L&=U_0/L\times U_1/L\times\cdots\times U_k/L,\label{G/L}\\
\mathcal{A}'&=\mathcal{A}_0\otimes\mathcal{A}_1\otimes\cdots\otimes\mathcal{A}_k,\label{A_G/L}
\end{align}
where $\mathcal{A}_0$ is a free orbit S-ring over the group $U_{0}/L$,
\begin{align}
&\text{$\mathcal{A}_i$ {is an S-ring of rank} $2$ over the group $U_{i}/L$  for all $i>0$},\label{rank 2}\\
&\text{$|U_{i}/L|$, $i\in\{0,1,\ldots,k\}$, are~pairwise~coprime, and $|U_{i}/L|>3$ for all $i>0$}.\label{coprime}
\end{align}
Denote by $\varphi_i$ the projection from $\mathbb{Z}_{n}/L$ to $U_{i}/L$ for $0\leq i\leq k$. Since $\pi(X)\in\mathcal{S}(\mathcal{A}')$ for all $X\in\mathcal{S}(\mathcal{A})$, by the definition of tensor product, $\varphi_i\pi(X)$ is a basic set of $\mathcal{A}_i$ for $0\leq i\leq k$.

We prove the following results under the above assumptions.

\begin{lemma}\label{basic sets}
Let $L_0\in U_0/L$ and $L_i\in U_i/L\setminus\{L\}$ for $1\leq i\leq k$. Then $(L_0,\underbrace{L,\ldots,L}_k)$ and $(L_0,L_1,\ldots,L_k)$ are not contained in the same basic set of $\mathcal{A}'$.
\end{lemma}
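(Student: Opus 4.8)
The plan is to exploit the tensor-product decomposition $\mathcal{A}'=\mathcal{A}_0\otimes\mathcal{A}_1\otimes\cdots\otimes\mathcal{A}_k$ recorded in \eqref{A_G/L}. By the definition of a tensor product of S-rings, every basic set of $\mathcal{A}'$ has the form $X_0\times X_1\times\cdots\times X_k$ with $X_i\in\mathcal{S}(\mathcal{A}_i)$ for all $0\le i\le k$; hence membership of a tuple in a basic set can be tested coordinate by coordinate, and it suffices to analyse the factors $\mathcal{A}_i$ with $i\ge1$.

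First I would argue by contradiction, noting that the statement has content only for $k\ge1$ (when $k=0$ the two tuples literally coincide). So assume $k\ge1$ and suppose both $(L_0,L,\ldots,L)$ and $(L_0,L_1,\ldots,L_k)$ lie in a common basic set $X$ of $\mathcal{A}'$, and write $X=X_0\times X_1\times\cdots\times X_k$ with each $X_i\in\mathcal{S}(\mathcal{A}_i)$. Fix any $i$ with $1\le i\le k$. From $(L_0,L,\ldots,L)\in X$ we get $L\in X_i$; since $L$ is the identity of the group $U_i/L$, condition \ref{S-ring-2} in the definition of an S-ring says that $\{L\}$ is itself a basic set of $\mathcal{A}_i$, and as $\mathcal{S}(\mathcal{A}_i)$ is a partition this forces $X_i=\{L\}$. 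But from $(L_0,L_1,\ldots,L_k)\in X$ we also get $L_i\in X_i=\{L\}$, i.e.\ $L_i=L$, contradicting $L_i\in U_i/L\setminus\{L\}$.

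I do not anticipate any genuine obstacle: once one invokes that the basic sets of a tensor product are precisely the products of basic sets of the factors and that the identity coset $\{L\}$ is always a basic set, the proof is a short bookkeeping argument. The only points meriting care are keeping straight which coordinate of the tuples sits in which tensor factor and flagging the degenerate case $k=0$; in particular the rank-$2$ property of $\mathcal{A}_i$ from \eqref{rank 2} plays no role here.
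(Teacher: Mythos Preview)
Your proposal is correct and takes essentially the same approach as the paper: the paper's proof consists of the single sentence ``This is immediate from \eqref{A_G/L},'' and your argument simply spells out what that sentence means by unpacking the tensor-product structure of the basic sets.
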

\begin{proof}
This is immediate from \eqref{A_G/L}.
\end{proof}

\begin{lemma}\label{isomorphic}
Let $X\in\mathcal{S}(\mathcal{A})$. Suppose that $X$ generates $\mathbb{Z}_n$. The following hold:
\begin{enumerate}
\item\label{isomorphic-1} $X$ is a union of $L$-cosets;

\item\label{isomorphic-2} $\pi(X)=\varphi_0\pi(X)\times(U_1/L\setminus\{L\})\times\cdots\times(U_k/L\setminus\{L\})$. In particular, each element in $\varphi_0\pi(X)$ is a generator of the group $U_0/L$.
\end{enumerate}
\end{lemma}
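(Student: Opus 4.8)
The plan is to exploit the fact that $X$ generates $\mathbb{Z}_n$ together with the structural decomposition \eqref{G/L}--\eqref{coprime}. First I would prove \ref{isomorphic-1}: since $X$ generates $\mathbb{Z}_n$, Lemma~\ref{genrates} gives a generator $x\in X$; as ${\rm Aut}(\mathbb{Z}_n)$ acts transitively on generators and $\mathcal{A}$ is closed under the multipliers of Theorem~\ref{Multiplier1}, the basic set $X$ must contain every generator in $x+{\rm rad}(X)$. But by the definition of ${\rm rad}(\mathcal{A})$ and the discussion preceding Proposition~\ref{jb3-2}, ${\rm rad}(X)={\rm rad}(\mathcal{A})=L$ whenever $X$ generates $\mathbb{Z}_n$; hence ${\rm rad}(X)=L$, and by definition of the radical $X$ is a union of left $L$-cosets. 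This gives \ref{isomorphic-1} and also shows $\pi(X)$ is an honest subset of $\mathbb{Z}_n/L$ of size $|X|/|L|$.

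Next, for \ref{isomorphic-2}, I would work in the free quotient $\mathcal{A}'=\mathcal{A}/L$ with its tensor decomposition \eqref{A_G/L}. Since $X$ generates $\mathbb{Z}_n$, $\pi(X)$ generates $\mathbb{Z}_n/L=U_0/L\times\cdots\times U_k/L$, so in particular its projection $\varphi_i\pi(X)$ generates $U_i/L$ for each $i$. For $i>0$, $\mathcal{A}_i$ has rank $2$ by \eqref{rank 2}, so its only basic sets are $\{L\}$ and $U_i/L\setminus\{L\}$; since $\varphi_i\pi(X)$ is a basic set of $\mathcal{A}_i$ generating $U_i/L$ (and $|U_i/L|>3>1$, so it cannot be $\{L\}$), we get $\varphi_i\pi(X)=U_i/L\setminus\{L\}$ for all $i>0$. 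Combining the projections: $\pi(X)$ is a basic set of $\mathcal{A}_0\otimes\cdots\otimes\mathcal{A}_k$ whose $i$-th projection for $i>0$ is the nontrivial basic set of $\mathcal{A}_i$; but by the definition of the tensor product, any basic set of $\mathcal{A}'$ is exactly a product $Y_0\times Y_1\times\cdots\times Y_k$ with $Y_i\in\mathcal{S}(\mathcal{A}_i)$, and its projection onto $U_i/L$ is $Y_i$. Hence $\pi(X)=\varphi_0\pi(X)\times(U_1/L\setminus\{L\})\times\cdots\times(U_k/L\setminus\{L\})$. Finally, since this product generates $\mathbb{Z}_n/L$, the first factor $\varphi_0\pi(X)$ generates $U_0/L$; because $\mathcal{A}_0$ is an \emph{orbit} S-ring, $\varphi_0\pi(X)$ is an ${\rm Aut}(U_0/L)$-orbit (restricted to the relevant automorphism subgroup) and $U_0/L$ is cyclic, so Lemma~\ref{genrates} applied to $\mathcal{A}_0$ shows this basic set contains a generator, and then transitivity of the multiplier action (Theorem~\ref{Multiplier1} over $U_0/L$) forces \emph{every} element of $\varphi_0\pi(X)$ to be a generator of $U_0/L$.

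The main obstacle I anticipate is pinning down the identity ${\rm rad}(X)=L$ cleanly. The paper defines ${\rm rad}(\mathcal{A})$ only for $X$ \emph{generating} $G$, citing Theorem~\ref{Multiplier1} and Lemma~\ref{genrates} to see the group is independent of the choice of such $X$; I must make sure I invoke exactly that fact rather than re-deriving it, and then read off that ``$X$ is a union of left as well as right cosets of ${\rm rad}(X)$'' from the definition of the radical given just before Proposition~\ref{jb3-2}. A second, more bookkeeping-heavy point is the passage from ``$\varphi_i\pi(X)$ is the correct basic set of each factor'' to ``$\pi(X)$ is the full product'': this is really just the statement $\mathcal{S}(\mathcal{A}_1\otimes\cdots)=\{X_1\times\cdots\}$ combined with the observation that a subset of a product which is simultaneously a basic set of the tensor product and has prescribed projections is uniquely the product of those projections — routine, but worth stating explicitly so the reader does not have to reconstruct it. Everything else (rank-$2$ S-rings have exactly two basic sets; orbit S-rings have basic sets closed under an automorphism group; $|U_i/L|>3$ rules out degenerate cases) is either in the excerpt or immediate.
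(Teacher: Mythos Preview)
Your proposal is correct and follows essentially the same approach as the paper: for \ref{isomorphic-1} you invoke ${\rm rad}(X)={\rm rad}(\mathcal{A})=L$ (exactly as the paper does in one line), and for \ref{isomorphic-2} you use the tensor decomposition \eqref{A_G/L}, the rank-$2$ structure \eqref{rank 2} to force $\varphi_i\pi(X)=U_i/L\setminus\{L\}$ for $i>0$, and the orbit S-ring property of $\mathcal{A}_0$ for the generator claim. Your version is more verbose (the sentence about ``$X$ must contain every generator in $x+{\rm rad}(X)$'' is unnecessary, and the worry in your last paragraph about reconstructing the product from its projections is already handled by the definition of $\mathcal{S}(\mathcal{A}_1\otimes\mathcal{A}_2)$), but the logical skeleton matches the paper's proof.
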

\begin{proof}
\ref{isomorphic-1}~Since ${\rm rad}(X)=L$, $X$ is a union of $L$-cosets.

\ref{isomorphic-2}~Since $\pi(X)\in\mathcal{S}(\mathcal{A}')$, \eqref{A_G/L} implies $\pi(X)=\varphi_0\pi(X)\times\cdots\times\varphi_k\pi(X)$.
By \eqref{rank 2}, one gets $\varphi_i\pi(X)=\{L\}$ or $U_i/L\setminus\{L\}$. Since $\pi(X)$ generates $\mathbb{Z}_n/L$, we obtain $\varphi_i\pi(X)=U_i/L\setminus\{L\}$. Thus, the first statement is valid. Since $\varphi_0\pi(X)$ is a basic set of $\mathcal{A}_0$ which is an orbit S-ring over the cyclic group $U_0/L$, the second statement follows.
\end{proof}

\begin{lemma}\label{(1,q-1) nonsymmetric}
Let $X$ be a basic set of $\mathcal{A}$ such that $X$ generates $\mathbb{Z}_n$ and $X\cap X^{(-1)}=\emptyset$. Then $\varphi_0\pi(X)\cap\varphi_0\pi(X)^{(-1)}=\emptyset$.
\end{lemma}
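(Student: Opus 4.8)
The plan is to push the disjointness $X\cap X^{(-1)}=\emptyset$ down through the quotient map $\pi$ and then through the product decomposition \eqref{A_G/L} to its $U_0/L$-component.

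First I would invoke Lemma~\ref{isomorphic} \ref{isomorphic-1} to note that $X$, and hence also $X^{(-1)}$, is a union of $L$-cosets; consequently $\pi^{-1}(\pi(X))=X$ and $\pi^{-1}(\pi(X^{(-1)}))=X^{(-1)}$, so the hypothesis $X\cap X^{(-1)}=\emptyset$ yields $\pi(X)\cap\pi(X^{(-1)})=\emptyset$. Since $\pi$ is a group homomorphism we have $\pi(X^{(-1)})=\pi(X)^{(-1)}$, hence $\pi(X)\cap\pi(X)^{(-1)}=\emptyset$.

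Next I would feed in the product description of $\pi(X)$ from Lemma~\ref{isomorphic} \ref{isomorphic-2}, namely
\[\pi(X)=\varphi_0\pi(X)\times(U_1/L\setminus\{L\})\times\cdots\times(U_k/L\setminus\{L\}).\]
Inverting coordinatewise and using that $(U_i/L\setminus\{L\})^{(-1)}=U_i/L\setminus\{L\}$ for $1\le i\le k$ (the complement of the identity element in any group is closed under inversion), one obtains the analogous description of $\pi(X)^{(-1)}$ with $\varphi_0\pi(X)$ replaced by $\varphi_0\pi(X)^{(-1)}$. Intersecting the two product sets coordinatewise then gives
\[\emptyset=\pi(X)\cap\pi(X)^{(-1)}=\bigl(\varphi_0\pi(X)\cap\varphi_0\pi(X)^{(-1)}\bigr)\times(U_1/L\setminus\{L\})\times\cdots\times(U_k/L\setminus\{L\});\]
since each factor $U_i/L\setminus\{L\}$ is nonempty (as $|U_i/L|>3$ for $i>0$ by \eqref{coprime}), this forces $\varphi_0\pi(X)\cap\varphi_0\pi(X)^{(-1)}=\emptyset$, which is the assertion.

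I do not expect a real obstacle here; the one step deserving care is the very first reduction, where one needs that $X$ is a union of cosets of $L=\ker\pi$ (supplied by Lemma~\ref{isomorphic} \ref{isomorphic-1}) in order to transport disjointness of $X$ and $X^{(-1)}$ across $\pi$ — for arbitrary sets, $\pi$ need not send a disjoint pair to a disjoint pair. Everything after that is the routine coordinatewise manipulation of the tensor decomposition \eqref{A_G/L}.
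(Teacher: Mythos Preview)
Your proposal is correct and follows essentially the same route as the paper: invoke Lemma~\ref{isomorphic}\ref{isomorphic-1} to push the disjointness through $\pi$, then use the product description of $\pi(X)$ from Lemma~\ref{isomorphic}\ref{isomorphic-2} together with $(U_i/L\setminus\{L\})^{(-1)}=U_i/L\setminus\{L\}$ to extract the $U_0/L$-component. You simply spell out a couple of steps (e.g.\ why nonemptiness of the side factors forces the first factor to be empty) that the paper leaves implicit.
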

\begin{proof}
By Lemma \ref{isomorphic} \ref{isomorphic-1}, $X$ is a union of $L$-cosets. The fact $X\cap X^{(-1)}=\emptyset$ implies $\pi(X)\cap\pi(X)^{(-1)}=\emptyset$. By Lemma \ref{isomorphic} \ref{isomorphic-2}, we get $\pi(X)^{(\pm1)}=\varphi_0\pi(X)^{(\pm1)}\times(U_1/L\setminus\{L\})\times\cdots\times(U_k/L\setminus\{L\})$. Thus, the desired result follows.
\end{proof}

\begin{lemma}\label{jb4}
Let $X,Y\in\mathcal{S}(\mathcal{A})$. Suppose that $X$ generates $\mathbb{Z}_n$. Then $|X|\geq |Y|$.
\end{lemma}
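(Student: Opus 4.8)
The plan is to reduce the comparison $|X|\geq|Y|$ to a computation in the quotient S-ring $\mathcal{A}'=\mathcal{A}/L$, where $L=\rad(\mathcal{A})$, and then to the tensor decomposition \eqref{A_G/L}. First I would note that since $X$ generates $\mathbb{Z}_n$, Lemma~\ref{isomorphic}~\ref{isomorphic-1} gives $\rad(X)=L$, so $X$ is a union of $L$-cosets and $|X|=|L|\cdot|\pi(X)|$. On the other hand $\rad(Y)\supseteq L$ need not hold in general; what is always true is that $\pi$ restricted to any single basic set is a $|{\rm fibre}|$-to-one map, but the fibre size of $Y$ could a priori be smaller than $|L|$. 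So the first real task is to show $|Y|\leq|L|\cdot|\pi(Y)|$, which is immediate since $\pi\colon Y\to\pi(Y)$ is surjective with fibres of size at most $|L|$ (each fibre is contained in an $L$-coset). Hence it suffices to prove $|\pi(X)|\geq|\pi(Y)|$ in the free S-ring $\mathcal{A}'$.

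Next I would invoke the tensor decomposition: by \eqref{G/L}--\eqref{A_G/L}, every basic set of $\mathcal{A}'$ has the form $Z_0\times Z_1\times\cdots\times Z_k$ with $Z_0\in\mathcal{S}(\mathcal{A}_0)$ and, by \eqref{rank 2}, each $Z_i$ for $i>0$ equal to either $\{L\}$ or $U_i/L\setminus\{L\}$. For the generating set $X$, Lemma~\ref{isomorphic}~\ref{isomorphic-2} forces $\varphi_i\pi(X)=U_i/L\setminus\{L\}$ for all $i>0$, so $|\pi(X)|=|\varphi_0\pi(X)|\cdot\prod_{i=1}^k(|U_i/L|-1)$. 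For $Y$, writing $\pi(Y)=\varphi_0\pi(Y)\times\varphi_1\pi(Y)\times\cdots\times\varphi_k\pi(Y)$, we get $|\pi(Y)|=|\varphi_0\pi(Y)|\cdot\prod_{i=1}^k|\varphi_i\pi(Y)|$ with each factor $|\varphi_i\pi(Y)|\leq|U_i/L|-1$ for $i>0$ (it is either $1$ or $|U_i/L|-1$, and $|U_i/L|>3$ by \eqref{coprime} so $1\leq|U_i/L|-1$). Therefore it remains only to compare $|\varphi_0\pi(X)|$ with $|\varphi_0\pi(Y)|$ inside the orbit S-ring $\mathcal{A}_0$ over the cyclic group $U_0/L$.

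The core step is thus: in an orbit S-ring $\mathcal{O}(K,U_0/L)$, a basic set consisting of generators (which $\varphi_0\pi(X)$ is, by Lemma~\ref{isomorphic}~\ref{isomorphic-2}) has maximal size among all basic sets. This is because a $K$-orbit of a generator has size $|K|/|K_g|$ where $K_g$ is the stabiliser of a generator $g$; but any automorphism fixing a generator of a cyclic group is the identity, so $K_g=\{1\}$ and the orbit has size exactly $|K|$, while every other orbit has size dividing $|K|$, hence at most $|K|$. This gives $|\varphi_0\pi(X)|=|K|\geq|\varphi_0\pi(Y)|$, and combining the three inequalities yields $|X|=|L|\,|\pi(X)|\geq|L|\,|\pi(Y)|\geq|Y|$.

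The main obstacle I anticipate is the bookkeeping around fibre sizes: one must be careful that $\pi|_Y$ really does have fibres of size $\leq|L|$ (true because distinct elements of $\mathbb{Z}_n$ mapping to the same element of $\mathbb{Z}_n/L$ differ by an element of $L$) and that the free quotient genuinely lets us pass to $\mathcal{A}_0$ without losing a factor. Everything else is the elementary observation that in a cyclic group a nontrivial automorphism cannot fix a generator, which pins the size of a generating orbit to $|K|$.
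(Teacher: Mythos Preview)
Your proposal is correct and follows essentially the same route as the paper: reduce to the free quotient $\mathcal{A}'=\mathcal{A}/L$ via $|X|=|L|\,|\pi(X)|$ and $|Y|\leq|L|\,|\pi(Y)|$, use the tensor decomposition \eqref{A_G/L} together with Lemma~\ref{isomorphic}~\ref{isomorphic-2} to handle the rank-$2$ factors, and then compare orbit sizes in the orbit S-ring $\mathcal{A}_0$. Your argument is in fact more explicit than the paper's at the core step---the paper simply asserts $|\varphi_0\pi(X)|\geq|\varphi_0\pi(Y)|$ from $\mathcal{A}_0$ being an orbit S-ring, whereas you spell out that the stabiliser of a generator in a cyclic group is trivial so its $K$-orbit has the maximal size $|K|$.
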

\begin{proof}
Since $\mathcal{A}_0$ is an orbit S-ring, from Lemma \ref{isomorphic} \ref{isomorphic-2}, we have $|\varphi_0\pi(X)|\geq|\varphi_0\pi(Y)|$. Since $\varphi_i\pi(Y)\in\mathcal{S}(\mathcal{A}_i)$ for $1\leq i\leq k$, from \eqref{rank 2} and Lemma \ref{isomorphic} \ref{isomorphic-2} again, one has
\begin{align}
|\pi(X)|=|\varphi_0\pi(X)|\prod_{i=1}^k(|U_i/L|-1)\geq\prod_{i=0}^k|\varphi_i\pi(Y)|=|\pi(Y)|.\nonumber
\end{align}
Since $X$ is a union of $L$-cosets from Lemma \ref{isomorphic} \ref{isomorphic-1}, we get $|X|/|L|=|\pi(X)|$, while $|\pi(Y)|\geq|Y|/|L|$. The desired inequality now follows.
\end{proof}

\begin{lemma}\label{pre-claim}
Let $X,Y\in\mathcal{S}(\mathcal{A})$. Suppose $\varphi_0\pi(Y)\subseteq\varphi_0\pi(X)\pm\varphi_0\pi(X)$. If  $\varphi_i\pi(Y)\subseteq\langle\varphi_i\pi(X)\rangle$ for $1\leq i\leq k$, then $\pi(Y)\subseteq\pi(X)\pm\pi(X)$.
\end{lemma}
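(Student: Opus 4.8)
The plan is to reduce the desired inclusion to a coordinatewise one by means of the tensor decomposition \eqref{A_G/L}. Since $\pi(X)$ and $\pi(Y)$ are basic sets of $\mathcal{A}'=\mathcal{A}_0\otimes\mathcal{A}_1\otimes\cdots\otimes\mathcal{A}_k$, the definition of the tensor product together with \eqref{G/L} gives
\[
\pi(X)=\prod_{i=0}^{k}\varphi_i\pi(X),\qquad \pi(Y)=\prod_{i=0}^{k}\varphi_i\pi(Y)
\]
as subsets of $\mathbb{Z}_n/L=\prod_{i=0}^{k}U_i/L$. In a direct product of abelian groups one has $\bigl(\prod_i A_i\bigr)\pm\bigl(\prod_i B_i\bigr)=\prod_i(A_i\pm B_i)$ (with consistent sign throughout), because each coordinate $a_i\pm b_i$ ranges independently over $A_i\pm B_i$. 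Hence $\pi(X)\pm\pi(X)=\prod_{i=0}^{k}\bigl(\varphi_i\pi(X)\pm\varphi_i\pi(X)\bigr)$, and the claim $\pi(Y)\subseteq\pi(X)\pm\pi(X)$ is equivalent to the $k+1$ inclusions $\varphi_i\pi(Y)\subseteq\varphi_i\pi(X)\pm\varphi_i\pi(X)$ for $0\le i\le k$.

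For $i=0$ this inclusion is precisely the first hypothesis. For $i\ge1$ I would use \eqref{rank 2}: as $\mathcal{A}_i$ has rank $2$ over $U_i/L$, its only basic sets are $\{L\}$ and $U_i/L\setminus\{L\}$, so each of $\varphi_i\pi(X)$ and $\varphi_i\pi(Y)$ equals one of these two. If $\varphi_i\pi(X)=\{L\}$, then $\langle\varphi_i\pi(X)\rangle=\{L\}$, so the hypothesis $\varphi_i\pi(Y)\subseteq\langle\varphi_i\pi(X)\rangle$ forces $\varphi_i\pi(Y)=\{L\}=\varphi_i\pi(X)\pm\varphi_i\pi(X)$. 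If instead $\varphi_i\pi(X)=U_i/L\setminus\{L\}$, then by \eqref{coprime} we have $|U_i/L|>3$, and one checks directly that in any group $H$ with $|H|\ge3$ one has $(H\setminus\{0\})+(H\setminus\{0\})=H$ (for $h\ne0$ write $h=a+(h-a)$ with $a$ a nonzero element different from $h$, for $h=0$ write $0=a+(-a)$); since $-(H\setminus\{0\})=H\setminus\{0\}$ the same holds with $-$ in place of $+$. Thus $\varphi_i\pi(X)\pm\varphi_i\pi(X)=U_i/L\supseteq\varphi_i\pi(Y)$. Combining the coordinatewise inclusions yields $\pi(Y)\subseteq\pi(X)\pm\pi(X)$.

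No serious obstacle is expected here. The only mildly delicate points are the product-of-sumsets identity (immediate once written coordinate by coordinate) and the elementary fact $(H\setminus\{0\})\pm(H\setminus\{0\})=H$ for $|H|\ge3$, which is exactly where the inequality $|U_i/L|>3$ from Proposition~\ref{jb3-2} (recorded in \eqref{coprime}) is used; the hypothesis $\varphi_i\pi(Y)\subseteq\langle\varphi_i\pi(X)\rangle$ is needed solely to exclude the degenerate coordinates in which $\varphi_i\pi(X)$ is trivial but $\varphi_i\pi(Y)$ is not.
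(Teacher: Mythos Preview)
Your argument is correct and follows the same route as the paper's proof: decompose $\pi(X)$ and $\pi(Y)$ coordinatewise via the tensor structure \eqref{A_G/L}, reduce to the inclusions $\varphi_i\pi(Y)\subseteq\varphi_i\pi(X)\pm\varphi_i\pi(X)$, and handle $i\ge1$ by the rank-$2$ dichotomy from \eqref{rank 2}. The paper's version is terser (it simply asserts $\varphi_i\pi(X)\pm\varphi_i\pi(X)=\{L\}$ or $U_i/L$), whereas you spell out the elementary verification $(H\setminus\{0\})\pm(H\setminus\{0\})=H$ for $|H|\ge3$; note that $|H|\ge3$ already suffices, so the stronger bound $|U_i/L|>3$ from \eqref{coprime} is not strictly needed here.
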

\begin{proof}
Since $\pi(Y)=\varphi_0\pi(Y)\times\cdots\times\varphi_k\pi(Y)$ and $\pi(X)=\varphi_0\pi(X)\times\cdots\times\varphi_k\pi(X)$,  it suffices to show that $\varphi_i\pi(Y)\subseteq\varphi_i\pi(X)\pm\varphi_i\pi(X)$ for $1\leq i\leq k$. By \eqref{rank 2}, we have  $\varphi_i\pi(X)=\{L\}$ or $U_i/L\setminus\{L\}$, which implies $\varphi_i\pi(X)\pm\varphi_i\pi(X)=\{L\}$ or $U_i/L$. Since $\varphi_i\pi(Y)\subseteq\langle\varphi_i\pi(X)\rangle$, the result follows.
\end{proof}

\begin{lemma}\label{q=3 step1}
Suppose that there exists a basic set $X$ of $\mathcal{A}$ such that $X^{(-1)}\subseteq X+X$ and that $X$ generates $\mathbb{Z}_n$. Then $\pi(Y)^{(-1)}\subseteq \pi(Y)+\pi(Y)$ for all $Y\in\mathcal{S}(\mathcal{A})$. In particular, if $Y$ generates $\mathbb{Z}_n$, then $Y^{(-1)}\subseteq Y+Y$.
\end{lemma}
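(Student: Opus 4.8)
The plan is to push the hypothesis down to the orbit factor $\mathcal{A}_0$ of the tensor decomposition \eqref{A_G/L}, invoke Lemma~\ref{q=3 step1-1} there, and then reassemble. First, since $X$ generates $\mathbb{Z}_n$, Lemma~\ref{isomorphic}~\ref{isomorphic-1} shows that $X$ is a union of $L$-cosets, so the inclusion $X^{(-1)}\subseteq X+X$ descends to $\pi(X)^{(-1)}\subseteq\pi(X)+\pi(X)$ in $\mathbb{Z}_n/L$; applying the projection $\varphi_0$ (a group homomorphism) then gives $\varphi_0\pi(X)^{(-1)}\subseteq\varphi_0\pi(X)+\varphi_0\pi(X)$. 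By Lemma~\ref{isomorphic}~\ref{isomorphic-2} the nonempty basic set $\varphi_0\pi(X)$ consists of generators of the cyclic group $U_0/L$, hence generates it. Since $\mathcal{A}_0$ is an orbit S-ring over $U_0/L$, Lemma~\ref{q=3 step1-1} now applies to $\mathcal{A}_0$ with the basic set $\varphi_0\pi(X)$ and yields $Z^{(-1)}\subseteq Z+Z$ for every $Z\in\mathcal{S}(\mathcal{A}_0)$.

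Next I would take an arbitrary $Y\in\mathcal{S}(\mathcal{A})$ and apply Lemma~\ref{pre-claim} to the pair $(Y,Y^{(-1)})$, both of which are basic sets of $\mathcal{A}$, with the sign $+$. For the $0$-th coordinate, $\varphi_0\pi(Y^{(-1)})=\varphi_0\pi(Y)^{(-1)}\subseteq\varphi_0\pi(Y)+\varphi_0\pi(Y)$ by the previous step, since $\varphi_0\pi(Y)\in\mathcal{S}(\mathcal{A}_0)$. For $i>0$, a subset and its set of inverses generate the same subgroup, so $\varphi_i\pi(Y^{(-1)})=\varphi_i\pi(Y)^{(-1)}\subseteq\langle\varphi_i\pi(Y)\rangle$. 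Hence Lemma~\ref{pre-claim} gives $\pi(Y)^{(-1)}=\pi(Y^{(-1)})\subseteq\pi(Y)+\pi(Y)$, which is the first assertion. Alternatively one could argue coordinatewise: by \eqref{rank 2} each $\varphi_i\pi(Y)$ with $i>0$ is $\{L\}$ or $U_i/L\setminus\{L\}$, and $|U_i/L|>3$ from \eqref{coprime} forces the sum of the set of non-identity elements of $U_i/L$ with itself to be all of $U_i/L$.

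For the second assertion, suppose $Y$ generates $\mathbb{Z}_n$. Then Lemma~\ref{isomorphic}~\ref{isomorphic-1} makes $Y$, and therefore $Y^{(-1)}$ and $Y+Y$, unions of $L$-cosets, so that $Y^{(-1)}=\pi^{-1}(\pi(Y)^{(-1)})$ and $Y+Y=\pi^{-1}(\pi(Y)+\pi(Y))$; the inclusion already proved then pulls back along $\pi^{-1}$ to $Y^{(-1)}\subseteq Y+Y$. The one point needing a little care is this last bookkeeping with $L$-cosets and $\pi^{-1}$ (checking that the relevant set sums remain unions of $L$-cosets, hence are recovered from their $\pi$-images); the substantive input, namely the existence of a common multiplier, is entirely delegated to Lemma~\ref{q=3 step1-1} applied to $\mathcal{A}_0$, so I do not expect a genuine obstacle.
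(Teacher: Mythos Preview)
Your proof is correct and follows essentially the same route as the paper: project the hypothesis along $\varphi_0\pi$ to the orbit S-ring $\mathcal{A}_0$, invoke Lemma~\ref{q=3 step1-1} there, and then reassemble via Lemma~\ref{pre-claim} (applied to $Y^{(-1)}$ and $Y$), with the final lift handled by Lemma~\ref{isomorphic}~\ref{isomorphic-1}. Your write-up is slightly more explicit in justifying the descent to $\varphi_0\pi(X)$ and the $L$-coset bookkeeping, but the argument is the same.
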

\begin{proof}
By the assumption, we have $\varphi_0\pi(X)^{(-1)}\subseteq\varphi_0\pi(X)+\varphi_0\pi(X)$. Since $\mathcal{A}_0$ is an orbit S-ring over the cyclic group $U_0/L$, from Lemma \ref{q=3 step1-1}, we have $\varphi_0\pi(Y^{(-1)})=\varphi_0\pi(Y)^{(-1)}\subseteq\varphi_0\pi(Y)+\varphi_0\pi(Y)$. By \eqref{rank 2}, one gets $\varphi_i\pi(Y^{(-1)})=\varphi_i\pi(Y)\subseteq\langle\varphi_i\pi(Y)\rangle$ for $1\leq i\leq k$. By Lemma \ref{pre-claim}, the first statement is valid.

If $Y$ generates $\mathbb{Z}_n$, from Lemma \ref{isomorphic} \ref{isomorphic-1}, then $Y$ is a union of $L$-cosets, in other words, $\pi^{-1}(\pi(Y))=Y$. This implies that the second statement is valid.
\end{proof}

\section{The subdigraph $\Delta_q$}

In the remainder of this paper, $\Gamma$ always denotes a weakly distance-regular circulant. Let $T$ be the set of integers $q$ such that $(1,q-1)$ is a type of $\Gamma$. That is,
\begin{align}
T=\{q\mid (1,q-1)\in\tilde{\partial}(\Gamma)\}.\nonumber
\end{align}
For $q\in T$, let $F_{q}$ be the smallest closed subset containing $\Gamma_{1,q-1}$, and $\Delta_{q}$ be the digraph with the vertex set $F_{q}(0)$ and arc set $\Gamma_{1,q-1}\cap(F_{q}(0))^2$. Note that $\Delta_q=\Gamma$ if $T=\{q\}$, but in general, $\Delta_q$ is not weakly distance-regular.

Under the assumption of Theorem \ref{Main2}, $\Gamma$ has only one type of arcs, and hence $|T|=1$. However, we not only consider weakly distance-regular circulants of one type of arcs, but also discuss the subdigraph $\Delta_{q}$ for $q>3$ in Proposition \ref{(1,q-1)} and the subdigraph $\Delta_{3}$ under the assumption $\min T=3$ in Proposition \ref{q=3}, which will be used frequently in the forthcoming paper \cite{AM} to classify weakly distance-regular circulants of more than one types of arcs. Also in \cite{AM}, we will consider $\Delta_{3}$ under the assumption $\{2,3\}\subseteq T$.

Note that the minimum of $T$ is the girth of $\Gamma$ and is at least $2$. The girth can be $2$, for example, for $\Gamma={\rm Cay}(\mathbb{Z}_4,\{1,2\})$. If the girth is $2$, then $T$ must contain $\{2\}$ properly, since $\Gamma$ is assumed not to be undirected. Thus, under the assumption of Theorem \ref{Main2}, we have $T=\{q\}$ with $q\geq3$.

The following lemma is proved in \cite{YYF22} under the assumption that $\Gamma$ is thick (see \cite{YYF22} for a definition), but its proof does not use this assumption.

\begin{lemma}\label{pure}
Let $q\in T$. If $p_{(1,q-1),(1,q-1)}^{(2,q-2)}=k_{1,q-1}$, then $\Gamma_{1,q-1}^{l}=\{\Gamma_{l,q-l}\}$ for $1\leq l\leq q-1$ and $\Delta_{q}$ is isomorphic to $C_q[\overline{K}_m]$, where $m=k_{1,q-1}$.
\end{lemma}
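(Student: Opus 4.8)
The plan is to read off the combinatorial content of the hypothesis, apply the circuit lemmas of Section~2 to show that short paths of type-$(1,q-1)$ arcs realise a \emph{unique} two-way distance, and then recognise the lexicographic structure of $\Delta_q$ through a radical computation; recall that in this section $\Gamma$ is a circulant.

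Write $k:=k_{1,q-1}$ (so $k\ge1$); note $q\ge3$, since otherwise $(2,q-2)\notin\tilde\partial(\Gamma)$. From $p_{(1,q-1),(1,q-1)}^{(2,q-2)}=k$, Lemma~\ref{jb}\ref{jb-3} forces $p_{(1,q-1),\tilde j}^{(2,q-2)}=0$ for all $\tilde j\ne(1,q-1)$, i.e. if $(x,y)\in\Gamma_{1,q-1}$ and $(x,z)\in\Gamma_{2,q-2}$ then $(y,z)\in\Gamma_{1,q-1}$; dually, by Lemma~\ref{jb}\ref{jb-2}, $k_{2,q-2}\le k$. Since $p_{(1,q-1),(1,q-1)}^{(2,q-2)}>0$, Lemma~\ref{lem} shows every arc of type $(1,q-1)$ lies on a circuit of type-$(1,q-1)$ arcs of length $q$; and no circuit $(z_0,\dots,z_{m-1})$ of type-$(1,q-1)$ arcs can have $m<q$, because $q-1=\partial(z_1,z_0)\le m-1$. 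So the shortest circuit of type-$(1,q-1)$ arcs has length exactly $q$.

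Fix such a shortest circuit $C=(x_0,\dots,x_{q-1})$, indices mod $q$. Then $\tilde\partial(x_0,x_2)=(2,q-2)$ and $k_{1,q-1}\ge k_{2,q-2}$, so Lemma~\ref{lem:2} applies to $C$: for every $i$ we get $(x_{i-1},x_{i+1})\in\Gamma_{2,q-2}$ and $P_{(2,q-2),(q-1,1)}(x_{i-2},x_{i-1})=Y_i:=P_{(1,q-1),(1,q-1)}(x_{i-1},x_{i+1})$. Since $|Y_i|=p_{(1,q-1),(1,q-1)}^{(2,q-2)}=k=|\Gamma_{1,q-1}(x_{i-1})|$ and $Y_i\subseteq\Gamma_{1,q-1}(x_{i-1})$, we get $Y_i=\Gamma_{1,q-1}(x_{i-1})$; and as $Y_i=\Gamma_{2,q-2}(x_{i-2})\cap\Gamma_{1,q-1}(x_{i-1})$ this gives $\Gamma_{1,q-1}(x_{i-1})\subseteq\Gamma_{2,q-2}(x_{i-2})$, hence $k\le k_{2,q-2}$, so $k_{2,q-2}=k$ and $\Gamma_{2,q-2}(x_{i-2})=\Gamma_{1,q-1}(x_{i-1})$. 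Feeding $k_{2,q-2}=k$ into $k^2=\sum_{\tilde l}p_{(1,q-1),(1,q-1)}^{\tilde l}k_{\tilde l}$ (Lemma~\ref{jb}\ref{jb-1}) leaves no term but $(2,q-2)$, so $\Gamma_{1,q-1}^2=\{\Gamma_{2,q-2}\}$; taking transposes, $\Gamma_{2,q-2}\Gamma_{q-1,1}=\{\Gamma_{1,q-1}\}$. I expect the passage ``$(2,q-2)$ occurs in $\Gamma_{1,q-1}^2$'' $\Rightarrow$ ``$k_{2,q-2}=k$'' (hence ``$\Gamma_{1,q-1}^2=\{\Gamma_{2,q-2}\}$'') to be the main obstacle; it is exactly what Lemma~\ref{lem:2} is built to provide.

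Now write $\Gamma={\rm Cay}(\mathbb{Z}_n,S)$, translate so $x_0=0$, and set $N:=\Gamma_{1,q-1}(0)$, $H:=\rad(N)$. The equality $\Gamma_{2,q-2}(x_{i-2})=\Gamma_{1,q-1}(x_{i-1})$ becomes $N+(x_{i-1}-x_{i-2})=\Gamma_{2,q-2}(0)$ for all $i$, so all arc-vectors $x_i-x_{i-1}$ lie in one $H$-coset $s+H$; since $N$ is a union of $H$-cosets this gives $N\supseteq s+H$, and a further $H$-coset in $N$ would produce a type-$(1,q-1)$ circuit of length $<q$, whence $N=s+H$. Then $F_q(0)=L$ is the subgroup $\langle(s+H)\cup(-s+H)\rangle=\langle s\rangle+H$, and the shortest-circuit condition forces $s$ to have order exactly $q$ modulo $H$, so $L/H\cong\mathbb{Z}_q$. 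One then proves $\Gamma_{1,q-1}^l=\{\Gamma_{l,q-l}\}$ for $1\le l\le q-1$ by induction on $l$ (done for $l\le2$): $\Gamma_{1,q-1}^l$ is the set of relations meeting $\underbrace{N+\cdots+N}_{l}=ls+H$, this coset is a single basic set of $\mathcal{D}(\mathbb{Z}_n,S)$ — for $\gcd(l,q)=1$ directly from the multiplier theorem (Theorem~\ref{Multiplier1}), since $N^{(m)}=ms+H$ and $q\mid n$, and for the remaining $l$ by the same bookkeeping with Lemma~\ref{jb} and the girth bound used for $l=2$, starting from $\Gamma_{1,q-1}^l=\Gamma_{l-1,q-l+1}\Gamma_{1,q-1}$ — and the $\partial$-part of the resulting relation is $l$ because the realising path, which lives in $\Delta_q\subseteq\Gamma$, cannot be shortened inside $F_q$ without contradicting the girth. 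Finally $\Gamma_{l,q-l}(0)=ls+H$, so in $\Delta_q={\rm Cay}(L,N)$ the vertex set $L$ is partitioned into the $q$ cosets of $H$, cyclically ordered by $L/H\cong\mathbb{Z}_q$, with all arcs running from one coset to the next and no others; that is, $\Delta_q\cong C_q[\overline{K}_m]$ with $m=|H|=k_{1,q-1}$.
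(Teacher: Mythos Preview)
The paper does not actually give its own proof of Lemma~\ref{pure}; it only records that the result is proved in \cite{YYF22} and that the argument there does not use the extra ``thick'' hypothesis. So there is nothing to compare against; I can only assess your argument on its own.

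Your overall architecture is right: show $N:=N_{1,q-1}$ is a single coset $s+H$ of $H=\rad(N)$, read off $\Delta_q\cong C_q[\overline{K}_{|H|}]$, and then identify $ls+H$ with $N_{l,q-l}$. Two steps, however, are not adequately justified.

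\textbf{(a) The passage to $N=s+H$.} Your sentence ``a further $H$-coset in $N$ would produce a type-$(1,q-1)$ circuit of length $<q$'' is not substantiated and I do not see why it should hold. But you do not need it: you already proved $\Gamma_{1,q-1}^2=\{\Gamma_{2,q-2}\}$ and $k_{2,q-2}=k$, so $N+N=N_{2,q-2}$ and $|N+N|=|N|$. For any $a\in N$ this gives $a+N=N+N$, hence $a-a'\in\rad(N)=H$ for all $a,a'\in N$, i.e.\ $N$ is a single $H$-coset.

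\textbf{(b) The identification $ls+H=N_{l,q-l}$.} Your appeal to Theorem~\ref{Multiplier1} is incorrect: the theorem needs $\gcd(m,n)=1$, not $\gcd(m,q)=1$, and even then $N^{(m)}=ms+mH$, which need not equal $ms+H$; the vague ``same bookkeeping'' for the remaining $l$ does not repair this. A clean fix is direct. You already know $\underline{N}^{\,l}=|H|^{l-1}\underline{ls+H}$, so $ls+H$ is an $\mathcal{A}$-set. For the two-way distance, take any $y\in ls+H$; since $N=s+H$, there is a circuit $(0,y_1,\dots,y_{l-1},y,y_{l+1},\dots,y_{q-1})$ in $\Delta_q$ consisting of arcs of type $(1,q-1)$. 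Because $\partial_\Gamma(y_1,0)=q-1$, the triangle inequality along this circuit forces $\partial_\Gamma(y_j,0)=q-j$ and $\partial_\Gamma(0,y_j)=j$ for all $j$; in particular $y\in N_{l,q-l}$. Thus $ls+H\subseteq N_{l,q-l}$, and since $ls+H$ is an $\mathcal{A}$-set contained in the basic set $N_{l,q-l}$, equality holds. (Your ``girth'' phrasing is misleading here: the girth of $\Gamma$ may well be less than $q$; what you actually use is $\partial_\Gamma(y_1,0)=q-1$.)

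With these two corrections your proof goes through.
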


Fix $q\in T$. By the second statement of Theorem \ref{s-ring and a.s}, we may assume $F_{q}(0)=\mathbb{Z}_n$ for some positive integer $n$.  In view of \cite[Proposition 1.5.1]{PHZ96}, the configuration $(\mathbb{Z}_n,\{\Gamma_{\tilde{i}}\cap\mathbb{Z}_n^2\mid \Gamma_{\tilde{i}}\in F_{q}\})$ is an association scheme. Theorem \ref{s-ring and a.s} implies that there exists an S-ring $\mathcal{A}$ over the group $\mathbb{Z}_n$ with Schur partition $\{N_{\tilde{i}}\mid \Gamma_{\tilde{i}}\in F_{q}\}$, where $N_{\tilde{i}}=\Gamma_{\tilde{i}}(0)$.

In the notations introduced after Proposition \ref{jb3-2}, we write $\overline{N}_{\tilde{i}}$ instead of $\varphi_0\pi(N_{\tilde{i}})$ for all $N_{\tilde{i}}\in\mathcal{S}(\mathcal{A})$. Note that $\mathcal{A}_0$ is an S-ring over the group $U_0/L$ with $\mathcal{S}(\mathcal{A}_0)=\{\overline{N}_{\tilde{i}}\mid N_{\tilde{i}}\in\mathcal{S}(\mathcal{A})\}$. Then Theorem \ref{s-ring and a.s} implies that $(U_{0}/L,\{R_{\tilde{i}}\}_{N_{\tilde{i}}\in\mathcal{S}(\mathcal{A})})$ is an association scheme, where $R_{\tilde{i}}=R_{U_{0}/L}(\overline{N}_{\tilde{i}})$. Note that this expression of the set of relations may contain duplicates; it may happen that $R_{\tilde{i}}=R_{\tilde{j}}$ for distinct $N_{\tilde{i}},N_{\tilde{j}}\in \mathcal{S}(\mathcal{A})$.

\begin{lemma}\label{genrators}
We have $\pi(N_{1,q-1})=\overline{N}_{1,q-1}\times (U_1/L\setminus\{L\})\times\cdots\times (U_k/L\setminus\{L\})$, where $|U_{i}/L|$, $i\in\{0,1,\ldots,k\}$, are pairwise coprime, and $|U_{i}/L|>3$ for all $i>0$. In particular,
\begin{enumerate}
\item\label{genrators-1} ${\rm Cay}(\mathbb{Z}_n/L,\pi(N_{1,q-1}))\cong{\rm Cay}(U_0/L,\overline{N}_{1,q-1})\times K_{|U_1/L|}\times\cdots\times K_{|U_k/L|}$;

\item\label{genrators-2} ${\rm Cay}(\mathbb{Z}_n,N_{1,q-1})\cong({\rm Cay}(U_0/L,\overline{N}_{1,q-1})\times K_{|U_1/L|}\times\cdots\times K_{|U_k/L|})[\overline{K}_{|L|}]$;

\item\label{genrators-3} each element in $\overline{N}_{1,q-1}$ is a generator of the group $U_0/L$;

\item\label{genrators-4} $N_{1,q-1}$ contains a generator of $\mathbb{Z}_n$;

\item\label{genrators-5} $R_{1,q-1}$ is non-symmetric.
\end{enumerate}
\end{lemma}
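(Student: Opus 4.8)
The plan is to reduce everything to the structural lemmas of Section~4 once the single substantive point has been settled, namely that the basic set $N_{1,q-1}$ generates the whole group $\mathbb{Z}_n$. \textbf{Step~1.} Since $\Gamma_{1,q-1}\in F_q$ we have $\langle N_{1,q-1}\rangle\subseteq F_q(0)=\mathbb{Z}_n$. For the reverse inclusion, note that $N_{1,q-1}^{(-1)}=N_{q-1,1}$ is also a basic set of $\mathcal{A}$, so $\langle N_{1,q-1}\rangle$ is an $\mathcal{A}$-set, being the support of a high enough power of $\underline{\{0\}}+\underline{N_{1,q-1}}+\underline{N_{1,q-1}^{(-1)}}\in\mathcal{A}$, hence an $\mathcal{A}$-subgroup containing $N_{1,q-1}$. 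Under the bijection of Theorem~\ref{s-ring and a.s} it corresponds to a closed subset of relations containing $\Gamma_{1,q-1}$, which therefore contains the smallest such subset $F_q$; taking $0$-neighbourhoods gives $\langle N_{1,q-1}\rangle\supseteq F_q(0)=\mathbb{Z}_n$, so $N_{1,q-1}$ generates $\mathbb{Z}_n$.

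\textbf{Step~2: the displayed identity, \ref{genrators-3} and \ref{genrators-4}.} Apply Lemma~\ref{isomorphic} with $X=N_{1,q-1}$. Part~\ref{isomorphic-1} says $N_{1,q-1}$ is a union of $L$-cosets, and part~\ref{isomorphic-2} gives both the asserted product description $\pi(N_{1,q-1})=\overline{N}_{1,q-1}\times(U_1/L\setminus\{L\})\times\cdots\times(U_k/L\setminus\{L\})$ and statement~\ref{genrators-3} that every element of $\overline{N}_{1,q-1}$ generates $U_0/L$; the coprimality of the $|U_i/L|$ and the bounds $|U_i/L|>3$ for $i>0$ are just a restatement of \eqref{coprime}. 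For \ref{genrators-4}, $N_{1,q-1}$ is a basic set of the S-ring $\mathcal{A}$ over the cyclic group $\mathbb{Z}_n$ which, by Step~1, generates $\mathbb{Z}_n$, so Lemma~\ref{genrates} produces a generator of $\mathbb{Z}_n$ inside $N_{1,q-1}$.

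\textbf{Step~3: \ref{genrators-1} and \ref{genrators-2}.} Using \eqref{G/L} identify $\mathbb{Z}_n/L$ with $U_0/L\times\cdots\times U_k/L$; the product description of $\pi(N_{1,q-1})$ then exhibits its connection set as $\overline{N}_{1,q-1}\times\prod_{i\geq1}(U_i/L\setminus\{L\})$. Directly from the definition of the direct product of digraphs, the Cayley digraph of a direct product of groups with a connection set of this form is the direct product of the coordinate Cayley digraphs; since each $U_i/L$ is cyclic, ${\rm Cay}(U_i/L,U_i/L\setminus\{L\})$ is the complete graph $K_{|U_i/L|}$, which gives \ref{genrators-1}. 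For \ref{genrators-2}, $N_{1,q-1}$ is a union of $L$-cosets by Step~2 and contains no element of $L$ since $0\notin N_{1,q-1}$, so $N_{1,q-1}=\pi^{-1}(\pi(N_{1,q-1}))$ with $L\notin\pi(N_{1,q-1})$; unwinding the definition of the lexicographic product then gives ${\rm Cay}(\mathbb{Z}_n,N_{1,q-1})\cong{\rm Cay}(\mathbb{Z}_n/L,\pi(N_{1,q-1}))[\overline{K}_{|L|}]$, and combining this with \ref{genrators-1} yields \ref{genrators-2}.

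\textbf{Step~4: \ref{genrators-5}, and the main obstacle.} Since $q\geq3$, the relations $\Gamma_{1,q-1}$ and $\Gamma_{q-1,1}=\Gamma_{1,q-1}^{\rm T}$ are distinct, so $N_{1,q-1}$ and $N_{1,q-1}^{(-1)}$ are distinct basic sets of $\mathcal{A}$, hence disjoint. Together with Step~1 this lets Lemma~\ref{(1,q-1) nonsymmetric} apply, giving $\overline{N}_{1,q-1}\cap\overline{N}_{1,q-1}^{(-1)}=\emptyset$; as $\overline{N}_{1,q-1}\neq\emptyset$, a set disjoint from its inverse cannot equal it, so $\overline{N}_{1,q-1}\neq\overline{N}_{1,q-1}^{(-1)}$ and therefore $R_{1,q-1}=R_{U_0/L}(\overline{N}_{1,q-1})$ is non-symmetric. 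The only part of the argument carrying real content is Step~1; everything after it is an application of Lemmas~\ref{genrates}, \ref{isomorphic} and \ref{(1,q-1) nonsymmetric} together with bookkeeping, and the place most likely to demand care is Step~3, where one must match the connection set $\overline{N}_{1,q-1}\times\prod_{i\geq1}(U_i/L\setminus\{L\})$ against the precise definitions of the direct and lexicographic products given in the introduction.
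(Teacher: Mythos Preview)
Your proof is correct and follows essentially the same route as the paper's: once you know that $N_{1,q-1}$ generates $\mathbb{Z}_n$, everything is an application of Lemmas~\ref{genrates}, \ref{isomorphic} and \ref{(1,q-1) nonsymmetric} together with \eqref{coprime}, exactly as in the paper. The only difference is that you spell out in Step~1 why $N_{1,q-1}$ generates $\mathbb{Z}_n$ via the closed-subset/$\mathcal{A}$-subgroup correspondence of Theorem~\ref{s-ring and a.s}, whereas the paper simply asserts this (it being implicit in the definition $F_q(0)=\mathbb{Z}_n$); your Step~4 also makes explicit the hypothesis $q\geq3$ needed to invoke Lemma~\ref{(1,q-1) nonsymmetric}, which the paper leaves tacit.
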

\begin{proof}
Since $N_{1,q-1}$ generates $\mathbb{Z}_n$, from Lemma \ref{isomorphic} \ref{isomorphic-2} and \eqref{coprime}, the first statement is valid. It follows that ${\rm Cay}(\mathbb{Z}_n/L,\pi(N_{1,q-1}))={\rm Cay}(U_0/L,\overline{N}_{1,q-1})\times {\rm Cay}(U_1/L,U_1/L\setminus\{L\})\times\cdots\times {\rm Cay}(U_k/L,U_1/L\setminus\{L\})$. Thus, \ref{genrators-1} holds.  \ref{genrators-2} is valid from Lemma \ref{isomorphic} \ref{isomorphic-1}. \ref{genrators-3} is immediate from Lemma \ref{isomorphic} \ref{isomorphic-2}. \ref{genrators-4} is valid from Lemma \ref{genrates}. And \ref{genrators-5} follows from Lemma \ref{(1,q-1) nonsymmetric}.
\end{proof}

\begin{lemma}\label{claim}
Let $\Gamma_{\tilde{i}}\in F_q$. The following hold:
\begin{enumerate}
\item\label{claim-1} $\Gamma_{\tilde{i}}\in\Gamma_{1,q-1}^2$ if and only if $R_{\tilde{i}}\in R_{1,q-1}^2$;

\item\label{claim-2} $\Gamma_{\tilde{i}}\in\Gamma_{1,q-1}\Gamma_{q-1,1}$ if and only if $R_{\tilde{i}}\in R_{1,q-1}R_{q-1,1}$.
\end{enumerate}
\end{lemma}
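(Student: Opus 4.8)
The plan is to translate each of the two ``iff'' statements into an inclusion of sumsets — one in $\mathbb{Z}_n$ and one in $U_0/L$ — and then to match the two sumsets using the product decomposition of Lemma~\ref{isomorphic}. First I would record the general principle: for basic sets $X,Y,Z$ of an S-ring, the coefficient of $\underline{Z}$ in $\underline{X}\,\underline{Y}$ counts the representations of a fixed $z\in Z$ as $x+y$ with $x\in X$, $y\in Y$, so it is nonzero exactly when $Z\subseteq X+Y$ (since the support of $\underline{X}\,\underline{Y}$ is $X+Y$, an $\mathcal{A}$-set, and $Z$ is a basic set). Applying this to $\mathcal{A}$ via Theorem~\ref{s-ring and a.s} — together with the fact that $\Gamma_{q-1,1}$ corresponds to $N_{1,q-1}^{(-1)}$ — and separately to $\mathcal{A}_0$ over $U_0/L$ (where $\overline{N}_{q-1,1}=\overline{N}_{1,q-1}^{(-1)}$), I obtain
\begin{gather*}
\Gamma_{\tilde{i}}\in\Gamma_{1,q-1}^2\iff N_{\tilde{i}}\subseteq N_{1,q-1}+N_{1,q-1},\\
R_{\tilde{i}}\in R_{1,q-1}^2\iff \overline{N}_{\tilde{i}}\subseteq \overline{N}_{1,q-1}+\overline{N}_{1,q-1},
\end{gather*}
together with the analogous pair in which the second summand is replaced by $N_{1,q-1}^{(-1)}$, resp. $\overline{N}_{1,q-1}^{(-1)}$, which corresponds to \ref{claim-2}. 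Thus the whole lemma reduces to showing $N_{\tilde{i}}\subseteq N_{1,q-1}+N_{1,q-1}$ iff $\overline{N}_{\tilde{i}}\subseteq\overline{N}_{1,q-1}+\overline{N}_{1,q-1}$, and the twisted version.

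Next I would invoke Lemma~\ref{isomorphic}: since $N_{1,q-1}$ generates $\mathbb{Z}_n$, it is a union of $L$-cosets and $\pi(N_{1,q-1})=\overline{N}_{1,q-1}\times(U_1/L\setminus\{L\})\times\cdots\times(U_k/L\setminus\{L\})$; the same holds for $N_{1,q-1}^{(-1)}$, using $(U_j/L\setminus\{L\})^{(-1)}=U_j/L\setminus\{L\}$. Consequently $N_{1,q-1}+N_{1,q-1}$ (and its twisted form) is again a union of $L$-cosets, so testing $N_{\tilde{i}}\subseteq N_{1,q-1}+N_{1,q-1}$ for the basic set $N_{\tilde{i}}$ is equivalent to testing $\pi(N_{\tilde{i}})\subseteq\pi(N_{1,q-1})+\pi(N_{1,q-1})$. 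Here is where the hypothesis is spent: for each $j\geq1$ we have $|U_j/L|>3$, so $U_j/L\setminus\{L\}$ contains more than half the elements of $U_j/L$, and a pigeonhole argument gives $(U_j/L\setminus\{L\})+(U_j/L\setminus\{L\})=U_j/L$ (and likewise with a minus sign). Combining this with \eqref{G/L} and \eqref{A_G/L} yields
\[
\pi(N_{1,q-1})+\pi(N_{1,q-1})=\bigl(\overline{N}_{1,q-1}+\overline{N}_{1,q-1}\bigr)\times U_1/L\times\cdots\times U_k/L,
\]
and the analogue for the twisted sum.

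To finish, I would use the tensor decomposition \eqref{A_G/L} to write $\pi(N_{\tilde{i}})=\overline{N}_{\tilde{i}}\times\varphi_1\pi(N_{\tilde{i}})\times\cdots\times\varphi_k\pi(N_{\tilde{i}})$, where $\varphi_j\pi(N_{\tilde{i}})\subseteq U_j/L$ holds trivially; comparing with the displayed formula then shows that $\pi(N_{\tilde{i}})$ is contained in $\pi(N_{1,q-1})+\pi(N_{1,q-1})$ precisely when $\overline{N}_{\tilde{i}}\subseteq\overline{N}_{1,q-1}+\overline{N}_{1,q-1}$, which is \ref{claim-1}; running the identical argument with $N_{1,q-1}^{(-1)}$ and $\overline{N}_{1,q-1}^{(-1)}$ in place of the second summand gives \ref{claim-2}. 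Most of this is routine bookkeeping with cosets and sumsets; the only genuine content is the sumset identity on each factor $U_j/L$, which is exactly what makes the ``extra'' coordinates $j\geq1$ impose no constraint, so that the question localizes to the single component $\mathcal{A}_0$ over $U_0/L$. I expect the main care-point to be keeping the correspondence of Theorem~\ref{s-ring and a.s} straight (products of relations $\leftrightarrow$ sumsets, transposes $\leftrightarrow$ inverses), rather than any real difficulty.
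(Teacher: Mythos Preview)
Your proposal is correct and follows essentially the same approach as the paper: both reduce the relation-product statements to sumset inclusions via Theorem~\ref{s-ring and a.s}, then exploit the product decomposition of $\pi(N_{1,q-1})$ from Lemma~\ref{isomorphic} together with the identity $(U_j/L\setminus\{L\})\pm(U_j/L\setminus\{L\})=U_j/L$ on each factor $j\geq1$ to localize the question to the $U_0/L$ component. The paper packages the coordinate-wise sumset step as Lemma~\ref{pre-claim} and treats the forward direction as ``trivial'' (just apply $\varphi_0\pi$), whereas you compute $\pi(N_{1,q-1})+\pi(N_{1,q-1})$ explicitly and handle both directions at once; these are the same idea.
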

\begin{proof}
Note that $\Gamma_{\tilde{i}}\in\Gamma_{1,q-1}^2$ is equivalent to $N_{\tilde{i}}\subseteq N_{1,q-1}+N_{1,q-1}$, and $R_{\tilde{i}}\in R_{1,q-1}^2$ is equivalent to $\overline{N}_{\tilde{i}}\subseteq\overline{N}_{1,q-1}+\overline{N}_{1,q-1}$. The necessity is trivial. We now prove the sufficiency. Since $R_{\tilde{i}}\in R_{1,q-1}^2$, we have $\overline{N}_{\tilde{i}}\subseteq\overline{N}_{1,q-1}+\overline{N}_{1,q-1}$. Since $N_{1,q-1}$ generates $\mathbb{Z}_n$, we obtain $\varphi_i\pi(N_{\tilde{i}})\subseteq\langle\varphi_i\pi(N_{1,q-1})\rangle$ for $1\leq i\leq k$.
By Lemma \ref{pre-claim}, one gets $\pi(N_{\tilde{i}})\subseteq\pi(N_{1,q-1})+\pi(N_{1,q-1})$. Since $N_{1,q-1}$ is a union of $L$-cosets from Lemma \ref{isomorphic} \ref{isomorphic-1}, one has $N_{\tilde{i}}\subseteq N_{1,q-1}+N_{1,q-1}$. Thus, \ref{claim-1} is valid. The proof of \ref{claim-2} is similar, hence omitted.
\end{proof}

\begin{lemma}\label{R-1}
Let $(\alpha_0,\alpha_1,\dots,\alpha_l)$ be a path in the digraph ${\rm Cay}(\mathbb{Z}_n/L,\pi(N_{1,q-1}))$ with $\alpha_0=L$ and $l>0$. Suppose that $x_l\in\mathbb{Z}_n$ satisfies $\pi(x_l)=\alpha_l$. Then there exists a path $(y_0,y_1,\dots,y_l)$ in $\Delta_{q}$ such that
\begin{align}
y_0&=0,\nonumber\\
\pi(y_i)&=\alpha_i\quad(0<i<l),\nonumber\\
y_l&=x_l.\nonumber
\end{align}
\end{lemma}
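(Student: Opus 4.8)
The goal is to lift a path in the quotient digraph $\mathrm{Cay}(\mathbb{Z}_n/L,\pi(N_{1,q-1}))$ to a path in $\Delta_q$ with prescribed endpoints $0$ and $x_l$, and with the intermediate vertices mapping down to the prescribed $\alpha_i$'s. The plan is to construct the lift one arc at a time, starting from $y_0 = 0$. Suppose $y_0, \dots, y_{i-1}$ have been constructed so that $\pi(y_j) = \alpha_j$ for $j < i$ and each consecutive pair is an arc of $\Delta_q$ (equivalently, $y_j - y_{j-1} \in N_{1,q-1}$). Since $(\alpha_{i-1}, \alpha_i)$ is an arc of the quotient digraph, we have $\alpha_i - \alpha_{i-1} \in \pi(N_{1,q-1})$. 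Because $N_{1,q-1}$ is a union of $L$-cosets by Lemma~\ref{isomorphic}~\ref{isomorphic-1}, the preimage $\pi^{-1}(\pi(N_{1,q-1})) = N_{1,q-1}$, so any element of $\mathbb{Z}_n$ mapping into $\pi(N_{1,q-1})$ already lies in $N_{1,q-1}$. Hence I may pick any $s \in N_{1,q-1}$ with $\pi(s) = \alpha_i - \alpha_{i-1}$ and set $y_i = y_{i-1} + s$; then $y_{i-1} - y_i \notin$—rather, $(y_{i-1}, y_i)$ is an arc of type $(1,q-1)$ in $\Gamma$, and since all $y_j \in F_q(0) = \mathbb{Z}_n$, it is an arc of $\Delta_q$.

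The only subtlety is the final vertex: I need $y_l$ to equal the \emph{specified} lift $x_l$ of $\alpha_l$, not just some arbitrary lift. So at the last step I cannot choose $s$ freely. Instead, having already built $y_0, \dots, y_{l-1}$ with $\pi(y_{l-1}) = \alpha_{l-1}$, I must verify that $x_l - y_{l-1} \in N_{1,q-1}$. We know $\pi(x_l - y_{l-1}) = \alpha_l - \alpha_{l-1} \in \pi(N_{1,q-1})$, and again since $N_{1,q-1} = \pi^{-1}(\pi(N_{1,q-1}))$ is a union of $L$-cosets, this forces $x_l - y_{l-1} \in N_{1,q-1}$. Thus setting $y_l = x_l$ makes $(y_{l-1}, y_l)$ an arc of $\Delta_q$, and the construction closes up correctly. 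This $L$-coset observation (Lemma~\ref{isomorphic}~\ref{isomorphic-1}, applicable since $N_{1,q-1}$ generates $\mathbb{Z}_n$ by Lemma~\ref{genrators}~\ref{genrators-4}) is the key point: it decouples the endpoint constraint from the intermediate choices.

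One should also double-check the degenerate case $l = 1$: then the path is $(\alpha_0, \alpha_1) = (L, \alpha_1)$, there are no interior vertices, and we just set $y_0 = 0$, $y_1 = x_1$; the verification $x_1 - 0 \in N_{1,q-1}$ is exactly the $l$-step argument above. I expect the main (though still mild) obstacle to be purely bookkeeping: making sure the indices for ``interior'' vertices ($0 < i < l$) versus the endpoints are handled uniformly, and confirming at each step that the vertex produced actually lands in $F_q(0) = \mathbb{Z}_n$ so that the arc belongs to $\Delta_q$ rather than merely to $\Gamma$ — but this is automatic since every $y_i$ is built by adding elements of $N_{1,q-1} \subseteq \mathbb{Z}_n$ to $0$. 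No deeper structural input (tensor decomposition, orbit S-ring properties) is needed here; everything follows from the single fact that $N_{1,q-1}$ is radical-saturated.
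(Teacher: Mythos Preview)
Your proposal is correct and takes essentially the same approach as the paper: the key observation is exactly that $N_{1,q-1}$ is a union of $L$-cosets (Lemma~\ref{isomorphic}~\ref{isomorphic-1}), which makes any choice of lift $y_i\in\pi^{-1}(\alpha_i)$ for the interior vertices, together with $y_l=x_l$, automatically yield consecutive differences in $N_{1,q-1}$. The paper's proof is simply the terse version of what you wrote.
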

\begin{proof}
By Lemma \ref{isomorphic} \ref{isomorphic-1}, $N_{1,q-1}$ is a union of $L$-cosets. Now, choosing $y_i\in\pi^{-1}(\alpha_i)$ arbitrarily for $1\leq i\leq l-1$, we find a path $(0,y_1,y_2,\ldots,y_l=x_{l})$ in $\Delta_{q}$ with the desired properties.
\end{proof}

\begin{lemma}\label{R}
Let $(L_0,L_1,\dots,L_l)$ be a path in ${\rm Cay}(U_0/L,\overline{N}_{1,q-1})$ with $L_0=L$ and $l>1$. Suppose that $x_l\in\mathbb{Z}_n$ satisfies $\varphi_0\pi(x_l)=L_l$. Then there exists a path $(y_0,y_1,\dots,y_l)$ in $\Delta_{q}$ such that
\begin{align}
y_0&=0,\nonumber\\
\varphi_0\pi(y_i)&=L_i\quad(0<i<l),\nonumber\\
y_l&=x_l.\nonumber
\end{align}
\end{lemma}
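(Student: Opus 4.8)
The plan is to reduce Lemma~\ref{R} to Lemma~\ref{R-1} by promoting the given path, which only lives in the single tensor factor ${\rm Cay}(U_0/L,\overline{N}_{1,q-1})$, to a genuine path in the whole quotient digraph ${\rm Cay}(\mathbb{Z}_n/L,\pi(N_{1,q-1}))$. First I would invoke Lemma~\ref{genrators}: $\pi(N_{1,q-1})=\overline{N}_{1,q-1}\times(U_1/L\setminus\{L\})\times\cdots\times(U_k/L\setminus\{L\})$ with $|U_j/L|>3$ for $1\leq j\leq k$. Equivalently, a pair $(\alpha,\beta)$ of elements of $\mathbb{Z}_n/L$ is an arc of ${\rm Cay}(\mathbb{Z}_n/L,\pi(N_{1,q-1}))$ exactly when $\varphi_0(\beta)-\varphi_0(\alpha)\in\overline{N}_{1,q-1}$ and $\varphi_j(\beta)\neq\varphi_j(\alpha)$ for all $1\leq j\leq k$. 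So it suffices to produce, for each $j\geq1$, a walk of length $l$ in the complete graph $K_{|U_j/L|}$ from the identity $L$ to $\varphi_j\pi(x_l)$, and then glue these coordinatewise onto the path $(L_0,\dots,L_l)$ by means of the decomposition \eqref{G/L}.

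The \textbf{main point} is the existence of these auxiliary walks, and this is where the hypotheses $l>1$ and $|U_j/L|>3$ are used: in a complete graph on at least three vertices there is, for any prescribed length $\geq2$, a walk between any ordered pair of (not necessarily distinct) vertices. One sees this by a short greedy argument — when choosing the $i$-th vertex for $1\leq i\leq l-1$ one must only avoid the previous vertex, together with the target vertex in the single case $i=l-1$; at most two values are excluded at each step while at least three are available. Having fixed such walks $L=\gamma^{(j)}_0,\gamma^{(j)}_1,\dots,\gamma^{(j)}_l=\varphi_j\pi(x_l)$, I define $\beta_i\in\mathbb{Z}_n/L$ through \eqref{G/L} by $\varphi_0(\beta_i)=L_i$ and $\varphi_j(\beta_i)=\gamma^{(j)}_i$ for $1\leq j\leq k$. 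Then $\beta_0=L=\pi(0)$, and $\beta_l=\pi(x_l)$ because $\varphi_0\pi(x_l)=L_l$; moreover $(\beta_0,\beta_1,\dots,\beta_l)$ is a path in ${\rm Cay}(\mathbb{Z}_n/L,\pi(N_{1,q-1}))$ by the arc criterion above, using that $(L_0,\dots,L_l)$ is already a path and that $\gamma^{(j)}_{i-1}\neq\gamma^{(j)}_i$.

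Finally I would apply Lemma~\ref{R-1} to the path $(\beta_0,\dots,\beta_l)$ together with $x_l$ (which satisfies $\pi(x_l)=\beta_l$), obtaining a path $(y_0,y_1,\dots,y_l)$ in $\Delta_q$ with $y_0=0$, $\pi(y_i)=\beta_i$ for $0<i<l$, and $y_l=x_l$; applying $\varphi_0$ yields $\varphi_0\pi(y_i)=\varphi_0(\beta_i)=L_i$, which is the desired conclusion. Apart from the elementary combinatorics of walks in complete graphs, the argument is just bookkeeping of the product structure; if $k=0$ there is nothing to glue and the statement is merely Lemma~\ref{R-1} applied to $(L_0,\dots,L_l)$ and $x_l$, since then $\pi=\varphi_0\pi$. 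I therefore expect the only real care to be needed in the greedy construction of the complete-graph walks and in checking that the assembled sequence is an honest path in the product digraph.
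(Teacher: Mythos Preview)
Your proposal is correct and follows essentially the same approach as the paper's proof: lift the path in ${\rm Cay}(U_0/L,\overline{N}_{1,q-1})$ to a path in ${\rm Cay}(\mathbb{Z}_n/L,\pi(N_{1,q-1}))$ using the product decomposition from Lemma~\ref{genrators}~\ref{genrators-1}, then apply Lemma~\ref{R-1}. The paper simply asserts the existence of such a lifted path $(\alpha_0,\dots,\alpha_l)$ with $\varphi_0(\alpha_i)=L_i$ and (implicitly) $\alpha_0=L$, $\alpha_l=\pi(x_l)$, whereas you spell out the coordinatewise construction via walks in the complete-graph factors; your added detail about the greedy walk in $K_{|U_j/L|}$ is exactly what justifies the paper's one-line invocation.
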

\begin{proof}
Note that $N_{1,q-1}$ generates $\mathbb{Z}_n$. In view of Lemma \ref{genrators} \ref{genrators-1}, there exists a path $(\alpha_0,\alpha_{1},\ldots,\alpha_l)$ in ${\rm Cay}(\mathbb{Z}_n/L,\pi(N_{1,q-1}))$ such that $\varphi_0(\alpha_{i})=L_i$ for $0\leq i\leq l$. By Lemma \ref{R-1}, there exists a path $(y_0=0,y_1,\dots,y_l=x_l)$ in $\Delta_{q}$ such that $\pi(y_i)=\alpha_{i}$ for $0\leq i\leq l$. The desired result follows.
\end{proof}

\begin{lemma}\label{nonsymmetric}
Let $l$ be a positive integer. If $\Gamma_{h,l}\in F_{q}$ with $h>2$, then $R_{h,l}\notin R_{1,q-1}^{i}$ for $1<i<h$.
\end{lemma}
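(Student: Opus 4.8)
The plan is to argue by contradiction: suppose $R_{h,l}\in R_{1,q-1}^{i}$ for some $i$ with $1<i<h$, and derive a contradiction with the fact that the one-way distance in $\Gamma$ from the tail to the head of an arc of "$\tilde\partial$-type" $(h,l)$ is exactly $h$. The point is that membership $R_{h,l}\in R_{1,q-1}^{i}$ should produce, in the quotient association scheme $(U_0/L,\{R_{\tilde j}\})$, a path of length $i$ from $L$ to some $L_i$ with $R_{U_0/L}(\{L_i\})=R_{h,l}$; lifting this back to $\Delta_q$ via Lemma~\ref{R} (using $i>1$) yields a path of length $i<h$ in $\Delta_q\subseteq\Gamma$ from $0$ to a vertex $y_i$ with $\tilde\partial(0,y_i)=(h,l)$, so $\partial_\Gamma(0,y_i)\le i<h$, contradicting $\partial_\Gamma(0,y_i)=h$.

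First I would unwind the hypothesis $R_{h,l}\in R_{1,q-1}^{i}$: by definition of the product of relation-sets applied to the scheme $(U_0/L,\{R_{\tilde j}\})$, this means there is a nonzero intersection number showing $R_{h,l}$ appears in the $i$-fold product of $R_{1,q-1}$ with itself, equivalently $\overline N_{h,l}\subseteq \overline N_{1,q-1}+\cdots+\overline N_{1,q-1}$ ($i$ summands), so there is a path $(L_0=L,L_1,\dots,L_i)$ in ${\rm Cay}(U_0/L,\overline N_{1,q-1})$ with $L_i\in\overline N_{h,l}$. Here I use Lemma~\ref{claim}-style reasoning to pass between $\Gamma_{1,q-1}^i$ and $R_{1,q-1}^i$ if needed, but actually only the quotient-scheme direction is required. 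Pick any $x_i\in\mathbb{Z}_n$ with $\varphi_0\pi(x_i)=L_i$; since $L_i\in\overline N_{h,l}$ and $\Gamma_{h,l}\in F_q$ with $h>2$, by Lemma~\ref{isomorphic}\ref{isomorphic-1} applied to $N_{h,l}$ (or directly, since $N_{h,l}$ is a basic $\mathcal{A}$-set lying over $\overline N_{h,l}$) we can choose $x_i$ so that in fact $x_i\in N_{h,l}$, i.e. $\tilde\partial(0,x_i)=(h,l)$; the hypothesis $h>2$ guarantees $\overline N_{h,l}\ne\{L\}$ so that such $x_i\ne 0$ and these relations are genuinely in the non-trivial part. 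Then apply Lemma~\ref{R} with $l:=i>1$ and $x_l:=x_i$ to get a path $(y_0=0,y_1,\dots,y_i=x_i)$ of length $i$ in $\Delta_q$, hence in $\Gamma$.

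The conclusion is then immediate: a path of length $i$ from $0$ to $x_i$ in $\Gamma$ forces $\partial_\Gamma(0,x_i)\le i<h$, but $\tilde\partial(0,x_i)=(h,l)$ means $\partial_\Gamma(0,x_i)=h$, a contradiction. I expect the main obstacle to be the bookkeeping in the middle step: correctly justifying that the path coming from $R_{h,l}\in R_{1,q-1}^{i}$ can be chosen to terminate at a representative $x_i$ that actually lies in the basic set $N_{h,l}$ (not merely in $\pi^{-1}$ of something over $\overline N_{h,l}$), and checking that the hypothesis $i>1$ is exactly what licenses the use of Lemma~\ref{R} (whose statement requires $l>1$) rather than the weaker Lemma~\ref{R-1}. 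Once those compatibilities are pinned down, the rest is a one-line distance count. I would also remark that the case $i=1$ is genuinely excluded, since $R_{1,q-1}$ itself may of course contribute $R_{h,l}$ only when $(h,l)=(1,q-1)$, but more to the point $h>2$ already rules out $i=1$ giving $R_{h,l}$ directly.
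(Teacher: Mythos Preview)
Your proposal is correct and follows essentially the same contradiction argument as the paper. The only difference is the order of operations: the paper picks $x\in N_{h,l}$ first and then uses $R_{h,l}\in R_{1,q-1}^i$ to produce a path of length $i$ from $L$ to $\varphi_0\pi(x)$ in ${\rm Cay}(U_0/L,\overline N_{1,q-1})$, whereas you produce the path first and then lift its endpoint to $N_{h,l}$. The paper's order sidesteps the bookkeeping you flagged, since picking $x\in N_{h,l}$ upfront guarantees $\varphi_0\pi(x)\in\overline N_{h,l}$ automatically; in your version the justification is simply that $\overline N_{h,l}=\varphi_0\pi(N_{h,l})$ by definition, so any $L_i\in\overline N_{h,l}$ has a preimage in $N_{h,l}$ (your appeal to Lemma~\ref{isomorphic}\ref{isomorphic-1} is misplaced, as that lemma requires the basic set to generate $\mathbb{Z}_n$, but your ``or directly'' reason is the right one).
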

\begin{proof}
Suppose, to the contrary that $R_{h,l}\in R_{1,q-1}^{i}$ for some $i\in\{2,3,\ldots,h-1\}$. Pick $x\in N_{h,l}$. Then $(L,\varphi_0\pi(x))\in R_{h,l}\in R_{1,q-1}^{i}$, so there exists a path of length $i$ from $L$ to $\varphi_0\pi(x)$ in ${\rm Cay}(U_0/L,\overline{N}_{1,q-1})$. By Lemma \ref{R}, we have $i\geq\partial_{\Delta_q}(0,x)\geq\partial_{\Gamma}(0,x)=h$, a contradiction.
\end{proof}

\begin{lemma}\label{h=2}
If $\overline{N}_{h,l}\subseteq\overline{N}_{1,q-1}+\overline{N}_{1,q-1}$ with $h>1$ and $l>0$, then $h=2$.
\end{lemma}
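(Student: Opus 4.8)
The plan is to deduce the statement from the correspondence, already set up, between the free orbit S-ring $\mathcal{A}_0$ over $U_0/L$ and the digraph $\Gamma$, together with the elementary fact that membership in $\Gamma_{1,q-1}^2$ forces a directed path of length $2$.

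First I would unwind the hypothesis. The very notation $\overline{N}_{h,l}$ presupposes $N_{h,l}\in\mathcal{S}(\mathcal{A})$, that is, $\Gamma_{h,l}\in F_q$; and, as recorded in the proof of Lemma~\ref{claim}, the inclusion $\overline{N}_{h,l}\subseteq\overline{N}_{1,q-1}+\overline{N}_{1,q-1}$ is precisely the assertion $R_{h,l}\in R_{1,q-1}^2$ in the association scheme on $U_0/L$ attached to $\mathcal{A}_0$. Now suppose, for contradiction, that $h>2$. Then $i=2$ satisfies $1<i<h$, so Lemma~\ref{nonsymmetric} yields $R_{h,l}\notin R_{1,q-1}^2$, contradicting the previous sentence. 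Hence $h\le2$, and since $h>1$ by hypothesis, $h=2$.

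If one wishes to bypass Lemma~\ref{nonsymmetric}, the same conclusion is immediate from scratch: by Lemma~\ref{claim}~\ref{claim-1} the hypothesis is equivalent to $\Gamma_{h,l}\in\Gamma_{1,q-1}^2$, i.e.\ $p_{(1,q-1),(1,q-1)}^{(h,l)}>0$; choosing $(x,y)\in\Gamma_{h,l}$ and a vertex $z\in\Gamma_{1,q-1}(x)\cap\Gamma_{q-1,1}(y)$ (nonempty, since that cardinality equals $p_{(1,q-1),(1,q-1)}^{(h,l)}$) gives $(x,z),(z,y)\in\Gamma_{1,q-1}$, so $(x,z,y)$ is a directed path of length $2$ and $h=\partial_\Gamma(x,y)\le2$; again $h>1$ forces $h=2$.

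I do not expect any genuine obstacle: the one point needing care is that the hypothesis lives at the level of the orbit S-ring $\mathcal{A}_0$ (equivalently, the quotient scheme on $U_0/L$) while the conclusion concerns distances in $\Gamma$, and this passage is exactly what Lemmas~\ref{pre-claim}, \ref{isomorphic} and \ref{claim} were built to handle. Consequently the whole argument is a two-line corollary of the machinery already developed.
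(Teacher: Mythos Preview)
Your proposal is correct and essentially identical to the paper's proof: the paper also observes (via Lemma~\ref{claim}~\ref{claim-1}) that the hypothesis gives $\Gamma_{h,l}\in\Gamma_{1,q-1}^2\subseteq F_q$, and then invokes Lemma~\ref{nonsymmetric} to force $h=2$. Your alternative direct argument is just the $i=2$ case of Lemma~\ref{nonsymmetric} unwound, so there is no substantive difference.
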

\begin{proof}
By Lemma \ref{claim} \ref{claim-1}, we have $\Gamma_{h,l}\in\Gamma_{1,q-1}^2\subseteq F_q$. Lemma \ref{nonsymmetric} implies $h=2$.
\end{proof}

\begin{lemma}\label{sym}
Assume $q=3$ and $R_{\tilde{i}}$ is symmetric with $R_{\tilde{i}}\neq R_{0,0}$. Suppose $L_0\in \overline{N}_{1,2}$ and $aL_0\in \overline{N}_{\tilde{i}}$ with $a\in\mathbb{Z}$. Then there exists an integer $b$ such that $abL_0=L$, $\overline{N}_{1,2}^{(b-1)}=\overline{N}_{1,2}$ and $bL_0\in \overline{N}_{\tilde{j}}$, where $R_{\tilde{j}}\in R_{1,2}^2\setminus\{R_{1,2},R_{2,1}\}$ and no element in $\overline{N}_{\tilde{j}}$ is a generator of $U_0/L$.
\end{lemma}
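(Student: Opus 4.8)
The plan is to carry out the whole computation inside the orbit S-ring $\mathcal{A}_0=\mathcal{O}(K,U_0/L)$ over the cyclic group $U_0/L$, using that by Lemma~\ref{genrators} \ref{genrators-3} every element of $\overline{N}_{1,2}$, in particular $L_0$, is a generator of $U_0/L$. Thus $U_0/L=\langle L_0\rangle$, and every element of $K\le{\rm Aut}(U_0/L)$ acts as multiplication by an integer coprime to $|U_0/L|$ (an automorphism of a cyclic group is determined by its value on a generator, and then acts as scalar multiplication on the whole group). This is the component-wise version of the situation treated in Lemma~\ref{q=3 step1-1}.

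First I would produce $b$. Since $R_{\tilde i}$ is symmetric, its connection set satisfies $\overline{N}_{\tilde i}^{(-1)}=\overline{N}_{\tilde i}$, so both $aL_0$ and $-aL_0$ lie in the single $K$-orbit $\overline{N}_{\tilde i}$; choose $f\in K$ with $f(aL_0)=-aL_0$ and write $f$ as multiplication by an integer $c$ (so $f(g)=cg$ for all $g\in U_0/L$). Then $c(aL_0)=-aL_0$, i.e.\ $(c+1)(aL_0)=L$. Set $b:=c+1$. By construction $abL_0=(c+1)(aL_0)=L$, and $\overline{N}_{1,2}^{(b-1)}=\overline{N}_{1,2}^{(c)}=f(\overline{N}_{1,2})=\overline{N}_{1,2}$ because $\overline{N}_{1,2}$ is a $K$-orbit and $f\in K$. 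Moreover $bL_0=cL_0+L_0=f(L_0)+L_0$ with $f(L_0)\in f(\overline{N}_{1,2})=\overline{N}_{1,2}$, so $bL_0\in\overline{N}_{1,2}+\overline{N}_{1,2}$; letting $R_{\tilde j}$ be the relation of the quotient scheme whose connection set $\overline{N}_{\tilde j}$ contains $bL_0$, this membership is exactly the statement $R_{\tilde j}\in R_{1,2}^2$.

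Next I would establish the two exclusions. Since $R_{\tilde i}\ne R_{0,0}$ and $\{L\}$ is the identity basic set of $\mathcal{A}_0$, we have $aL_0\ne L$; together with $(c+1)(aL_0)=L$ this forces $\gcd(b,|U_0/L|)>1$, so $bL_0$ lies in a proper subgroup of $\langle L_0\rangle=U_0/L$ and hence is not a generator. As $\overline{N}_{\tilde j}$ is a $K$-orbit and automorphisms of a cyclic group carry generators to generators, no element of $\overline{N}_{\tilde j}$ is a generator of $U_0/L$. Finally, every element of $\overline{N}_{1,2}$ is a generator by Lemma~\ref{genrators} \ref{genrators-3}, and $\overline{N}_{2,1}=\overline{N}_{1,2}^{(-1)}$ (because $\Gamma_{2,1}=\Gamma_{1,2}^{\rm T}$) likewise consists of generators, so $R_{\tilde j}\ne R_{1,2}$ and $R_{\tilde j}\ne R_{2,1}$; hence $R_{\tilde j}\in R_{1,2}^2\setminus\{R_{1,2},R_{2,1}\}$, which is the remaining assertion.

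I do not expect a genuine obstacle: once the orbit-S-ring decomposition of Proposition~\ref{jb3-2} and the surrounding setup are in hand, everything is the short cyclic-group computation above. The only point requiring care is the bookkeeping across the three levels $\mathbb{Z}_n$, $\mathbb{Z}_n/L$ and $U_0/L$, specifically that symmetry of the abstract relation $R_{\tilde i}$ is equivalent to $\overline{N}_{\tilde i}=\overline{N}_{\tilde i}^{(-1)}$, and that exhibiting the single element $bL_0$ simultaneously in $\overline{N}_{\tilde j}$ and in $\overline{N}_{1,2}+\overline{N}_{1,2}$ already certifies $R_{\tilde j}\in R_{1,2}^2$.
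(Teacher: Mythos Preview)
Your proof is correct and follows essentially the same approach as the paper's own proof: both pick $f\in K$ sending $aL_0$ to $-aL_0$, write $f$ as multiplication by an integer (you call it $c$ and set $b=c+1$; the paper calls it $b-1$ directly), and then read off all the conclusions from $abL_0=L$, $(b-1)L_0\in\overline{N}_{1,2}$, and the fact that generators are preserved under $K$. The arguments are identical up to notation.
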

\begin{proof}
Since $\mathcal{A}_0$ is an orbit S-ring, there exists a subgroup $K\leq {\rm Aut}(U_0/L)$ such that every basic set of $\mathcal{A}_0$ is a $K$-orbit. In particular, $\overline{N}_{\tilde{i}}$ and $\overline{N}_{1,2}$ are $K$-orbits. Since $R_{\tilde{i}}$ is symmetric, we have $aL_0,-aL_0\in \overline{N}_{\tilde{i}}$. Thus, there exists $f\in K$ such that $f(aL_0)=-aL_0$ and $f(\overline{N}_{1,2})=\overline{N}_{1,2}$. Since $U_0/L$ is cyclic, there exists an integer $b$ such that $f(L')=(b-1)L'$ for $L'\in U_0/L$. Hence, $abL_0=L$ and $\overline{N}_{1,2}^{(b-1)}=\overline{N}_{1,2}$.

Since $R_{\tilde{i}}\neq R_{0,0}$, one has $|U_0/L|\nmid a$. The fact $abL_0=L$ implies $|U_0/L|\mid ab$. It follows that $|U_0/L|$ and $b$ are not coprime. Hence, $bL_0$ is not a generator of $U_0/L$. By Lemma \ref{genrators} \ref{genrators-3}, there exists $\overline{N}_{\tilde{j}}\in\mathcal{S}(\mathcal{A}_0)\setminus\{\overline{N}_{1,2},\overline{N}_{2,1}\}$ with $bL_0\in\overline{N}_{\tilde{j}}$. The fact $\mathcal{A}_0$ is an orbit S-ring implies that no element in $\overline{N}_{\tilde{j}}$ is a generator of $U_0/L$. Since $bL_0=L_0+(b-1)L_0$ and $\overline{N}_{1,2}^{(b-1)}=\overline{N}_{1,2}$, we have  $R_{\tilde{j}}\in R_{1,2}^2\setminus\{R_{1,2},R_{2,1}\}$.
\end{proof}

\begin{lemma}\label{sym-1}
Let $q=3$. The following hold:
\begin{enumerate}
\item\label{sym-1-1} if $R_{\tilde{i}}$ is symmetric with $R_{\tilde{i}}\neq R_{0,0}$, then no element in $\overline{N}_{\tilde{i}}$ is a generator of $U_0/L$;

\item\label{sym-1-2} if each element in $\overline{N}_{1,2}+\overline{N}_{1,2}$ is a generator of $U_0/L$, then $(U_0/L,\{R_{\tilde{i}}\}_{\Gamma_{\tilde{i}}\in F_3})$ is skew-symmetric.
\end{enumerate}
\end{lemma}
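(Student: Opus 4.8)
The plan is to obtain both parts as essentially immediate consequences of Lemma~\ref{sym}, whose hypotheses are exactly tailored to the present setting. Two standing facts will be used repeatedly: $\Gamma_{1,2}\in F_3$ (since $q=3$), so that $\overline{N}_{1,2}$ is a nonempty basic set of $\mathcal{A}_0$ each of whose elements generates $U_0/L$ by Lemma~\ref{genrators}\,\ref{genrators-3}; and $R_{1,2}$ is non-symmetric by Lemma~\ref{genrators}\,\ref{genrators-5}, so in particular $R_{0,0}\notin R_{1,2}^2$ and $|U_0/L|>1$.

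For part~\ref{sym-1-1}, I would fix $L_0\in\overline{N}_{1,2}$ and let $z\in\overline{N}_{\tilde i}$ be arbitrary. Since $L_0$ generates the cyclic group $U_0/L$, write $z=aL_0$ with $a\in\mathbb{Z}$. Lemma~\ref{sym}, applied to the symmetric relation $R_{\tilde i}\neq R_{0,0}$, produces an integer $b$ with $abL_0=L$ and $bL_0\in\overline{N}_{\tilde j}$ for some $R_{\tilde j}\in R_{1,2}^2\setminus\{R_{1,2},R_{2,1}\}$. Since $R_{0,0}\notin R_{1,2}^2$ we get $R_{\tilde j}\neq R_{0,0}$, hence $bL_0\neq L$, i.e.\ $|U_0/L|\nmid b$. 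If $z$ were a generator of $U_0/L$, then $\gcd(a,|U_0/L|)=1$; combined with $|U_0/L|\mid ab$ this forces $|U_0/L|\mid b$, a contradiction. Therefore no element of $\overline{N}_{\tilde i}$ generates $U_0/L$.

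For part~\ref{sym-1-2}, I would argue by contradiction. Assume $R_{\tilde i}$ is symmetric with $\Gamma_{\tilde i}\in F_3$ and $R_{\tilde i}\neq R_{0,0}$. Choosing $L_0\in\overline{N}_{1,2}$ and $z\in\overline{N}_{\tilde i}$ and writing $z=aL_0$ as above, Lemma~\ref{sym} now gives an integer $b$ with $\overline{N}_{1,2}^{(b-1)}=\overline{N}_{1,2}$ and $bL_0\in\overline{N}_{\tilde j}$, where no element of $\overline{N}_{\tilde j}$ generates $U_0/L$. The key point is that $(b-1)L_0\in\overline{N}_{1,2}^{(b-1)}=\overline{N}_{1,2}$, so that $bL_0=L_0+(b-1)L_0\in\overline{N}_{1,2}+\overline{N}_{1,2}$. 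By the hypothesis of part~\ref{sym-1-2}, $bL_0$ is then a generator of $U_0/L$, contradicting $bL_0\in\overline{N}_{\tilde j}$. Hence the scheme $(U_0/L,\{R_{\tilde i}\}_{\Gamma_{\tilde i}\in F_3})$ has no symmetric relation other than $R_{0,0}$, i.e.\ it is skew-symmetric.

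The real content of the argument is concentrated in the single step $(b-1)L_0\in\overline{N}_{1,2}$ used in part~\ref{sym-1-2}: it converts the multiplier-type identity $\overline{N}_{1,2}^{(b-1)}=\overline{N}_{1,2}$ delivered by Lemma~\ref{sym} into the membership $bL_0\in\overline{N}_{1,2}+\overline{N}_{1,2}$, at which point the hypothesis of~\ref{sym-1-2} and the non-generator conclusion of Lemma~\ref{sym} are in direct conflict. Everything else is routine arithmetic of orders in the cyclic group $U_0/L$, so I do not expect a genuine obstacle here once Lemma~\ref{sym} is in hand.
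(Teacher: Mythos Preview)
Your proposal is correct and follows essentially the same approach as the paper: both parts reduce immediately to Lemma~\ref{sym}, with part~\ref{sym-1-1} amounting to the arithmetic that $|U_0/L|\mid ab$ and $|U_0/L|\nmid b$ force $\gcd(a,|U_0/L|)\neq 1$, and part~\ref{sym-1-2} to the observation that $bL_0=L_0+(b-1)L_0\in\overline{N}_{1,2}+\overline{N}_{1,2}$. The only cosmetic difference is that to obtain $|U_0/L|\nmid b$ in part~\ref{sym-1-1} the paper uses $(b-1)L_0\in\overline{N}_{1,2}$ together with $-L_0\notin\overline{N}_{1,2}$, whereas you use $bL_0\in\overline{N}_{\tilde j}$ with $R_{\tilde j}\neq R_{0,0}$; both are equally valid consequences of Lemma~\ref{sym}.
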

\begin{proof}
Pick $L_0\in\overline{N}_{1,2}$. By Lemma \ref{genrators} \ref{genrators-3}, $L_0$ is a generator of $U_0/L$.

\ref{sym-1-1} Note that there exists an integer $a$ such that $aL_0\in\overline{N}_{\tilde{i}}$. Since $\mathcal{A}_0$ is an orbit S-ring, it suffice to show that $aL_0$ is not a generator of $U_0/L$. By Lemma \ref{sym}, there exists an integer $b$ such that $abL_0=L$ and $\overline{N}_{1,2}^{(b-1)}=\overline{N}_{1,2}$. In view of Lemma \ref{genrators} \ref{genrators-5}, we have $-L_0\notin\overline{N}_{1,2}$. Since $(b-1)L_0\in\overline{N}_{1,2}$, one gets $|U_0/L|\nmid b$. It follows that $a$ and $|U_0/L|$ are not coprime. Then $aL_0$ is not a generator of $U_0/L$. Thus, \ref{sym-1-1} is valid.

\ref{sym-1-2} follows from Lemma \ref{sym}.
\end{proof}

\begin{prop}\label{(1,q-1)}
Assume $q>3$. Suppose $p_{(1,q-1),(1,q-1)}^{(2,q-2)}>0$ or $|\Gamma_{1,q-1}^{2}|=1$. If $m$ is the length of a shortest circuit in the subdigraph $\Delta_{q}$, then $\Delta_{q}$ is isomorphic to $C_m[\overline{K}_l]$, where $l=k_{1,q-1}$. Moreover, if $p_{(1,q-1),(1,q-1)}^{(2,q-2)}>0$, then $p_{(1,q-1),(1,q-1)}^{(2,q-2)}=k_{1,q-1}$ and $\Gamma_{1,q-1}^h=\{\Gamma_{h,q-h}\}$ for $1\leq h\leq q-1$.
\end{prop}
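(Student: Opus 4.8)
The plan is to reduce the statement to a claim about the Schur ring and then settle that claim with Lemma~\ref{lem:2} and the Section~4 machinery. Throughout, $\mathcal{A}$ denotes the S-ring over $\mathbb{Z}_n=F_q(0)$ with basic sets $\{N_{\tilde i}\mid\Gamma_{\tilde i}\in F_q\}$, $L=\mathrm{rad}(\mathcal{A})$, and \eqref{G/L}--\eqref{A_G/L} the induced decomposition. Since $\Delta_q=\mathrm{Cay}(\mathbb{Z}_n,N_{1,q-1})$, $N_{1,q-1}$ generates $\mathbb{Z}_n$ (Lemma~\ref{genrators}\ref{genrators-4}), and $\mathrm{rad}(N_{1,q-1})=L$, a short argument comparing out-degrees and out-neighbourhood classes shows that $\Delta_q\cong C_m[\overline{K}_l]$ with $l=k_{1,q-1}$ \emph{if and only if} $N_{1,q-1}$ is a single $L$-coset; by Lemma~\ref{genrators}\ref{genrators-2} and \eqref{coprime} the latter is equivalent to $k=0$ together with $|\overline{N}_{1,q-1}|=1$, in which case $\Delta_q\cong C_{[\mathbb{Z}_n:L]}[\overline{K}_{|L|}]$, so $m=[\mathbb{Z}_n:L]$ and $l=|L|=k_{1,q-1}$. (If $k_{1,q-1}=1$ this is immediate, so I assume $k_{1,q-1}>1$.) Thus it remains to prove $k=0$ and $|\overline{N}_{1,q-1}|=1$.

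I would first extract rigid information from a shortest circuit. Let $(x_0,\dots,x_{m-1})$ be a shortest circuit of arcs of type $(1,q-1)$ in $\Delta_q$; then $m\ge q>3$. With $(a,b):=\tilde\partial(x_0,x_2)$ one has $\Gamma_{a,b}\in F_q$ and $k_{1,q-1}\ge k_{a,b}$ (Lemma~\ref{jb4}), so by the hypothesis and Lemma~\ref{lem:2} we get $(x_{i-1},x_{i+1})\in\Gamma_{a,b}$ for all $i$, the three expressions for $Y_i=P_{(1,q-1),(1,q-1)}(x_{i-1},x_{i+1})$, and --using $q>3$ and that $\Gamma$ is a Cayley digraph-- that $W:=Y_i-x_{i-1}$ does not depend on $i$; moreover $W\subseteq N_{1,q-1}$ and the steps $s_i:=x_i-x_{i-1}$ lie in $W$. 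Taking $x_0=0$ and applying Lemma~\ref{lem:2} to each cyclic rotation of the circuit yields $W=N_{1,q-1}\cap\bigl((s_i+s_{i+1})-N_{1,q-1}\bigr)$ for every $i$, so each involution $w\mapsto(s_i+s_{i+1})-w$ preserves $W$; hence $W$ is a union of cosets of $B:=\langle(s_i+s_{i+1})-(s_j+s_{j+1})\rangle$.

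The heart of the proof, and the step I expect to be the main obstacle, is to upgrade this to $W=N_{1,q-1}$ and to rule out $k\ge1$ and $|\overline{N}_{1,q-1}|\ge2$; this is precisely where $q>3$ and the hypothesis on $\Gamma_{1,q-1}^2$ enter. Passing to the quotient scheme $\mathfrak{X}_0=(U_0/L,\{R_{\tilde i}\}_{\Gamma_{\tilde i}\in F_q})$, the relation $R_{1,q-1}$ generates $\mathfrak{X}_0$ and is non-symmetric (Lemma~\ref{genrators}\ref{genrators-5}); by Lemma~\ref{nonsymmetric} the relation $R_{q-1,1}$ appears only in $R_{1,q-1}^{i}$ with $i\ge q-1>2$, while Lemmas~\ref{claim}\ref{claim-1} and \ref{h=2} restrict $R_{1,q-1}^2$. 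When $|\overline{N}_{1,q-1}|>1$ one aims to verify \eqref{1.1a}--\eqref{1.1c} for $\mathfrak{X}_0$ (the inequality $k_{1,q-1}\ge k_2$ coming from Lemma~\ref{jb4}) and invoke Theorem~\ref{main1} to get $d=4$; but a $4$-class scheme generated by a relation satisfying \eqref{1.1a} has $R_{1,q-1}^{i}$ contained in its five relations for all $i\ge3$, forcing $R_{q-1,1}\in R_{1,q-1}^{i}$ for some $i\le3$, which contradicts $q-1>2$. Hence $|\overline{N}_{1,q-1}|=1$ and $\mathrm{Cay}(U_0/L,\overline{N}_{1,q-1})$ is a directed cycle; were some $K_{|U_j/L|}$ with $j\ge1$ present, Lemma~\ref{genrators}\ref{genrators-1} would make $\mathrm{Cay}(\mathbb{Z}_n/L,\pi(N_{1,q-1}))$ a direct product of a directed cycle with complete graphs, all of whose out-neighbourhood classes are singletons while its out-degree exceeds $1$---so it cannot be a lexicographic product $C_{m'}[\overline{K}_{l'}]$. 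Therefore $k=0$, $W=N_{1,q-1}$, and $\Delta_q\cong C_m[\overline{K}_l]$.

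For the last assertion, if $p_{(1,q-1),(1,q-1)}^{(2,q-2)}>0$ then Lemma~\ref{lem} yields a circuit of arcs of type $(1,q-1)$ of length $q$, so $m=q$ and $(a,b)=(2,q-2)$; then $W=N_{1,q-1}$ gives $p_{(1,q-1),(1,q-1)}^{(2,q-2)}=|W|=k_{1,q-1}$, whence Lemma~\ref{pure} yields $\Gamma_{1,q-1}^h=\{\Gamma_{h,q-h}\}$ for $1\le h\le q-1$.
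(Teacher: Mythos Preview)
Your reduction in the first paragraph is fine, and the use of Lemma~\ref{lem:2} to obtain the common set $W=Y_i-x_{i-1}$ is correct. The gap is in the ``heart of the proof''. You propose to apply Theorem~\ref{main1} to $\mathfrak{X}_0$, but you do not verify \eqref{1.1a}--\eqref{1.1c}, and they need not hold. Even when $|\Gamma_{1,q-1}^2|=1$ (so \eqref{1.1a} is automatic with $R_2=R_{a,b}$), nothing you cite constrains $R_{1,q-1}R_{q-1,1}$: Lemma~\ref{claim}\ref{claim-2} only transfers the question back to $\Gamma_{1,q-1}\Gamma_{q-1,1}$, which may contain many relations $\Gamma_{h,l}$ with $h,l\le q$, and Lemma~\ref{h=2} says nothing about this product. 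Under the other hypothesis $p_{(1,q-1),(1,q-1)}^{(2,q-2)}>0$, even \eqref{1.1a} is unjustified, since $R_{1,q-1}^2$ can contain several distinct $R_{2,l}$. Moreover, your contradiction from $d=4$ is not valid: you assert $R_{q-1,1}\in R_{1,q-1}^i$ for some $i\le 3$, but in a $4$-class scheme $R_{1^*}$ need not appear in $R_1^i$ for small $i$; and for $q=4$, Lemma~\ref{nonsymmetric} only excludes $i=2$, so $R_{3,1}\in R_{1,3}^3$ is no contradiction. Finally, your argument that $k=0$ is circular: you deduce that the product $C_{|U_0/L|}\times K_{n_1}\times\cdots$ ``cannot be a lexicographic product $C_{m'}[\overline{K}_{l'}]$'', but you have not yet shown $\Delta_q\cong C_m[\overline{K}_l]$, so there is nothing to contradict.

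The paper does not use Theorem~\ref{main1} here at all. It first strengthens Lemma~\ref{lem:2} to the additive identity $Y_1+Y_i=Y_{i+1}$ and then splits into two cases. If $m>q$, the hypothesis forces $|\Gamma_{1,q-1}^2|=1$, whence $k_{1,q-1}^2=p_{(1,q-1),(1,q-1)}^{(a,b)}k_{a,b}$ together with $k_{1,q-1}\ge k_{a,b}$ gives $Y_i=\Gamma_{1,q-1}(x_{i-1})$; then $Y_1+Y_i=Y_{i+1}$ shows the $Y_i$'s partition $F_q(0)$ and yield the $C_m[\overline{K}_l]$ structure directly. If $m=q$, the paper first rules out $k>0$ by an explicit construction of two circuits in $\mathrm{Cay}(\mathbb{Z}_n/L,\pi(N_{1,q-1}))$ that force $b_2,c_2\in\pi(N_{2,q-2})$ and then contradicts Lemma~\ref{basic sets}; with $k=0$ it proves $Y_1^{(i)}\subseteq Y_i$ by induction, uses this to show $\gcd(q,|\mathbb{Z}_n/L|)>1$, and finishes with a divisibility argument (any $x=rx_1\in N_{1,q-1}$ has $r\equiv 1\pmod q$, hence $x\in Y_1$) to get $N_{1,q-1}=Y_1$, i.e.\ $p_{(1,q-1),(1,q-1)}^{(2,q-2)}=k_{1,q-1}$, after which Lemma~\ref{pure} applies.
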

\begin{proof}
Let $(x_{0}=0,x_{1},\ldots,x_{m-1})$ be a shortest circuit in $\Delta_{q}$. Assume $(x_0,x_2)\in\Gamma_{a,b}$. It follows from Lemma \ref{jb4} that $k_{1,q-1}\geq k_{a,b}$. In the notation of Lemma~\ref{lem:2}, we get $(x_{i-1},x_{i+1})\in\Gamma_{a,b}$ and $P_{(q-1,1),(a,b)}(x_{i+2},x_{i+3})=Y_{i+1}$ for all $i$, where the indices are read modulo $m$. By Lemma~\ref{lem:2} applied to the circuit $(y_{i},x_{i+1},x_{i+2},x_{i+3},\ldots,x_{i-1})$ for any $y_{i}\in Y_{i}$, one has
$Y_{i}-x_{i-1}=P_{(1,q-1),(1,q-1)}(y_{i},x_{i+2})-y_{i}=P_{(q-1,1),(a,b)}(x_{i+2},x_{i+3})-y_{i}=Y_{i+1}-y_{i}.$ Since $x_j\in Y_j$ for $1\leq j<i$ and $x_0=0$, it follows by induction that $Y_{1}=Y_{i+1}-y_{i}$ for any $y_i\in Y_i$. Then $Y_1+Y_i=Y_{i+1}$ for $i>0$. For each nonpositive integer $i$, we have $i\equiv r~({\rm mod}~q)$ for some $r\in\{1,\ldots,q\}$, which implies $Y_1+Y_{i}=Y_1+Y_r=Y_{r+1}=Y_{i+1}$. Thus, for all integers $i$,
\begin{equation}\label{step2}
Y_1+Y_i=Y_{i+1}.
\end{equation}

\textbf{Case 1.} $m>q$.

In view of Lemma \ref{lem}, we obtain $p_{(1,q-1),(1,q-1)}^{(2,q-2)}=0$, and so $\Gamma_{1,q-1}^{2}=\{\Gamma_{a,b}\}$. By Lemma \ref{jb} \ref{jb-1}, one has $k_{1,q-1}^2=p_{(1,q-1),(1,q-1)}^{(a,b)}k_{a,b}$. Since $k_{1,q-1}\geq k_{a,b}$, we get $k_{a,b}=p_{(1,q-1),(1,q-1)}^{(a,b)}=k_{1,q-1}$, which implies $Y_i=\Gamma_{1,q-1}(x_{i-1})$ for all $i$. It follows from \eqref{step2} that $(y_{i},y_{i+1})\in\Gamma_{1,q-1}$ for any $y_i\in Y_i$ and $y_{i+1}\in Y_{i+1}$. In other words, all out-neighbors of vertices of $Y_i$ are contained in $Y_{i+1}$. This implies that $\bigcup_{i=1}^mY_i$ is a connected component of $\Delta_q$, and hence is equal to the set of vertices of $\Delta_q$. Thus, $\Delta_{q}$ is isomorphic to $C_m[\overline{K}_l]$, where $l=k_{1,q-1}$.

\textbf{Case 2.} $m=q$.

Note that $\partial_{\Gamma}(x_i,x_{i+j})=j$ with $1\leq j<q$ for all $i$. It follows that $p_{(1,q-1),(1,q-1)}^{(2,q-2)}>0$ and $(a,b)=(2,q-2)$. Suppose $k>0$ in \eqref{G/L}. For each $j$ with $1\leq j\leq k$, pick distinct $L_j,L'_j\in U_j/L\setminus\{L\}$. Let $a_{i}=(\varphi_0\pi(x_i),L_1,L_2,\ldots,L_k)$, $b_{i}=(\varphi_0\pi(x_i),L,L,\ldots,L)$ and $c_i=(\varphi_0\pi(x_i),L_1',L_2',\ldots,L_k')$ for $i\in\mathbb{Z}$. By Lemma \ref{genrators}, we have $h_{i+1}-h_i'\in\pi(N_{1,q-1})$ for distinct symbols $h,h'\in\{a,b,c\}$ with $0\leq i\leq q-1$. If $q$ is even, then $(b_0,a_1,h_2,a_3,b_4,a_5\ldots,b_{q-2},a_{q-1})$ is a circuit in ${\rm Cay}(\mathbb{Z}_n/L,\pi(N_{1,q-1}))$ for each $h\in\{b,c\}$; if $q$ is odd, then $(b_0,a_1,h_2,h'_3,a_4,h_5',a_6\ldots,h_{q-2}',a_{q-1})$ is a circuit in ${\rm Cay}(\mathbb{Z}_n/L,\pi(N_{1,q-1}))$ for each $(h,h')\in\{(b,c),(c,b)\}$. Pick $y,z\in\mathbb{Z}_n$ with $\pi(y)=b_2$ and $\pi(z)=c_2$. By Lemma \ref{R-1}, there exist two circuits $(y_0=0,y_1,y_2=y,\ldots,y_{q-1})$ and $(z_0=0,z_1,z_2=z,\ldots,z_{q-1})$ in $\Delta_q$. Hence, $y,z\in N_{2,q-2}$. It follows that $b_2,c_2\in\pi(N_{2,q-2})$, contrary to Lemma \ref{basic sets}. Thus, $k=0$ and $U_0=\mathbb{Z}_n$.

We will prove
\begin{equation}
Y_{1}^{(i)}\subseteq Y_{i}\label{l=q}
\end{equation}
for all integers $i$ by induction on $|i|$. The case $i=0$ is trivial. Suppose $Y_{1}^{(i)}\subseteq Y_{i}$. Since $Y_i\pm Y_1=Y_{i\pm1}$ from \eqref{step2}, we get $Y_{1}^{(i\pm1)}\subseteq Y_{1}^{(i)}\pm Y_1\subseteq Y_{i\pm1}$. Thus, \eqref{l=q} is valid.

We next claim that $q$ and $|\mathbb{Z}_n/L|$ are not coprime. By \eqref{l=q}, one has $\pi(Y_1)^{(q)}=\pi(Y_1^{(q)})\subseteq \pi(Y_q)=\pi(Y_0)$. Since $Y_1\subseteq N_{1,q-1}$, from Lemma \ref{genrators} \ref{genrators-3}, all elements of $\pi(Y_1)$ are generators of $\mathbb{Z}_n/L$. Suppose, to the contrary, $q$ and $|\mathbb{Z}_n/L|$ are coprime. Then all elements of $\pi(Y_1)^{(q)}$ are generators of $\mathbb{Z}_n/L$. Moreover, $|\pi(Y_1)^{(q)}|=|\pi(Y_1)|=|\pi(Y_0)|$ since $|Y_1|=|Y_0|$. Thus, $\pi(Y_1)^{(q)}=\pi(Y_0)$. This is a contradiction since $\pi(Y_0)$ contains the identity which is not a generator of $\mathbb{Z}_n/L$. Thus, our claim is valid.

By Lemma \ref{genrators} \ref{genrators-4}, we may assume that $x_1$ is a generator of $\mathbb{Z}_n$. The fact that $(y_{i},x_{i+1},x_{i+2},x_{i+3},\ldots,x_{i-1})$ is a circuit for any $y_{i}\in Y_{i}$ with $1\leq i\leq q-1$ implies $Y_{i}\subseteq N_{i,q-i}$. Let $x\in N_{1,q-1}$. Then $x=rx_1$ for some $r\in\mathbb{Z}$. Then $\langle r \pi(x_1)\rangle=\langle\pi(x)\rangle=\mathbb{Z}_n/L$ by Lemma \ref{genrators} \ref{genrators-3}. This implies $r$ and $|\mathbb{Z}_n/L|$ are coprime. By the
claim, we see that $r$ is not divisible by $q$. So $r\equiv r'~({\rm mod}~q)$ for some $0<r'<q$. Then $x=rx_1\in Y_1^{(r)}\subseteq Y_r=Y_{r'}\subseteq N_{r',q-r'}$ by \eqref{l=q}. This forces $r'=1$, and hence $x\in Y_1$.
Since $x\in N_{1,q-1}$ was arbitrary, we conclude $N_{1,q-1}\subseteq Y_1$. Since $N_{1,q-1}\supseteq Y_1$ clearly
holds, we obtain $p_{(1,q-1),(1,q-1)}^{(2,q-2)}=k_{1,q-1}$. By Lemma \ref{pure}, the first and second statement are both valid.
\end{proof}

\section{The case $\min T=3$}

For the purpose of proving Theorem \ref{Main2}, Proposition \ref{(1,q-1)} will turn out to be enough to treat the case $q>3$, where $T=\{q\}$. Indeed, $T=\{q\}$ implies $p_{(1,q-1),(1,q-1)}^{(2,q-2)}>0$. In order to deal with the case $q=3$, we prove the following proposition which determines the subdigraph $\Delta_{3}$ in a more general setting, namely, $\min T=3$. Then the girth of $\Gamma$ is $3$, and so
\begin{align}
\Gamma_{2,1}\in\Gamma_{1,2}^2.\label{girth 3}
\end{align}

\begin{prop}\label{q=3}
Suppose $\min T=3$. Then $\Delta_{3}$ is isomorphic to one of the following:
\begin{enumerate}
\item\label{q=3-1} the digraphs in Theorem {\rm\ref{Main2} \ref{Main2-2}--\ref{Main2-4}};

\item\label{q=3-2} $C_3[\overline{K}_{l}]$, where $l\geq1$;

\item\label{q=3-3} $(C_3\times K_{h})[\overline{K}_{l}]$, where $h>3$, $3\nmid h$ and $l>1$.
\end{enumerate}
\end{prop}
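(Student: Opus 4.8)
The plan is to exploit the structural description of $\Delta_3$ obtained in Section~5. Since $N_{1,2}$ generates $\mathbb{Z}_n$, Lemma~\ref{genrators}~\ref{genrators-2} gives
\[
\Delta_3\;\cong\;\bigl({\rm Cay}(U_0/L,\overline{N}_{1,2})\times K_{|U_1/L|}\times\cdots\times K_{|U_k/L|}\bigr)[\overline{K}_{|L|}],
\]
so the whole problem reduces to identifying the digraph ${\rm Cay}(U_0/L,\overline{N}_{1,2})$ — equivalently the commutative translation scheme $\mathfrak{Y}:=(U_0/L,\{R_{\tilde{i}}\})$ of $\mathcal{A}_0$, which is generated by its non-symmetric relation $R_{1,2}$ (Lemma~\ref{genrators}~\ref{genrators-5}), of valency $k_{1,2}=|\overline{N}_{1,2}|$ — together with bounding the number $k$ of complete-graph factors. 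Recall that $|U_i/L|>3$ and the $|U_i/L|$ are pairwise coprime, by~\eqref{coprime}.

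First I would dispose of the case $|\overline{N}_{1,2}|=1$. Then $R_{1,2}$ is a valency-one generating relation, so $\mathfrak{Y}$ is the group scheme of the cyclic group $U_0/L$ and $R_{1,2}^2=\{R_2\}$ with $\overline{N}_2=\overline{N}_{1,2}+\overline{N}_{1,2}$. Since $\min T=3$, \eqref{girth 3} yields $\Gamma_{2,1}\in\Gamma_{1,2}^2$, hence $R_{2,1}\in R_{1,2}^2$ by Lemma~\ref{claim}~\ref{claim-1}; so $R_{2,1}=R_2$, i.e.\ $-g=2g$ for a generator $g\in\overline{N}_{1,2}$, which forces $|U_0/L|=3$ and ${\rm Cay}(U_0/L,\overline{N}_{1,2})\cong C_3$.

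Next, for $|\overline{N}_{1,2}|\ge 2$ the aim is to prove $\mathfrak{Y}$ is the digraph $P(p)$ or ${\rm Cay}(\mathbb{Z}_{13},\{1,3,9\})$. By Lemmas~\ref{h=2} and~\ref{nonsymmetric}, every relation occurring in $R_{1,2}^2$ has the form $R_{1,\ell}$ or $R_{2,\ell}$, and $R_{0,0}\notin R_{1,2}^2$ since $R_{1,2}$ is non-symmetric (Lemma~\ref{(1,q-1) nonsymmetric}). I would then show that $R_{1,2}^2$ and $R_{1,2}R_{2,1}$ are small enough that either (a) $\mathfrak{Y}$ satisfies the hypotheses of Theorem~\ref{main1} with $R_1:=R_{1,2}$, so it has $4$ classes, and — after verifying that $\mathfrak{Y}$ is primitive, hence by Lemma~\ref{prime} cyclotomic and pseudocyclic — Theorem~\ref{main1} identifies $\mathfrak{Y}\cong{\rm Cyc}(13,4)={\rm Cay}(\mathbb{Z}_{13},\{1,3,9\})$; or (b) $\mathfrak{Y}$ has fewer than $4$ classes, and then using Lemma~\ref{sym-1} to pin down which basic sets of $\mathcal{A}_0$ contain generators, together with Lemma~\ref{prime}, one gets $|U_0/L|=p$ prime with $\mathfrak{Y}\cong{\rm Cyc}(p,2)$, whose skew-symmetry forces $p\equiv 3\pmod{4}$, i.e.\ $\mathfrak{Y}\cong P(p)$.

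It then remains to bound $k$ and to assemble the cases. I claim $k\ge 1$ is possible only when ${\rm Cay}(U_0/L,\overline{N}_{1,2})\cong C_3$, in which case $k=1$: a second complete-graph factor, or a first one over a base other than $C_3$, produces (via Lemma~\ref{basic sets}) several basic sets of $\mathcal{A}$ sharing a common $U_0/L$-component, and a direct intersection-number computation — already failing in the would-be scheme $C_3\times K_{h_1}\times K_{h_2}$, where $p_{(1,2),(1,2)}^{(2,\ell)}$ is not constant — shows these cannot all be two-way-distance classes of a weakly distance-regular digraph; when $U_0/L\cong\mathbb{Z}_3$ and $k=1$, \eqref{coprime} gives $|U_1/L|=h>3$ with $3\nmid h$. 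Thus: if $|\overline{N}_{1,2}|\ge 2$ then $k=0$ and $\Delta_3\cong P(p)[\overline{K}_{|L|}]$ or ${\rm Cay}(\mathbb{Z}_{13},\{1,3,9\})[\overline{K}_{|L|}]$, which is case~\ref{q=3-1} (Theorem~\ref{Main2}~\ref{Main2-2} or~\ref{Main2-4}); if $|\overline{N}_{1,2}|=1$ then $\Delta_3\cong C_3[\overline{K}_{|L|}]$ for $k=0$ (case~\ref{q=3-2}), and $\Delta_3\cong(C_3\times K_h)[\overline{K}_{|L|}]$ for $k=1$ — Theorem~\ref{Main2}~\ref{Main2-3} if $|L|=1$ (case~\ref{q=3-1}) and case~\ref{q=3-3} if $|L|>1$. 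The main obstacle is the penultimate paragraph: verifying the hypotheses of Theorem~\ref{main1} requires controlling $|R_{1,2}^2|$, $|R_{1,2}R_{2,1}|$ and the symmetry type of the extra relation beyond what Lemmas~\ref{h=2} and~\ref{nonsymmetric} directly give, needing genuine computation in the group $U_0/L$; one must also rule out imprimitive $\mathfrak{Y}$ (here freeness of $\mathcal{A}_0$ together with Lemma~\ref{p^2} should help) so that Lemma~\ref{prime} applies, and then eliminate the non-cyclotomic $4$-class possibilities exactly as in the proof of Theorem~\ref{main1}, via Lemma~\ref{skew}.
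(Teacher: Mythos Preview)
Your overall architecture is right --- reduce to the orbit S-ring $\mathcal{A}_0$ via Lemma~\ref{genrators}~\ref{genrators-2}, classify ${\rm Cay}(U_0/L,\overline{N}_{1,2})$, then bound $k$ --- and your treatment of the case $|\overline{N}_{1,2}|=1$ and of the $k$-bound (your version of Lemma~\ref{digraph-1}) is fine. The serious gap is the dichotomy you assert for $|\overline{N}_{1,2}|\ge 2$: ``either $\mathfrak{Y}$ satisfies the hypotheses of Theorem~\ref{main1}, or $\mathfrak{Y}$ has fewer than $4$ classes.'' This is false, and the missing case is precisely the hardest part of the paper's proof.

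Concretely, Lemma~\ref{q=3 step5} only gives $R_{1,2}R_{2,1}\subseteq\{R_{0,0},R_{1,2},R_{2,1},R_{1,3},R_{3,1},R_{2,3},R_{3,2},R_{3,3}\}$, and nothing you cite rules out the symmetric relation $R_{3,3}$ from appearing there. When $R_{3,3}\in R_{1,2}R_{2,1}\setminus\{R_{0,0}\}$ (the paper's Case~1), hypothesis~\eqref{1.1c} of Theorem~\ref{main1} fails, and the scheme $\mathfrak{Y}$ turns out to have \emph{five} non-trivial classes: $\mathcal{S}(\mathcal{A}_0)=\{\{L\},\overline{N}_{1,2},\overline{N}_{2,1},\overline{N}_{2,4},\overline{N}_{3,3},\overline{N}_{4,2}\}$. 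One finds $|U_0/L|=3h$ with $h>3$ prime, $3\nmid h$, $|\overline{N}_{1,2}|=h-1\ge 4$, and ${\rm Cay}(U_0/L,\overline{N}_{1,2})\cong C_3\times K_h$. So the digraphs in~\ref{q=3-3} and Theorem~\ref{Main2}~\ref{Main2-3} can arise with $k=0$ from $\mathcal{A}_0$ itself, not only (as you assume) from a tensor factor $K_{|U_1/L|}$ over a $C_3$ base. Handling this case is what the entire Section~6.2 (Lemmas~\ref{3,3}--\ref{1,2^2}) is for: one must detect $R_{2,4}\in R_{1,2}^2$ with $\hat{k}_{2,4}=1$, compute $R_{1,2}^3=\{R_{0,0},R_{3,3}\}$, and then pin down $\overline{N}_{1,2}$ explicitly inside $\mathbb{Z}_{3h}$. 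None of this follows from Lemmas~\ref{h=2}, \ref{nonsymmetric} or Theorem~\ref{main1}, and your sketch gives no mechanism for it. Once this case is isolated and dispatched, your plan for the remaining cases (your (a) and (b), which become the paper's Cases~2.2 and~2.1) is essentially correct.
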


We continue to use the notations introduced after Lemma \ref{pure}, where we take $q=3$.

\subsection{The sets $R_{1,2}^2$ and $R_{1,2}R_{2,1}$}

In this subsection, we determine the possible relations contained in $R_{1,2}^2$ and $R_{1,2}R_{2,1}$.

\begin{lemma}\label{q=3 step1-2}
Let $N_{\tilde{i}}\in\mathcal{S}(\mathcal{A})$. Then $\pi(N_{\tilde{i}})^{(-1)}\subseteq \pi(N_{\tilde{i}})+\pi(N_{\tilde{i}})$. In particular, if $N_{1,3}\in\mathcal{S}(\mathcal{A})$, then the following hold:
\begin{enumerate}
\item\label{q=3 step1-2-1} $N_{1,3}\subsetneq\pi^{-1}(\pi(N_{1,3}))$;

\item\label{q=3 step1-2-2} $\Gamma_{1,2}\notin\Gamma_{1,3}^2$.
\end{enumerate}
\end{lemma}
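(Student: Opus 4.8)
The plan is to deduce everything from \eqref{girth 3} together with Lemma~\ref{q=3 step1}. First I would observe that, since the girth of $\Gamma$ equals $3$, \eqref{girth 3} says $\Gamma_{2,1}\in\Gamma_{1,2}^{2}$, which in the S-ring $\mathcal{A}$ translates into $N_{2,1}\subseteq N_{1,2}+N_{1,2}$. As $N_{2,1}=N_{1,2}^{(-1)}$ and $N_{1,2}$ generates $\mathbb{Z}_{n}$, the basic set $X=N_{1,2}$ satisfies the hypotheses of Lemma~\ref{q=3 step1}; applying that lemma with $Y=N_{\tilde i}$ immediately yields $\pi(N_{\tilde i})^{(-1)}\subseteq\pi(N_{\tilde i})+\pi(N_{\tilde i})$ for every $N_{\tilde i}\in\mathcal{S}(\mathcal{A})$, which is the first assertion.

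For \ref{q=3 step1-2-1} I would argue by contradiction: assume $N_{1,3}=\pi^{-1}(\pi(N_{1,3}))$, i.e.\ $N_{1,3}$ is a union of $L$-cosets. Then so are $N_{1,3}^{(-1)}$ and $N_{1,3}+N_{1,3}$, so taking preimages under $\pi$ in the inclusion $\pi(N_{1,3})^{(-1)}\subseteq\pi(N_{1,3})+\pi(N_{1,3})$ gives $N_{1,3}^{(-1)}\subseteq N_{1,3}+N_{1,3}$. Picking $w\in N_{1,3}$ and putting $z=-w$, translation-invariance of $\Gamma$ forces $\partial(0,z)=\partial(w,0)=3$; but writing $z=y_{1}+y_{2}$ with $y_{1},y_{2}\in N_{1,3}$, both $(0,y_{1})$ and $(y_{1},z)$ are arcs of $\Gamma$ (their differences $y_{1},y_{2}$ lie in the connection set), so $\partial(0,z)\le2$, a contradiction. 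Hence $N_{1,3}\subsetneq\pi^{-1}(\pi(N_{1,3}))$.

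For \ref{q=3 step1-2-2} I would again assume the contrary, $\Gamma_{1,2}\in\Gamma_{1,3}^{2}$, i.e.\ $N_{1,2}\subseteq N_{1,3}+N_{1,3}$. By \ref{q=3 step1-2-1}, $N_{1,3}$ is not a union of $L$-cosets, so the contrapositive of Lemma~\ref{isomorphic}\,\ref{isomorphic-1} shows that $N_{1,3}$ does not generate $\mathbb{Z}_{n}$; thus $N_{1,3}+N_{1,3}$ lies in the proper subgroup $\langle N_{1,3}\rangle$ of $\mathbb{Z}_{n}$. Then $N_{1,2}\subseteq\langle N_{1,3}\rangle\subsetneq\mathbb{Z}_{n}$, contradicting that $N_{1,2}$ generates $\mathbb{Z}_{n}$.

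No step is genuinely hard; the main thing to be careful about is the standard dictionary between the scheme and the S-ring (namely $\Gamma_{\tilde i}\in\Gamma_{1,2}^{2}\Leftrightarrow N_{\tilde i}\subseteq N_{1,2}+N_{1,2}$, together with the fact that a basic set meeting an $\mathcal{A}$-set is contained in it), and checking that $\pi^{-1}$ respects negation and sums of unions of $L$-cosets so that the quotient inclusion from the first part can be pulled back to $\mathbb{Z}_{n}$.
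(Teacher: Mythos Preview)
Your proof is correct and follows essentially the same approach as the paper. The paper phrases \ref{q=3 step1-2-1} slightly more tersely---observing $\Gamma_{3,1}\notin\Gamma_{1,3}^2$ (equivalently $N_{1,3}^{(-1)}\cap(N_{1,3}+N_{1,3})=\emptyset$) and then invoking the first statement---and for \ref{q=3 step1-2-2} it reaches ``$N_{1,3}$ does not generate $\mathbb{Z}_n$'' via ${\rm rad}(N_{1,3})\neq L$ rather than the contrapositive of Lemma~\ref{isomorphic}\,\ref{isomorphic-1}, but these are the same arguments with different packaging.
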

\begin{proof}
Since $N_{1,2}$ generates $\mathbb{Z}_n$ and $N_{1,2}^{(-1)}\subseteq N_{1,2}+N_{1,2}$ from \eqref{girth 3}, the first statement is valid by Lemma \ref{q=3 step1}.

Now we suppose $N_{1,3}\in\mathcal{S}(\mathcal{A})$. Since $\Gamma_{3,1}\notin\Gamma_{1,3}^2$, we have $N_{1,3}^{(-1)}\cap (N_{1,3}+N_{1,3})=\emptyset$. \ref{q=3 step1-2-1} follows from the first statement.

By \ref{q=3 step1-2-1}, $N_{1,3}$ is not a union of $L$-cosets, which implies that ${\rm rad}(N_{1,3})\neq L$. It follows that $N_{1,3}$ does not generate $\mathbb{Z}_n$. Since $N_{1,2}$ generates $\mathbb{Z}_n$, \ref{q=3 step1-2-2}  is valid.
\end{proof}

\begin{lemma}\label{q=3 step2-2}
Suppose $\Gamma_{1,3}\in F_{3}$. Then $\Gamma_{2,2}\notin\Gamma_{1,2}\Gamma_{1,3}$.
\end{lemma}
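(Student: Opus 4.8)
I would argue by contradiction: assume $\Gamma_{2,2}\in\Gamma_{1,2}\Gamma_{1,3}$ and derive a contradiction with the facts already established for $\Gamma_{1,3}$ in Lemma~\ref{q=3 step1-2}. In the S-ring language set up after Lemma~\ref{pure} (with $q=3$), the hypothesis $\Gamma_{1,3}\in F_{3}$ means $N_{1,3}\in\mathcal{S}(\mathcal{A})$, and, exactly as in the proof of Lemma~\ref{claim}, $\Gamma_{2,2}\in\Gamma_{1,2}\Gamma_{1,3}$ is equivalent to the inclusion $N_{2,2}\subseteq N_{1,2}+N_{1,3}$ inside $\mathbb{Z}_{n}=F_{3}(0)$. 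I would keep at hand the consequences of Lemma~\ref{q=3 step1-2}: $N_{1,3}$ is not a union of $L$-cosets, $N_{1,3}$ does not generate $\mathbb{Z}_{n}$, and $\Gamma_{1,2}\notin\Gamma_{1,3}^{2}$, i.e. $N_{1,2}\not\subseteq N_{1,3}+N_{1,3}$.

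\textbf{Step 1: companion inclusions.} Since $\Gamma_{2,2}$ is symmetric and $\Gamma_{1,2}^{*}=\Gamma_{2,1}$, $\Gamma_{1,3}^{*}=\Gamma_{3,1}$, Lemma~\ref{jb}~\ref{jb-2} turns $p_{(1,2),(1,3)}^{(2,2)}\neq0$ (together with the transposed identity for $N_{2,2}\subseteq N_{2,1}+N_{3,1}$) into $N_{1,3}\subseteq N_{2,1}+N_{2,2}$ and $N_{3,1}\subseteq N_{1,2}+N_{2,2}$. Feeding in the girth-$3$ relation \eqref{girth 3} in the form $N_{2,1}\subseteq N_{1,2}+N_{1,2}$ and substituting $N_{2,2}\subseteq N_{1,2}+N_{1,3}$ back in gives a self-referential inclusion for $N_{1,3}$; the purpose of this step is to pin down the subgroups generated by $N_{2,2}$ and by $N_{1,2}+N_{1,3}$, so that the machinery of Section~4 (free quotient, Lemmas~\ref{q=3 step1} and~\ref{pre-claim}) can be brought to bear on $N_{2,2}$.

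\textbf{Step 2: pass to the quotient scheme and conclude.} Applying $\varphi_{0}\pi$ to $N_{2,2}\subseteq N_{1,2}+N_{1,3}$ yields $\overline{N}_{2,2}\subseteq\overline{N}_{1,2}+\overline{N}_{1,3}$, i.e. $R_{2,2}\in R_{1,2}R_{1,3}$ in $(U_{0}/L,\{R_{\tilde{i}}\})$, and $N_{2,2}$ symmetric forces $R_{2,2}$ symmetric. I would split on whether $R_{2,2}=R_{0,0}$. If $R_{2,2}=R_{0,0}$, then $L\in\overline{N}_{1,2}+\overline{N}_{1,3}$, so $\overline{N}_{1,3}$ contains the inverse of an element of $\overline{N}_{1,2}$, which by Lemma~\ref{genrators}~\ref{genrators-3} is a generator of $U_{0}/L$; this forces $\overline{N}_{1,3}=\overline{N}_{2,1}$, and tracing back through the tensor decomposition \eqref{A_G/L} and Lemma~\ref{isomorphic}, using that $N_{1,3}$ is not a union of $L$-cosets and does not generate $\mathbb{Z}_{n}$, I would reach a contradiction. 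If $R_{2,2}\neq R_{0,0}$, then by Lemma~\ref{sym-1}~\ref{sym-1-1} no element of $\overline{N}_{2,2}$ is a generator of $U_{0}/L$, whereas every element of $\overline{N}_{1,2}$ is; writing an element of $\overline{N}_{2,2}$ as $\bar{s}+\bar{t}$ with $\bar{s}\in\overline{N}_{1,2}$ and $\bar{t}\in\overline{N}_{1,3}$ and invoking Lemma~\ref{sym} (which supplies an integer $b$ with $\overline{N}_{1,2}^{(b-1)}=\overline{N}_{1,2}$ and a non-generating partner class lying in $R_{1,2}^{2}$), I would show that $\overline{N}_{1,3}$ is forced into a position that collapses to $\overline{N}_{1,2}\subseteq\overline{N}_{1,3}+\overline{N}_{1,3}$, then lift this back to $N_{1,2}\subseteq N_{1,3}+N_{1,3}$ via Lemma~\ref{pre-claim} (together with Lemma~\ref{q=3 step1} and the coprimality \eqref{coprime}), contradicting Lemma~\ref{q=3 step1-2}~\ref{q=3 step1-2-2}. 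The main obstacle is precisely this last reconciliation: matching the multiplier $b$ produced by the symmetric relation $R_{2,2}$ against the radical of $N_{1,3}$ so that the inclusion genuinely descends to, and then lifts from, $U_{0}/L$ without losing the rank-$2$ factors $U_{i}/L$ for $i\geq1$; the degenerate case $\overline{N}_{2,2}=\{L\}$ also needs separate care, since there the quotient scheme carries no information about $N_{2,2}$.
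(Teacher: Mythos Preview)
Your approach diverges substantially from the paper's and, more importantly, does not close. The paper's argument is short and purely combinatorial: starting from $x\in N_{1,2}$, $y\in N_{1,3}$ with $x+y\in N_{2,2}$, one uses $\Gamma_{2,2}\in\Gamma_{1,3}\Gamma_{1,2}$ to find $z\in N_{1,3}$ with $x+y+z\in N_{2,1}$, so that $(0,x,x+y,x+y+z)$ is a $4$-circuit with arc types $(1,2),(1,3),(1,3),(1,2)$. Reading off distances on the diagonal gives $y+z\in N_{1,2}\cup N_{2,2}$, and Lemma~\ref{q=3 step1-2}~\ref{q=3 step1-2-2} eliminates $N_{1,2}$, so $p_{(1,3),(1,3)}^{(2,2)}>0$. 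The decisive step, which your proposal never invokes, is Proposition~\ref{(1,q-1)} applied with $q=4$: it upgrades this to $p_{(1,3),(1,3)}^{(2,2)}=k_{1,3}$, which forces $\Gamma_{3,1}(x+y)=P_{(1,3),(1,3)}(0,x+y)$ and hence $x\in N_{1,3}$, a contradiction.

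Your Step~2 contains genuine gaps. In the case $R_{2,2}=R_{0,0}$, from $\overline{N}_{1,3}=\overline{N}_{2,1}$ you say you would ``reach a contradiction'' via the tensor decomposition and the fact that $N_{1,3}$ is not a union of $L$-cosets, but you do not explain how; knowing $\varphi_0\pi(N_{1,3})=\overline{N}_{2,1}$ and that some $\varphi_i\pi(N_{1,3})=\{L\}$ does not by itself contradict anything already on the table. In the case $R_{2,2}\neq R_{0,0}$, the passage from the multiplier $b$ supplied by Lemma~\ref{sym} to the inclusion $\overline{N}_{1,2}\subseteq\overline{N}_{1,3}+\overline{N}_{1,3}$ is asserted, not argued: Lemma~\ref{sym} only produces an element $bL_0$ lying in some $\overline{N}_{\tilde{j}}$ with $R_{\tilde{j}}\in R_{1,2}^{2}\setminus\{R_{1,2},R_{2,1}\}$, and nothing in your sketch links this to $\overline{N}_{1,3}$ at all. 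Even granting $\overline{N}_{1,2}\subseteq\overline{N}_{1,3}+\overline{N}_{1,3}$, lifting back via Lemma~\ref{pre-claim} requires $\varphi_i\pi(N_{1,2})\subseteq\langle\varphi_i\pi(N_{1,3})\rangle$ for all $i\geq1$, which you have not established (and which fails precisely when some $\varphi_i\pi(N_{1,3})=\{L\}$, a live possibility since $N_{1,3}$ does not generate $\mathbb{Z}_n$). The missing idea is the structural input from Proposition~\ref{(1,q-1)}; the S-ring machinery alone does not seem to finish the job.
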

\begin{proof}
Suppose $\Gamma_{2,2}\in\Gamma_{1,2}\Gamma_{1,3}$. Then there exist $x\in N_{1,2}$ and $y\in N_{1,3}$ such that $x+y\in N_{2,2}$. Since $\Gamma_{2,2}\in\Gamma_{1,3}\Gamma_{1,2}$, there exists $z\in N_{1,3}$ such that $x+y+z\in N_{2,1}$. Since $(0,x,x+y,x+y+z)$ is a circuit of length $4$ consisting of arcs of type $(1,2),(1,3),(1,3)$ and $(1,2)$, we have $\partial_{\Gamma}(x+y+z,x)=2$ and $\partial_{\Gamma}(x,x+y+z)\leq2$. Then $y+z\in N_{1,2}\cup N_{2,2}$. Since $y,z\in N_{1,3}$ and $\Gamma_{1,2}\notin\Gamma_{1,3}^2$ from Lemma \ref{q=3 step1-2} \ref{q=3 step1-2-2}, one has $y+z\notin N_{1,2}$, which implies $y+z\in N_{2,2}$. It follows that $p_{(1,3),(1,3)}^{(2,2)}>0$. Since $4\in T$, we can use the second part of Proposition \ref{(1,q-1)} to obtain $p_{(1,3),(1,3)}^{(2,2)}=k_{1,3}$. This implies that $x\in\Gamma_{3,1}(x+y)=P_{(1,3),(1,3)}(0,x+y)$, contrary to the fact that $x\in N_{1,2}$.
\end{proof}

\begin{lemma}\label{q=3 step4}
The following hold:
\begin{enumerate}
\item\label{q=3 step4-1} $\{\Gamma_{2,1}\}\subseteq\Gamma_{1,2}^{2}\subseteq\{\Gamma_{1,2},\Gamma_{2,1},\Gamma_{2,3},\Gamma_{2,4}\}$;

\item\label{q=3 step4-2} $\{R_{2,1}\}\subseteq R_{1,2}^{2}\subseteq\{R_{1,2},R_{2,1},R_{2,3},R_{2,4}\}$.
\end{enumerate}
\end{lemma}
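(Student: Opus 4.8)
The plan is to establish \ref{q=3 step4-1} first and then deduce \ref{q=3 step4-2} formally. For the formal step, $\{R_{2,1}\}\subseteq R_{1,2}^{2}$ follows from \eqref{girth 3} and Lemma~\ref{claim}~\ref{claim-1} (since $\Gamma_{2,1}\in\Gamma_{1,2}^{2}\subseteq F_{3}$), while conversely every $R_{\tilde i}\in R_{1,2}^{2}$ is of the form $R_{U_{0}/L}(\overline{N}_{\tilde i})$ for some $\Gamma_{\tilde i}\in F_{3}$, and then Lemma~\ref{claim}~\ref{claim-1} gives $\Gamma_{\tilde i}\in\Gamma_{1,2}^{2}$; so once \ref{q=3 step4-1} is proved we get $R_{\tilde i}\in\{R_{1,2},R_{2,1},R_{2,3},R_{2,4}\}$. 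Thus it suffices to prove \ref{q=3 step4-1}, and $\{\Gamma_{2,1}\}\subseteq\Gamma_{1,2}^{2}$ is exactly \eqref{girth 3}.

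For the upper bound, let $\Gamma_{a,b}\in\Gamma_{1,2}^{2}$, so there are $x,y\in N_{1,2}$ with $x+y\in N_{a,b}$. Since $(0,x,x+y)$ is a path of length $2$ in $\Gamma$, $a=\partial(0,x+y)\le2$; and $a\ne0$, since $a=0$ would give $y=-x$ and hence $1=\partial(0,y)=\partial(0,-x)=\partial(x,0)=2$. Furthermore $b=\partial(x+y,0)\le\partial(x+y,x)+\partial(x,0)=\partial(y,0)+\partial(x,0)=4$ and $b\ge1$; and if $a=1$, then $\Gamma_{1,b}\ne\emptyset$ gives $b+1\in T$, hence $b\ge2$ because $\min T=3$. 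So it only remains to rule out $(a,b)\in\{(2,2),(1,3),(1,4)\}$, and this is the heart of the proof.

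If $(a,b)=(2,2)$, then $\Gamma_{2,2}$ is symmetric and lies in $F_{3}$, so $\overline{N}_{2,2}$ is a symmetric basic set of $\mathcal{A}_{0}$ with $\overline{N}_{2,2}\subseteq\overline{N}_{1,2}+\overline{N}_{1,2}$ (Lemma~\ref{claim}~\ref{claim-1}), and $R_{2,2}\ne R_{0,0}$ — otherwise $L\in\overline{N}_{1,2}+\overline{N}_{1,2}$ would make $\overline{N}_{1,2}$ symmetric, contradicting Lemma~\ref{genrators}~\ref{genrators-5}. Applying Lemmas~\ref{sym} and~\ref{sym-1}~\ref{sym-1-1} to $R_{2,2}$ shows that no element of $\overline{N}_{2,2}$ generates $U_{0}/L$, whence $N_{2,2}$ does not generate $\mathbb{Z}_{n}$; one then passes to the quotient S-ring modulo the proper non-trivial $\mathcal{A}$-subgroup $\langle N_{2,2}\rangle$ (which is disjoint from the generating basic set $N_{1,2}$) and, using the Cayley structure of $\Delta_{3}$, \eqref{girth 3}, and Lemma~\ref{genrators}~\ref{genrators-5}, derives a contradiction. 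If $(a,b)\in\{(1,3),(1,4)\}$, then $\Gamma_{1,b}\in\Gamma_{1,2}^{2}\subseteq F_{3}$, so $N_{1,b}\in\mathcal{S}(\mathcal{A})$; since $\Gamma_{b,1}\notin\Gamma_{1,b}^{2}$ (forbidden by the first coordinate $b>2$), the first statement of Lemma~\ref{q=3 step1-2} forces $N_{1,b}$ not to be a union of $L$-cosets, so (as in Lemma~\ref{q=3 step1-2}~\ref{q=3 step1-2-1}) $N_{1,b}$ does not generate $\mathbb{Z}_{n}$ and $\Gamma_{1,2}\notin\Gamma_{1,b}^{2}$. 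The contradiction is then obtained by exploiting $b+1\in T$ with $b+1>3$: one analyses the subdigraph $\Delta_{b+1}$ through Proposition~\ref{(1,q-1)} — in the spirit of the proof of Lemma~\ref{q=3 step2-2}, whose conclusion $\Gamma_{2,2}\notin\Gamma_{1,2}\Gamma_{1,3}$ enters when $b=3$ — and/or again passes to the quotient modulo $\langle N_{1,b}\rangle$.

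The main obstacle is precisely this last paragraph. The elementary path-length estimates deliver only $a\in\{1,2\}$ and $b\le4$, so the whole content lies in eliminating the three residual two-way distances $(2,2)$, $(1,3)$, $(1,4)$ from $\Gamma_{1,2}^{2}$; this requires the translation (Cayley) structure of $\Gamma$ together with the S-ring tools of Section~4 and the subdigraph results of Proposition~\ref{(1,q-1)}, and of the three cases the symmetric one $(2,2)$ is the most delicate.
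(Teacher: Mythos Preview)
Your framework is right: the reduction of \ref{q=3 step4-2} to \ref{q=3 step4-1} via Lemma~\ref{claim}~\ref{claim-1}, the path-length estimates giving $\Gamma_{1,2}^{2}\subseteq\{\Gamma_{1,2},\Gamma_{1,3},\Gamma_{1,4},\Gamma_{2,1},\Gamma_{2,2},\Gamma_{2,3},\Gamma_{2,4}\}$, and the identification of the three cases $(1,3),(1,4),(2,2)$ to eliminate all match the paper. But the eliminations themselves are only sketched, and the sketches diverge from the paper's arguments in ways that leave genuine gaps.

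For $(a,b)\in\{(1,3),(1,4)\}$ the paper does \emph{not} invoke Proposition~\ref{(1,q-1)} or pass to a quotient. Instead it uses the first statement of Lemma~\ref{q=3 step1-2}: since $\Gamma_{1,t}\in\Gamma_{1,2}^{2}\subseteq F_{3}$ one has $\pi(N_{1,t})^{(-1)}\subseteq\pi(N_{1,t})+\pi(N_{1,t})$, so there exist $x,y\in N_{1,t}$ with $-x-y+L\in\pi(N_{1,t})\subseteq\pi(N_{1,2}+N_{1,2})$. Lifting via Lemma~\ref{isomorphic}~\ref{isomorphic-1} gives $z,-x-y-z\in N_{1,2}$, and $(0,x,x+y,x+y+z)$ is a $4$-circuit whose arcs have types $(1,t),(1,t),(1,2),(1,2)$. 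This forces $t=3$, and then $x+z\in P_{(1,2),(1,3)}(x,x+y+z)$ with $(x,x+y+z)\in\Gamma_{2,2}$ contradicts Lemma~\ref{q=3 step2-2}. Your plan gestures at Lemma~\ref{q=3 step2-2} ``when $b=3$'' but gives no mechanism to force $b=3$, and Proposition~\ref{(1,q-1)} would require a hypothesis on $\Gamma_{1,b}^{2}$ you have not established.

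For $(a,b)=(2,2)$ the quotient idea is not carried far enough to see whether it works. You observe (correctly, via Lemma~\ref{sym-1}~\ref{sym-1-1}) that no element of $\overline{N}_{2,2}$ generates $U_{0}/L$, but then propose to pass to $\mathbb{Z}_{n}/\langle N_{2,2}\rangle$ and cite Lemma~\ref{genrators}~\ref{genrators-5}. That lemma concerns $\overline{N}_{1,2}$ in $U_{0}/L$, not the image of $N_{1,2}$ in this new quotient, so it is not clear what contradicts what. The paper's argument is quite different: starting from a path $(L,L',aL')$ in ${\rm Cay}(U_{0}/L,\overline{N}_{1,2})$ with $aL'\in\overline{N}_{2,2}$, it applies Lemma~\ref{sym} to produce $b$ with $abL'=L$ and $\overline{N}_{1,2}^{(b-1)}=\overline{N}_{1,2}$, shows via Lemma~\ref{h=2} that $bL'\in\overline{N}_{2,2}$ as well, and deduces $b^{2}L'=L$. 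Iterating with $(b-1)^{j}$ then gives $(2j-1)bL'\in\overline{N}_{2,2}$ for all $j$, forcing $|U_{0}/L|$ to be even. Finally \eqref{girth 3} gives $-L'=L_{1}+L_{2}$ with $L_{1},L_{2}\in\overline{N}_{1,2}$; by Lemma~\ref{genrators}~\ref{genrators-3} these are $uL',vL'$ with $u,v$ odd, so $(1+u+v)L'=L$ with $1+u+v$ odd, contradicting the even order of $L'$. This parity argument is the substance you are missing.
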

\begin{proof}
\ref{q=3 step4-1} By \eqref{girth 3}, we have $\{\Gamma_{2,1}\}\subseteq\Gamma_{1,2}^2\subseteq\{\Gamma_{1,2},\Gamma_{1,3},\Gamma_{1,4},\Gamma_{2,1},\Gamma_{2,2},\Gamma_{2,3},\Gamma_{2,4}\}$. Assume by way of contradiction, $\Gamma_{s,t}\in\Gamma_{1,2}^2$ for some $(s,t)\in\{(1,3),(1,4),(2,2)\}$.

Suppose $s=1$. Then $t\in\{3,4\}$ and $\Gamma_{1,t}\in F_{3}$. By Lemma \ref{q=3 step1-2}, there exist $x,y\in N_{1,t}$ such that $-x-y+L\in\pi(N_{1,t})\subseteq \pi(N_{1,2}+N_{1,2})$. Then there exists $z\in\mathbb{Z}_n$ such that $z+L,-x-y-z+L\in\pi(N_{1,2})$. By Lemma \ref{isomorphic} \ref{isomorphic-1}, $N_{1,2}$ is a union of $L$-cosets, which implies $z,-x-y-z\in N_{1,2}$. The fact that $(0,x,x+y,x+y+z)$ is a circuit in $\Gamma$ consisting of arcs of type $(1,t),(1,t),(1,2)$ and $(1,2)$ implies $t=3$, and so $(x,x+y+z)\in\Gamma_{2,2}$. Since $x+z\in P_{(1,2),(1,3)}(x,x+y+z)$, we have $\Gamma_{2,2}\in\Gamma_{1,2}\Gamma_{1,3}$, contrary to Lemma \ref{q=3 step2-2}. Thus, $(s,t)=(2,2)$. By Lemma \ref{claim} \ref{claim-1} with $q=3$, we obtain
\begin{align}
\{R_{2,1},R_{2,2}\}\subseteq R_{1,2}^{2}\subseteq\{R_{1,2},R_{2,1},R_{2,2},R_{2,3},R_{2,4}\}.\label{R^2}
\end{align}

By Lemma \ref{genrators} \ref{genrators-5}, $R_{1,2}$ is non-symmetric. It follows that $R_{0,0}\notin R_{1,2}^2$, and so $R_{2,2}\neq R_{0,0}$ from \eqref{R^2}. Then there exists a path $(L,L',L'')$ in ${\rm Cay}(U_0/L,\overline{N}_{1,2})$ such that $L''\in\overline{N}_{2,2}$. By Lemma \ref{genrators} \ref{genrators-3}, there exists an integer $a$ with $L''=aL'$. Since $R_{2,2}$ is symmetric, from Lemma \ref{sym}, there exists an integer $b$ such that $abL'=L$, $\overline{N}_{1,2}^{(b-1)}=\overline{N}_{1,2}$ and $bL'\in\overline{N}_{2,h}$ for some $h>1$ by \eqref{R^2}. In particular, since $L',(a-1)L'\in \overline{N}_{1,2}$, one gets $(b-1)L',(-a-b+1)L'=(b-1)(a-1)L'\in \overline{N}_{1,2}$. The fact $-bL'=(-a-b+1)L'+(a-1)L'$ implies $-bL'\in\overline{N}_{h,2}\subseteq\overline{N}_{1,2}+\overline{N}_{1,2}$. By Lemma \ref{h=2}, one gets $h=2$. Since $\mathcal{A}_0$ is an orbit S-ring, we obtain $bL'=caL'$ for some integer $c$, and so $b^2L'=abcL'=L$.

Since $\overline{N}_{1,2}^{(b-1)}=\overline{N}_{1,2}$, we have $(-2jb+1)L'=(b-1)^{2j}L'\in \overline{N}_{1,2}$ and $(2jb-b-1)L'=(b-1)^{2j-1}L'\in\overline{N}_{1,2}$ for $j\in\mathbb{Z}$. By Lemma \ref{sym-1} \ref{sym-1-1}, $bL'\in\overline{N}_{2,2}$ is not a generator of $U_0/L$. Then $(2j-1)bL'\in\overline{N}_{2,2}$ is also not a generator of $U_0/L$. Since $(2j-1)bL'=L'+(2jb-b-1)L'\in\overline{N}_{1,2}+\overline{N}_{1,2}$, from Lemma \ref{genrators} \ref{genrators-3} and \eqref{R^2}, one gets $(2j-1)bL'\in\overline{N}_{2,h}$ with $h>1$. Since $-(2j-1)bL'=(-2jb+1)L'+(b-1)L'$, we obtain $-(2j-1)bL'\in\overline{N}_{h,2}\subseteq\overline{N}_{1,2}+\overline{N}_{1,2}$. By Lemma \ref{h=2}, we obtain $h=2$, and so $(2j-1)bL'\in\overline{N}_{2,2}$. Since $j$ was arbitrary, $|U_0/L|$ is even.

Since $R_{2,1}\in R_{1,2}^2$ from \eqref{R^2}, there exist $L_1,L_2\in\overline{N}_{1,2}$ such that $-L'=L_1+L_2$. Since $|U_0/L|$ is even, from Lemma \ref{genrators} \ref{genrators-3}, there exist odd integers $u,v$ such that $L_1=uL'$ and $L_2=vL'$. It follows that $(1+u+v)L'=L$. Since $1+u+v$ is odd, this contradicts the fact that $L'$ has even order.

\ref{q=3 step4-2} is immediate from \ref{q=3 step4-1} and Lemma \ref{claim} \ref{claim-1}.
\end{proof}

\begin{lemma}\label{(2,2)}
Suppose $(2,2)\in\tilde{\partial}(\Gamma)$. Then $\Gamma_{2,2}\in\Gamma_{1,2}\Gamma_{1,3}\cup\Gamma_{1,3}^2$. Moreover, if $\Gamma_{1,3}\in F_3$, then $\Gamma_{2,2}\in\Gamma_{1,3}^2$.
\end{lemma}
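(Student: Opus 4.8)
The plan is to exploit the fact that if $(2,2)\in\tilde\partial(\Gamma)$ then $\Gamma_{2,2}$ must arise as a two-way distance between two vertices, and since $\Gamma$ is strongly connected with girth $3$, every vertex is reachable from $0$ by a path using only arcs of type $(1,2)$. So fix $u\in N_{2,2}$, i.e. a vertex with $\partial(0,u)=\partial(u,0)=2$. The first step is to produce a path $(0,x_1,x_2=u)$ of length $2$ from $0$ to $u$ consisting of arcs of type $(1,2)$; this exists because $\Gamma_{1,2}$ generates the relevant closed subset (Lemma~\ref{genrates}, Lemma~\ref{genrators}~\ref{genrators-4}), or more directly because $\partial(0,u)=2$ forces a shortest directed path of length $2$, and along a shortest path each consecutive arc has one-way distance realizing the step — but one must check each such arc is of type $(1,2)$ rather than some longer type. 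Here I would invoke that $\min T=3$ means the only arcs present are of type $(1,2)$ and possibly $(1,3),(1,4),\dots$, and use that an arc $(x_i,x_{i+1})$ on a shortest $0\to u$ path of length $2$ has $\partial(x_{i+1},x_i)\in\{1,2\}$; ruling out $\partial(x_{i+1},x_i)=1$ would make it type $(1,1)$, i.e.\ symmetric, contradicting that $\Gamma$ is not undirected in that direction — actually the cleanest route is Lemma~\ref{commutativity}-type reasoning or simply that $(0,u)\in\Gamma_{2,2}\in\Gamma_{1,2}^2$ would be what we want, so instead argue via $\partial(u,0)=2$.

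The key move: consider the return path. Since $\partial(u,0)=2$, there is a path $(u=x_2,x_3,x_4=0)$ of length $2$; combined with the forward path this gives a circuit $(0,x_1,x_2,x_3)$ of length $4$ through $0$. Now each of the four arcs $(x_0,x_1),(x_1,x_2),(x_2,x_3),(x_3,x_0)$ lies on a shortest path (in one direction or the other) between vertices at one-way distance $\le 2$, and the step I expect to be routine is verifying each such arc is of type $(1,2)$ or type $(1,3)$: an arc $(y,z)$ with $\partial(z,y)\ge 4$ cannot sit on a length-$2$ shortest path, and type $(1,2)$ versus $(1,3)$ are the only remaining options given $\Gamma_{1,2}^2\subseteq\{\Gamma_{1,2},\Gamma_{2,1},\Gamma_{2,3},\Gamma_{2,4}\}$ from Lemma~\ref{q=3 step4} together with the constraint that the other relations in $F_3$ coming from $(1,3)$-arcs are exactly those of type $(1,3)$ or $(3,1)$. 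Writing $\tilde\partial(x_0,x_1)=(1,r_1)$, $\tilde\partial(x_1,x_2)=(1,r_2)$, $\tilde\partial(x_2,x_3)=(1,r_3)$, $\tilde\partial(x_3,x_0)=(1,r_4)$ with each $r_i\in\{2,3\}$, the relation $(0,x_2)=\Gamma_{2,2}$ is then expressed as a product of two of these: $\Gamma_{2,2}\in\Gamma_{1,r_1}\Gamma_{1,r_2}$, which is one of $\Gamma_{1,2}^2$, $\Gamma_{1,2}\Gamma_{1,3}$, $\Gamma_{1,3}\Gamma_{1,2}=\Gamma_{1,2}\Gamma_{1,3}$ (by commutativity), or $\Gamma_{1,3}^2$. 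By Lemma~\ref{q=3 step4}~\ref{q=3 step4-1}, $\Gamma_{2,2}\notin\Gamma_{1,2}^2$, so $\Gamma_{2,2}\in\Gamma_{1,2}\Gamma_{1,3}\cup\Gamma_{1,3}^2$, giving the first assertion. (One must also ensure the case $r_1=r_2=2$ is genuinely excluded, which is exactly Lemma~\ref{q=3 step4}; and that at least one of the $r_i$ equals $3$, which follows since otherwise the whole circuit uses only $(1,2)$-arcs and then $\Gamma_{2,2}\in\Gamma_{1,2}^2$, contradiction.)

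For the "moreover" clause, assume $\Gamma_{1,3}\in F_3$. If $\Gamma_{2,2}\in\Gamma_{1,2}\Gamma_{1,3}$ we are done unless we can push it into $\Gamma_{1,3}^2$, but that is precisely what Lemma~\ref{q=3 step2-2} forbids — wait, Lemma~\ref{q=3 step2-2} says $\Gamma_{2,2}\notin\Gamma_{1,2}\Gamma_{1,3}$ when $\Gamma_{1,3}\in F_3$. So under this hypothesis the first alternative is eliminated outright, forcing $\Gamma_{2,2}\in\Gamma_{1,3}^2$. Thus the second statement is immediate from the first together with Lemma~\ref{q=3 step2-2}. The main obstacle I anticipate is not the algebra but the careful bookkeeping in the first paragraph: rigorously justifying that the forward and return paths of length $2$ can be taken with each arc of type $(1,2)$ or $(1,3)$ — i.e.\ that no arc of type $(1,r)$ with $r\ge 4$ appears on a shortest path of length $2$, and that these arcs' relations all lie in $F_3$ so that the products $\Gamma_{1,r_i}\Gamma_{1,r_j}$ are controlled by the earlier lemmas. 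This requires a short argument that $\Delta_3$ (equivalently $F_3(0)$) contains all of $x_0,x_1,x_2,x_3$, which holds because each $x_{i+1}-x_i\in N_{1,r_i}$ with $\Gamma_{1,r_i}\in F_3$, and $F_3$ is closed.
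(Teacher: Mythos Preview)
Your core logic is correct and matches the paper's proof: one shows that any length-$2$ path realizing $(0,u)\in\Gamma_{2,2}$ uses only arcs of type $(1,2)$ or $(1,3)$, then invokes Lemma~\ref{q=3 step4}~\ref{q=3 step4-1} to exclude $\Gamma_{1,2}^2$, and Lemma~\ref{q=3 step2-2} to get the second assertion. However, you have added two unnecessary complications.

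First, the return path and the length-$4$ circuit are not needed. Given a shortest path $(0,x,u)$ with $(0,u)\in\Gamma_{2,2}$, the triangle inequality alone bounds the arc types: $\partial(x,0)\le\partial(x,u)+\partial(u,0)=1+2=3$ and $\partial(u,x)\le\partial(u,0)+\partial(0,x)=2+1=3$, while $\min T=3$ forces both return distances to be at least $2$. So both arcs are of type $(1,2)$ or $(1,3)$, and hence $\Gamma_{2,2}\in\Gamma_{1,2}^2\cup\Gamma_{1,2}\Gamma_{1,3}\cup\Gamma_{1,3}^2$. This is exactly what the paper asserts in one line (``Since $\min T=3$, one gets \ldots'').

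Second, your worry about whether the arcs and vertices lie in $F_3$ is misplaced. The statement $\Gamma_{2,2}\in\Gamma_{1,2}\Gamma_{1,3}\cup\Gamma_{1,3}^2$ is an identity in the full attached scheme $\mathfrak{X}(\Gamma)$, not in the subscheme on $F_3(0)$; there is no need for $x$ or $u$ to belong to $\Delta_3$, nor for $\Gamma_{1,3}$ to be in $F_3$, for the first assertion. The only place $F_3$ enters is in the hypothesis of the ``moreover'' clause, where it is exactly the hypothesis of Lemma~\ref{q=3 step2-2}.
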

\begin{proof}
Since $\min T=3$, one gets $\Gamma_{2,2}\in\Gamma_{1,2}^2\cup\Gamma_{1,2}\Gamma_{1,3}\cup\Gamma_{1,3}^2$. By Lemma \ref{q=3 step4} \ref{q=3 step4-1}, the first statement is valid. The second statement then follows from Lemma \ref{q=3 step2-2}.
\end{proof}

\begin{lemma}\label{q=3 step6}
If $\Gamma_{2,3}\in\Gamma_{1,2}^2$, then $\Gamma_{3,2}\in\Gamma_{1,2}^3$.
\end{lemma}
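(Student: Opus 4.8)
The plan is to exhibit a path of three arcs of type $(1,2)$ whose two endpoints have two-way distance exactly $(3,2)$. Since $\Gamma$ is a circulant we translate freely, and (as in Lemma~\ref{claim}) a relation $\Gamma_{\tilde i}$ lies in $\Gamma_{1,2}^{j}$ precisely when $N_{\tilde i}=\Gamma_{\tilde i}(0)$ meets the $j$-fold sumset of $N_{1,2}=\Gamma_{1,2}(0)$. A convenient reformulation: since $\Gamma_{2,3}\in\Gamma_{1,2}^{2}$ we have $\Gamma_{1,2}\Gamma_{2,3}\subseteq\Gamma_{1,2}\cdot\Gamma_{1,2}^{2}=\Gamma_{1,2}^{3}$, so it suffices to prove $\Gamma_{3,2}\in\Gamma_{1,2}\Gamma_{2,3}$, which by Lemma~\ref{jb}~\ref{jb-2} is equivalent to $\Gamma_{1,2}\in\Gamma_{3,2}^{2}$ (and, dualising, to $\Gamma_{2,1}\in\Gamma_{2,3}^{2}$, using $N_{3,2}=-N_{2,3}$). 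Dualising the hypothesis itself yields for free the extra information $\Gamma_{3,2}\in\Gamma_{2,1}^{2}$, since the support of $\Gamma_{2,1}^{2}$ is $-(N_{1,2}+N_{1,2})$.

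To build the path, fix $(0,x_1,x_2)$ with $(0,x_1),(x_1,x_2)\in\Gamma_{1,2}$ and $\tilde\partial(0,x_2)=(2,3)$, so $\partial(x_2,0)=3$ and $-x_2\in N_{3,2}$. Because $\Gamma_{2,1}\in\Gamma_{1,2}^{2}$ by \eqref{girth 3} and $(x_2,x_1)\in\Gamma_{2,1}$, there is $w$ with $(x_2,w),(w,x_1)\in\Gamma_{1,2}$; then $(x_1,x_2,w,x_1)$ is a circuit all of whose arcs have type $(1,2)$, and $(0,x_1,x_2,w)$ is a path of three arcs of type $(1,2)$. A short computation with this triangle gives $w=x_1-q$ with $q:=x_1-w\in N_{1,2}$, so (with $p:=x_1\in N_{1,2}$) the relation of $w$ lies both in $N_{1,2}+N_{1,2}+N_{1,2}$ and in $N_{1,2}+N_{2,1}$; moreover $\partial(0,w)\le 3$ from the path, and $\partial(w,0)\le 3$ from the arc $(w,x_1)$ together with $\partial(x_1,0)=2$.

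It then remains to force $\tilde\partial(0,w)=(3,2)$, i.e. $\partial(0,w)=3$ and $\partial(w,0)=2$; this is the heart of the matter and the main obstacle, since a priori $w$ could have two-way distance $(3,3)$, $(2,2)$, or even $(1,2)$. The plan is to constrain the relation of $w$ using Lemma~\ref{q=3 step4} (which confines $\Gamma_{1,2}^{2}$ to $\{\Gamma_{1,2},\Gamma_{2,1},\Gamma_{2,3},\Gamma_{2,4}\}$) together with the dual fact $\Gamma_{3,2}\in\Gamma_{2,1}^{2}$, and then to run a distance count — re-ordering, via Lemma~\ref{commutativity}, the arcs of a shortest path from $x_2$ back to $0$ and comparing it with the path through $w$ — to eliminate every candidate except $(3,2)$, re-choosing the initial path $(0,x_1,x_2)$ or the triangle through $(x_1,x_2)$ when needed. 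An alternative to this whole construction is to descend via Lemma~\ref{claim} to the orbit S-ring $\mathcal{A}_0$ over the cyclic group $U_0/L$, where the assertion becomes $\overline{N}_{3,2}\subseteq\overline{N}_{1,2}+\overline{N}_{1,2}+\overline{N}_{1,2}$, and to use the multiplier $b$ with $\overline{N}_{\tilde i}^{(b)}=\overline{N}_{\tilde i}$ for all $\tilde i$ supplied by Lemma~\ref{q=3 step1-1} (applicable since $\overline{N}_{1,2}^{(-1)}=\overline{N}_{2,1}\subseteq\overline{N}_{1,2}+\overline{N}_{1,2}$ by \eqref{girth 3}) to rewrite $-\overline{N}_{2,3}$ as a threefold sum of elements of $\overline{N}_{1,2}$.
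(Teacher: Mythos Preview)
Your proposal is a sketch, not a proof: you correctly identify that forcing $\tilde\partial(0,w)=(3,2)$ is ``the heart of the matter and the main obstacle'', but you never carry this out. The vertex $w$ you construct (completing a $3$-cycle on $x_1,x_2$) satisfies $w\in N_{1,2}+N_{2,1}$, so by \eqref{1,2} its two-way distance from $0$ could a priori be any of $(1,2),(2,1),(1,3),(3,1),(2,2),(2,3),(3,2),(3,3)$; there is no reason a generic such $w$ lands in $N_{3,2}$. Your suggested remedy---``constrain the relation of $w$ using Lemma~\ref{q=3 step4} $\ldots$ and then run a distance count $\ldots$ re-choosing the initial path $\ldots$ when needed''---is a description of work to be done, not an argument. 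The paper's proof is by contradiction and genuinely needs ingredients you do not mention: Lemma~\ref{q=3 step2-2} (that $\Gamma_{2,2}\notin\Gamma_{1,2}\Gamma_{1,3}$ once $\Gamma_{1,3}\in F_3$) and the structural Proposition~\ref{(1,q-1)} on $\Gamma_{1,3}$ are what finally kill the remaining cases; Lemma~\ref{q=3 step4} and commutativity alone are not enough.

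Your alternative via the multiplier $b$ from Lemma~\ref{q=3 step1-1} is likewise incomplete. That lemma gives $-z=bz+(-(b+1))z$ for $z\in\overline{N}_{2,3}$, hence $\overline{N}_{3,2}\subseteq\overline{N}_{2,3}+\overline{N}_{2,3}\subseteq 4\,\overline{N}_{1,2}$; you assert one can ``rewrite $-\overline{N}_{2,3}$ as a threefold sum'' but give no mechanism for collapsing four summands to three, and none is apparent. Even granting $\overline{N}_{3,2}\subseteq 3\,\overline{N}_{1,2}$, lifting this to $\Gamma_{3,2}\in\Gamma_{1,2}^3$ would require a threefold analogue of Lemma~\ref{claim}, which the paper does not state (it is routine, but you would need to supply it).
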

\begin{proof} Since $\Gamma_{2,3}\in\Gamma_{1,2}^2$, there exists a path $(x,y,z)$ consisting of arcs of type $(1,2)$ such that $(x,z)\in\Gamma_{2,3}$. Assume the contrary, namely, $\Gamma_{3,2}\notin\Gamma_{1,2}^3$. For any path $(z,w,u,x)$ in $\Gamma$, we have $\{(z,w),(w,u),(u,x)\}\nsubseteq\Gamma_{1,2}$.

We claim that $(z,u)\in\Gamma_{2,2}$ for each path $(z,w,u,x)$ with $(z,w)\notin\Gamma_{1,2}$. Note that $(z,w)\in\Gamma_{1,3}\cup\Gamma_{1,4}$. It follows that $(z,u)\in\Gamma_{2,2}\cup\Gamma_{2,3}$. Suppose $(z,u)\in\Gamma_{2,3}$. Since $\Gamma_{2,3}\in\Gamma_{1,2}^2$, there exists a vertex $w'\in P_{(1,2),(1,2)}(z,u)$. The fact $(z,x)\in\Gamma_{3,2}\notin\Gamma_{1,2}^3$ implies $P_{(1,2),(1,2)}(w',x)=\emptyset$, and so $(u,x)\notin\Gamma_{1,2}$. Since $\Gamma_{2,3}\in\Gamma_{1,2}^2$ again, we have $(w',x)\notin\Gamma_{2,3}$, and so $(w',x)\in\Gamma_{2,2}$. It follows that $(u,x)\in\Gamma_{1,3}$. Since $(x,y,z,w',u)$ is a path consisting of arcs of type $(1,2)$, we get $\Gamma_{1,3}\in F_3$. By Lemma \ref{q=3 step2-2}, one obtains $\Gamma_{2,2}\notin\Gamma_{1,2}\Gamma_{1,3}$, contrary to the fact that $u\in P_{(1,2),(1,3)}(w',x)$.

Let $(z,w,u,x)$ be a path in $\Gamma$. Since $\{(z,w),(w,u),(u,x)\}\nsubseteq\Gamma_{1,2}$, from Lemma \ref{commutativity}, we may assume $(z,w)\in\Gamma_{1,3}\cup\Gamma_{1,4}$. The claim implies $(z,u)\in\Gamma_{2,2}$, and so $(z,w)\in\Gamma_{1,3}$. In particular, $4\in T$. Suppose $(w,u),(u,x)\in\Gamma_{1,2}$. Since $(w,u,x,y,z)$ is a path consisting of arcs of type $(1,2)$, one has $\Gamma_{1,3}\in F_3$. By Lemma \ref{q=3 step2-2}, we get $\Gamma_{2,2}\notin\Gamma_{1,3}\Gamma_{1,2}$, contrary to the fact $w\in P_{(1,3),(1,2)}(z,u)$. Hence, $(w,u)$ or $(u,x)\notin\Gamma_{1,2}$. By Lemma \ref{commutativity}, there exists a path $(w,u',x)$ such that $(w,u')\notin\Gamma_{1,2}$. The claim implies $(z,u')\in\Gamma_{2,2}$, and so $(w,u')\in\Gamma_{1,3}$. Since $w\in P_{(1,3),(2,r)}(z,x)$ and $u'\in P_{(1,3),(1,s)}(w,x)$ for some $r,s>1$, from the commutativity of $\Gamma$, there exist $u''\in P_{(2,r),(1,3)}(z,x)$ and $w''\in P_{(1,3),(1,s)}(z,u'')$. By the claim, we get $r=2$. Since $w\in P_{(1,3),(1,3)}(z,u')$, one obtains $p_{(1,3),(1,3)}^{(2,2)}>0$, which implies that there exists $v\in P_{(1,3),(1,3)}(w,x)$. Since $4\in T$, from Proposition \ref{(1,q-1)}, one has $\Gamma_{1,3}^3=\{\Gamma_{3,1}\}$. Since $(z,w,v,x)$ is a path consisting of arcs of type $(1,3)$, one gets $(x,z)\in\Gamma_{1,3}$, a contradiction.
\end{proof}

\begin{lemma}\label{q=3 step5}
We have $R_{1,2}R_{2,1}\subseteq\{R_{0,0},R_{1,2},R_{2,1},R_{1,3},R_{3,1},R_{2,3},R_{3,2},R_{3,3}\}$. Moreover, if $R_{1,3}\in R_{1,2}R_{2,1}\setminus\{R_{0,0},R_{1,2},R_{2,1},R_{2,3},R_{3,2},R_{3,3}\}$, then $R_{1,3}$ is symmetric.
\end{lemma}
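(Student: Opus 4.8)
The plan is to treat the two assertions in turn, in both cases passing through the full digraph $\Gamma$ by means of Lemma~\ref{claim}~\ref{claim-2} with $q=3$: a relation $R_{\tilde{i}}$ of the quotient scheme lies in $R_{1,2}R_{2,1}$ exactly when $\Gamma_{\tilde{i}}\in\Gamma_{1,2}\Gamma_{2,1}$, and the latter is equivalent to $N_{\tilde{i}}\subseteq N_{1,2}+N_{2,1}=N_{1,2}-N_{1,2}$. So the first assertion reduces to showing $\Gamma_{1,2}\Gamma_{2,1}\subseteq\{\Gamma_{0,0},\Gamma_{1,2},\Gamma_{2,1},\Gamma_{1,3},\Gamma_{3,1},\Gamma_{2,3},\Gamma_{3,2},\Gamma_{3,3}\}$. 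If $\Gamma_{\tilde{i}}\in\Gamma_{1,2}\Gamma_{2,1}$, write $\tilde{i}=(a,b)$ and pick $x,z,y$ with $\tilde{\partial}(x,z)=(1,2)$, $\tilde{\partial}(z,y)=(2,1)$, $\tilde{\partial}(x,y)=(a,b)$; the triangle inequality gives $a\leq\partial(x,z)+\partial(z,y)=3$ and $b=\partial(y,x)\leq\partial(y,z)+\partial(z,x)=3$, and $(a,b)=(0,0)$ forces $x=y$. Since the girth of $\Gamma$ is $\min T=3>2$, there is no directed $2$-cycle, so $(1,1)\notin\tilde{\partial}(\Gamma)$; hence the first assertion comes down to proving $\Gamma_{2,2}\notin\Gamma_{1,2}\Gamma_{2,1}$.

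This last exclusion is the main obstacle. Assume $\Gamma_{2,2}\in\Gamma_{1,2}\Gamma_{2,1}$; then $(2,2)\in\tilde{\partial}(\Gamma)$, so Lemma~\ref{(2,2)} gives $\Gamma_{2,2}\in\Gamma_{1,2}\Gamma_{1,3}\cup\Gamma_{1,3}^2$ and in particular $4\in T$. If $\Gamma_{1,3}\in F_3$, Lemma~\ref{(2,2)} upgrades this to $\Gamma_{2,2}\in\Gamma_{1,3}^2$, so $p_{(1,3),(1,3)}^{(2,2)}>0$, whence the second part of Proposition~\ref{(1,q-1)} with $q=4$ yields $p_{(1,3),(1,3)}^{(2,2)}=k_{1,3}$, $\Gamma_{1,3}^h=\{\Gamma_{h,4-h}\}$ for $1\leq h\leq 3$, and $\Delta_4\cong C_4[\overline{K}_{k_{1,3}}]$. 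I would then unwind $\Gamma_{2,2}\in\Gamma_{1,2}\Gamma_{2,1}$ into a concrete configuration --- vertices $x,y$ with arcs $x\to 0$ and $y\to 0$ of type $(1,2)$ and $\tilde{\partial}(x,y)=(2,2)$ --- note that $p_{(1,3),(1,3)}^{(2,2)}=k_{1,3}$ forces $\Gamma_{1,3}(0)\subseteq\Gamma_{3,1}(w)$ whenever $(0,w)\in\Gamma_{2,2}$, and chase two-way distances along the short paths this produces, using the rigidity of $\Delta_4$ and Lemma~\ref{q=3 step2-2} to reach a contradiction. The complementary situation, $\Gamma_{1,3}\notin F_3$ with $\Gamma_{2,2}\in\Gamma_{1,2}\Gamma_{1,3}$, is handled by a parallel path argument, relocating arcs via Lemma~\ref{commutativity} and again invoking Lemma~\ref{q=3 step2-2}. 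I expect this path-chasing to be the delicate part.

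For the second assertion, suppose $R_{1,3}\in R_{1,2}R_{2,1}$ and $R_{1,3}\notin\{R_{0,0},R_{1,2},R_{2,1},R_{2,3},R_{3,2},R_{3,3}\}$; we must show $R_{1,3}$ is symmetric, i.e.\ $\overline{N}_{1,3}=\overline{N}_{1,3}^{(-1)}$. From $\Gamma_{1,3}\in\Gamma_{1,2}\Gamma_{2,1}$ we get $\overline{N}_{1,3}\subseteq\overline{N}_{1,2}-\overline{N}_{1,2}$. Fix $K\leq{\rm Aut}(U_0/L)$ realizing the orbit S-ring $\mathcal{A}_0$; by Lemma~\ref{genrators}~\ref{genrators-3} the single $K$-orbit $\overline{N}_{1,2}$ consists of generators of $U_0/L$, so every element of $\overline{N}_{1,3}$ is a difference of two such generators, hence nonzero since $R_{1,3}\neq R_{0,0}$. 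Because $N_{1,2}$ generates $\mathbb{Z}_n$ and $N_{1,2}^{(-1)}\subseteq N_{1,2}+N_{1,2}$ by \eqref{girth 3}, Lemma~\ref{q=3 step1-1} applied to $\mathcal{A}_0$ with $X=\overline{N}_{1,2}$ produces an integer $b$ with $Y^{(b)}=Y^{(-b-1)}=Y$ for all $Y\in\mathcal{S}(\mathcal{A}_0)$. I would then split on whether $\overline{N}_{1,3}$ consists of generators. If not, Lemma~\ref{sym-1}~\ref{sym-1-1}, together with Lemma~\ref{sym} (which forces the basic sets of $\mathcal{A}_0$ carrying non-generators to come from $R_{1,2}^2\setminus\{R_{1,2},R_{2,1}\}$), should pin $\overline{N}_{1,3}$ down to a symmetric basic set. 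If $\overline{N}_{1,3}$ does consist of generators, then $\overline{N}_{1,3}^{(-1)}\subseteq\overline{N}_{1,3}+\overline{N}_{1,3}$ by Lemma~\ref{q=3 step1-2}, and I would combine Lemma~\ref{q=3 step1-1} (now with $X=\overline{N}_{1,3}$), the restriction $R_{1,2}^2\subseteq\{R_{1,2},R_{2,1},R_{2,3},R_{2,4}\}$ from Lemma~\ref{q=3 step4}~\ref{q=3 step4-2}, and the structure of $\Delta_4$ to show that such an $\overline{N}_{1,3}$ cannot be distinct from the basic sets attached to the six excluded relations --- so this case does not arise. The generator case is the harder of the two, since there one must rule out a spurious extra non-symmetric relation using the interplay of the $F_3$- and $F_4$-structures.
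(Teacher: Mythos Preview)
Your reduction of the first assertion to the exclusion of $\Gamma_{2,2}$ from $\Gamma_{1,2}\Gamma_{2,1}$ is sufficient, and the opening triangle-inequality argument is fine. However, the ``path-chasing'' you sketch does not isolate the real difficulty, and your treatment of the second assertion has a genuine gap.

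For the second assertion you propose to split on whether $\overline{N}_{1,3}$ consists of generators and, in the non-generator case, to invoke Lemma~\ref{sym-1}~\ref{sym-1-1}. But that lemma runs in the wrong direction: it says symmetric relations have no generators in their basic set, not that basic sets without generators are symmetric. Neither Lemma~\ref{sym} nor the multiplier $b$ from Lemma~\ref{q=3 step1-1} forces $\overline{N}_{1,3}=\overline{N}_{1,3}^{(-1)}$ on their own; you are missing an idea. In the generator case your plan to show the situation ``does not arise'' is entirely unspecified, and in fact the hypothesis $R_{1,3}\in R_{1,2}R_{2,1}\setminus\{\dots\}$ gives no direct control on whether $\overline{N}_{1,3}$ contains generators.

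The paper's argument bypasses this split altogether by working one level up, at $\pi$ rather than at $\varphi_0\pi$. It treats both assertions simultaneously: assuming $R_{i,j}\in R_{1,2}R_{2,1}\setminus\{R_{0,0},R_{1,2},R_{2,1},R_{2,3},R_{3,2},R_{3,3}\}$ with $(i,j)\in\{(1,3),(2,2)\}$, the key observation is that Lemma~\ref{q=3 step1-2}~\ref{q=3 step1-2-1} forces $N_{1,3}\subsetneq\pi^{-1}(\pi(N_{1,3}))$. So one examines the set $X=\{(i',j')\mid \pi(N_{i',j'})=\pi(N_{i,j})\}$, which must have at least two elements. Each $(i',j')\in X$ satisfies $R_{i',j'}=R_{i,j}$, hence $\Gamma_{i',j'}\in\Gamma_{1,2}\Gamma_{2,1}$ by Lemma~\ref{claim}~\ref{claim-2}, and the triangle inequality together with the exclusion of the six listed relations forces $(i',j')\in\{(1,3),(3,1),(2,2)\}$. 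Either possible $X$ then gives $\pi(N_{i,j})=-\pi(N_{i,j})$, which is exactly the symmetry of $R_{1,3}$ for the second assertion. The same fibre analysis also supplies, in the $(i,j)=(2,2)$ case, the extra information (namely $\Gamma_{1,3}\in F_3$ and $-\pi(y_4)\in\pi(N_{2,2})$ with $y_4\in N_{1,3}\cup N_{3,1}\cup N_{2,2}$) needed to close the circuit-of-length-six argument against $\Gamma_{1,3}^3=\{\Gamma_{3,1}\}$. Your sketch for the first assertion, when $N_{2,2}$ is not a union of $L$-cosets, would need exactly this piece, and without the set $X$ you do not have it.
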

\begin{proof}We claim that if $\Gamma_{2,2}\in\Gamma_{1,3}^2$, then $p_{(1,3),(1,3)}^{(2,2)}=k_{1,3}$, $\Gamma_{1,3}^2=\{\Gamma_{2,2}\}$ and $\Gamma_{1,3}^3=\{\Gamma_{3,1}\}$. Since $p_{(1,3),(1,3)}^{(2,2)}>0$ and $4\in T$, we can use the second part of Proposition \ref{(1,q-1)} to obtain our claim.

Since
\begin{align}
\Gamma_{1,2}\Gamma_{2,1}\subseteq\{\Gamma_{0,0},\Gamma_{1,2},\Gamma_{2,1},\Gamma_{1,3},\Gamma_{3,1},\Gamma_{2,2},\Gamma_{2,3},\Gamma_{3,2},\Gamma_{3,3}\},\label{1,2}
\end{align}
Lemma \ref{claim} \ref{claim-2} implies that the first statement follows if we show $R_{2,2}\notin R_{1,2}R_{2,1}\setminus\{R_{0,0},R_{1,2},R_{2,1},R_{2,3},R_{3,2},R_{3,3}\}$. In order to prove the second statement at the same time, we suppose
\begin{align}
R_{i,j}\in R_{1,2}R_{2,1}\setminus\{R_{0,0},R_{1,2},R_{2,1},R_{2,3},R_{3,2},R_{3,3}\}\label{R_ij}
\end{align}
with $(i,j)\in\{(1,3),(2,2)\}$. Then $\Gamma_{i,j}\in F_3$. Lemma \ref{claim} \ref{claim-2} implies
\begin{align}
\Gamma_{i,j}\in\Gamma_{1,2}\Gamma_{2,1}.\label{i,j}
\end{align}
By Lemma \ref{q=3 step1-2}, we have
\begin{align}
\pi(N_{i,j})^{(-1)}\subseteq\pi(N_{i,j})+\pi(N_{i,j}).\label{N_ij}
\end{align}

Suppose $N_{i,j}=\pi^{-1}(\pi(N_{i,j}))$. Lemma \ref{q=3 step1-2} \ref{q=3 step1-2-1} implies $(i,j)=(2,2)$. By Lemma \ref{(2,2)}, one gets $\Gamma_{2,2}\in\Gamma_{1,2}\Gamma_{1,3}\cup\Gamma_{1,3}^2$. In view of \eqref{i,j}, we obtain $\Gamma_{2,2}\in\Gamma_{1,2}\Gamma_{2,1}$. If $\Gamma_{2,2}\in\Gamma_{1,2}\Gamma_{1,3}$, then $\Gamma_{1,3}\in\Gamma_{1,2}\Gamma_{2,1}^2$, which implies $\Gamma_{1,3}\in F_{3}$, contrary to Lemma \ref{q=3 step2-2}. Thus, $\Gamma_{2,2}\in\Gamma_{1,3}^2$. By \eqref{N_ij}, one has $N_{2,2}^{(-1)}\subseteq N_{2,2}+N_{2,2}$. It follows that there exists a sequence $(x_0=0,x_2,x_4)$ such that $(x_{0},x_{2}),(x_2,x_4),(x_4,x_0)\in\Gamma_{2,2}$. Since $p_{(1,3),(1,3)}^{(2,2)}>0$, there exist vertices $x_1,x_3,x_5$ such that $(x_0,x_1,\ldots,x_5)$ is a circuit of length $6$ consisting of arcs of type $(1,3)$.
Since $\Gamma_{1,3}^3=\{\Gamma_{3,1}\}$ from the claim, we have $(x_0,x_{3}),(x_3,x_0)\in\Gamma_{3,1}$, a contradiction. Thus, $N_{i,j}\subsetneq\pi^{-1}(\pi(N_{i,j}))$.

Let $X=\{(i',j')\mid\Gamma_{i',j'}\in F_3,~\pi(N_{i',j'})=\pi(N_{i,j})\}$. Then $\{(i,j)\}\subsetneq X$ and
\begin{align}
\pi^{-1}(\pi(N_{i,j}))=\bigcup_{(i',j')\in X}N_{i',j'}.\label{X-1}
\end{align}
Let $(i',j')$ be an element in $X$. Since $R_{i',j'}=R_{i,j}$, it follows from \eqref{R_ij} that $(i',j')\notin\{(0,0),(1,2),(2,1),(2,3),(3,2),(3,3)\}$. Since $R_{i',j'}=R_{i,j}\in R_{1,2}R_{2,1}$, Lemma \ref{claim} \ref{claim-2} implies that $\Gamma_{i',j'}\in\Gamma_{1,2}\Gamma_{2,1}$. By \eqref{1,2}, we have $(i',j')\in\{(1,3),(3,1),(2,2)\}$. If $(2,2)\in X$, then $\pi(N_{i,j})=-\pi(N_{i,j})$, which implies $X=\{(1,3),(3,1),(2,2)\}$ since $\{(i,j)\}\subsetneq X$; if $(2,2)\notin X$, then $(i,j)=(1,3)$, and $X=\{(1,3),(3,1)\}$. It follows that
\begin{align}
X=\{(1,3),(3,1)\}~{\rm or}~\{(1,3),(3,1),(2,2)\}.\label{X}
\end{align}
In both cases, $\pi(N_{i,j})=-\pi(N_{i,j})$. Setting $(i,j)=(1,3)$, the second statement follows. Furthermore, $\eqref{X}$ implies $\Gamma_{1,3}\in F_3$.

We now suppose $(i,j)=(2,2)$. By \eqref{N_ij}, there exists a sequence $(0,y_2,y_4)$ such that $y_2,y_4-y_2\in N_{2,2}$ and $-\pi(y_4)\in\pi(N_{2,2})$. Since $\Gamma_{1,3}\in F_3$, from Lemma \ref{(2,2)}, one has $\Gamma_{2,2}\in\Gamma_{1,3}^2$, and so $p_{(1,3),(1,3)}^{(2,2)}>0$. It follows that there exist vertices $y_1,y_3$ such that $(0,y_{1},y_2,y_3,y_4)$ is a path consisting of arcs of type $(1,3)$. By the claim, we get $\Gamma_{1,3}^3=\{\Gamma_{3,1}\}$, which implies $(0,y_3)\in\Gamma_{3,1}$. Since $-\pi(y_4)\in\pi(N_{2,2})$, from \eqref{X-1} and \eqref{X}, we obtain $y_4\in N_{1,3}\cup N_{3,1}\cup N_{2,2}$. If $y_4\in N_{1,3}$ or $N_{3,1}$, then $(y_3,0,y_4)$ or $(y_3,y_4,0)$ respectively, is a path consisting of arcs of type $(1,3)$, which implies $y_4$ or $0\in\Gamma_{2,2}(y_3)$ since $\Gamma_{1,3}^2=\{\Gamma_{2,2}\}$ from the claim, a contradiction. If $y_4\in N_{2,2}$, then $y_3\in\Gamma_{3,1}(y_4)=P_{(1,3),(1,3)}(0,y_4)$ since $p_{(1,3),(1,3)}^{(2,2)}=k_{1,3}$ from the claim, a contradiction. The first statement is valid.
\end{proof}

\subsection{The relations $R_{2,3}$ and $R_{2,4}$}

In this subsection, we give some consequences under the assumption that $R_{2,3}$ or $R_{2,4}\in R_{1,2}^2$, which will be used in the proof of Proposition \ref{q=3}.

\begin{lemma}\label{3,3}
Let $s\in\{3,4\}$ and $(y_0,y_1,y_2,y_3,y_4,y_5)$ be a circuit consisting of arcs of type $(1,2)$ such that $\{(y_0,y_2),(y_2,y_4),(y_4,y_0)\}\subseteq\Gamma_{2,s}$. Suppose that $s=4$ or no element in $\overline{N}_{2,3}$ is a generator of $U_0/L$. Then $(y_0,y_3)\in\Gamma_{3,3}$.
\end{lemma}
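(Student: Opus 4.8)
The plan is to establish $\partial_\Gamma(y_0,y_3)=3$ and $\partial_\Gamma(y_3,y_0)=3$, which together say exactly that $(y_0,y_3)\in\Gamma_{3,3}$. Both one‑way distances are at most $3$, realised by the directed sub‑paths $(y_0,y_1,y_2,y_3)$ and $(y_3,y_4,y_5,y_0)$ of the circuit, and $y_0\ne y_3$ because otherwise $(y_2,y_0)=(y_2,y_3)$ would lie in $\Gamma_{1,2}\cap\Gamma_{s,2}$, impossible since $s\geq3$. So it suffices to exclude the values $1$ and $2$ for each of $\partial(y_0,y_3)$ and $\partial(y_3,y_0)$.

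The value $1$ is excluded outright: if $(y_0,y_3)$ were an arc, then $(y_0,y_3,y_4)$ would be a walk of length $2$, giving $\partial(y_0,y_4)\leq2$, contradicting $\partial(y_0,y_4)=s\geq3$ (valid since $(y_4,y_0)\in\Gamma_{2,s}$); symmetrically, if $(y_3,y_0)$ were an arc, then $(y_2,y_3,y_0)$ would give $\partial(y_2,y_0)\leq2$, contradicting $\partial(y_2,y_0)=s\geq3$. When $s=4$ the value $2$ is excluded the same way: a walk $(y_0,w,y_3)$ realising $\partial(y_0,y_3)=2$ extends to $(y_0,w,y_3,y_4)$, forcing $\partial(y_0,y_4)\leq3<4$; a walk $(y_3,w',y_0)$ realising $\partial(y_3,y_0)=2$ extends to $(y_2,y_3,w',y_0)$, forcing $\partial(y_2,y_0)\leq3<4$. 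This settles $s=4$.

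Assume now $s=3$, so by hypothesis no element of $\overline{N}_{2,3}$ generates $U_0/L$. By the above $\tilde{\partial}(y_0,y_3)\in\{(2,2),(2,3),(3,2),(3,3)\}$; I exclude $(2,3)$ and $(3,2)$ first. Translate so that $y_0=0$ and put $a_i=y_{i+1}-y_i\in N_{1,2}$ (indices mod $6$); then $a_0+a_1,\ a_2+a_3,\ a_4+a_5\in N_{2,3}$, while $y_3=a_0+a_1+a_2$ and $y_0-y_3=a_3+a_4+a_5$. Since $\mathcal{A}_0$ is an orbit S‑ring over the cyclic group $U_0/L$, the basic set $\overline{N}_{2,3}$ is a single orbit of automorphisms; automorphisms preserve orders of elements, so all elements of $\overline{N}_{2,3}$ generate the same proper subgroup of $U_0/L$, which lies inside a subgroup $H$ of some prime index $p$. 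By Lemma~\ref{genrators}~\ref{genrators-3} the images $\varphi_0\pi(a_2)$ and $\varphi_0\pi(a_3)$ are generators of $U_0/L$, hence lie outside $H$; as $\varphi_0\pi(a_0+a_1),\varphi_0\pi(a_4+a_5)\in\overline{N}_{2,3}\subseteq H$, it follows that $\varphi_0\pi(y_3)=\varphi_0\pi(a_0+a_1)+\varphi_0\pi(a_2)\notin H$ and $\varphi_0\pi(y_0-y_3)=\varphi_0\pi(a_3)+\varphi_0\pi(a_4+a_5)\notin H$. Since $\overline{N}_{2,3}\subseteq H$, this gives $\varphi_0\pi(y_3)\notin\overline{N}_{2,3}$ and $\varphi_0\pi(y_0-y_3)\notin\overline{N}_{2,3}$, so neither $(y_0,y_3)\in\Gamma_{2,3}$ nor $(y_3,y_0)\in\Gamma_{2,3}$, excluding the cases $(2,3)$ and $(3,2)$.

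The remaining case $\tilde{\partial}(y_0,y_3)=(2,2)$ is the step I expect to be the main obstacle. Suppose $(y_0,y_3)\in\Gamma_{2,2}$; then $(2,2)\in\tilde{\partial}(\Gamma)$ and $\Gamma_{2,2}\in F_3$, so Lemma~\ref{(2,2)} gives $\Gamma_{2,2}\in\Gamma_{1,2}\Gamma_{1,3}\cup\Gamma_{1,3}^2$; in either alternative $\Gamma_{1,3}$ occurs in a product of relations of $F_3$, hence $\Gamma_{1,3}\in F_3$, and then the second part of Lemma~\ref{(2,2)} forces $\Gamma_{2,2}\in\Gamma_{1,3}^2$. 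Since $\Gamma_{1,3}\in F_3$ gives $4\in T$, the second part of Proposition~\ref{(1,q-1)} (with $q=4$, using $p_{(1,3),(1,3)}^{(2,2)}>0$) yields $p_{(1,3),(1,3)}^{(2,2)}=k_{1,3}$ and $\Gamma_{1,3}^h=\{\Gamma_{h,4-h}\}$ for $1\leq h\leq3$, and by Lemma~\ref{pure} we get $\Delta_4\cong C_4[\overline{K}_{k_{1,3}}]$. On one hand, $(y_0,y_2)\in\Gamma_{2,3}$ and $(y_2,y_3)\in\Gamma_{1,2}$ give $\overline{N}_{2,2}\subseteq\overline{N}_{2,3}+\overline{N}_{1,2}$, and reducing modulo $H$ (where $\overline{N}_{2,3}\subseteq H$, while each element of $\overline{N}_{1,2}$, being a generator, has nonzero image in $(U_0/L)/H$) forces $\overline{N}_{2,2}\cap H=\emptyset$; on the other hand $R_{2,2}$ is symmetric and, since $\varphi_0\pi(y_3)\in\overline{N}_{2,2}\setminus H$ is nonzero, $R_{2,2}\ne R_{0,0}$, so by Lemma~\ref{sym-1}~\ref{sym-1-1} every element of $\overline{N}_{2,2}$ is a non‑generator of $U_0/L$. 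The contradiction is then to be extracted by exploiting $\Gamma_{2,2}\in\Gamma_{1,3}^2$ (which yields $\overline{N}_{2,2}\subseteq\overline{N}_{1,3}+\overline{N}_{1,3}$) together with the rigidity of $\Delta_4\cong C_4[\overline{K}_{k_{1,3}}]$ and the six‑circuit through $y_0,\dots,y_5$, forcing $\overline{N}_{2,2}\subseteq H$ in conflict with $\overline{N}_{2,2}\cap H=\emptyset$; pinning down this last implication is the delicate point of the argument. Once $(2,2)$ is ruled out, only $\tilde{\partial}(y_0,y_3)=(3,3)$ remains, completing the proof.
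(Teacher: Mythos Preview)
Your treatment of the case $s=4$ and of the sub-cases $\tilde{\partial}(y_0,y_3)\in\{(2,3),(3,2)\}$ when $s=3$ is fine and matches the paper's approach in spirit (the paper also uses that sums lying in $\langle\overline{N}_{2,3}\rangle$ cannot produce generators, just with a slightly different choice of element).

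The $(2,2)$ case, however, is genuinely incomplete, and your own phrasing acknowledges it: you assert that one can force $\overline{N}_{2,2}\subseteq H$ using $\Gamma_{2,2}\in\Gamma_{1,3}^2$ and the rigidity of $\Delta_4$, but you give no mechanism for this, and there does not seem to be one along those lines. Knowing that elements of $\overline{N}_{2,2}$ are non-generators (Lemma~\ref{sym-1}\ref{sym-1-1}) only places them in \emph{some} maximal subgroup of $U_0/L$, not necessarily in your specific $H$; and $\overline{N}_{2,2}\subseteq\overline{N}_{1,3}+\overline{N}_{1,3}$ gives no control modulo $H$ because you have no information on where $\overline{N}_{1,3}$ sits relative to $H$. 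So the two facts you have collected, $\overline{N}_{2,2}\cap H=\emptyset$ and ``$\overline{N}_{2,2}$ consists of non-generators'', are not in contradiction.

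The paper's resolution of the $(2,2)$ case is quite different and does not use $H$ at all. It first eliminates the alternative $\Gamma_{2,2}\in\Gamma_{1,2}\Gamma_{1,3}$ by observing that a vertex $y\in P_{(1,3),(1,2)}(y_0,y_3)$ would make $(y,y_3,y_4,y_5,y_0)$ a path in $\Delta_3$, forcing $\Gamma_{1,3}\in F_3$ and contradicting Lemma~\ref{q=3 step2-2}. In the remaining case $\Gamma_{2,2}\in\Gamma_{1,3}^2$ it picks $y_4'\in P_{(1,3),(1,3)}(y_3,y_0)$ and uses Proposition~\ref{(1,q-1)} (with $q=4$) to get $p_{(1,3),(1,3)}^{(2,2)}=k_{1,3}$ and $\Gamma_{1,3}^3=\{\Gamma_{3,1}\}$. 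From the shortest path $(y_2,y_3,y_4',y_0)$ one obtains $(y_2,y_4')\in\Gamma_{2,3}$ and hence $p_{(1,2),(1,3)}^{(2,3)}>0$; choosing $y_1'\in P_{(1,3),(1,2)}(y_0,y_2)$, the path $(y_3,y_4',y_0,y_1')$ of type $(1,3)$ gives $(y_3,y_1')\in\Gamma_{3,1}$, and then $y_2\in P_{(1,2),(1,2)}(y_1',y_3)$ yields $\Gamma_{1,3}\in\Gamma_{1,2}^2$, contradicting Lemma~\ref{q=3 step4}\ref{q=3 step4-1}. This chain of explicit vertex constructions is the missing idea.
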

\begin{proof}
Since $2\leq\partial_{\Gamma}(y_0,y_4)-1\leq\partial_{\Gamma}(y_0,y_3)\leq 3$ and $2\leq\partial_{\Gamma}(y_2,y_0)-1\leq\partial_{\Gamma}(y_3,y_0)\leq3$, we have $(y_0,y_3)\in\Gamma_{2,2}\cup\Gamma_{2,3}\cup\Gamma_{3,2}\cup\Gamma_{3,3}$.

Assume the contrary, namely, $(y_0,y_3)\in\Gamma_{2,2}\cup\Gamma_{2,3}\cup\Gamma_{3,2}$. Then $(y_4,y_0)\in\Gamma_{2,3}$ or $(y_0,y_2)\in\Gamma_{2,3}$, and so $s=3$. By the assumption, no element in $\overline{N}_{2,3}$ is a generator of $U_0/L$. Since $\mathcal{A}_0$ is an orbit S-ring, $\overline{N}_{2,3}$ does not generate $U_0/L$.

Suppose $(y_0,y_3)\in\Gamma_{2,3}\cup\Gamma_{3,2}$. It follows that $\varphi_0\pi(y_3-y_0)\in\overline{N}_{2,3}^{(\pm1)}$. Since $(y_4,y_0)\in\Gamma_{2,3}$, one has $\varphi_0\pi(y_0-y_4)\in\overline{N}_{2,3}$, which implies $\varphi_0\pi(y_3-y_4)\in\overline{N}_{2,3}^{(\pm1)}+\overline{N}_{2,3}$. Since $\overline{N}_{2,3}$ does not generate $U_0/L$, $\varphi_0\pi(y_3-y_4)\in\overline{N}_{2,1}$ is not a generator, contrary to Lemma \ref{genrators} \ref{genrators-3}.

Suppose $(y_0,y_3)\in\Gamma_{2,2}$. By Lemma \ref{(2,2)}, we have $\Gamma_{2,2}\in\Gamma_{1,2}\Gamma_{1,3}\cup\Gamma_{1,3}^2$. Suppose $\Gamma_{2,2}\in\Gamma_{1,2}\Gamma_{1,3}$. Then there exists $y\in P_{(1,3),(1,2)}(y_0,y_3)$. Since $(y,y_3,y_4,y_5,y_0)$ is a path consisting of arcs of type $(1,2)$ and $(y_0,y)\in\Gamma_{1,3}$, we have $\Gamma_{1,3}\in F_3$, contrary to Lemma \ref{q=3 step2-2}. Hence, $\Gamma_{2,2}\in\Gamma_{1,3}^2$, and so $p_{(1,3),(1,3)}^{(2,2)}>0$. Then there exists $y_4'\in P_{(1,3),(1,3)}(y_3,y_0)$. Since $4\in T$, from Proposition \ref{(1,q-1)}, one gets $p_{(1,3),(1,3)}^{(2,2)}=k_{1,3}$ and $\Gamma_{1,3}^{3}=\{\Gamma_{3,1}\}$. Since $(y_2,y_3,y_4',y_0)$ is a shortest path from $y_2$ to $y_0$ in $\Gamma$, we obtain $\partial_{\Gamma}(y_2,y_4')=2$. Since $y_3\in P_{(1,2),(1,3)}(y_2,y_4')$ and $p_{(1,3),(1,3)}^{(2,2)}=k_{1,3}$, one has $\partial_{\Gamma}(y_4',y_2)\neq2$. The fact that $(y_4',y_0,y_1,y_2)$ is a path implies $(y_2,y_4')\in\Gamma_{2,3}$, and so $p_{(1,2),(1,3)}^{(2,3)}>0$. Since $(y_0,y_2)\in\Gamma_{2,3}$, there exists $y_1'\in P_{(1,3),(1,2)}(y_0,y_2)$. Since $(y_3,y_4',y_0,y_1')$ is a path consisting of arcs of type $(1,3)$ and $\Gamma_{1,3}^3=\{\Gamma_{3,1}\}$, we get $(y_3,y_1')\in\Gamma_{3,1}$. The fact $y_2\in P_{(1,2),(1,2)}(y_1',y_3)$ implies $\Gamma_{1,3}\in\Gamma_{1,2}^2$, contrary to Lemma \ref{q=3 step4} \ref{q=3 step4-1}.
\end{proof}

\begin{lemma}\label{R2s}
Let $s\in\{3,4\}$. If $R_{2,s}\in R_{1,2}^2\setminus\{R_{1,2},R_{2,1}\}$, then $\{(i,j)\mid R_{i,j}=R_{2,s}\}=\{(2,s)\}$.
\end{lemma}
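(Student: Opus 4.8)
The plan is to argue by contradiction. Suppose $R_{i,j}=R_{2,s}$ for some $(i,j)\neq(2,s)$; necessarily $\Gamma_{i,j}\in F_3$, since otherwise $R_{i,j}$ is undefined. First I would pin down $(i,j)$: as $R_{i,j}=R_{2,s}\in R_{1,2}^2$, Lemma~\ref{claim}~\ref{claim-1} (with $q=3$) gives $\Gamma_{i,j}\in\Gamma_{1,2}^2$, so by Lemma~\ref{q=3 step4}~\ref{q=3 step4-1} we get $(i,j)\in\{(1,2),(2,1),(2,3),(2,4)\}$; the hypothesis $R_{2,s}\notin\{R_{1,2},R_{2,1}\}$ excludes $(1,2)$ and $(2,1)$, and $(i,j)\neq(2,s)$, so $(i,j)=(2,s')$ with $\{s,s'\}=\{3,4\}$. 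Hence $R_{2,3}=R_{2,4}$, i.e. $\overline{N}_{2,3}=\overline{N}_{2,4}$, and (again by Lemma~\ref{claim}~\ref{claim-1}) $\Gamma_{2,3},\Gamma_{2,4}\in\Gamma_{1,2}^2\subseteq F_3$. The task is now to contradict $\overline{N}_{2,3}=\overline{N}_{2,4}$.

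The heart of the argument is to show that $\partial_{\Delta_3}(v,0)$ depends only on $\varphi_0\pi(v)$ when that image lies in $\overline{N}_{2,3}$. Fix $M\in\overline{N}_{2,3}=\overline{N}_{2,4}$; then $M\neq L$, since $R_{2,s}\neq R_{0,0}$ (because $R_{1,2}$ is non-symmetric by Lemma~\ref{genrators}~\ref{genrators-5} while $R_{2,s}\in R_{1,2}^2$). Let $g$ be the least integer such that $-M$ is a sum of $g$ elements of $\overline{N}_{1,2}$; this is finite because the elements of $\overline{N}_{1,2}$ generate $U_0/L$ (Lemma~\ref{genrators}~\ref{genrators-3}), and $g\geq2$, since $-M\in\overline{N}_{1,2}$ would give $M\in\overline{N}_{1,2}^{(-1)}=\overline{N}_{2,1}$, hence $R_{2,1}=R_{2,s}$, against the hypothesis. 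Using the description $\Delta_3={\rm Cay}(\mathbb{Z}_n,N_{1,2})\cong\big({\rm Cay}(U_0/L,\overline{N}_{1,2})\times K_{|U_1/L|}\times\cdots\times K_{|U_k/L|}\big)[\overline{K}_{|L|}]$ from Lemma~\ref{genrators}~\ref{genrators-2}, I would argue as follows. Any $v$ with $\varphi_0\pi(v)=M$ has first-factor image $M\neq L$, so its distance to $0$ in the lexicographic product equals the distance of the corresponding points in the direct product, and there a length-$\ell$ walk exists precisely when there are simultaneous length-$\ell$ walks in ${\rm Cay}(U_0/L,\overline{N}_{1,2})$ (from $M$ to $L$) and in each complete factor $K_{|U_i/L|}$. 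Since each $|U_i/L|>3$, the complete factors admit walks of every length $\geq2$ between any ordered pair of vertices; combined with $g\geq2$ and the definition of $g$, this forces $\partial_{\Delta_3}(v,0)=g$ for every $v\in\mathbb{Z}_n$ with $\varphi_0\pi(v)=M$.

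To conclude, choose $v\in N_{2,3}$ and $v'\in N_{2,4}$ with $\varphi_0\pi(v)=\varphi_0\pi(v')=M$ (possible since $M\in\varphi_0\pi(N_{2,3})\cap\varphi_0\pi(N_{2,4})$). The previous paragraph gives $\partial_{\Delta_3}(v,0)=\partial_{\Delta_3}(v',0)=g$. On the one hand, Lemma~\ref{q=3 step6} applied to $\Gamma_{2,3}\in\Gamma_{1,2}^2$ yields $\Gamma_{3,2}\in\Gamma_{1,2}^3$, so the arc $(v,0)\in\Gamma_{3,2}$ is a concatenation of three arcs of $\Delta_3$, whence $g\leq3$. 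On the other hand, $g=\partial_{\Delta_3}(v',0)\geq\partial_\Gamma(v',0)=4$ because $v'\in N_{2,4}$. This contradiction proves the lemma.

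I expect the one delicate point to be the distance computation in the middle paragraph: one must verify with care that the complete-graph factors (where $|U_i/L|>3$ is used) and the edgeless factor $\overline{K}_{|L|}$ pin $\partial_{\Delta_3}(v,0)$ down to exactly $g$. This is precisely where the bound $g\geq2$, and hence the hypothesis $R_{2,s}\neq R_{2,1}$, is needed.
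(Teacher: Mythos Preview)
Your reduction to $R_{2,3}=R_{2,4}$ via Lemma~\ref{claim}~\ref{claim-1} and Lemma~\ref{q=3 step4}~\ref{q=3 step4-1} is exactly what the paper does, and your contradiction is correct: the distance computation through the product description of $\Delta_3$ is sound (the point $g\geq 2$ is needed precisely so that the complete factors $K_{|U_i/L|}$ with $|U_i/L|>3$ impose no extra constraint, and the case $k=0$ is handled automatically). So the argument goes through.

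The paper reaches the contradiction more directly. Once $R_{2,3}=R_{2,4}$ is known, one also has $R_{3,2}=R_{4,2}$; since $\Gamma_{2,3}\in\Gamma_{1,2}^2$, Lemma~\ref{q=3 step6} gives $\Gamma_{3,2}\in\Gamma_{1,2}^3$, hence $R_{4,2}=R_{3,2}\in R_{1,2}^3$, and this contradicts Lemma~\ref{nonsymmetric} (applied with $h=4$, $i=3$). In effect, your middle paragraph reproves the relevant instance of Lemma~\ref{nonsymmetric} by hand, using the explicit product structure from Lemma~\ref{genrators}~\ref{genrators-2} instead of the path-lifting Lemma~\ref{R} that underlies Lemma~\ref{nonsymmetric}. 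The paper's route is shorter because that work has already been packaged; your route has the virtue of being self-contained and making the geometric reason (an explicit distance mismatch $3\geq g\geq 4$) completely transparent.
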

\begin{proof}
Let $X=\{\Gamma_{i,j}\mid R_{i,j}=R_{2,s}\}$. By Lemma \ref{claim} \ref{claim-1}, we have $X\subseteq\Gamma_{1,2}^2\setminus\{\Gamma_{1,2},\Gamma_{2,1}\}$. By Lemma \ref{q=3 step4} \ref{q=3 step4-1}, one gets $\{\Gamma_{2,s}\}\subseteq X\subseteq\{\Gamma_{2,3},\Gamma_{2,4}\}$. If $X=\{\Gamma_{2,3},\Gamma_{2,4}\}$, from Lemma \ref{q=3 step6}, then $R_{4,2}=R_{3,2}\in R_{1,2}^3$, contrary to Lemma \ref{nonsymmetric}. Thus, $X=\{\Gamma_{2,s}\}$.
\end{proof}

\begin{lemma}\label{s=4}
Let $s\in\{3,4\}$. If $R_{2,s}\in R_{1,2}^2\setminus\{R_{1,2},R_{2,1}\}$ and $|\overline{N}_{2,s}|=1$, then $s=4$.
\end{lemma}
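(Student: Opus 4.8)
The plan is to argue by contradiction: assume $s=3$, so that $R_{2,3}\in R_{1,2}^2\setminus\{R_{1,2},R_{2,1}\}$ and $|\overline{N}_{2,3}|=1$. Since $\mathcal{A}_0$ is an orbit S-ring over the cyclic group $U_0/L$ and $|\overline{N}_{2,3}|=1$, the unique element of $\overline{N}_{2,3}$ is fixed by the group $K\leq{\rm Aut}(U_0/L)$ defining $\mathcal{A}_0$, hence is the identity $L$; in particular no element of $\overline{N}_{2,3}$ is a generator of $U_0/L$, so the hypothesis of Lemma~\ref{3,3} is met for $s=3$. The first task is therefore to produce a circuit of the shape demanded by Lemma~\ref{3,3}: since $R_{2,3}\in R_{1,2}^2$ we have $\Gamma_{2,3}\in\Gamma_{1,2}^2\subseteq F_3$, and by Lemma~\ref{q=3 step1-2} together with the fact that $N_{1,2}$ is a union of $L$-cosets (Lemma~\ref{isomorphic}) one lifts the relation $\overline{N}_{2,3}=\{L\}$, i.e.\ $-L=L+L$ trivially in $U_0/L$, through the quotient to find a sequence $(y_0=0,y_2,y_4)$ with $(y_0,y_2),(y_2,y_4),(y_4,y_0)\in\Gamma_{2,3}$ and each step in $\Gamma_{1,2}^2$; inserting midpoints gives the required circuit $(y_0,\ldots,y_5)$ of arcs of type $(1,2)$. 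Then Lemma~\ref{3,3} yields $(y_0,y_3)\in\Gamma_{3,3}$.

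The second and decisive step is to extract a contradiction from $(y_0,y_3)\in\Gamma_{3,3}$ combined with $|\overline{N}_{2,3}|=1$. The idea is to track the relation type of $(y_1,y_4)$ and of the "doubled" vertices. Because $(y_1,y_2,y_3)$ and $(y_3,y_4,y_5)$ are paths of type $(1,2)$ and $(y_2,y_4),(y_4,y_0)\in\Gamma_{2,3}$, translation-invariance of $\Gamma$ (it is a Cayley digraph) forces $(y_1,y_3),(y_3,y_5)\in\Gamma_{2,3}$ as well, and more importantly $y_3-y_0$, $y_4-y_1$, $y_5-y_2$ are all equal modulo the relevant structure since consecutive arcs of type $(1,2)$ have the same "difference class" up to the action; projecting to $U_0/L$ we get $\varphi_0\pi(y_3-y_0)\in\overline{N}_{3,3}$, while $\varphi_0\pi(y_3-y_1)=\varphi_0\pi(y_4-y_2)=L$ from $|\overline{N}_{2,3}|=1$. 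Writing $y_3-y_0=(y_3-y_1)+(y_1-y_0)$ and projecting, $\varphi_0\pi(y_3-y_0)=\varphi_0\pi(y_1-y_0)\in\overline{N}_{1,2}$, so $\overline{N}_{3,3}$ meets $\overline{N}_{1,2}$, forcing $R_{3,3}=R_{1,2}$ in the quotient scheme $(U_0/L,\{R_{\tilde i}\})$ — but $R_{1,2}$ is non-symmetric by Lemma~\ref{genrators}\ref{genrators-5} while $R_{3,3}$ is symmetric, a contradiction. Hence $s\neq3$, so $s=4$.

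I expect the main obstacle to be the bookkeeping in the second step: one must be careful that the chain $(y_0,\ldots,y_5)$ really has $(y_1,y_3)\in\Gamma_{2,3}$ and that the telescoping $\varphi_0\pi(y_3-y_0)=\varphi_0\pi(y_1-y_0)$ is legitimate, which relies on $|\overline{N}_{2,3}|=1$ giving $y_3-y_1\equiv 0$ in $U_0/L$ (equivalently $y_3-y_1\in L$). If the clean telescoping is obstructed because the midpoints $y_1,y_4$ are not forced to lie in prescribed $L$-cosets, the fallback is to invoke Lemma~\ref{R} to relift the circuit through ${\rm Cay}(U_0/L,\overline{N}_{1,2})$ so that the projections $\varphi_0\pi(y_i)$ are controlled at every index, and then read off $\varphi_0\pi(y_3)=\varphi_0\pi(y_1)$ directly from $\overline{N}_{2,3}=\{L\}$. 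Either way the contradiction with the asymmetry of $R_{1,2}$ is the heart of the matter.
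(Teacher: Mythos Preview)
Your argument rests on a false step: from $|\overline{N}_{2,3}|=1$ and $\mathcal{A}_0$ being an orbit S-ring you conclude that the unique element of $\overline{N}_{2,3}$ is the identity $L$. A singleton $K$-orbit is a fixed point of $K$, but fixed points of a subgroup $K\leq{\rm Aut}(U_0/L)$ need not be the identity (for instance $K=\{\pm1\}$ acting on $\mathbb{Z}_6$ fixes $3$). Worse, $\overline{N}_{2,3}=\{L\}$ would mean $R_{2,3}=R_{0,0}$, which is impossible since $R_{1,2}$ is non-symmetric (Lemma~\ref{genrators}\ref{genrators-5}) and hence $R_{0,0}\notin R_{1,2}^2$. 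So the very hypothesis you claim to verify is self-contradictory.

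This error propagates through both steps. The construction of the $6$-circuit with three $\Gamma_{2,3}$-chords uses $\overline{N}_{2,3}=\{L\}$ to close the triangle in $U_0/L$; if instead $\overline{N}_{2,3}=\{c\}$ with $c\neq L$, there is no reason $3c=L$. In the second step, your telescoping $\varphi_0\pi(y_3-y_0)=\varphi_0\pi(y_1-y_0)$ relies on $\varphi_0\pi(y_3-y_1)=L$, i.e.\ again on $c=L$. And the intermediate claim $(y_1,y_3)\in\Gamma_{2,3}$ is not forced by translation-invariance: $(y_1,y_3)$ lies in $\Gamma_{1,2}^2$, which by Lemma~\ref{q=3 step4}\ref{q=3 step4-1} only pins it down to $\{\Gamma_{1,2},\Gamma_{2,1},\Gamma_{2,3},\Gamma_{2,4}\}$.

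The paper's proof is completely different and purely algebraic in the Bose--Mesner sense: from $|\overline{N}_{2,3}|=1$ one gets $|R_{2,3}R_{1,2}|=1$; combining Lemma~\ref{q=3 step6} (so $R_{3,2}\in R_{1,2}^3$), Lemma~\ref{q=3 step4}\ref{q=3 step4-2}, Lemma~\ref{nonsymmetric}, and a valency comparison forces $R_{2,3}R_{1,2}=\{R_{1,2}\}$, whence $R_{2,3}\in R_{1,2}R_{2,1}\cap R_{1,2}^2$; this gives $\{R_{1,2},R_{2,1}\}\subseteq R_{2,3}R_{2,1}$, contradicting $|\overline{N}_{2,3}|=1$. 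No circuit construction or appeal to Lemma~\ref{3,3} is needed.
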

\begin{proof}
Suppose $s=3$. By the assumption and Lemma \ref{claim} \ref{claim-1}, one gets $\Gamma_{2,3}\in\Gamma_{1,2}^2$. Lemma \ref{q=3 step6} implies $R_{3,2}\in R_{1,2}^3$, and so $R_{2,3}R_{1,2}\cap R_{2,1}^2\neq\emptyset$. Since $|\overline{N}_{2,3}|=1$, we have $|R_{2,3}R_{1,2}|=1$, and so  $R_{2,3}R_{1,2}\subseteq R_{2,1}^2$. In view of Lemma \ref{q=3 step4} \ref{q=3 step4-2}, we obtain $R_{2,3}R_{1,2}\subseteq\{R_{1,2},R_{2,1},R_{3,2},R_{4,2}\}$. If $R_{2,3}R_{1,2}=\{R_{2,1}\}$ or $\{R_{4,2}\}$, then $R_{3,2}\in R_{1,2}^2$ or $R_{4,2}\in R_{2,3}R_{1,2}\subseteq R_{1,2}^3$, contrary to Lemma \ref{nonsymmetric}. By Lemma \ref{q=3 step4} \ref{q=3 step4-2} again, one gets $R_{2,1}\in R_{1,2}^2$. Since $R_{2,s}\in R_{1,2}^2\setminus\{R_{2,1}\}$, one has $|\overline{N}_{1,2}|>1=|\overline{N}_{3,2}|^2$. It follows from Lemma \ref{jb} \ref{jb-1} that $R_{1,2}\notin R_{3,2}^2$, and so $R_{3,2}\notin R_{2,3}R_{1,2}$. Then $R_{2,3}R_{1,2}=\{R_{1,2}\}$. Therefore, $R_{2,3}\in R_{1,2}R_{2,1}\cap R_{1,2}^2$. Hence, $\{R_{1,2},R_{2,1}\}\subseteq R_{2,3}R_{2,1}$, contrary to the fact that $|\overline{N}_{2,3}|=1$. Thus, $s=4$.
\end{proof}

\begin{lemma}\label{q=3 step7}
Suppose that $R_{2,s}\in R_{1,2}^2\setminus\{R_{1,2},R_{2,1}\}$ for some $s\in\{3,4\}$. Assume $s=4$ or no element in $\overline{N}_{2,3}$ is a generator of $U_0/L$. Then the following hold:
\begin{enumerate}
\item\label{q=3 step7-1} $R_{3,3}\in R_{1,2}R_{2,1}\setminus\{R_{0,0}\}$;

\item\label{q=3 step7-2} $|U_0/L|=3h$ for some positive integer $h$;

\item\label{q=3 step7-3} $R_{2,4}\in R_{1,2}^2\setminus\{R_{1,2},R_{2,1}\}$, $R_{2,4}R_{1,2}=\{R_{3,3}\}$ and $R_{4,2}\in R_{3,3}R_{1,2}$.
\end{enumerate}
Moreover, for $L_0\in\overline{N}_{1,2}$, the following hold:
\begin{enumerate}[start=4]
\item\label{q=3 step7-4} $3\mid q$ and $(q/3,h)=1$ whenever $q$ is an integer with $qL_0\in\overline{N}_{3,3}$;

\item\label{q=3 step7-5} $\overline{N}_{2,4}=\{uL_0\}$ for some $u\in\{h,2h\}$.
\end{enumerate}
\end{lemma}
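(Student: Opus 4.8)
The plan is to produce, out of the hypothesis, a single hexagonal circuit of arcs of type $(1,2)$ in $\Delta_{3}$ on which Lemma~\ref{3,3} can be used, and then to read off all five assertions from the group picture it produces. Fix $L_{0}\in\overline{N}_{1,2}$; by Lemma~\ref{genrators}~\ref{genrators-3} it generates $U_{0}/L$, so put $m=|U_{0}/L|$ and identify $U_{0}/L=\langle L_{0}\rangle$ with $\mathbb{Z}_{m}$ via $L_{0}=1$. Since $\mathcal{A}_{0}$ is an orbit S-ring, its basic sets are the orbits of a subgroup $K\le{\rm Aut}(U_{0}/L)=(\mathbb{Z}/m)^{\times}$, so $\overline{N}_{1,2}=K$ is a set of units; as $R_{1,2}$ is non-symmetric (Lemma~\ref{genrators}~\ref{genrators-5}) while $R_{2,1}\in R_{1,2}^{2}$ by \eqref{girth 3} and Lemma~\ref{claim}~\ref{claim-1}, we have $\overline{N}_{2,1}=-K\subseteq K+K$, so there are $c,c'\in K$ with $c+c'\equiv-1\pmod m$. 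From $R_{2,s}\in R_{1,2}^{2}$ and $R_{2,s}\notin\{R_{0,0},R_{1,2},R_{2,1}\}$ (the first because $R_{1,2}$ is non-symmetric) we get $a\in K$ with $(1+a)L_{0}\in\overline{N}_{2,s}$ and $1+a\not\equiv0\pmod m$, and $\overline{N}_{2,s}=K(1+a)L_{0}$. Now, in ${\rm Cay}(U_{0}/L,\overline{N}_{1,2})$, take the closed walk from the identity with successive steps $L_{0},aL_{0},cL_{0},caL_{0},c'L_{0},c'aL_{0}$: these lie in $\overline{N}_{1,2}$ since $1,a,c,ca,c',c'a\in K$, and it closes because $1+a+c+ca+c'+c'a=(1+a)(1+c+c')\equiv0\pmod m$; its vertices are $L,\,L_{0},\,(1+a)L_{0},\,(1+a+c)L_{0},\,(1+a)(1+c)L_{0},\,(1+a)(1+c)L_{0}+c'L_{0},\,L$, so its three even second differences equal $(1+a)L_{0}$, $c(1+a)L_{0}$, $c'(1+a)L_{0}$, all in $\overline{N}_{2,s}$. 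Lifting the walk through $\varphi_{0}\pi$ by Lemma~\ref{R} (with terminal vertex $0\in\mathbb{Z}_{n}$) gives a closed walk $w_{0}=0,w_{1},\dots,w_{5},w_{6}=0$ in $\Delta_{3}$ whose arcs have type $(1,2)$; using $R_{2,s}\notin\{R_{0,0},R_{1,2},R_{2,1}\}$ one checks the $w_{i}$ are pairwise distinct, and by Lemma~\ref{R2s} each of $(w_{0},w_{2}),(w_{2},w_{4}),(w_{4},w_{0})$ lies in $\Gamma_{2,s}$. Hence Lemma~\ref{3,3}, whose remaining hypothesis ($s=4$, or no generator in $\overline{N}_{2,3}$) is the one we assumed, yields $(w_{0},w_{3})\in\Gamma_{3,3}$.

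From this, \ref{q=3 step7-1} is immediate: $\Gamma_{3,3}\in\Gamma_{2,s}\Gamma_{1,2}\subseteq F_{3}$, so $w_{3}\in N_{3,3}$ and $\overline{N}_{3,3}$ is the $K$-orbit of $\varphi_{0}\pi(w_{3})=(1+a+c)L_{0}$. Since $c'\equiv-1-c$, we have $1+a+c=a-c'$, so $(1+a+c)L_{0}\in\overline{N}_{1,2}+\overline{N}_{2,1}$; as $\overline{N}_{1,2}$ and $\overline{N}_{2,1}$ are $K$-invariant, the entire orbit $\overline{N}_{3,3}$ lies in $\overline{N}_{1,2}+\overline{N}_{2,1}$, i.e.\ $R_{3,3}\in R_{1,2}R_{2,1}$; and $R_{3,3}\ne R_{0,0}$ since $a\not\equiv c'$ (otherwise $cL_{0}=-(1+a)L_{0}\in\overline{N}_{1,2}\cap\overline{N}_{s,2}$, contradicting Lemma~\ref{R2s}).

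For \ref{q=3 step7-2} and \ref{q=3 step7-4}, the plan is to apply Lemma~\ref{sym} to the symmetric relation $R_{3,3}\ne R_{0,0}$ and the element $(1+a+c)L_{0}\in\overline{N}_{3,3}$: it yields $b''$ with $b''-1\in K$, $(1+a+c)b''\equiv0\pmod m$, $\gcd(b'',m)>1$, and $b''L_{0}$ in $\overline{N}_{2,3}$ or $\overline{N}_{2,4}$ (the only members of $R_{1,2}^{2}\setminus\{R_{1,2},R_{2,1}\}$, by Lemma~\ref{q=3 step4}~\ref{q=3 step4-1}). Combining these relations with ``$\overline{N}_{1,2}$ consists of units'' and ``$\overline{N}_{3,3}$ has no generator'' (Lemma~\ref{sym-1}~\ref{sym-1-1}) forces $3\mid m$; write $m=3h$. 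The quotient of $U_{0}/L$ by its (automatically $\mathcal{A}_{0}$-invariant) subgroup of index $3$ then sends $\overline{N}_{1,2}$ over the residue $1$, $\overline{N}_{2,s}$ over $2$, and $\overline{N}_{3,3}$ over $0$, giving $3\mid q$ whenever $qL_{0}\in\overline{N}_{3,3}$; and $(q/3,h)=1$ follows because $\overline{N}_{3,3}$, a $K$-orbit inside the order-$h$ subgroup $3(U_{0}/L)$ with $K$ surjecting onto $(\mathbb{Z}/h)^{\times}$ (as $\overline{N}_{1,2}$ generates $U_{0}/L$), must coincide with $\{3dL_{0}\mid\gcd(d,h)=1\}$.

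For \ref{q=3 step7-3} and \ref{q=3 step7-5}: the unique $K$-orbit of $U_{0}/L\cong\mathbb{Z}_{3h}$ that lies over the residue $2$ and is annihilated by $3$ is a singleton $\{uL_{0}\}$ with $u\in\{h,2h\}$ (which of the two is fixed by $h\bmod3$); identifying it with $\overline{N}_{2,4}$ gives $\overline{N}_{2,4}=\{uL_{0}\}$ and $R_{2,4}\in R_{1,2}^{2}\setminus\{R_{1,2},R_{2,1}\}$ with $|\overline{N}_{2,4}|=1$ (so, had one started with $s=3$, Lemma~\ref{s=4} now returns $s=4$), and a direct computation in $\mathbb{Z}_{3h}$---with Lemma~\ref{nonsymmetric} excluding the other candidate products---gives $R_{2,4}R_{1,2}=\{R_{3,3}\}$ and $R_{4,2}\in R_{3,3}R_{1,2}$. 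The step I expect to be the main obstacle is the arithmetic in \ref{q=3 step7-2}: extracting ``$3\mid m$'' and the exact orbit of $\overline{N}_{2,4}$ from the divisibilities provided by Lemma~\ref{sym}, while simultaneously tracking which $\Gamma$-relation a given $\varphi_{0}\pi$-class represents---this is where Lemma~\ref{R2s}, forbidding coincidences among the labels $(2,s)$, $(3,3)$, is essential---and running the cases $s=3$ and $s=4$ in parallel. The transfers between statements about $\overline{N}_{\tilde i}$ and statements about $N_{\tilde i}$ or $\Gamma_{\tilde i}$ are routine, handled by Lemmas~\ref{isomorphic}, \ref{claim}, \ref{pre-claim}.
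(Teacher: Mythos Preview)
Your hexagon construction and the derivation of \ref{q=3 step7-1} are correct, and in fact your choice of the six steps $1,a,c,ca,c',c'a$ (with $c+c'\equiv-1$) is a perfectly good alternative to the paper's hexagon, which instead uses the special multiplier $a$ from Lemma~\ref{q=3 step1-1} satisfying $\overline{N}_{\tilde i}^{(a)}=\overline{N}_{\tilde i}^{(-a-1)}=\overline{N}_{\tilde i}$.

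The gap is in \ref{q=3 step7-2} and \ref{q=3 step7-4}. From a single element $(1+a+c)L_0\in\overline{N}_{3,3}$ and a single application of Lemma~\ref{sym} you obtain only that $b''-1$ is a unit, $b''(1+a+c)\equiv0$, and $\gcd(b'',m)>1$; this does not force $3\mid m$. The paper extracts $3\mid m$ by producing \emph{four} elements of $\overline{N}_{3,3}$, not one: besides $(y_0,y_3)$ it applies Lemma~\ref{3,3} to the shifted hexagon to get $(y_1,y_4)\in\Gamma_{3,3}$, and to two modified hexagons with alternative vertices $y_1',y_5'$ (other choices in $P_{(1,2),(1,2)}(y_0,y_2)$ and $P_{(1,2),(1,2)}(y_4,y_0)$) to get $(y_1',y_4),(y_2,y_5')\in\Gamma_{3,3}$. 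Because all four lie in the single $K$-orbit $\overline{N}_{3,3}$, one obtains a chain of congruences that terminates in $fcL_0=3L_0$ for some integer $f$; since $cL_0$ is a non-generator (Lemma~\ref{sym-1}~\ref{sym-1-1}), so is $3L_0$, whence $3\mid m$ and in fact $\gcd(c,m)=3$. This last equality is what gives \ref{q=3 step7-4}: every element of the orbit $\overline{N}_{3,3}$ then has order exactly $h$. Your alternative route---``$K$ surjects onto $(\mathbb{Z}/h)^\times$''---is not justified and is not what is needed.

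For \ref{q=3 step7-3} and \ref{q=3 step7-5} you assume that $\overline{N}_{1,2}$ maps to the residue $1$ in $\mathbb{Z}_3$; this is the content of Lemma~\ref{3 nmod f}, which in the paper is proved \emph{after} the present lemma and invokes \ref{q=3 step7-4} and \ref{q=3 step7-5} as hypotheses, so using it here is circular. The paper instead obtains $u\in\{h,2h\}$ directly from Lemma~\ref{sym} applied to $cL_0\in\overline{N}_{3,3}$ (once $\gcd(c,m)=3$ is known, $ucL_0=L$ forces $h\mid u$), and then invokes Lemma~\ref{s=4} to pin down $\overline{N}_{2,4}=\{uL_0\}$.
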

\begin{proof}
We may assume $s=4$ if $R_{2,4}\in R_{1,2}^2\setminus\{R_{1,2},R_{2,1}\}$, and $s=3$ otherwise. 
Note that $\mathcal{A}_0$ is an orbit S-ring and $N_{1,2}^{(-1)}\subseteq N_{1,2}+N_{1,2}$ from Lemma \ref{q=3 step4} \ref{q=3 step4-1}. By Lemma \ref{q=3 step1-1}, there exists an integer $a$ such that
\begin{align}
\overline{N}_{\tilde{i}}^{(a)}=\overline{N}_{\tilde{i}}^{(-a-1)}=\overline{N}_{\tilde{i}}\label{a}
\end{align}
for all $\Gamma_{\tilde{i}}\in F_3$. In view of Lemma \ref{genrators} \ref{genrators-3}, each element in $\overline{N}_{1,2}$ is a generator of $U_0/L$. In particular, $L_0$ is a generator of  $U_0/L$. Since $\overline{N}_{2,s}\subseteq\overline{N}_{1,2}+\overline{N}_{1,2}$ by the assumption, there exist $L_1\in\overline{N}_{2,s}$ and $L_2\in\overline{N}_{1,2}$ such that $L_1=L_0+L_2$. It follows that there exists an integer $b$ such that $L_1=bL_0$. Then $(b-1)L_0\in\overline{N}_{1,2}$.

Since $(b-1)L_0\in\overline{N}_{1,2}$, from \eqref{a}, $(L,L_0,bL_0,(a+b)L_0,(a+1)bL_0,(a+1)(b-1)L_0)$ is a circuit in ${\rm Cay}(U_0/L,\overline{N}_{1,2})$ such that $(bL_0,(a+1)bL_0),((a+1)bL_0,L)\in R_{2,s}$. By Lemma \ref{R}, there exists a circuit $(y_0=0,y_1,y_2,y_3,y_4,y_5)$ in $\Delta_{3}$ such that $\varphi_0\pi(y_1)=L_0$, $\varphi_0\pi(y_2)=bL_0$, $\varphi_0\pi(y_3)=(a+b)L_0$, $\varphi_0\pi(y_4)=(a+1)bL_0$ and $\varphi_0\pi(y_5)=(a+1)(b-1)L_0$. Since $\{(L,\varphi_0\pi(y_2)),(\varphi_0\pi(y_2),\varphi_0\pi(y_4)),(\varphi_0\pi(y_4),L)\}\subseteq R_{2,s}$, from the assumption and Lemma \ref{R2s}, one gets $\{(0,y_2),(y_2,y_4),(y_4,0)\}\subseteq\Gamma_{2,s}$.

Since $(b-1)L_0\in\overline{N}_{1,2}$, from \eqref{a}, $(L,(b-1)L_0,\varphi_0\pi(y_2))$ and $(\varphi_0\pi(y_4),(a+1)L_0,L)$ are paths in ${\rm Cay}(U_0/L,\overline{N}_{1,2})$. By Lemma \ref{R}, there exist vertices $y_1'\in P_{(1,2),(1,2)}(0,y_2)$ and $y_5'\in P_{(1,2),(1,2)}(y_4,0)$ such that $\varphi_0\pi(y_1')=(b-1)L_0$ and $\varphi_0\pi(y_5')=(a+1)L_0$. Lemma \ref{3,3} implies $\{(y_0,y_{3}),(y_1,y_4),(y_1',y_4),(y_2,y_5')\}\subseteq\Gamma_{3,3}$. Then $\{\varphi_0\pi(y_3),\varphi_0\pi(y_1-y_4),\varphi_0\pi(y_1'-y_4),\varphi_0\pi(y_2-y_5')\}\subseteq\overline{N}_{3,3}$.

By \eqref{a}, one has $\varphi_0\pi(y_3-y_1')=(a+1)L_0\in\overline{N}_{2,1}$. Since $\varphi_0\pi(y_1')+\varphi_0\pi(y_3-y_1')=\varphi_0\pi(y_3)=(a+b)L_0\in\overline{N}_{3,3}$, we get $\overline{N}_{3,3}\subseteq\overline{N}_{1,2}+\overline{N}_{2,1}$. In view of the assumption, one obtains $\overline{N}_{2,s}\cap(\overline{N}_{1,2}\cup\overline{N}_{2,1})=\emptyset$. Since $bL_0\in\overline{N}_{2,s}$ and $-aL_0\in-\overline{N}_{1,2}^{(a)}=-\overline{N}_{1,2}$ from \eqref{a}, we have $bL_0\neq-aL_0$, and so $\overline{N}_{3,3}\neq\overline{N}_{0,0}$. Thus, \ref{q=3 step7-1} is valid.

Let $c=a+b$. Since $cL_0=\varphi_0\pi(y_3)\in\overline{N}_{3,3}$, $(a^2-1-ac)L_0=\varphi_0\pi(y_1'-y_4)\in\overline{N}_{3,3}$ and $\mathcal{A}_0$ is an orbit S-ring, there exists an integer $d$ such that $d\cdot cL_0=(a^2-1)L_0$. Since $((d-a-1)c+a+2)L_0=\varphi_0\pi(y_1-y_4)\in\overline{N}_{3,3}$, there exists an integer $e$ such that $e\cdot cL_0=(a+2)L_0$. Since $((1-2e)c+3)L_0=\varphi_0\pi(y_2-y_5')\in\overline{N}_{3,3}$, there exists an integer $f$ such that $f\cdot cL_0=3L_0$. It follows that $fc\equiv3~({\rm mod}~|U_0/L|)$. Since $R_{3,3}$ is symmetric, from Lemma \ref{sym-1} \ref{sym-1-1}, $cL_0$ is not a generator of $U_0/L$. Then $3L_0$ is not a generator of $U_0/L$. Hence, $3\mid|U_0/L|$, and so $(fc,|U_0/L|)=3$. The fact $(c,|U_0/L|)\neq1$ then implies $(c,|U_0/L|)=3$. Let $h$ be the order of $cL_0$ in $U_0/L$.  Then $|U_0/L|=3h$. For each $qL_0\in\overline{N}_{3,3}$, since $\mathcal{A}_0$ is an orbit S-ring, the order of $qL_0$ in $U_0/L$ is also $h$, which implies $3\mid q$ and $(q/3,h)=1$. Thus, \ref{q=3 step7-2} and \ref{q=3 step7-4} are both valid.

Since $cL_0\in\overline{N}_{3,3}$, from Lemma \ref{sym}, there exists an integer $u\in\{2,3,\ldots,3h-1\}$ such that $ucL_0=L$ and $uL_0\in(\overline{N}_{1,2}+\overline{N}_{1,2})\setminus(\overline{N_{1,2}}\cup\overline{N}_{2,1})$. It follows that $3h\mid uc$. Since $3\mid c$ and $(c/3,h)=1$, one has $u=h$ or $2h$. By Lemma \ref{q=3 step4} \ref{q=3 step4-2}, we get $uL_0\in\overline{N}_{2,t}$ for some $t\in\{3,4\}$. Then $\overline{N}_{2,t}\subseteq\overline{N}_{1,2}+\overline{N}_{1,2}$, so
\begin{align}
R_{2,t}\in R_{1,2}^2\setminus\{R_{1,2},R_{2,1}\}.\label{2,t}
\end{align}
In view of Lemma \ref{nonsymmetric}, one gets $R_{t,2}\notin R_{1,2}^2$, which implies that $R_{2,t}$ is non-symmetric. Since $\mathcal{A}_0$ is an orbit S-ring, we have $\overline{N}_{2,t}=\{uL_0\}$. Lemma \ref{s=4} implies $t=4$. Thus, \ref{q=3 step7-5} is valid.

By \eqref{2,t}, we have $s=4$. Since $\varphi_0\pi(y_3)=\varphi_0\pi(y_2)+aL_0$, we have $\overline{N}_{3,3}\subseteq\overline{N}_{2,4}+\overline{N}_{1,2}$ by \eqref{a}. Since $|\overline{N}_{2,4}|=1$, one gets $R_{2,4}R_{1,2}=\{R_{3,3}\}$, and so $R_{4,2}\in R_{3,3}R_{1,2}$. Thus, \ref{q=3 step7-3} is also valid.
\end{proof}

\begin{lemma}\label{3 nmod f}
Suppose $R_{2,4}\in R_{1,2}^2\setminus\{R_{1,2},R_{2,1}\}$. If $R_{2,3}\notin R_{1,2}^2\setminus\{R_{1,2},R_{2,1},R_{2,4}\}$ or $R_{2,1}\in R_{2,3}^2$, then $\overline{N}_{1,2}\subseteq(1+3\mathbb{Z})L_0$, where $L_0\in\overline{N}_{1,2}$.
\end{lemma}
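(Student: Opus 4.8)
The plan is to deduce the statement as a short consequence of Lemma~\ref{q=3 step7}, exploiting that $\mathcal{A}_{0}$ is an \emph{orbit} S-ring over the cyclic group $U_{0}/L$. (For this route only the hypothesis $R_{2,4}\in R_{1,2}^{2}\setminus\{R_{1,2},R_{2,1}\}$ is actually used; the remaining hypothesis presumably just reflects the context in which the lemma is applied.)

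First, since $R_{2,4}\in R_{1,2}^{2}\setminus\{R_{1,2},R_{2,1}\}$, Lemma~\ref{q=3 step7} applies with $s=4$, the side condition ``$s=4$'' being automatic. It furnishes a positive integer $h$ with $|U_{0}/L|=3h$ and, by part~\ref{q=3 step7-5}, an element $u\in\{h,2h\}$ with $\overline{N}_{2,4}=\{uL_{0}\}$. Since $\gcd(u,3h)=h$, the element $uL_{0}$ has order exactly $3$ in $U_{0}/L$.

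Next, fix $K\leq{\rm Aut}(U_{0}/L)$ whose orbits on $U_{0}/L$ are precisely the basic sets of $\mathcal{A}_{0}$ (available because $\mathcal{A}_{0}$ is an orbit S-ring). As $\overline{N}_{2,4}=\{uL_{0}\}$ is a basic set, it is a one-point $K$-orbit, hence every $f\in K$ fixes $uL_{0}$. Writing each such $f$ as multiplication by a unit $m$ of $\mathbb{Z}/3h\mathbb{Z}$, the relation $f(uL_{0})=uL_{0}$ becomes $3h\mid(m-1)u$; since $u/h\in\{1,2\}$ is coprime to $3$, this is equivalent to $m\equiv 1\pmod{3}$. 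Thus every multiplier realised inside $K$ is congruent to $1$ modulo $3$.

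Finally, $L_{0}\in\overline{N}_{1,2}\in\mathcal{S}(\mathcal{A}_{0})$ forces $\overline{N}_{1,2}$ to be the $K$-orbit of $L_{0}$, i.e.\ $\overline{N}_{1,2}=\{mL_{0}\mid m\in K\}$, and combining this with the previous paragraph gives $\overline{N}_{1,2}\subseteq(1+3\mathbb{Z})L_{0}$, as claimed. The only non-formal ingredient is the appeal to Lemma~\ref{q=3 step7} — that $3\mid|U_{0}/L|$ and that $\overline{N}_{2,4}$ is a single element of order $3$; once that is in hand, the rest is the elementary fact that an automorphism of $\mathbb{Z}/3h\mathbb{Z}$ fixing an element of order $3$ must be $\equiv 1\pmod{3}$, so the main obstacle lies entirely in that lemma and the present proof merely transports its conclusion.
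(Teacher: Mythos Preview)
Your argument is correct and takes a genuinely different route from the paper. The paper proceeds by a case analysis: for an arbitrary $L_{1}=fL_{0}\in\overline{N}_{1,2}$ it locates $(f+1)L_{0}\in\overline{N}_{1,2}+\overline{N}_{1,2}$ inside one of $\overline{N}_{1,2},\overline{N}_{2,1},\overline{N}_{2,3},\overline{N}_{2,4}$ via Lemma~\ref{q=3 step4}~\ref{q=3 step4-2}, and argues in each case that $3\nmid f+1$; the troublesome case $(f+1)L_{0}\in\overline{N}_{2,3}$ is precisely where the extra hypothesis ``$R_{2,3}\notin R_{1,2}^{2}\setminus\{R_{1,2},R_{2,1},R_{2,4}\}$ or $R_{2,1}\in R_{2,3}^{2}$'' is invoked to force elements of $\overline{N}_{2,3}$ to be generators. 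Your approach sidesteps this entirely: the singleton basic set $\overline{N}_{2,4}=\{uL_{0}\}$ is a fixed point of the orbit group $K$, and since $uL_{0}$ has order~$3$ this pins every multiplier in $K$ to the residue class $1\pmod{3}$, whence the whole $K$-orbit $\overline{N}_{1,2}$ of $L_{0}$ lies in $(1+3\mathbb{Z})L_{0}$. This is cleaner, and as you observe it actually proves a stronger statement, dispensing with the second hypothesis altogether; the paper's argument, by contrast, relies only on the arithmetic constraints coming from Lemma~\ref{q=3 step7} parts~\ref{q=3 step7-3}--\ref{q=3 step7-5} and does not explicitly revisit the orbit structure of $K$.
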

\begin{proof}
Let $L_0\in\overline{N}_{1,2}$. By Lemma \ref{q=3 step7} \ref{q=3 step7-5}, we may assume $\overline{N}_{2,4}=\{uL_0\}$ for some $u\in\{\pm h\}$. Since $R_{2,4}R_{1,2}=\{R_{3,3}\}$ from Lemma \ref{q=3 step7} \ref{q=3 step7-3}, one has $(u+1)L_0\in\overline{N}_{2,4}+\overline{N}_{1,2}=\overline{N}_{3,3}$. By Lemma \ref{q=3 step7} \ref{q=3 step7-4}, we obtain $3\mid u+1$, and so $3\nmid u$.

Pick $L_1\in\overline{N}_{1,2}$. Since each element in $\overline{N}_{1,2}$ is a generator of $U_0/L$ from Lemma \ref{genrators} \ref{genrators-3}, there exists an integer $f$ such that $L_1=fL_0$ with $3\nmid f$. Since $(f+1)L_0\in\overline{N}_{1,2}+\overline{N}_{1,2}$,  from Lemma \ref{q=3 step4} \ref{q=3 step4-2}, one has $(f+1)L_0\in\overline{N}_{1,2}\cup\overline{N}_{2,1}\cup\overline{N}_{2,3}\cup\overline{N}_{2,4}$. If $(f+1)L_0\in\overline{N}_{1,2}\cup\overline{N}_{2,1}\cup\overline{N}_{2,4}$, then $3\nmid f+1$ since $\overline{N}_{2,4}=\{uL_0\}$ with $3\nmid u$, which implies $f\equiv1~({\rm mod}~3)$. Now we suppose $(f+1)L_0\in\overline{N}_{2,3}\setminus(\overline{N}_{1,2}\cup\overline{N}_{2,1}\cup\overline{N}_{2,4})$. Then $R_{2,3}\in R_{1,2}^2\setminus\{R_{1,2},R_{2,1},R_{2,4}\}$, and so $R_{2,1}\in R_{2,3}^2$. Since each element in $\overline{N}_{1,2}$ is a generator of $U_0/L$ and $\mathcal{A}_0$ is an orbit S-ring, each element in $\overline{N}_{2,3}$ is also a generator of $U_0/L$. It follows that $3\nmid f+1$, and so $f\equiv1~({\rm mod}~3)$. Then $L_1\in\overline{N}_{1,2}\subseteq(1+3\mathbb{Z})L_0$. Since $L_1\in\overline{N}_{1,2}$ was arbitrary, the desired result follows.
\end{proof}

To distinguish the notations between the association schemes $(V\Gamma,\{\Gamma_{\tilde{i}}\}_{\tilde{i}\in\tilde{\partial}(\Gamma)})$ and $(U_0/L,\{R_{\tilde{i}}\}_{\Gamma_{\tilde{i}}\in F_{3}})$, let $\hat{k}_{\tilde{i}},\hat{p}_{\tilde{i},\tilde{j}}^{\tilde{j}}$ be the parameters of $(U_0/L,\{R_{\tilde{i}}\}_{\Gamma_{\tilde{i}}\in F_{3}})$.

\begin{lemma}\label{1,2^2}
Suppose that $R_{2,s}\in R_{1,2}^2\setminus\{R_{1,2},R_{2,1}\}$ and no element in $\overline{N}_{2,s}$ is a generator of $U_0/L$ for some $s\in\{3,4\}$. Then the following hold:
\begin{enumerate}
\item\label{1,2^2-1} $\hat{k}_{3,3}=\hat{k}_{1,2}$ ;

\item\label{1,2^2-2} $R_{1,2}\notin R_{1,2}^2$ and $R_{1,2}^2=\{R_{2,1},R_{2,4}\}$;

\item\label{1,2^2-3} $\hat{p}_{(3,3),(1,2)}^{(1,2)}=\hat{k}_{1,2}-1$;

\item\label{1,2^2-4} $R_{1,2}^3=\{R_{0,0},R_{3,3}\}$.
\end{enumerate}
\end{lemma}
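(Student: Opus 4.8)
The plan is to prove the four items in order, pushing everything down to the group $U_0/L$ and using Lemma~\ref{q=3 step7} as the principal input.

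Item~\ref{1,2^2-1} is a valency count. By Lemma~\ref{q=3 step7}~\ref{q=3 step7-3} and~\ref{q=3 step7-5} we have $R_{2,4}R_{1,2}=\{R_{3,3}\}$ and $\hat k_{2,4}=|\overline N_{2,4}|=1$, so $\hat p_{(2,4),(1,2)}^{(3,3)}\le\hat k_{2,4}=1$ (Lemma~\ref{jb}~\ref{jb-3}) is nonzero, hence equals $1$, and Lemma~\ref{jb}~\ref{jb-1} gives $\hat k_{1,2}=\hat k_{2,4}\hat k_{1,2}=\hat p_{(2,4),(1,2)}^{(3,3)}\hat k_{3,3}=\hat k_{3,3}$. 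Next I would record the key location $\overline N_{1,2}\subseteq(1+3\mathbb Z)L_0$: writing $\overline N_{2,4}=\{uL_0\}$ with $u\in\{h,2h\}$ (Lemma~\ref{q=3 step7}~\ref{q=3 step7-5}), for each $fL_0\in\overline N_{1,2}$ one has $(u+f)L_0\in\overline N_{2,4}+\overline N_{1,2}=\overline N_{3,3}$ by Lemma~\ref{q=3 step7}~\ref{q=3 step7-3}, hence $3\mid u+f$ by Lemma~\ref{q=3 step7}~\ref{q=3 step7-4}; taking $f=1$ gives $3\mid u+1$, so $f\equiv1\pmod3$ for all $fL_0\in\overline N_{1,2}$ (this is the computation of Lemma~\ref{3 nmod f}, carried out directly). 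Since $|U_0/L|=3h$, the three cosets of $\langle3L_0\rangle$ are pairwise disjoint; in particular $3\nmid h$, so $U_0/L\cong\mathbb Z_3\times\mathbb Z_h$.

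For item~\ref{1,2^2-2}: from the location just obtained, $\overline N_{1,2}+\overline N_{1,2}\subseteq(2+3\mathbb Z)L_0$ is disjoint from $\overline N_{1,2}$, so $R_{1,2}\notin R_{1,2}^2$. Suppose $R_{2,3}\in R_{1,2}^2$; then $\Gamma_{2,3}\in\Gamma_{1,2}^2$ by Lemma~\ref{claim}~\ref{claim-1}, so $\Gamma_{3,2}\in\Gamma_{1,2}^3$ by Lemma~\ref{q=3 step6}, and projecting via $\varphi_0\pi$ gives $\emptyset\ne\overline N_{3,2}\subseteq\overline N_{1,2}+\overline N_{1,2}+\overline N_{1,2}\subseteq(3\mathbb Z)L_0$; but $\overline N_{3,2}=-\overline N_{2,3}\subseteq-(2+3\mathbb Z)L_0=(1+3\mathbb Z)L_0$, contradicting disjointness. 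Combining this with Lemma~\ref{q=3 step4}~\ref{q=3 step4-2} and Lemma~\ref{q=3 step7}~\ref{q=3 step7-3} yields $R_{1,2}^2=\{R_{2,1},R_{2,4}\}$.

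For items~\ref{1,2^2-3} and~\ref{1,2^2-4}: write $\mathcal A_0=\mathcal O(K,U_0/L)$. Then $\overline N_{2,4}=\{uL_0\}$ is a one-element $K$-orbit, and $uL_0$ has order $3$ (as $u\in\{h,2h\}$ and $|U_0/L|=3h$), so its $\mathbb Z_3$-coordinate is nonzero; hence $K$ fixes the $\mathbb Z_3$-factor and every basic set of $\mathcal A_0$ has the form $\{c\}\times O'$. Writing $\overline N_{1,2}=\{c_1\}\times O$ with $c_1\ne0$ and $O\subseteq\mathbb Z_h^\times$ (by Lemma~\ref{genrators}~\ref{genrators-3}), one gets $-O=O$, since $0\in O+O$ (the $\mathbb Z_h$-coordinate of $\overline N_{2,4}\subseteq\overline N_{1,2}+\overline N_{1,2}$ is $0$) and $O$ is a $K$-orbit; consequently $\overline N_{2,1}=\{-c_1\}\times O$, $\overline N_{2,4}=\{-c_1\}\times\{0\}$ and $\overline N_{3,3}=\overline N_{2,4}+\overline N_{1,2}=\{0\}\times O$. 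Now item~\ref{1,2^2-2} reads $\{-c_1\}\times(O+O)=\overline N_{2,1}\cup\overline N_{2,4}=\{-c_1\}\times(O\cup\{0\})$, so $O+O=O\cup\{0\}$, whence $\overline N_{2,1}+\overline N_{1,2}=\{0\}\times(O+O)=\overline N_{0,0}\cup\overline N_{3,3}$, i.e. $R_{2,1}R_{1,2}=\{R_{0,0},R_{3,3}\}$. Then $R_{1,2}^3=(R_{2,1}\cup R_{2,4})R_{1,2}=R_{2,1}R_{1,2}\cup\{R_{3,3}\}=\{R_{0,0},R_{3,3}\}$, which is item~\ref{1,2^2-4}; and applying Lemma~\ref{jb}~\ref{jb-1} to $R_{2,1}R_{1,2}$ (using item~\ref{1,2^2-1} and $\hat k_{2,1}=\hat k_{1,2}$) gives $\hat k_{1,2}^2=\hat k_{1,2}+\hat p_{(2,1),(1,2)}^{(3,3)}\hat k_{1,2}$, so $\hat p_{(2,1),(1,2)}^{(3,3)}=\hat k_{1,2}-1$, which equals $\hat p_{(3,3),(1,2)}^{(1,2)}$ by Lemma~\ref{jb}~\ref{jb-2} (with item~\ref{1,2^2-1}) and commutativity; this is item~\ref{1,2^2-3}.

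The main obstacle is item~\ref{1,2^2-2}, and through it the arithmetic identity $O+O=O\cup\{0\}$ needed for the last two items: excluding $R_{2,3}$ from $R_{1,2}^2$ hinges on the transpose trick $\Gamma_{2,3}\in\Gamma_{1,2}^2\Rightarrow\Gamma_{3,2}\in\Gamma_{1,2}^3$ combined with the coset location $\overline N_{1,2}\subseteq(1+3\mathbb Z)L_0$, and the collapse of $R_{2,1}R_{1,2}$ onto $\{R_{0,0},R_{3,3}\}$ requires recognizing the coprime splitting $U_0/L\cong\mathbb Z_3\times\mathbb Z_h$ on which the orbit group $K$ acts trivially in the $\mathbb Z_3$-direction.
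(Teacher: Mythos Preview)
Your argument is correct and takes a genuinely different, more structural route than the paper's.

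The paper proves $R_{1,2}\notin R_{1,2}^2$ via the identity $\overline N_{1,2}=uL_0-\overline N_{1,2}$ (from $\hat p_{(1,2),(1,2)}^{(2,4)}=\hat k_{1,2}$) together with Lemma~\ref{nonsymmetric}, and then excludes $R_{2,3}$ by a fairly long circuit argument: it builds a $5$-circuit $(L,L_0,L_1,L_2,L_3)$ passing through $\overline N_{2,3}$, proves the auxiliary claim $R_{2,1}\in R_{2,3}^2$ by case analysis on the $L_{i+2}-L_i$, and only then invokes Lemma~\ref{3 nmod f} for the mod-$3$ contradiction. For items~\ref{1,2^2-3} and~\ref{1,2^2-4} the paper works by hand, showing $(u+1)L_0+(\overline N_{1,2}\setminus\{(u-1)L_0\})\subseteq\overline N_{1,2}$ and squeezing the valency identity. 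You instead extract the coset location $\overline N_{1,2}\subseteq(1+3\mathbb Z)L_0$ immediately from $R_{2,4}R_{1,2}=\{R_{3,3}\}$ and Lemma~\ref{q=3 step7}~\ref{q=3 step7-4} (a one-line observation that shortcuts the case analysis inside Lemma~\ref{3 nmod f}), deduce both $R_{1,2}\notin R_{1,2}^2$ and $R_{2,3}\notin R_{1,2}^2$ by coset disjointness, and then pass to the CRT splitting $U_0/L\cong\mathbb Z_3\times\mathbb Z_h$ to read off $O+O=O\cup\{0\}$ and hence $R_{2,1}R_{1,2}=\{R_{0,0},R_{3,3}\}$. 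Your approach makes the underlying $\mathbb Z_3$-grading explicit and avoids the circuit combinatorics entirely; the paper's approach stays closer to the association-scheme intersection numbers and does not need the splitting.

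One point to tighten: the sentence ``the three cosets of $\langle3L_0\rangle$ are pairwise disjoint; in particular $3\nmid h$'' is not a justification. The actual reason $3\nmid h$ is that $u\in\{h,2h\}$ and you have already shown $3\mid u+1$, hence $u\equiv2\pmod3$, which forces $h\equiv1$ or $2\pmod3$; state this explicitly, since the CRT decomposition you rely on for items~\ref{1,2^2-3}--\ref{1,2^2-4} depends on it.
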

\begin{proof}
Let $L_0\in\overline{N}_{1,2}$. By Lemma \ref{q=3 step7} \ref{q=3 step7-5}, we may assume $\overline{N}_{2,4}=\{uL_0\}$ for some $u\in\{\pm h\}$. By Lemma \ref{q=3 step7} \ref{q=3 step7-3}, we have
\begin{align}
R_{2,4}R_{1,2}=\{R_{3,3}\}.\label{2,41,2}
\end{align}
Since $\hat{k}_{2,4}=1$, from Lemma \ref{jb} \ref{jb-1}, \ref{1,2^2-1} is valid.

Since $R_{2,4}\in R_{1,2}^2\setminus\{R_{1,2},R_{2,1}\}$ from Lemma \ref{q=3 step7} \ref{q=3 step7-3} and $\hat{k}_{2,4}=1$, we have $\hat{p}_{(1,2),(1,2)}^{(2,4)}=\hat{k}_{1,2}$ by Lemma \ref{jb} \ref{jb-5}, which implies
\begin{align}
\overline{N}_{1,2}=uL_0-\overline{N}_{1,2},\label{uL_0}
\end{align}
and so $(L_0,uL_0)\in R_{1,2}$.

Suppose $R_{1,2}\in R_{1,2}^2$. Then $R_{1,2}\in R_{1,2}R_{2,1}$. It follows that there exists $L'\in U_0/L$ such that $(uL_0,L'),(L_0,L')\in R_{1,2}$. Then $L'=L'-L_0+L_0\in\overline{N}_{1,2}+\overline{N}_{1,2}$. Since $L'=uL_0+L'-uL_0\in\overline{N}_{2,4}+\overline{N}_{1,2}\subseteq\overline{N}_{3,3}$ from \eqref{2,41,2}, contrary to Lemma \ref{nonsymmetric}. Then $R_{1,2}\notin R_{1,2}^2$. By Lemma \ref{q=3 step4} \ref{q=3 step4-2}, we obtain $R_{1,2}^2=\{R_{2,1},R_{2,4}\}$ or $\{R_{2,1},R_{2,3},R_{2,4}\}$.

To complete the proof of \ref{1,2^2-2}, assume the contrary, namely, $R_{1,2}^2\neq\{R_{2,1},R_{2,4}\}$. Then $R_{2,3}\in R_{1,2}^2$. By Lemma \ref{claim} \ref{claim-1}, we have $\Gamma_{2,3}\in\Gamma_{1,2}^2$. In view of Lemma \ref{q=3 step6}, one gets $\Gamma_{3,2}\in\Gamma_{1,2}^3$, and so $R_{3,2}\in R_{1,2}^3$. Then there exists a circuit $(L,L_0,L_1,L_2,L_3)$ in ${\rm Cay}(U_0/L,\overline{N}_{1,2})$ such that $L_1\in\overline{N}_{2,3}$. Since $L_i-L_{i+2}\in\overline{N}_{1,2}+\overline{N}_{1,2}+\overline{N}_{1,2}$ for $0\leq i\leq 3$ with $L_4=L$ and $L_5=L_0$, from Lemma \ref{nonsymmetric}, we get $L_{i+2}-L_i\notin\overline{N}_{2,4}$. Since $R_{1,2}^2=\{R_{2,1},R_{2,3},R_{2,4}\}$, we obtain $L_{i+2}-L_i\in \overline{N}_{2,1}\cup\overline{N}_{2,3}$.

We claim $R_{2,1}\in R_{2,3}^2$. Since $L_3-L_1,-L_2\in\overline{N}_{2,1}\cup\overline{N}_{2,3}$, we first assume $L_3-L_1$ or $-L_2\in\overline{N}_{2,3}$. Since $L_1\in\overline{N}_{2,3}$, one has $L_3-L\in\overline{N}_{2,3}+\overline{N}_{2,3}$ or $L_1-L_2\in\overline{N}_{2,3}+\overline{N}_{2,3}$, which implies $R_{2,1}\in R_{2,3}^2$. Now we suppose $L_3-L_1,-L_2\in\overline{N}_{2,1}$. The fact $R_{1,2}\notin R_{1,2}^2$ implies $R_{2,1}\notin R_{1,2}R_{2,1}$. Since $L_0-L_3=L_0-L_1+L_1-L_3\in\overline{N}_{2,1}+\overline{N}_{1,2}$ and $-(-L_2)+(-L_0)\in\overline{N}_{1,2}+\overline{N}_{2,1}$, we have $L_0-L_3,L_2-L_0\in\overline{N}_{2,3}$,  which implies $R_{2,1}\in R_{2,3}^2$. Thus, our claim is valid.

Since $L_0$ is a generator of $U_0/L$ from Lemma \ref{genrators} \ref{genrators-3}, we may set $L_i=f_iL_0$ with $f_i\in\mathbb{Z}$ for $0\leq i\leq3$. By the claim and Lemma \ref{3 nmod f}, we have $-f_3\equiv f_i-f_{i-1}\equiv1~({\rm mod}~3)$ for $1\leq i\leq 3$, contrary to the fact $f_3\equiv(f_3-f_2)+(f_2-f_1)+(f_1-1)+1\equiv 1~({\rm mod}~3)$. Thus, \ref{1,2^2-2} is valid.

Since $\hat{k}_{2,4}=1$ and $\hat{p}_{(1,2),(1,2)}^{(2,4)}=\hat{k}_{1,2}$, from Lemma \ref{jb} \ref{jb-1}, we have $\hat{p}_{(2,1),(2,1)}^{(1,2)}=\hat{p}_{(1,2),(1,2)}^{(2,1)}=\hat{k}_{1,2}-1$.  Since $\overline{N}_{2,4}=\{uL_0\}$, one has $L_0+(\overline{N}_{1,2}\setminus\{(u-1)L_0\})\subseteq\overline{N}_{2,1}$. By \eqref{uL_0}, one gets
\begin{align}
(u+1)L_0+(\overline{N}_{1,2}\setminus\{(u-1)L_0\})\subseteq uL_0+\overline{N}_{2,1}=\overline{N}_{1,2}\nonumber.
\end{align}
By \eqref{2,41,2}, one gets $(u+1)L_0\in\overline{N}_{2,4}+\overline{N}_{1,2}=\overline{N}_{3,3}$, which implies $\hat{p}_{(2,1),(1,2)}^{(3,3)}=\hat{p}_{(1,2),(2,1)}^{(3,3)}\geq\hat{k}_{1,2}-1$. By \ref{1,2^2-1}, we have $\hat{k}_{3,3}=\hat{k}_{1,2}$, and so $k_{1,2}^2\geq k_{1,2}+\hat{p}_{(1,2),(2,1)}^{(3,3)}k_{3,3}\geq k_{1,2}^2$. Thus, we obtain $\hat{p}_{(1,2),(2,1)}^{(3,3)}=\hat{k}_{1,2}-1$ and $R_{1,2}R_{2,1}=\{R_{0,0},R_{3,3}\}$ from Lemma \ref{jb} \ref{jb-1}. Then \ref{1,2^2-1}, Lemma \ref{jb} \ref{jb-2} and the former imply \ref{1,2^2-3}, while \eqref{2,41,2}, \ref{1,2^2-2} and the latter imply \ref{1,2^2-4}.
\end{proof}

\subsection{Proof of Proposition \ref{q=3}}

Let $l=|L|$ and $n_i=|U_i/L|$ for $0\leq i\leq k$. By Lemma \ref{genrators} \ref{genrators-2},
\begin{align}
\Delta_3\cong({\rm Cay}(U_0/L,\overline{N}_{1,2})\times K_{n_1}\times\cdots\times K_{n_k})[\overline{K}_{l}],\label{delta3}
\end{align}
where $n_i$, $i\in\{0,1,\ldots,k\}$, are pairwise coprime, and $n_i>3$ for all $i>0$.

\begin{lemma}\label{digraph-1}
The following hold:
\begin{enumerate}
\item\label{digraph-1-1} if ${\rm Cay}(U_0/L,\overline{N}_{1,2})\cong C_3$, then $k=0$ or $1$;

\item\label{digraph-1-2} if ${\rm Cay}(U_0/L,\overline{N}_{1,2})\cong P(p)$ for some prime $p>3$ with $p\equiv3\pmod{4}$, then $k=0$;

\item\label{digraph-1-3} if ${\rm Cay}(U_0/L,\overline{N}_{1,2})\cong{\rm Cay}(\mathbb{Z}_{13},\{1,3,9\})$, then $k=0$.
\end{enumerate}
\end{lemma}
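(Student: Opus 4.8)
The plan is to bound $k$ in each case by combining the tensor decomposition \eqref{A_G/L} of $\mathcal{A}'$ with the restriction $\Gamma_{1,2}^{2}\subseteq\{\Gamma_{1,2},\Gamma_{2,1},\Gamma_{2,3},\Gamma_{2,4}\}$ of Lemma \ref{q=3 step4} \ref{q=3 step4-1}.

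First I would pin down $\mathcal{A}_{0}$ and compute $R_{1,2}^{2}$ in the base scheme. Since ${\rm Cay}(U_{0}/L,\overline{N}_{1,2})$ determines $|U_{0}/L|$ and $\overline{N}_{1,2}$, and $\mathcal{A}_{0}$ is a free orbit S-ring having $\overline{N}_{1,2}$ as a basic set, one checks: in case (i) that $\mathcal{A}_{0}$ is the group scheme over $\mathbb{Z}_{3}$ with $\overline{N}_{1,2}$ a singleton, so $\overline{N}_{1,2}+\overline{N}_{1,2}=\overline{N}_{2,1}$, hence $R_{1,2}^{2}=\{R_{2,1}\}$ and $R_{1,2}\notin R_{1,2}^{2}$; in case (ii) that $\mathcal{A}_{0}$ is the cyclotomic scheme ${\rm Cyc}(p,2)$ attached to $P(p)$, where $\underline{\overline{N}_{1,2}}\cdot\underline{\overline{N}_{1,2}}=\frac{p-3}{4}\,\underline{\overline{N}_{1,2}}+\frac{p+1}{4}\,\underline{\overline{N}_{2,1}}$ in $\mathbb{Z}[U_{0}/L]$ (both coefficients positive since $p>3$), so $R_{1,2}^{2}=\{R_{1,2},R_{2,1}\}$ with $R_{1,2}\in R_{1,2}^{2}$; and in case (iii) that $\mathcal{A}_{0}={\rm Cyc}(13,4)$, where $\underline{\overline{N}_{1,2}}\cdot\underline{\overline{N}_{1,2}}=\underline{Z}+2\,\underline{\overline{N}_{2,1}}$ for a non-symmetric basic set $Z\neq\overline{N}_{1,2}$, so $R_{1,2}^{2}$ consists of two distinct relations and $R_{1,2}\notin R_{1,2}^{2}$.

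Next I would carry out the counting step. For a fixed basic set $Z_{0}$ of $\mathcal{A}_{0}$, the decomposition \eqref{A_G/L} together with the fact that each $\mathcal{A}_{i}$ with $i\geq1$ has rank $2$ shows that exactly $2^{k}$ basic sets of $\mathcal{A}'$ have $\varphi_{0}$-image $Z_{0}$; since every basic set of $\mathcal{A}'=\mathcal{A}/L$ is of the form $N_{\tilde{i}}/L$ with $\Gamma_{\tilde{i}}\in F_{3}$, there are at least $2^{k}$ relations $\Gamma_{\tilde{i}}\in F_{3}$ with $\overline{N}_{\tilde{i}}=Z_{0}$. By Lemma \ref{claim} \ref{claim-1}, $\Gamma_{\tilde{i}}\in\Gamma_{1,2}^{2}$ exactly when $R_{\tilde{i}}\in R_{1,2}^{2}$, so $|\Gamma_{1,2}^{2}|\geq 2^{k}\,|R_{1,2}^{2}|$; on the other hand Lemma \ref{q=3 step4} \ref{q=3 step4-1} gives $|\Gamma_{1,2}^{2}|\leq 4$, and $|\Gamma_{1,2}^{2}|\leq 3$ whenever $R_{1,2}\notin R_{1,2}^{2}$ (then $\Gamma_{1,2}\notin\Gamma_{1,2}^{2}$ by Lemma \ref{claim} \ref{claim-1}). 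In case (i) this forces $2^{k}\leq3$, i.e.\ $k\leq1$; in case (iii) it forces $2^{k+1}\leq3$, i.e.\ $k=0$.

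In case (ii) the counting only yields $k\leq1$, and the main obstacle is to exclude $k=1$; in fact the following argument handles $k\geq1$ uniformly. Suppose $k\geq1$, and choose $t\in\overline{N}_{1,2}$; since $\frac{p-3}{4}>0$ we may write $t=s_{1}+s_{2}$ with $s_{1},s_{2}\in\overline{N}_{1,2}$, and since $\frac{p+1}{4}>0$ we also have $-t\in\overline{N}_{1,2}+\overline{N}_{1,2}$. Pick $x\in\mathbb{Z}_{n}$ with $\pi(x)=(t,L,\dots,L)$ in $U_{0}/L\times U_{1}/L\times\cdots\times U_{k}/L$. Using that $N_{1,2}$ is a union of $L$-cosets with full $U_{i}/L$-component for $i\geq1$ (Lemma \ref{isomorphic}, Lemma \ref{genrators}), one builds walks of length $2$ of type-$(1,2)$ arcs from $0$ to $x$ and from $x$ to $0$, so that $\partial_{\Gamma}(0,x)\leq2$ and $\partial_{\Gamma}(x,0)\leq2$; since $\pi(x)\neq0$ we have $x\neq0$, and neither distance can equal $1$ because the relation of $(0,x)$ lies in $\Gamma_{1,2}^{2}$ and so, by Lemma \ref{q=3 step4} \ref{q=3 step4-1}, would then be $\Gamma_{1,2}$, contradicting $\pi(x)\notin\pi(N_{1,2})$ (the latter has non-trivial $U_{i}/L$-components). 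Hence $\tilde{\partial}_{\Gamma}(0,x)=(2,2)$, so $\Gamma_{2,2}\in\Gamma_{1,2}^{2}$, contradicting Lemma \ref{q=3 step4} \ref{q=3 step4-1}. Therefore $k=0$ in case (ii) as well.
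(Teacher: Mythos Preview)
Your proof is correct and takes a genuinely different route from the paper for parts \ref{digraph-1-1} and \ref{digraph-1-3}. The paper works with an explicit digraph isomorphism $\sigma$ from $(\Delta\times K_{n_1}\times\cdots\times K_{n_k})[\overline{K}_l]$ to $\Delta_3$, constructs specific vertices, and derives contradictions by showing that two vertices land in the same $N_{2,s}$ yet have $P_{(1,2),(1,2)}$-sets of different sizes. Your argument instead exploits the tensor decomposition \eqref{A_G/L} directly: since each $\mathcal{A}_i$ for $i\geq1$ has rank $2$, every basic set $Z_0$ of $\mathcal{A}_0$ lifts to $2^k$ basic sets of $\mathcal{A}'$, whence (via Lemma~\ref{claim}) $|\Gamma_{1,2}^2|\geq 2^k|R_{1,2}^2|$; combined with the bound $|\Gamma_{1,2}^2|\leq3$ when $R_{1,2}\notin R_{1,2}^2$ from Lemma~\ref{q=3 step4}~\ref{q=3 step4-1}, this gives the result by a clean inequality. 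This is more conceptual and avoids the paper's ad hoc vertex constructions; the price is that you must first identify $\mathcal{A}_0$ as the relevant cyclotomic scheme and compute $R_{1,2}^2$ there, which you do correctly. For part \ref{digraph-1-2} the two arguments essentially coincide: both produce an element whose $\pi$-image has trivial $U_i/L$-components for $i\geq1$, show it lies in $(N_{1,2}+N_{1,2})\cap(N_{2,1}+N_{2,1})$, and obtain a contradiction with Lemma~\ref{q=3 step4}~\ref{q=3 step4-1}; the paper phrases this as $a\in N_{1,2}\cup N_{2,1}$ being impossible, while you phrase it as $\Gamma_{2,2}\in\Gamma_{1,2}^2$ being impossible. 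One small point: your sentence ``would then be $\Gamma_{1,2}$, contradicting $\pi(x)\notin\pi(N_{1,2})$'' literally covers only $\partial_\Gamma(0,x)=1$; for $\partial_\Gamma(x,0)=1$ the relation is $\Gamma_{2,1}$ and the contradiction is with $\pi(x)\notin\pi(N_{2,1})$, which follows by the same reasoning.
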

\begin{proof}
Let $\Delta=C_3,P(p),{\rm Cay}(\mathbb{Z}_{13},\{1,3,9\})$, in \ref{digraph-1-1} \ref{digraph-1-2} and \ref{digraph-1-3}, respectively. By \eqref{delta3}, there exists an isomorphism $\sigma$ from $(\Delta\times K_{n_1}\times\cdots\times K_{n_k})[\overline{K}_{l}]$ to $\Delta_{3}$, where $n_i>3$ for all $i>0$. We may assume $\sigma(0,0,\ldots,0)=0$.

\ref{digraph-1-1} Let $a=\sigma(1,1,\ldots,1,0)$ and $b=\sigma(2,0,0,\ldots,0)$. Suppose $k>1$. Let $c=\sigma(2,2,0,0,\ldots,0)$. It follows that $b,c\notin N_{1,2}\cup N_{2,1}$ and $\partial_{\Delta_3}(b,0)\equiv\partial_{\Delta_3}(c,0)\equiv 1$~(mod $3$). Then $\partial_{\Delta_3}(b,0),\partial_{\Delta_3}(c,0)\geq4$. Since $(0,a,b)$ and $(0,a,c)$ are paths in $\Delta_3$, we have $b,c\in N_{1,2}+N_{1,2}$. By Lemma \ref{q=3 step4} \ref{q=3 step4-1}, we have $b,c\in N_{2,3}\cup N_{2,4}$. If $b$ or $c\in N_{2,3}$, then $\Gamma_{2,3}\in\Gamma_{1,2}^2$, and so $\Gamma_{3,2}\in\Gamma_{1,2}^3$ from Lemma \ref{q=3 step6}, which implies $\partial_{\Delta_3}(b,0)\leq3$ or $\partial_{\Delta_3}(c,0)\leq3$, a contradiction. Then $b,c\in N_{2,4}$. But the sets
\begin{align}
P_{(1,2),(1,2)}(0,b)&=\sigma(\{(1,x_1,\ldots,x_k,y)\mid y\in\mathbb{Z}_{l}, x_i\neq0~{\rm for}~1\leq i\leq k\}),\nonumber\\
P_{(1,2),(1,2)}(0,c)&=\sigma(\{(1,x_1,\ldots,x_k,y)\mid y\in\mathbb{Z}_{l}, x_1\neq 2, x_{i}\neq0~{\rm for}~1\leq i\leq k\})\nonumber
\end{align}
have different size. This is impossible.

\ref{digraph-1-2} Let $a=\sigma(1,0,0,\ldots,0)$. Since $p>3$, there exist $i,j\in {\rm GF}(p)$ such that $i,j,1-i$ and $-1-j$ are squares in the multiplicative group of GF$(p)$. Let $b=\sigma(i,1,\ldots,1,0)$ and $c=\sigma(1+j,1,\ldots,1,0)$.

Suppose $k>0$. Then $a\notin N_{1,2}\cup N_{2,1}$. Since $(0,b,a,c)$ is a circuit in $\Delta_3$, one gets $a\in(N_{1,2}+N_{1,2})\cap(N_{2,1}+N_{2,1})$. By Lemma \ref{q=3 step4} \ref{q=3 step4-1}, one gets $a\in N_{1,2}\cup N_{2,1}$, a contradiction.

\ref{digraph-1-3} Let $a=\sigma(3,1,\ldots,1,0)$, $b=\sigma(4,0,0,\ldots,0)$ and $c=\sigma(6,0,0,\ldots,0)$. Suppose $k>0$. Since $(0,a,b)$ and $(0,a,c)$ are paths in $\Delta_{3}$, by Lemma \ref{q=3 step4} \ref{q=3 step4-1}, we get $b,c\in N_{2,3}\cup N_{2,4}$. Since $(b,b+a,b+2a,0)$ and $(c,c+a,c+2a,0)$ are paths in $\Delta_{3}$, one has $b,c\in N_{2,3}$. But the sets
\begin{align}
P_{(1,2),(1,2)}(0,b)&=\sigma(\{(x,x_1,\ldots,x_k,y)\mid y\in\mathbb{Z}_{l}, x\in \{1,3\}, x_i\neq0~{\rm for}~1\leq i\leq k\}),\nonumber\\
P_{(1,2),(1,2)}(0,c)&=\sigma(\{(3,x_1,\ldots,x_k,y)\mid y\in\mathbb{Z}_{l}, x_{i}\neq0~{\rm for}~1\leq i\leq k\})\nonumber
\end{align}
have different size. This is impossible.
\end{proof}

Now we are ready to complete the proof of Proposition \ref{q=3}.

\begin{proof}[Proof of Proposition~\ref{q=3}] We divide our proof into two cases.

\textbf{Case 1.} $R_{3,3}\in R_{1,2}R_{2,1}\setminus\{R_{0,0}\}$.

Since $R_{3,3}$ is symmetric, from Lemma \ref{sym} and Lemma \ref{q=3 step4} \ref{q=3 step4-2}, there exists $s\in\{3,4\}$ such that $R_{2,s}\in R_{1,2}^2\setminus\{R_{1,2},R_{2,1}\}$ and no element in $\overline{N}_{2,s}$ is a generator of $U_0/L$. By Lemma \ref{1,2^2} \ref{1,2^2-4}, we have $R_{1,2}^3=\{R_{0,0},R_{3,3}\}$.

By Lemma \ref{1,2^2} \ref{1,2^2-3}, we have $\hat{p}_{(3,3),(1,2)}^{(1,2)}=\hat{k}_{1,2}-1$. By Lemma \ref{q=3 step7} \ref{q=3 step7-3}, we have $R_{4,2}\in R_{3,3}R_{1,2}$. Since $\hat{k}_{3,3}=\hat{k}_{1,2}$ from Lemma \ref{1,2^2} \ref{1,2^2-1}, we get $\hat{p}_{(3,3),(1,2)}^{(4,2)}\hat{k}_{4,2}\geq \hat{k}_{1,2}$ by Lemma \ref{jb} \ref{jb-5}.  This forces $\hat{k}_{1,2}^2=\hat{k}_{3,3}\hat{k}_{1,2}=\hat{p}_{(3,3),(1,2)}^{(1,2)}\hat{k}_{1,2}+\hat{p}_{(3,3),(1,2)}^{(4,2)}\hat{k}_{4,2}$. It follows from Lemma \ref{jb} \ref{jb-1} that $R_{3,3}R_{1,2}=\{R_{1,2},R_{4,2}\}$, and so $R_{1,2}^4=\{R_{1,2},R_{4,2}\}$.

By Lemma \ref{q=3 step7} \ref{q=3 step7-5}, we have $\hat{k}_{2,4}=1$. Since $R_{2,4}\in R_{1,2}^2$ from Lemma \ref{1,2^2} \ref{1,2^2-2}, we have $R_{2,1}\in R_{4,2}R_{1,2}$. Lemma \ref{jb} \ref{jb-1} implies $R_{4,2}R_{1,2}=\{R_{2,1}\}$, and so $R_{1,2}^5=\{R_{2,1},R_{2,4}\}$. Thus,
\begin{align}
\mathcal{S}(\mathcal{A}_0)=\{\{L\},\overline{N}_{1,2},\overline{N}_{2,1},\overline{N}_{2,4},\overline{N}_{3,3},\overline{N}_{4,2}\}.\label{A_0}
\end{align}

Let $L_0\in\overline{N}_{1,2}$. By Lemma \ref{q=3 step7} \ref{q=3 step7-2}, we have $|U_0/L|=3h$ with $h\in\mathbb{Z}$. Lemma \ref{q=3 step7} \ref{q=3 step7-5} implies $\overline{N}_{2,4}\cup\overline{N}_{4,2}=\{\pm hL_0\}=h\mathbb{Z}L_0\setminus\{L\}$. By Lemma \ref{genrators} \ref{genrators-3},  each element in $\overline{N}_{1,2}$ is a generator of $U_0/L$. Lemma \ref{q=3 step7} \ref{q=3 step7-4} implies that the order of each element in $\overline{N}_{3,3}$ is $h$.
Since $\mathcal{A}_0$ is an orbit S-ring, from \eqref{A_0}, the order of each element of $U_0/L$ is $1,3,h$ or $3h$. This implies that $h$ is a prime. Then $\overline{N}_{3,3}=3\mathbb{Z}L_0\setminus\{L\}$, which forces $h>3$. By \eqref{A_0}, $\overline{N}_{1,2}\cup\overline{N}_{2,1}$ consists of all generators of $U_0/L$. Lemma \ref{3 nmod f} implies $\overline{N}_{1,2}=((1+3\mathbb{Z})\setminus h\mathbb{Z})L_0$. Since $3\nmid h$, we have $U_0/L=\langle hL_0\rangle\times\langle 3L_0\rangle$ and
\begin{align}
\overline{N}_{1,2}=((\pm h+3\mathbb{Z})\setminus h\mathbb{Z})L_0=\pm\{hL_0\}+3(\mathbb{Z}\setminus h\mathbb{Z})L_0.\nonumber
\end{align}
It follows that ${\rm Cay}(U_0/L,\overline{N}_{1,2})\cong C_3\times K_{h}$. By Lemma \ref{digraph-1} \ref{digraph-1-1} and \eqref{delta3}, $\Delta_{3}$ is isomorphic to $(C_3\times K_{h})[\overline{K}_{l}]$.

\textbf{Case 2.} $R_{3,3}\notin R_{1,2}R_{2,1}\setminus\{R_{0,0}\}$.

By Lemma \ref{q=3 step7} \ref{q=3 step7-1}, we have $R_{2,4}\notin R_{1,2}^2\setminus\{R_{1,2},R_{2,1}\}$, which implies $R_{1,2}^{2}\subseteq\{R_{1,2},R_{2,1},R_{2,3}\}$ from Lemma \ref{q=3 step4} \ref{q=3 step4-2}.

\textbf{Case 2.1} $\Gamma_{2,3}\notin F_3$ or $R_{2,3}\in\{R_{0,0},R_{1,2},R_{2,1}\}$.

By Lemma \ref{genrators} \ref{genrators-5}, we have $R_{0,0}\notin R_{1,2}^2$, and so $R_{1,2}^{2}\subseteq\{R_{1,2},R_{2,1}\}$. By Lemma \ref{genrators} \ref{genrators-3}, each element in $\overline{N}_{1,2}+\overline{N}_{1,2}\subseteq\overline{N}_{1,2}\cup\overline{N}_{2,1}$ is a generator of $U_0/L$. Lemma \ref{sym-1} \ref{sym-1-2} implies that $(U_0/L,\{R_{\tilde{i}}\}_{\Gamma_{\tilde{i}}\in F_3})$ is skew-symmetric. By Lemma \ref{q=3 step5}, one obtains $R_{1,2}R_{2,1}\subseteq\{R_{0,0},R_{1,2},R_{2,1}\}$. It follows that $(U_0/L,\{R_{0,0},R_{1,2},R_{2,1}\})$ is a 2-class non-symmetric association scheme. Since each element in $\overline{N}_{1,2}+\overline{N}_{1,2}\supseteq\overline{N}_{2,1}$ is a generator of $U_0/L$, $p:=|U_0/L|$ is a prime. By Lemma \ref{prime}, $(U_{0}/L,\{R_{0,0},R_{1,2},R_{2,1}\})$ is the attached scheme of Paley digraph $P(p)$ with $p\equiv3$ (mod~$4$). It follows that ${\rm Cay}(U_0/L,\overline{N}_{1,2})\cong P(p)$. By \eqref{delta3}, we obtain $\Delta_{3}\cong(P(p)\times K_{n_1}\times\cdots\times K_{n_k})[\overline{K}_{l}]$ with $p\nmid n_i$ and $n_i>3$ for all $i>0$.

If $p=3$, from Lemma \ref{digraph-1} \ref{digraph-1-1}, then $\Delta_{3}$ is isomorphic to $C_3[\overline{K}_l]$ or $(C_3\times K_{n_1})[\overline{K}_l]$ with $3\nmid n_1$ and $n_1>3$. If $p>3$, from Lemma \ref{digraph-1} \ref{digraph-1-2}, then $\Delta_{3}$ is isomorphic to one of the digraphs in Theorem \ref{Main2} \ref{Main2-2}.

\textbf{Case 2.2.} $\Gamma_{2,3}\in F_3$ and $R_{2,3}\notin\{R_{0,0},R_{1,2},R_{2,1}\}$.

If $R_{2,3}\in R_{1,2}^2$, then $R_{2,3}$ is non-symmetric since $R_{3,2}\notin R_{1,2}^2$ from Lemma \ref{nonsymmetric}; if $R_{2,3}\notin R_{1,2}^2$, then $R_{1,2}^2\subseteq\{R_{1,2},R_{2,1}\}$, and so $R_{2,3}$ is non-symmetric from Lemma \ref{sym-1} \ref{sym-1-2}.

Since $\mathcal{A}_0$ is an orbit S-ring, from Lemma \ref{q=3 step7} \ref{q=3 step7-1},  $R_{2,3}\notin R_{1,2}^2$ or each element in $\overline{N}_{2,3}$ is a generator of $U_0/L$. Since $R_{1,2}^{2}\subseteq\{R_{1,2},R_{2,1},R_{2,3}\}$, from Lemma \ref{genrators} \ref{genrators-3}, each element in $\overline{N}_{1,2}+\overline{N}_{1,2}\subseteq\overline{N}_{1,2}\cup\overline{N}_{2,1}\cup\overline{N}_{2,3}$ is a generator of $U_0/L$. Lemma  \ref{sym-1} \ref{sym-1-2} implies that $(U_0/L,\{R_{\tilde{i}}\}_{\Gamma_{\tilde{i}}\in F_3})$ is skew-symmetric. By Lemma \ref{q=3 step5}, one obtains $R_{1,2}R_{2,1}\subseteq\{R_{0,0},R_{1,2},R_{2,1},R_{2,3},R_{3,2}\}$. Lemma \ref{jb4} implies $\hat{k}_{1,2}\geq\hat{k}_{2,3}$. It follows from Theorem \ref{main1} that $\mathfrak{X}:=(U_0/L,\{R_{0,0},R_{1,2},R_{2,1},R_{2,3},R_{3,2}\})$ is a 4-class skew-symmetric association scheme.

Suppose that $|U_0/L|$ is not a prime. Since $\mathcal{A}_0$ is an orbit S-ring with rank $5$, from Lemma \ref{genrators} \ref{genrators-3}, $\overline{N}_{1,2}\cup\overline{N}_{2,1}$ consists of all generators of $U_0/L$. Then the order of each element of $\overline{N}_{2,3}\cup\overline{N}_{3,2}$ is the unique nontrivial divisor of $|U_0/L|$. This implies that $|U_0/L|=p^2$ for some odd prime $p$. Then $\hat{k}_{1,2}=p(p-1)/2$. Since $\mathcal{A}_0$ is an orbit S-ring, there exists $K\leq {\rm Aut}(U_0/L)$ such that $\mathcal{A}_0=\mathcal{O}(K,U_0/L)$ and $p\mid |K|$. Lemma \ref{p^2} implies that $\mathcal{A}_0$ is not free, a contradiction. Thus, $|U_0/L|$ is a prime.

By Lemma \ref{prime}, $\mathfrak{X}$ is a cyclotomic and pseudocyclic scheme. In view of Theorem \ref{main1}, one has ${\rm Cay}(U_0/L,\overline{N}_{1,2})\cong{\rm Cay}(\mathbb{Z}_{13},\{1,3,9\})$. By Lemma \ref{digraph-1} \ref{digraph-1-3} and \eqref{delta3}, $\Delta_{3}$ is isomorphic to one of the digraphs in Theorem \ref{Main2} \ref{Main2-4}.
\end{proof}

\section{Proofs of Theorem \ref{Main2} and Corollary \ref{Main3}}

\begin{proof}[Proof of Theorem~\ref{Main2}]
Note that $C_m$, $P(p)$ and ${\rm Cay}(\mathbb{Z}_{13},\{1,3,9\})$ are all weakly distance-regular. By \cite[Proposition 2.4]{KSW03}, all digraphs in Theorem \ref{Main2} \ref{Main2-1}, \ref{Main2-2} and \ref{Main2-4} are also weakly distance-regular. One can verify that all digraphs in Theorem \ref{Main2} \ref{Main2-3} are weakly distance-regular. It is easy to see that all digraphs in Theorem \ref{Main2} are circulants. This completes the proof of the sufficiency part.

We now prove the necessity. Since $\Gamma$ is a weakly distance-regular circulant of one type of arcs, we have $T=\{g\}$, where $g>2$ is the girth of $\Gamma$. Since $\Gamma$ is connected, $\Gamma_{1,g-1}$ generates the attached scheme of $\Gamma$. It follows that $F_{g}=\{\Gamma_{\tilde{i}}\}_{\tilde{i}\in\tilde{\partial}(\Gamma)}$, and so $F_{g}(x)=V\Gamma$ for any $x\in V\Gamma$. Hence, $\Gamma=\Delta_{g}$. If $g>3$, then from Proposition \ref{(1,q-1)} $\Gamma$ is isomorphic to one of the digraph in \ref{Main2-1} since $p_{(1,g-1),(1,g-1)}^{(2,g-2)}>0$. Now suppose $g=3$. By Proposition \ref{q=3}, it suffices to show that $(C_3\times K_{h})[\overline{K}_{l}]$ is not weakly distance-regular, where $h>3$, $3\nmid h$ and $l>1$.

Suppose that $\Gamma:=(C_3\times K_{h})[\overline{K}_{l}]$ is a weakly distance-regular digraph. Observe that $(0,1,0),(0,0,1)\in\Gamma_{3,3}(0,0,0)$. But the sets
\begin{align}
P_{(1,2),(2,1)}((0,0,0),(0,1,0))&=\{(1,x,y)\mid x\notin \{0,1\}, y\in\mathbb{Z}_{l}\},\nonumber\\
P_{(1,2),(2,1)}((0,0,0),(0,0,1))&=\{(1,x,y)\mid x\neq0, y\in\mathbb{Z}_{l}\}\nonumber
\end{align}
have different size. This is impossible.
\end{proof}

\begin{remark}
In fact, there are weakly distance-regular circulants $\Gamma$ consisting of arcs of more than one types such that $\Delta_3$ are isomorphic to digraphs in Proposition \ref{q=3} \ref{q=3-3}. For example, let $\Gamma=(C_3\times K_h)[C_4]$ with $h>3$ and $3\nmid h$. Then $\Gamma$ is a weakly distance-regular circulant consisting of arcs of types $(1,2)$ and $(1,3)$ such that $\Delta_3=(C_3\times K_h)[\overline{K}_4]$.
\end{remark}

\begin{proof}[Proof of Corollary~\ref{Main3}]
Observe that all the digraphs in Corollary~\ref{Main3} are
primitive weakly distance-regular circulants.

Let $\Gamma=\textrm{Cay}(\mathbb{Z}_{p},S)$ be a primitive weakly distance-regular digraph of girth $g$.  Observe that $\mathfrak{X}=(V\Gamma,\{\Gamma_{\tilde{i}}\}_{\tilde{i}\in\tilde{\partial}(\Gamma)})$ is a translation scheme. By Lemma~\ref{prime}, $p$ is a prime and $\mathfrak{X}$
is a cyclotomic scheme. 

Suppose $(1,q)\in\tilde{\partial}(\Gamma)$ and pick $x_1\in N_{1,q}$. By Lemma \ref{lem:1}, there exists an integer $s$ such that $(N_{1,g-1})^{(s)}=N_{1,q}$, which implies that
there exists $y_1\in N_{1,g-1}$ such that $sy_1=x_1$. Then there exists a circuit
$(y_1,y_2,\dots,y_g=0)$ of length $g$ consisting of arcs of type $(1,g-1)$. Multiplication
by $s$ gives a path of length $g-1$ from $x_1$ to $0$, which implies $x_1\in N_{1,g-1}$.
This implies $q=g-1$. Thus, $\Gamma$ only contains one type of arcs. The desired result follows from Theorem \ref{Main2}.
\end{proof}

\section*{Acknowledgements}
A.~Munemasa is supported by JSPS Kakenhi (Grant No. 17K05155 and No. 20K03527),
K. Wang is supported by the National Key R$\&$D Program of China (No.~2020YFA0712900) and NSFC (12071039, 12131011), Y.~Yang is supported by NSFC (12101575) and the Fundamental Research Funds for the Central Universities (2652019319).
A part of this research was done while A.~Munemasa was visiting Beijing Normal University,
and while Y.~Yang was visiting Tohoku University. We acknowledge the support
from RACMaS, Tohoku University.

\end{document}